\newtheorem{assumption}{Assumption}
\newcommand{\R}{{\mathbb R}}
\newcommand{\tr}{\operatorname{tr}}
\newcommand{\proj}{\operatorname{proj}}
\newcommand{\argmin}{\operatornamewithlimits{argmin}}
\newcommand{\dom}{\operatorname{dom}}
\newcommand{\prox}{\operatorname{prox}}
\def\shortdisplay{\setlength{\abovedisplayskip}{5pt}\setlength{\belowdisplayskip}{5pt}\setlength{\abovedisplayshortskip}{2pt}\setlength{\belowdisplayshortskip}{2pt}}
\let\oldselectfont\selectfont
\def\selectfont{\oldselectfont\shortdisplay}
\begin{document}

\tolerance 1414
\hbadness 1414
\emergencystretch 1.5em
\hfuzz 0.3pt
\widowpenalty=10000
\vfuzz \hfuzz
\raggedbottom

\title{Stochastic Optimization under Distributional Drift}

\author{\name Joshua Cutler \email jocutler@uw.edu\\
	\addr Department of Mathematics\\
	University of Washington\\
	Seattle, WA 98195-4322, USA
	\AND
	\name Dmitriy Drusvyatskiy \email ddrusv@uw.edu\\
	\addr Department of Mathematics\\
	University of Washington\\
	Seattle, WA 98195-4322, USA
	\AND
	\name Zaid Harchaoui \email zaid@uw.edu\\
	\addr Department of Statistics\\
	University of Washington\\
	Seattle, WA 98195-4322, USA}

\editor{Alekh Agarwal}	

\maketitle
	
\begin{abstract}
We consider the problem of minimizing a convex function that is evolving according to unknown and possibly stochastic dynamics, which may depend jointly on time and on the decision variable itself. Such problems abound in the machine learning and signal processing literature, under the names of concept drift, stochastic tracking, and performative prediction. We provide novel non-asymptotic convergence guarantees for stochastic algorithms with iterate averaging, focusing on bounds valid both in expectation and with high probability. The efficiency estimates we obtain clearly decouple the contributions of optimization error, gradient noise, and time drift. Notably, we identify a low drift-to-noise regime in which the tracking efficiency of the proximal stochastic gradient method benefits significantly from a step decay schedule. Numerical experiments illustrate our results.
\end{abstract}	

\begin{keywords}
	stochastic gradient, stochastic tracking, concept drift, performative prediction, high-probability bounds
\end{keywords}

\section{Introduction}
\label{sec:introduction} 
Stochastic optimization underpins much of machine learning theory and practice. Significant progress has been made over the last two decades in the finite-time analysis of stochastic approximation algorithms~\citep{bottou, bottou:bousquet, bottou:2012, srebro:sridharan:tewari, agarwal:etal:2014, lang:etal,bach:moulines, sra2012optimization, nemirovski2009robust}. The predominant assumption in much of the work on stochastic optimization for machine learning is that the distribution generating the data is fixed throughout the run of the process. There is no shortage of problems, however, where this assumption is grossly violated. There are two main sources of such distributional shifts. The first is temporal, wherein the distribution varies slowly
in time due to reasons that are independent of the learning process. This setting is often called dynamic stochastic approximation~\citep{dupac} and is the basis for adaptive algorithms for stochastic tracking~\citep{benveniste2012adaptive}.
The second common source is due to a feedback mechanism, wherein the distribution generating the data may depend on, or react to, the decisions made by the learner. This setting has been a subject of increased interest recently in the context of strategic classification and performative prediction~\citep{mendler2020stochastic, drusvyatskiy2020stochastic}.

In this work, we present finite-time efficiency estimates in expectation and with high probability for the tracking error of the proximal stochastic gradient method under time drift. The results are presented in a single framework that encompasses both the purely temporal and the decision-dependent time drift. Our results concisely explain the interplay between the learning rate, the noise variance in the gradient oracle, and the strength of the time drift.  
While conventional wisdom and previous work recommend the use of a constant step size under time drift, we identify a low drift-to-noise regime in which tracking efficiency benefits significantly from a step size schedule that geometrically decays to a ``critical step size''.

Setting the stage, consider the sequence of stochastic optimization problems 
\begin{equation}\label{eqn:prob_online_intro}
	\min_x\, \varphi_t(x) := f_t(x) + r_t(x)
\end{equation}
indexed by time $t\in \mathbb{N}$. In typical machine learning and signal processing settings, the function $f_t$ corresponds to an average loss that varies in time, while the regularizer $r_t$ models constraints or promotes structure (e.g., sparsity) in the variable $x$. 
Two examples are worth highlighting. The first is a classical problem in signal processing related to stochastic tracking~\citep{kushner:lin,sayed2003fundamentals}, wherein the learning algorithm aims to track over time a moving target driven by an unknown stochastic process. The second example is the concept drift phenomenon in online learning~\citep{HazanS09, ZhangLZ18}, wherein the true hypothesis may be changing over time.

The main goal of a learning algorithm for problem~\eqref{eqn:prob_online_intro} is to generate a sequence of points $\{x_t\}$ that minimize some natural performance metric. To make progress, we impose the standard  assumption that each function $f_t$ is $\mu$-strongly convex with $L$-Lipschitz continuous gradient, while each regularizer $r_t$ is proper, closed, and convex. The online proximal stochastic gradient method (PSG) naturally applies to the sequence of problems \eqref{eqn:prob_online_intro}. At each iteration $t$, the method simply takes the step 
$$x_{t+1}=\prox_{\eta_t r_t}(x_t-\eta_t g_t),$$
where the vector $g_t$ is an unbiased estimator of the true gradient of $f_t$ at $x_t$, the step size (learning rate) $\eta_t>0$ is user-specified, and $\prox_{\eta_t r_t}(\cdot)$ is the proximal map of the scaled regularizer $\eta_t r_t$.
In this work, we analyze two types of tracking error for PSG: the squared distance $\|x_t-x_t^\star\|^2$ and the suboptimality gap $\varphi_t(\hat x_t)-\varphi_t(x_{t}^{\star})$. Here, $x_t^\star$ denotes the minimizer of the function $\varphi_t$ which may evolve stochastically in time, and $\hat x_t$ denotes a weighted average of iterates up to time $t$. We next outline the main results of the paper; the results in Sections~\ref{sec:intro_track_dist} and \ref{sec:intro_track_val} below appeared in a preliminary version of this paper at NeurIPS \citep{cutler21stochastic}.

\subsection{Tracking the Minimizer}\label{sec:intro_track_dist}
We begin with a simple bound on distance tracking of the constant-step PSG: 
	\begin{equation}\label{eqn:one_step_intro}
		\mathbb{E}\|x_t-x^{\star}_t\|^2 \lesssim \underbrace{(1-\mu\eta)^t\|x_0-x^{\star}_0\|^2}_\text{optimization} + \underbrace{\frac{\eta\sigma^2}{\mu}}_\text{noise} + \underbrace{\left(\frac{\Delta}{\mu\eta}\right)^2}_\text{drift}.
	\end{equation}
Here $\eta\in (0,1/2L]$ is the constant step size used by PSG, $\sigma^2$ upper-bounds the variance of the stochastic gradient, and $\Delta^2$ upper-bounds the minimizer variations $\mathbb{E}\|x_t^\star - x_{t+1}^\star\|^2$; the symbol $\lesssim$ indicates an inequality that holds up to an absolute constant factor, i.e., up to multiplying the upper bound by a positive numerical constant independent of the problem parameters.
Inequality~(\ref{eqn:one_step_intro}) asserts that the tracking error $\mathbb{E}\|x_t-x^{\star}_t\|^2$ decays linearly in time $t$, 
until it reaches the ``noise\,+\,drift'' error $\eta\sigma^2/\mu + (\Delta/\mu\eta)^2$. 
Notice that the ``noise\,+\,drift'' error cannot be made arbitrarily small by tuning $\eta$. 
This is perfectly in line with intuition: a step size $\eta$ that is too small prevents the algorithm from catching up with the minimizers $x_t^{\star}$. We note that the individual error terms due to the optimization and noise are classically known to be tight for PSG; tightness of the drift term is proved by \citet[Theorem 3.2]{madden2021bounds}. Though the estimate~\eqref{eqn:one_step_intro} is likely known, we were unable to find a precise reference in this generality.

Letting $t$ tend to infinity in~\eqref{eqn:one_step_intro}, the optimization error tends to zero, leaving only the ``noise\,+\,drift'' term. Optimizing this remaining term over $\eta$, it is natural to define the asymptotic distance tracking error of PSG and the corresponding optimal  learning rate as
\begin{equation*}
	\mathcal{E}:=\min_{\eta\in (0,1/2L]} \left\{\frac{\eta\sigma^2}{\mu} + \left(\frac{\Delta}{\mu\eta}\right)^2\right\}\quad \textnormal{and}\quad \eta_\star := \min\left\{\frac{1}{2L}, \left(\frac{2\Delta^2}{\mu\sigma^2}\right)^{1/3}\right\}\!.
\end{equation*}
Two regimes of variation are brought to light: the \emph{high drift-to-noise regime} ${\Delta/\sigma \geq \sqrt{\mu/16L^3}}$, and the \emph{low drift-to-noise regime} $\Delta/\sigma< \sqrt{\mu/16L^3}$. The high drift-to-noise regime is uninteresting from the viewpoint of stochastic optimization because in this case the optimal learning rate $\eta_\star \asymp 1/L$ is as large as in the deterministic setting (here, the symbol $\asymp$ indicates an equality that holds up to an absolute constant factor).
In contrast, the low drift-to-noise regime is interesting because it necessitates using a smaller learning rate $\eta_{\star}\asymp (\Delta^2/\mu\sigma^2)^{1/3}$ that exhibits a nontrivial scaling with the problem parameters. Consequently, for the rest of the introduction we focus on the low drift-to-noise regime.

A central question is to find a learning rate schedule that achieves a tracking error $\mathbb{E}\|x_t-x^{\star}_t\|^2 $ that is within a constant factor of $\mathcal{E}$ in the shortest possible time. The simplest strategy is to execute PSG 
with the constant learning rate $\eta_\star$. Then a direct application of \eqref{eqn:one_step_intro} yields the efficiency estimate $\mathbb{E}\|x_t-x^{\star}_t\|^2 \lesssim \mathcal{E}$ in time $t\lesssim (\sigma^2/\mu^2\mathcal{E}) \log(\|x_0-x_0^\star\|^2/\mathcal{E})$. 
This efficiency estimate can be significantly improved by gradually decaying the learning rate using a ``step decay schedule'', wherein the algorithm is implemented in epochs with the new learning rate chosen to be the midpoint between the current learning rate and $\eta_{\star}$. 
Such schedules are well known to improve efficiency in the static (stationary objective) setting, as was  discovered by  \citet{ghadimi2013optimal}, and can be used here. The end result is an algorithm that produces a point $x_t$ satisfying 
\begin{equation}\label{eqn:geo_dec_intro}
\mathbb{E}\|x_t-x_{t}^{\star}\|^2\lesssim \mathcal{E} \quad\text{in time}\quad t\lesssim \frac{L}{\mu}\log\!\left(\frac{ \|x_0-x_0^\star\|^2}{\mathcal{E}}\right)+\frac{\sigma^2}{\mu^2\mathcal{E}}.
\end{equation}
This efficiency estimate is remarkably similar to that in the static setting \citep{ghadimi2013optimal}, with $\mathcal{E}$ playing the role of the target accuracy $\varepsilon$. 
An elementary computation shows that \eqref{eqn:geo_dec_intro} improves the constant learning rate efficiency estimate when $\mathcal{E}$ is small, e.g., when $\mathcal{E}\leq\|x_0-x_0^{\star}\|^2/e^2$, where $e$ denotes Euler's number.

The efficiency estimate~\eqref{eqn:geo_dec_intro} is a baseline guarantee for PSG with step decay. Since the result is stated in terms of the \emph{expected} tracking error $\mathbb{E}\|x_t-x_{t}^{\star}\|^2$, it is only meaningful if the entire algorithm can be repeated from scratch multiple times on the same problem.\footnote{Specifically, error bounds holding in expectation yield concentration inequalities for the average of i.i.d.\ errors arising from executing a stochastic algorithm multiple times from scratch on the same problem (e.g., via Chebyshev's inequality); if the algorithm cannot be executed in this fashion to generate i.i.d.\ errors, then alternative high-probability error bounds are called for.} However, there is no shortage of situations in which a learning algorithm is operating in real time and the time drift is irreversible; in such settings, the algorithm may only be executed once. These situations call for efficiency estimates that hold with high probability, rather than only in expectation. With this in mind, we show that under mild light-tail assumptions, PSG with step decay produces a point $x_t$ satisfying $\|x_t-x_{t}^{\star}\|^2\lesssim \mathcal{E}\log\left(1/\delta\right)$ with probability at least $1-\delta$ in the same order of iterations as in \eqref{eqn:geo_dec_intro}.
The proof follows closely the probabilistic techniques developed by \citet{pmlr-v99-harvey19a} for bounding moment generating functions.

\subsection{Tracking the Minimum Value}\label{sec:intro_track_val}
The results outlined so far have focused on tracking the minimizer $x_t^{\star}$; stronger guarantees may be obtained for tracking the minimum value $\varphi_{t}^{\star}$. To this end, we require stronger assumptions on the variation of the functions $f_t$ beyond control on the minimizer drift $\|x_t^\star-x_{t+1}^\star\|^2$. Similar in spirit to the measure of cumulative gradient variation in the dynamic online learning literature~\citep[e.g., see][]{pmlr-v38-jadbabaie15}, we will be concerned with the \emph{gradient drift} $$G_{i,t}:=\sup_{x}\|\nabla f_i(x)-\nabla f_t(x)\|$$ 
and assume the bound $\mathbb{E}[G_{i,t}^2/\mu^2]\leq \Delta^2|i-t|^2$ for all times $i$ and $t$. Thus, the second moment of the gradient drift is assumed to grow at most quadratically in the time horizon. Assuming henceforth that the regularizers $r_t\equiv r$ are identical for all times $t$, this condition on the gradient drift implies the weaker assumption $\mathbb{E}\|x_t^\star - x_{t+1}^\star\|^2\leq\Delta^2$ used in Section~\ref{sec:intro_track_dist}.

Analogous to \eqref{eqn:one_step_intro}, we show that PSG generates a point $\hat x_t$ (an average iterate) satisfying 
	\begin{equation*}
		\mathbb{E}\big[\varphi_t(\hat x_t)-\varphi_t^\star\big] \lesssim  \underbrace{\big(1 - \tfrac{\mu\eta}{2}\big)^t\big(\varphi_0(x_0)-\varphi_0^\star\big)}_\text{optimization} + \underbrace{\eta\sigma^2}_\text{noise} + \underbrace{\frac{\Delta^2}{\mu\eta^2}}_\text{drift}.
	\end{equation*}
	This estimate again decouples nicely into three terms, signifying the error due to optimization, gradient noise, and time drift.
Taking the limit as $t$ tends to infinity, we obtain the asymptotic function gap tracking error $\mathcal{G}:=\mu\mathcal{E}$.  
Similar to \eqref{eqn:geo_dec_intro}, we show that PSG with step decay produces a point $\hat{x}_t$ satisfying 
\begin{equation}\label{eqn:gap_eqn_intro}
\mathbb{E}\big[\varphi_t(\hat{x}_t) - \varphi_t^\star\big] \lesssim \mathcal{G}\quad\text{in time}\quad t\lesssim \frac{L}{\mu}\log\!\left(\frac{\varphi_0(x_0) - \varphi_0^\star}{\mathcal{G}}\right)+\frac{\sigma^2}{\mu\mathcal{G}}.
\end{equation}
Again, the similarity to the static setting \citep{ghadimi2013optimal}, with $\mathcal{G}$ playing the role of a target accuracy, is striking. We then provide a high-probability extension of this estimate: under mild light-tail assumptions, PSG with step decay produces a point $\hat{x}_t$ satisfying $\varphi_t(\hat{x}_t) - \varphi_t^\star\lesssim \mathcal{G}\log(1/\delta)$ with probability at least $1-\delta$ in the same order of iterations as in \eqref{eqn:gap_eqn_intro} up to a factor of $\log\log(1/\delta)$. The proofs are based on the generalized Freedman inequality of  \citet{pmlr-v99-harvey19a}---a remarkably flexible tool for analyzing stochastic gradient-type algorithms.

\subsection{Extension to Decision-Dependent Problems with Time Drift}
We have so far focused on stochastic optimization problems that undergo a temporal shift. A primary reason for this phenomenon in machine learning, and data science more broadly, is that data distributions often evolve in time independently of the learning process. Recent literature, on the other hand, highlights a different source of distributional shift due to decision-dependent or performative effects. Namely, the distribution generating the data in iteration $t$ may depend on, or react to, the current ``decision'' $x_t$. For example, deployment of a classifier by a learning system, when made public, often causes the population to adapt their attributes in order to increase the likelihood of being positively labeled---a process called “gaming”. Even when
the population is agnostic to the classifier, the decisions made by the learning system (e.g., loan
approval) may inadvertently alter the profile of the population (e.g., credit score). The goal of the
learning system therefore is to find a classifier that generalizes well under the response distribution.
Recent research in strategic classification \citep{hardt2016strategic,bruckner2012static,bechavod2020causal,dalvi2004adversarial} and performative prediction \citep{perdomo2020performative,mendler2020stochastic} has highlighted the prevalence of this phenomenon. 

Combining time-dependence and decision-dependence yields a class of problems \eqref{eqn:prob_online_intro} where the loss function $f_t(x)$ takes the special form
$f_t(x)=\mathbb{E}_{\xi\sim \mathcal{D}(t, x)}\ell(x,\xi)$.
Here $\mathcal{D}(t, x)$ is a distribution that depends on both time $t$ and the decision variable $x$. Thus for any fixed time $t$, the problem \eqref{eqn:prob_online_intro}  becomes the performative risk problem considered by  \citet{perdomo2020performative} and \citet{mendler2020stochastic}.
Following this line of work, instead of tracking the true minimizer of $\varphi_t$---typically a challenging task---we will settle for tracking the equilibrium points $\bar x_t$. These are the points satisfying
$$\bar{x}_t\in\argmin_{x}\underset{\xi\sim \mathcal{D}(t, \bar x_{t})}{\mathbb{E}}\ell(x,\xi)+r(x).$$
Equilibrium points are sure to exist and are unique under mild Lipschitzness and strong convexity assumptions.
We refer the reader to \citet{perdomo2020performative} for a compelling motivation for considering such equilibrium points. The problem of tracking equilibrium points is yet again an instance of \eqref{eqn:prob_online_intro}, but now with the different function $f_t(x)=\mathbb{E}_{\xi\sim \mathcal{D}(t, \bar x_{t})}\ell(x,\xi)$ induced by the equilibrium distributions. The PSG algorithm is not directly applicable here since the learner cannot typically sample from $\mathcal{D}(t, \bar x_{t})$ directly. Instead, a natural algorithm for this problem class draws in each iteration $t$ a sample $\xi_t$ from the current distribution $\mathcal{D}(t, x_t)$ and declares $x_{t+1}=\prox_{\eta_t r}(x_t-\eta_t\nabla\ell(x_t,\xi_t))$. 
Notice that the sample gradient $\nabla\ell(x_t,\xi_t)$ is a biased estimator of the true gradient $\nabla f_t(x_t) = \mathbb{E}_{\xi\sim \mathcal{D}(t, \bar x_{t})}\nabla\ell(x_t,\xi)$ because $\xi_t$ is sampled from the wrong distribution. Nonetheless, as pointed out by \citet{drusvyatskiy2020stochastic}, the gradient bias is small for any fixed time, decaying linearly with the distance to $\bar x_t$. Using this perspective, we show that all guarantees for PSG in the time-dependent setting naturally extend to this biased PSG algorithm for tracking equilibrium points, with essentially no loss in efficiency.

\subsection{Related Work}
\label{sec:related} 
\smallskip
Our current work fits within the broader literature on stochastic tracking, online optimization with dynamic regret, high-probability guarantees in stochastic optimization, and performative prediction.
We now survey the most relevant literature in these areas.

\paragraph{Stochastic tracking.}
Stochastic optimization with time drift was considered soon after the Robbins-Monro approach for stochastic optimization was introduced; see~\citet{kushner:lin} for a survey. 
Early results can be traced back to~\citet{dupac} in sequential estimation and~\citet{gaivoronskii1978nonstationary} in stochastic optimization; 
see also~\citet{fujita1972convergence}, \citet{ruppert:1979}, \citet{1971adaptation}, \citet{tsypkin:1992}, and \citet{uosaki1974some}.
Stochastic algorithms have also been extensively studied as adaptive algorithms for stochastic tracking~\citep{1971adaptation, kushner:lin, benveniste2012adaptive}, 
for their ability to indeed track parameters under time drift. Most works have focused on the so-called least mean-squares (LMS) algorithm and its variants, which can be viewed as a stochastic gradient method on a least-squares loss-based objective. Other stochastic algorithms that have been studied in these settings with a larger cost per iteration include recursive least-squares and related Kalman filtering algorithms~\citep{guo1995exponential}. 

Recent works have revisited these methods from a more modern viewpoint~\citep{besbes2015non,wilson2018adaptive,madden2021bounds}.  In particular, the paper of \citet{madden2021bounds} focuses on (accelerated) gradient methods for deterministic tracking problems, while \citet{wilson2018adaptive} present a framework for online stochastic gradient methods with parameter estimation. The work of \citet{besbes2015non} analyzes the dynamic regret of stochastic algorithms for time-varying problems, focusing both on lower and upper complexity bounds. Though the proof techniques in our paper share many aspects with those available in the literature, the results we obtain are distinct. In particular, the guarantees \eqref{eqn:geo_dec_intro} and \eqref{eqn:gap_eqn_intro} for PSG with step decay, along with their high-probability variants, are new to the best of our knowledge.

\paragraph{Online optimization with dynamic regret.}
Online optimization for sequences of convex objectives with domain $\mathcal{X}$ has been studied through the lens of adaptive regret \citep{HazanS09, DanielyGS15} and dynamic regret \citep{besbes2015non, zinkevich03, ZhangLZ18, zhao20dynreg, pmlr-v38-jadbabaie15, MokhtariSJR16}. The adaptive regret 
	$$
	\sup_{[r,s]\subset[T]}\left\{ \sum_{t=r}^s f_t(x_t) - \inf_{x\in\mathcal{X}}\sum_{t=r}^s f_t(x) \right\}
	$$ 
	reads as the maximum static regret over any contiguous time interval; more relevant to our analysis is the dynamic regret 
	$$
	\textrm{Reg}_T^\star = \sum_{t=1}^T \big(f_t(x_t) - f_t(x_t^\star)\big),
	$$
	which reads as the cumulative difference between the instantaneous loss and the minimum loss. More generally, one can consider the dynamic regret against an arbitrary comparator sequence $\{u_t\}_{t=1}^T$ in $\mathcal{X}$, given by
	$$
	\textrm{Reg}_T(u_1,\ldots,u_T) = \sum_{t=1}^T \big(f_t(x_t) - f_t(u_t)\big).
	$$

\citet{pmlr-v38-jadbabaie15} apply an adaptive step size strategy to an optimistic mirror descent algorithm, thereby obtaining a comprehensive dynamic regret guarantee for $\textrm{Reg}_T^\star$ in terms of the cumulative loss variation $V_T = \sum_{t=2}^T \sup_{x\in\mathcal{X}}|f_t(x) - f_{t-1}(x)|$, the cumulative gradient variation $D_T = \sum_{t=1}^T \|\nabla f_t(x_t) - M_t\|^2$ using a causally predictable sequence $M_t$ available to the algorithm prior to time $t$ (e.g., $M_t = \nabla f_{t-1}(x_{t-1})$), and the cumulative minimizer variation $C_T^\star = \sum_{t=2}^T \|x_t^\star - x_{t-1}^\star\|$. Under strong convexity, \citet{MokhtariSJR16} show that online projected gradient descent satisfies $\textrm{Reg}_T^\star \leq \mathcal{O}(1 + C_T^\star)$. For convex losses with bounded domain $\mathcal{X}$, \citet{ZhangLZ18} present an adaptive online gradient method  that achieves an optimal dynamic regret bound in terms of the cumulative comparator variation $C_T = \sum_{t=2}^T\|u_{t} - u_{t-1}\|$, namely, $\textrm{Reg}_T(u_1,\ldots,u_T) \leq \mathcal{O}\big(\sqrt{T(1 + C_T)}\big)$. This last guarantee is enhanced by \citet{zhao20dynreg} through exploiting smoothness to replace the time horizon $T$ by problem-dependent quantities that are at most $\mathcal{O}(T)$ but often much smaller in easy problems.

As is standard in the dynamic online optimization literature, the preceding works assume that either the losses $f_t(x)$ or their gradients $\nabla f_t(x)$ are uniformly bounded in both $t$ and $x$, and a priori knowledge of these uniform bounds is required for the aforementioned guarantees. In contrast, we take great care to make no such uniform boundedness assumptions and we work instead with bounded second moments or light tails of minimizer or gradient drift, which we allow to evolve stochastically. Furthermore, we only assume stochastic gradient access, and the presence of stochasticity in the drift and the gradient noise requires guarantees that hold both in expectation and with high probability. Our bounds depend on a characteristic quantity of the problem difficulty encapsulating the drift and the noise level and hence delineate two regimes depending on the drift-to-noise ratio. In general, regret bounds do not entail last-iterate bounds of the type presented in our work.

\paragraph{High-probability guarantees in stochastic optimization.}
A large part of our work revolves around high-probability guarantees for stochastic optimization. Classical references on the subject in static settings and for minimizing regret in online optimization include the work of \citet{bartlett2008high}, \citet{hazan2014beyond}, \citet{lan2012optimal}, and \citet{rakhlin2011making}. There exists a variety of techniques for establishing high-probability guarantees based on Freedman's inequality and doubling tricks \citep[e.g., see][]{bartlett2008high, hazan2014beyond}. A more recent line of work by \citet{pmlr-v99-harvey19a} establishes a generalized Freedman inequality that is custom-tailored for analyzing stochastic gradient-type methods and results in the best known high-probability guarantees. Our arguments closely follow the paradigm of~\citet{pmlr-v99-harvey19a} based on the generalized Freedman inequality.

\paragraph{Performative prediction and decision-dependent learning.}
Recent works on strategic classification \citep{hardt2016strategic,bruckner2012static,bechavod2020causal,dalvi2004adversarial} and performative prediction \citep{perdomo2020performative,mendler2020stochastic} have highlighted the importance of strategic behavior in machine learning. That is, common learning systems exhibit a feedback mechanism, wherein the distribution generating the data in iteration $t$ may depend on, or react to, the current ``decision'' of an algorithm $x_t$. The recent paper by \citet{perdomo2020performative} put forth an elegant framework for thinking about such problems, while \citet{mendler2020stochastic} develop stochastic algorithms for this setting. The subsequent work of \citet{drusvyatskiy2020stochastic} shows that a variety of stochastic algorithms for performative prediction can be understood as biased variants of the same algorithms on a certain static problem in equilibrium. Building on the techniques of \citet{drusvyatskiy2020stochastic}, we show how all our results for time-dependent problems extend to problems that simultaneously depend on time and on the decision variable. We note that during the final stage of completing this paper, the closely related and complementary work by \citet{wood2021online} was posted on arXiv.\footnote{More precisely, a short version of our paper \citep{cutler21stochastic} was submitted to NeurIPS in May '21, the paper by \citet{wood2021online} appeared on arXiv in July '21, and our full paper was posted on arXiv in August '21. Our paper \citep{cutler21stochastic} was presented at NeurIPS in December '21.} The paper by \citet{wood2021online} considers decision-dependent projected stochastic gradient descent under time drift in the distributional framework proposed by \citet{perdomo2020performative}, establishing distance tracking bounds in expectation and with high probability under sub-Weibull gradient noise. In particular, the light-tail assumption on gradient noise used by \citet{wood2021online} for obtaining high-probability guarantees is more general than the one in our paper. On the other hand, we analyze tracking of both the minimizer and the minimum value of more general stochastically evolving objectives, allow presence of general convex regularizers, and propose a step decay schedule for improved efficiency.

\subsection{Outline}

The outline of the paper is as follows. Section~\ref{sec:framework} formalizes the problem setting of time-dependent stochastic optimization and records the relevant assumptions. Sections~\ref{sec:trckminimizer}--\ref{decision-dependent} summarize the main results of the paper. Specifically, Section~\ref{sec:trckminimizer}  focuses on efficiency estimates for tracking the minimizer, Section~\ref{sec:trckminimum} focuses on efficiency estimates for tracking the minimum value, and Section~\ref{decision-dependent} develops an extension to the decision-dependent setting via tracking equilibria. Section~\ref{mainproofs} presents the proofs of the main results in a unified framework. Illustrative numerical results appear in Section~\ref{sec:exp}. Appendix~\ref{apdx:avg} describes the averaging technique used for tracking function values, and additional proofs appear in Appendix~\ref{missingproofs}.
 \section{Framework and Assumptions}
\label{sec:framework} 
Throughout Sections~\ref{sec:framework}--\ref{sec:trckminimum}, we consider the sequence of stochastic optimization problems 
\begin{equation}\label{eqn:prob_online}
	\min_{x\in\mathbb{R}^d}\, \varphi_t(x) := f_t(x) + r_t(x)
\end{equation}
indexed by time $t\in \mathbb{N}$, where $\mathbb{R}^d$ denotes a fixed $d$-dimensional Euclidean space with inner product $\langle \cdot,\cdot\rangle$ and Euclidean norm $\|x\|=\sqrt{\langle x, x \rangle}$, and the following standard regularity assumptions hold:
\begin{enumerate}[label=(\roman*)]
\item Each function $f_t\colon\mathbb{R}^d\to\mathbb{R}$ is $\mu$-strongly convex and $C^1$-smooth with $L$-Lipschitz continuous gradient for some common parameters $\mu,L>0$.
\item Each regularizer $r_t\colon\mathbb{R}^d\to\mathbb{R}\cup\{\infty\}$ is proper, closed, and convex.\footnote{We assume $\dom{f_{t}}=\mathbb{R}^d$ for simplicity, but this is not essential. For example, it suffices to assume that each function $f_{t}\colon\mathbb{R}^d\to\mathbb{R}\cup\{\infty\}$ is closed and $\mu$-strongly convex and that there exists an open convex set $U\subset\R^d$ such that for all $t\in\mathbb{N}$, $\dom r_t \subset U \subset \dom f_{t}$ and $f_{t}$ is $L$-smooth on $U$.}  
\end{enumerate}
The  minimizer and minimum value of $\varphi_t$ will be denoted by $x_t^{\star}$ and $\varphi^{\star}_t$, respectively. We will be concerned with settings in which $\varphi_t$ evolves stochastically in time. As motivation, we describe two classical examples of \eqref{eqn:prob_online} that are worth keeping in mind and that guide our framework: stochastic tracking of a drifting target and online learning under distributional drift.

\begin{example}[Stochastic tracking of a drifting target]\label{ex:1}
\textup{The problem of stochastic tracking, related to the filtering problem in signal processing, is to track a moving target $x_t^{\star}$ from observations
$$b_t=c_t(x_t^{\star})+\epsilon_t,$$
where $c_t(\cdot)$ is a known measurement map and $\epsilon_t$ is a mean-zero noise vector. A typical time-dependent problem formulation takes the form
$$\min_x~\underset{\epsilon_t}{\mathbb{E}}~\ell_t(b_t-c_t(x))+r_t(x),$$
where the loss $\ell_t(\cdot)$ derives from the distribution of $\epsilon_t$ and the regularizer $r_t(\cdot)$  encodes available side information about the target $x_t^{\star}$. Common choices for $r_t$ are the $1$-norm and the squared $2$-norm. The motion of the target $x_t^{\star}$  is typically driven by a random walk or a diffusion~\citep{guo1995exponential,sayed2003fundamentals}.}
\end{example}

\begin{example}[Online learning under distributional drift]\label{ex:2}
\textup{The problem of online learning under distributional drift is to learn while the data distribution changes over time. More formally, a typical problem formulation takes the form
$$\min_x\underset{\xi\sim \mathcal{D}(v_t)}{\mathbb{E}} \ell(x,\xi)+r(x),$$
where $\mathcal{D}(v_t)$ is a data distribution that depends on an unknown parameter sequence $\{v_t\}$, which itself may evolve stochastically. 
}
\end{example}

The main goal of a learning algorithm for problem~\eqref{eqn:prob_online} is to generate a sequence of points $\{x_t\}$ that minimize some natural performance metric. The most prevalent performance metrics in the literature are the \emph{tracking error} and the \emph{dynamic regret}. We will focus on two types of tracking error: the squared distance $\|x_t-x_t^\star\|^2$ and the suboptimality gap $\varphi_t(\hat x_t)-\varphi_t(x_{t}^{\star})$, where $\hat x_t$ denotes a weighted average of iterates up to time $t$.

We make the standing assumption that at every time $t$, and at every query point $x$, the learner can select an \emph{unbiased estimator} $\widetilde{\nabla}f_t(x)$ of the true gradient $\nabla f_t(x)$ in order to proceed with a stochastic gradient-like optimization algorithm. With this oracle access, the online proximal stochastic gradient method---recorded as Algorithm~\ref{alg:sgd} below---selects in each iteration $t$ the stochastic gradient $g_t = \widetilde{\nabla}f_t(x_t)$ and takes the step 
\begin{equation*}
	x_{t+1} := \prox_{\eta_t r_t}(x_t-\eta_tg_t) = \argmin_{u\in\mathbb{R}^d}\left\{r_t(u)+\tfrac{1}{2\eta_t}\|u-(x_t-\eta_tg_t)\|^2\right\} 
\end{equation*}
using step size $\eta_t>0$. The goal of our work is to obtain efficiency estimates for this procedure that hold both in expectation and with high probability.
 
\begin{algorithm}[h!]
	\caption{Online Proximal Stochastic Gradient\hfill $\mathtt{PSG}(x_{0},\{\eta_t\},T)$}\label{alg:sgd}
	{\bf Input}: initial $x_0$ and step sizes $\{\eta_t\}_{t=0}^{T-1}\subset (0,\infty)$
	
	{\bf Step} $t=0,\ldots,T-1$: 
	\begin{equation*}
		\begin{aligned}
			&\text{Select~} g_t=\widetilde{\nabla}f_t(x_t) \\
			&\text{Set~} x_{t+1}=\prox_{\eta_t r_t}(x_t-\eta_t g_t)
		\end{aligned}
	\end{equation*}
	{\bf Return} $x_T$
\end{algorithm}

The guarantees we obtain allow both the iterates $x_t$ \emph{and the minimizers} $x_t^\star$ to evolve stochastically. This is convenient for example when tracking a moving target $x_t^\star$ whose motion may be governed 
by a stochastic process such as a random walk or a diffusion (Example~\ref{ex:1}), or when tracking the minimizer of an expected loss over a stochastically evolving data distribution (Example~\ref{ex:2}). Given $\{x_t\}$ and $\{g_t\}$ as in Algorithm~\ref{alg:sgd}, we let $$z_t := \nabla f_t(x_t) - g_t$$ denote the \emph{gradient noise} at time $t$ and we impose the following assumption modeling stochasticity on a fixed probability space $(\Omega,\mathcal{F},\mathbb{P})$ throughout Sections~\ref{sec:framework}--\ref{sec:trckminimum}.

\begin{assumption}[Stochastic framework]\label{assmp:stochfram}
	\textup{There exists a probability space $(\Omega,\mathcal{F},\mathbb{P})$ with filtration $(\mathcal{F}_t)_{t\geq0}$ such that $\mathcal{F}_0=\left\{\emptyset,\Omega\right\}$ and the following two conditions hold for all $t\geq0$:
	\begin{enumerate}[label=(\roman*)]
		\item\label{it1} $x_t,x_t^\star\colon\Omega\rightarrow\mathbb{R}^d$ are $\mathcal{F}_t$-measurable.
		\item\label{it2} $z_t\colon\Omega\rightarrow\mathbb{R}^d$ is $\mathcal{F}_{t+1}$-measurable with $\mathbb{E}[z_t\,|\,\mathcal{F}_t]=0$.
	\end{enumerate}
	}
\end{assumption}
The first item of Assumption~\ref{assmp:stochfram} formalizes the assertion that $x_t$ and $x_t^\star$ are fully determined by information up to time $t$. The second item of Assumption~\ref{assmp:stochfram} formalizes the assertion that the gradient noise $z_t$ is fully determined by information up to time $t+1$ and has zero mean conditioned on the information up to time $t$, i.e., $g_t$ is an unbiased estimator of $\nabla f_t(x_t)$; for example, this holds naturally in Example~\ref{ex:2} under typical regularity assumptions if $g_t = \nabla\ell(x_t,\xi_t)$ with $\xi_t\sim \mathcal{D}(v_t)$, where $\nabla \ell(x_t,\xi_t)$ denotes the gradient of $\ell(\cdot,\xi_t)$ at $x_t$.

Efficiency estimates for Algorithm~\ref{alg:sgd} must clearly take into account the variation of the problem \eqref{eqn:prob_online} in time $t$. One of the standard metrics for measuring this variation is the \emph{minimizer drift} $$\Delta_t := \|x_t^\star - x_{t+1}^\star\|.$$ 
Another popular metric is the \emph{gradient drift} $$\sup_{x}\|\nabla f_t(x)-\nabla f_{t+1}(x)\|.$$ Our efficiency estimates for tracking the minimizer will depend on the minimizer drift, while our efficiency estimates for tracking the minimum value will depend on the gradient drift. As the following elementary lemma shows, the minimizer drift scaled by $\mu$ is dominated by the gradient drift  whenever the regularizers do not vary in time.\footnote{Lemma~\ref{lem:grad_vs_dist} provides a bound similar in spirit to the bound $\mu \|x^{\star}_{i} - x^{\star}_{t}\|^2\leq 4\sup_{x\in\dom{r}}|f_{i}(x) - f_t(x)|$ in terms of variation in function value, which is also an elementary consequence of $\mu$-strong convexity \citep[e.g., see][Section~4.1]{ZhaoZ21}.}

\begin{lemma}[Minimizer vs. gradient drift]
	\label{lem:grad_vs_dist}
	Suppose that $i$ and $t$ are indices for which the regularizers $r_i$ and $r_t$ are identical. Then \[\mu \|x^{\star}_i-x^{\star}_{t}\|\leq \|\nabla f_{i}(x_t^\star)-\nabla f_t(x_t^\star)\|.\]
\end{lemma}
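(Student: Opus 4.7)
The plan is to combine first-order optimality at $x_i^\star$ and $x_t^\star$ with monotonicity of the common subdifferential $\partial r$ and the $\mu$-strong convexity of $f_i$, and then apply Cauchy--Schwarz.

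\medskip

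\textbf{Step 1 (optimality conditions).} Write $r=r_i=r_t$. Since $x_i^\star$ minimizes $f_i+r$ and $x_t^\star$ minimizes $f_t+r$, the first-order conditions give
\[
-\nabla f_i(x_i^\star)\in\partial r(x_i^\star), \qquad -\nabla f_t(x_t^\star)\in\partial r(x_t^\star).
\]

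\textbf{Step 2 (monotonicity of $\partial r$).} Because $r$ is proper, closed, and convex, $\partial r$ is a monotone operator, hence
\[
\bigl\langle -\nabla f_i(x_i^\star)-\bigl(-\nabla f_t(x_t^\star)\bigr),\, x_i^\star-x_t^\star\bigr\rangle \geq 0,
\]
which rearranges to
\[
\bigl\langle \nabla f_t(x_t^\star)-\nabla f_i(x_i^\star),\, x_i^\star-x_t^\star\bigr\rangle \geq 0.
\]

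\textbf{Step 3 (strong convexity of $f_i$).} The $\mu$-strong convexity of $f_i$ yields
\[
\bigl\langle \nabla f_i(x_i^\star)-\nabla f_i(x_t^\star),\, x_i^\star-x_t^\star\bigr\rangle \geq \mu\|x_i^\star-x_t^\star\|^2.
\]
Adding this to the inequality from Step 2 causes the $\nabla f_i(x_i^\star)$ terms to cancel, leaving
\[
\bigl\langle \nabla f_t(x_t^\star)-\nabla f_i(x_t^\star),\, x_i^\star-x_t^\star\bigr\rangle \geq \mu\|x_i^\star-x_t^\star\|^2.
\]

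\textbf{Step 4 (Cauchy--Schwarz).} Applying Cauchy--Schwarz to the left-hand side and dividing through by $\|x_i^\star-x_t^\star\|$ (trivial if zero) gives the claim
\[
\mu\|x_i^\star-x_t^\star\|\leq \|\nabla f_i(x_t^\star)-\nabla f_t(x_t^\star)\|.
\]

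There is no real obstacle here; the only delicate choice is which point ($x_i^\star$ or $x_t^\star$) to evaluate the two gradients at after cancellation. The asymmetric form in the statement, with both gradients evaluated at $x_t^\star$, falls out naturally from pairing monotonicity of $\partial r$ with strong convexity applied to $f_i$; pairing it with $f_t$ instead would have produced the symmetric variant with both gradients at $x_i^\star$.
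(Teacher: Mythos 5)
Your proof is correct and amounts to the same argument as the paper's: where you explicitly add monotonicity of $\partial r$ to strong monotonicity of $\nabla f_i$, the paper packages these into a single appeal to strong monotonicity of $\partial\varphi_i$, after noting that $\nabla f_i(x_t^\star)-\nabla f_t(x_t^\star)\in\partial\varphi_i(x_t^\star)$ and $0\in\partial\varphi_i(x_i^\star)$.
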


\begin{proof}
	Let $r$ denote the common regularizer: $r = r_i = r_t$. Then the first-order optimality condition $$0\in \partial \varphi_t(x_t^{\star}) = \nabla f_t(x_t^{\star}) + \partial r(x_t^\star)$$ implies $-\nabla f_t(x_t^{\star})\in \partial r(x_t^{\star})$, so the vector $v:=\nabla f_i(x_t^{\star})-\nabla f_t(x_t^{\star})$ lies in $\partial \varphi_i(x_{t}^{\star})$. Hence the $\mu$-strong convexity of $\varphi_i$ and the inclusion $0\in \partial \varphi_i(x_i^{\star})$ imply 
	$\mu\|x_i^{\star}-x_t^{\star}\|\leq \|0-v\|$.
\end{proof}
  \section{Tracking the Minimizer}
\label{sec:trckminimizer}
This section presents bounds on the tracking error $\|x_t-x_t^{\star}\|^2$ that are valid both in expectation and with high probability under light-tail assumptions. Further, we show that a geometrically decaying learning rate schedule may be superior to a constant learning rate in terms of efficiency. 

\subsection{Bounds in Expectation}\label{sec:minimizerexp}

We begin with bounding the expected value $\mathbb{E}\|x_t-x_t^{\star}\|^2$. Proofs appear in Section~\ref{minimizerproofsexp}.
The starting point for our analysis is the following standard one-step improvement guarantee.
\begin{lemma}[One-step improvement]\label{lem:onestep0}
	For all $x\in \mathbb{R}^d$, the iterates $\{x_t\}$ produced by Algorithm~\ref{alg:sgd} with $\eta_t<1/L$ satisfy the bound:
	\begin{equation*}
		2\eta_t(\varphi_t(x_{t+1})-\varphi_t(x)) \leq (1-\mu\eta_t)\|x_t-x\|^2 - \|x_{t+1}-x\|^2 + 2\eta_t\langle z_t,x_t-x \rangle + \tfrac{\eta^2_t}{1-L\eta_t}\|z_t\|^2.
	\end{equation*}
\end{lemma}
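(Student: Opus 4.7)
The plan is to derive the stated one-step bound by combining the optimality condition for the proximal step with strong convexity of $f_t$, smoothness of $f_t$, and convexity of $r_t$, then eliminating the stochastic inner product via a carefully tuned Young's inequality.

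First I would unpack the proximal update. Writing $g_t = \nabla f_t(x_t) - z_t$, the first-order optimality condition for $x_{t+1} = \prox_{\eta_t r_t}(x_t - \eta_t g_t)$ yields a subgradient $s \in \partial r_t(x_{t+1})$ with $s = \tfrac{1}{\eta_t}(x_t - x_{t+1}) - \nabla f_t(x_t) + z_t$. Next, $L$-smoothness of $f_t$ gives the descent bound $f_t(x_{t+1}) \le f_t(x_t) + \langle \nabla f_t(x_t), x_{t+1}-x_t\rangle + \tfrac{L}{2}\|x_{t+1}-x_t\|^2$, while $\mu$-strong convexity of $f_t$ gives $f_t(x_t) \le f_t(x) - \langle \nabla f_t(x_t), x - x_t\rangle - \tfrac{\mu}{2}\|x-x_t\|^2$. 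Convexity of $r_t$ applied with $s$ gives $r_t(x_{t+1}) \le r_t(x) + \langle s, x_{t+1}-x\rangle$. Summing these three inequalities and substituting the expression for $s$ collapses the first-order terms to
\[
\varphi_t(x_{t+1}) - \varphi_t(x) \le \left\langle \tfrac{x_t-x_{t+1}}{\eta_t} + z_t,\, x_{t+1}-x \right\rangle + \tfrac{L}{2}\|x_{t+1}-x_t\|^2 - \tfrac{\mu}{2}\|x_t-x\|^2.
\]

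Then I would multiply through by $2\eta_t$ and apply the polarization identity $2\langle x_t-x_{t+1}, x_{t+1}-x\rangle = \|x_t-x\|^2 - \|x_{t+1}-x\|^2 - \|x_t-x_{t+1}\|^2$. After rearranging, this yields
\[
2\eta_t(\varphi_t(x_{t+1}) - \varphi_t(x)) \le (1-\mu\eta_t)\|x_t-x\|^2 - \|x_{t+1}-x\|^2 - (1-L\eta_t)\|x_t - x_{t+1}\|^2 + 2\eta_t\langle z_t, x_{t+1}-x\rangle.
\]

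Finally, the stochastic inner product must be converted into the target form $2\eta_t\langle z_t, x_t-x\rangle$. I would split $\langle z_t, x_{t+1}-x\rangle = \langle z_t, x_t-x\rangle + \langle z_t, x_{t+1}-x_t\rangle$ and apply Young's inequality to the second piece with parameter chosen precisely to absorb the negative $(1-L\eta_t)\|x_t-x_{t+1}\|^2$ term: namely $2\eta_t\langle z_t, x_{t+1}-x_t\rangle \le \tfrac{\eta_t^2}{1-L\eta_t}\|z_t\|^2 + (1-L\eta_t)\|x_{t+1}-x_t\|^2$. The $\|x_t-x_{t+1}\|^2$ contributions cancel exactly, producing the claimed inequality. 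The only subtle step is the choice of Young's parameter; everything else is bookkeeping, and the assumption $\eta_t < 1/L$ is used precisely to keep $1 - L\eta_t$ strictly positive so that Young's inequality is valid and the residual term $\tfrac{\eta_t^2}{1-L\eta_t}\|z_t\|^2$ is finite.
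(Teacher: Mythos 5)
Your proof is correct and uses essentially the same ingredients as the paper's: $L$-smoothness of $f_t$, $\mu$-strong convexity of $f_t$, proximal optimality, and Young's inequality with the parameter chosen so that $\tfrac{\eta_t^2}{1-L\eta_t}\|z_t\|^2 + (1-L\eta_t)\|x_{t+1}-x_t\|^2$ cancels the residual $-(1-L\eta_t)\|x_t-x_{t+1}\|^2$. The only cosmetic difference is that you invoke proximal optimality via the explicit subgradient $s\in\partial r_t(x_{t+1})$ together with polarization, whereas the paper packages the same step as the three-point property of the $\eta_t^{-1}$-strongly convex prox subproblem, and the paper applies Young's inequality to $\langle z_t, x_{t+1}-x_t\rangle$ earlier in the derivation; the net computation and the crucial parameter choice $\delta_t=\eta_t/(1-L\eta_t)$ are identical.
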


For simplicity, we state the main results under the assumption that the second moments $\mathbb{E}\,\Delta_t^2$ and $\mathbb{E}\|z_t\|^2$ are uniformly bounded; more general guarantees that take into account weighted averages of the moments and allow for time-dependent learning rates follow from Lemma~\ref{lem:onestep0} as well.

\begin{assumption}[Bounded second moments]\label{assump:sec_moment}
	\textup{There exist constants $\Delta,\sigma>0$ such that the following two conditions hold for all $t\geq0$:
	\begin{enumerate}[label=(\roman*)]
		\item {\bf (Drift) } The minimizer drift $\Delta_t$ satisfies $\mathbb{E}\,\Delta_t^2\leq \Delta^2$. 
		\item {\bf (Noise) } The gradient noise $z_t$ satisfies $\mathbb{E}\|z_t\|^2\leq\sigma^2$.
	\end{enumerate}
	}
\end{assumption}

The following theorem establishes an expected improvement guarantee for Algorithm~\ref{alg:sgd}, and serves as the basis for much of what follows.

\begin{theorem}[Expected distance]\label{thm:exp_dist}
	Suppose that Assumption~\ref{assump:sec_moment} holds. Then the iterates produced by Algorithm~\ref{alg:sgd} with constant learning rate $\eta\leq 1/2L$ satisfy the bound:  
	\begin{equation*}
		\mathbb{E}\|x_t-x^{\star}_t\|^2 \lesssim \underbrace{(1-\mu\eta)^t\|x_0-x^{\star}_0\|^2}_\text{optimization} + \underbrace{\frac{\eta\sigma^2}{\mu}}_\text{noise} + \underbrace{\left(\frac{\Delta}{\mu\eta}\right)^2}_\text{drift}.
	\end{equation*}
\end{theorem}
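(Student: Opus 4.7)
The plan is to extract a contractive one-step recurrence for $D_t := \mathbb{E}\|x_t - x_t^\star\|^2$ and then iterate it. Substituting $x = x_t^\star$ into Lemma~\ref{lem:onestep0} and using $\varphi_t(x_{t+1}) - \varphi_t(x_t^\star) \geq 0$, I drop the left-hand side and rearrange to obtain
\[ \|x_{t+1} - x_t^\star\|^2 \leq (1-\mu\eta)\|x_t - x_t^\star\|^2 + 2\eta\langle z_t,\, x_t - x_t^\star\rangle + \tfrac{\eta^2}{1 - L\eta}\|z_t\|^2. \]
Conditioning on $\mathcal{F}_t$, Assumption~\ref{assmp:stochfram}\ref{it2} kills the cross term (since $x_t - x_t^\star$ is $\mathcal{F}_t$-measurable while $\mathbb{E}[z_t\mid\mathcal{F}_t] = 0$), and Assumption~\ref{assump:sec_moment}(ii) together with $\eta \leq 1/(2L)$ bounds the last term by $2\eta^2\sigma^2$ in expectation. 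Taking total expectation yields
\[ \mathbb{E}\|x_{t+1} - x_t^\star\|^2 \leq (1-\mu\eta) D_t + 2\eta^2\sigma^2. \]

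Next I convert the anchor from $x_t^\star$ to $x_{t+1}^\star$ and absorb the drift. Young's inequality gives
\[ \|x_{t+1} - x_{t+1}^\star\|^2 \leq (1+\alpha)\|x_{t+1} - x_t^\star\|^2 + (1 + 1/\alpha)\|x_{t+1}^\star - x_t^\star\|^2 \]
for any $\alpha > 0$. I pick $\alpha \asymp \mu\eta$ (e.g., $\alpha = \mu\eta/2$), so that $(1+\alpha)(1-\mu\eta) \leq 1 - c\mu\eta$ for an absolute constant $c \in (0,1)$, while $1 + 1/\alpha \lesssim 1/(\mu\eta)$. Taking expectations and invoking Assumption~\ref{assump:sec_moment}(i) produces the affine recurrence
\[ D_{t+1} \leq (1 - c\mu\eta) D_t + C_1 \eta^2\sigma^2 + \tfrac{C_2 \Delta^2}{\mu\eta} \]
for absolute constants $C_1, C_2 > 0$.

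Iterating this linear recurrence and bounding the geometric sum $\sum_{k \geq 0}(1-c\mu\eta)^k = 1/(c\mu\eta)$ yields
\[ D_t \leq (1 - c\mu\eta)^t D_0 + \tfrac{C_1 \eta \sigma^2}{c\mu} + \tfrac{C_2 \Delta^2}{c\mu^2\eta^2}, \]
which matches the displayed bound once the absolute constant $c$ is absorbed into the $\lesssim$ symbol (noting that $D_0 = \|x_0 - x_0^\star\|^2$ since $\mathcal{F}_0 = \{\emptyset, \Omega\}$ and both $x_0, x_0^\star$ are $\mathcal{F}_0$-measurable by Assumption~\ref{assmp:stochfram}\ref{it1}).

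The main technical obstacle is the Young-parameter balancing in the second step: a smaller $\alpha$ fails to preserve the required contraction $(1+\alpha)(1-\mu\eta) \leq 1 - c\mu\eta$, while a larger $\alpha$ inflates the $(1 + 1/\alpha)\Delta^2$ drift contribution; the scaling $\alpha \asymp \mu\eta$ is essentially forced, and this is precisely what pins down the $(\Delta/(\mu\eta))^2$ asymptotic term. Everything else is routine algebra combined with one application of the tower property of conditional expectation.
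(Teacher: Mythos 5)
Your proposal follows the same one-step-plus-anchor-shift skeleton as the paper, but there is a genuine gap in the final step. After Young's inequality with parameter $\alpha = \mu\eta/2$, the contraction factor you obtain is $(1+\alpha)(1-\mu\eta) \leq 1 - \mu\eta/2$, so iterating gives $(1-\mu\eta/2)^t\|x_0-x_0^\star\|^2$ in place of $(1-\mu\eta)^t\|x_0-x_0^\star\|^2$. You claim this discrepancy ``is absorbed into the $\lesssim$ symbol,'' but that is incorrect: $\lesssim$ allows a constant multiplying the whole right-hand side, and there is no absolute constant $C$ with $(1-\mu\eta/2)^t \leq C(1-\mu\eta)^t$ for all $t$ and all $\mu\eta\in(0,1/2]$ (the ratio $(1-\mu\eta/2)^t/(1-\mu\eta)^t$ grows without bound in $t$). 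More fundamentally, your approach cannot recover the rate $\mu\eta$: any $\alpha>0$ degrades the contraction, while $\alpha\to 0$ blows up the $(1+1/\alpha)\Delta_t^2$ drift contribution.

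The paper sidesteps this tension entirely by \emph{not} dropping the left-hand side of Lemma~\ref{lem:onestep0} at $x=x_t^\star$. Instead, it uses $\mu$-strong convexity of $\varphi_t$ to lower-bound $\varphi_t(x_{t+1})-\varphi_t^\star \geq \tfrac{\mu}{2}\|x_{t+1}-x_t^\star\|^2$, so that after rearranging, Lemma~\ref{lem:onestep0} reads
\begin{equation*}
(1+\mu\eta)\|x_{t+1}-x_t^\star\|^2 \leq (1-\mu\eta)\|x_t-x_t^\star\|^2 + 2\eta\langle z_t,x_t-x_t^\star\rangle + \tfrac{\eta^2}{1-L\eta}\|z_t\|^2.
\end{equation*}
The extra $(1+\mu\eta)$ on the left is precisely the factor Young's inequality produces when shifting the anchor, i.e.,
\begin{equation*}
\|x_{t+1}-x_{t+1}^\star\|^2 \leq (1+\mu\eta)\|x_{t+1}-x_t^\star\|^2 + \big(1+\tfrac{1}{\mu\eta}\big)\Delta_t^2,
\end{equation*}
so the two cancel exactly and the recursion retains the clean factor $(1-\mu\eta)$ with no loss (this is Lemma~\ref{lem:distrecur} specialized to $\gamma=0$). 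Everything downstream in your argument --- killing the cross term with the martingale property, bounding $\|z_t\|^2$, geometric summation, and the $D_0 = \|x_0-x_0^\star\|^2$ measurability remark --- is sound. To repair the proof, replace ``drop the left-hand side'' by ``lower-bound the left-hand side via strong convexity and move it to the right,'' which eliminates the need for a Young parameter balancing act altogether.
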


\paragraph{Interplay of optimization, noise, and drift.}
Theorem~\ref{thm:exp_dist} states that when using a constant learning rate, the error $\mathbb{E}\|x_t-x^{\star}_t\|^2$ decays linearly in time $t$, 
until it reaches the ``noise\,+\,drift'' error $\eta\sigma^2/\mu + (\Delta/\mu\eta)^2$. 
Notice that the ``noise\,+\,drift'' error cannot be made arbitrarily small. 
This is perfectly in line with intuition: a learning rate that is too small prevents the algorithm from catching up with  $x_t^{\star}$. We note that the individual error terms due to the optimization and noise are classically known to be tight for PSG; tightness of the drift term is proved by \citet[Theorem 3.2]{madden2021bounds}.

With Theorem~\ref{thm:exp_dist} in hand, we  define the asymptotic tracking error of Algorithm~\ref{alg:sgd} corresponding to $\mathbb{E}\|x_t-x^{\star}_t\|^2$, together with the corresponding optimal step size:
\begin{equation*}
	\mathcal{E}:=\min_{\eta\in (0,1/2L]} \left\{\frac{\eta\sigma^2}{\mu} + \left(\frac{\Delta}{\mu\eta}\right)^2\right\} \quad\text{and}\quad\eta_\star := \min\left\{\frac{1}{2L}, \left(\frac{2\Delta^2}{\mu\sigma^2}\right)^{1/3}\right\}\!.
\end{equation*}
Plugging $\eta_{\star}$ into the definition of $\mathcal{E}$, we see that Algorithm~\ref{alg:sgd} exhibits qualitatively different behaviors in settings with high or low drift-to-noise ratio $\Delta/\sigma$. Explicitly, 
\begin{equation*}
	\mathcal{E}\asymp\begin{cases} 
		\frac{\sigma^2}{\mu L}+\left(\frac{L\Delta}{\mu}\right)^2 & \text{if~} \frac{\Delta}{\sigma}\geq \sqrt{\frac{\mu}{16L^3}} \\
		\left(\frac{\Delta\sigma^2}{\mu^2}\right)^{2/3} & \text{otherwise}.
	\end{cases}
\end{equation*}
Two regimes of variation are brought to light by the above computation: the \emph{high drift-to-noise regime} $\Delta/\sigma \geq \sqrt{\mu/16L^3}$ and the \emph{low drift-to-noise regime} $\Delta/\sigma< \sqrt{\mu/16L^3}$. The high drift-to-noise regime is uninteresting from the viewpoint of stochastic optimization because in this case the optimal learning rate $\eta_\star \asymp 1/L$ is as large as in the deterministic setting. In contrast, the low drift-to-noise regime is interesting because it necessitates using a smaller learning rate $\eta_{\star}\asymp (\Delta^2/\mu\sigma^2)^{1/3}$ that exhibits a nontrivial scaling with the problem parameters.

\paragraph{Learning rate vs. rate of variation.}
A central question is to find a learning rate schedule that achieves a tracking error $\mathbb{E}\|x_t-x^{\star}_t\|^2 $ that is within a constant factor of $\mathcal{E}$ in the shortest possible time. The answer is clear in the high drift-to-noise regime $\Delta/\sigma \geq \sqrt{\mu/16L^3}$. Indeed, in this case, Theorem~\ref{thm:exp_dist} directly implies that Algorithm~\ref{alg:sgd} with the constant learning rate $\eta_\star=1/2L$ will find a point $x_t$ satisfying 
$\mathbb{E}\|x_t-x_t^{\star}\|^2\lesssim \mathcal{E}$ in time $t \lesssim (L/\mu)\log(\|x_0-x_0^{\star}\|^2/\mathcal{E})$. Notice that this efficiency estimate is logarithmic in $1/\mathcal{E}$; intuitively, the reason for the absence of a sublinear component is that the error due to the drift $\Delta$ dominates the error due to the variance $\sigma^2$ in the stochastic gradient. 

The low drift-to-noise regime $\Delta/\sigma< \sqrt{\mu/16L^3}$ is more subtle. Namely, the simplest strategy is to execute Algorithm~\ref{alg:sgd} 
with the constant learning rate $\eta_\star = (2\Delta^2/\mu\sigma^2)^{1/3}$. Then a direct application of Theorem~\ref{thm:exp_dist} yields the estimate 
$\mathbb{E}\|x_t-x^{\star}_t\|^2 \lesssim \mathcal{E}$ in time $t\lesssim (\sigma^2/\mu^2\mathcal{E}) \log(\|x_0-x_0^\star\|^2/\mathcal{E})$. 
This efficiency estimate can be significantly improved by gradually decaying the learning rate using a ``step decay schedule'', wherein the algorithm is implemented in epochs with the new learning rate chosen to be the midpoint between the current learning rate and $\eta_{\star}$. 
Such schedules are well known to improve efficiency in the static setting, as was  discovered by  \citet{ghadimi2013optimal}, and can be used here. The end result is the following theorem; see Theorem~\ref{thm:time_to_track} for the formal statement.

\begin{theorem}[Time to track in expectation, informal]\label{thm:dist_schedul}
	Suppose that Assumption~\ref{assump:sec_moment} holds. Then there is a learning rate schedule $\{\eta_t\}$ such that Algorithm~\ref{alg:sgd} produces a point $x_t$ satisfying $$\mathbb{E}\|x_t-x_{t}^{\star}\|^2\lesssim \mathcal{E} \quad\text{in time}\quad t\lesssim \frac{L}{\mu}\log\!\left(\frac{ \|x_0-x_0^\star\|^2}{\mathcal{E}}\right)+\frac{\sigma^2}{\mu^2\mathcal{E}}.$$
\end{theorem}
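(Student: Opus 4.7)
The plan is to build the guarantee on top of Theorem~\ref{thm:exp_dist} via a multi-epoch step decay schedule in the spirit of \citet{ghadimi2013optimal}. I would divide the iteration horizon into epochs indexed by $k=0,1,2,\ldots$, run Algorithm~\ref{alg:sgd} with a constant learning rate $\eta_k$ for $T_k$ steps inside epoch $k$, and choose $\eta_0 = 1/(2L)$ together with a halving-toward-$\eta_\star$ rule, e.g.\ $\eta_{k+1}=\tfrac{1}{2}(\eta_k+\eta_\star)$, so that $\eta_k\downarrow\eta_\star$ geometrically and reaches $\eta_k\asymp\eta_\star$ after $K\asymp\log(1/(L\eta_\star))$ epochs. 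Since Theorem~\ref{thm:exp_dist} is time-translation invariant (the drift and noise contributions depend only on the fixed learning rate and on $\Delta,\sigma$), applying it separately in each epoch with warm start at the last iterate of the previous epoch gives the recursion
\[
D_k^2 \;:=\; \mathbb{E}\|x_{\tau_k}-x^\star_{\tau_k}\|^2 \;\lesssim\; (1-\mu\eta_k)^{T_k} D_{k-1}^2 + E_k, \qquad E_k := \frac{\eta_k\sigma^2}{\mu} + \left(\frac{\Delta}{\mu\eta_k}\right)^{\!2},
\]
where $\tau_k=T_0+\cdots+T_k$ denotes the end of epoch $k$ and $D_{-1}^2 := \|x_0-x_0^\star\|^2$.

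Next I would choose $T_k$ so that the optimization term is forced below the noise-plus-drift floor: pick $T_k$ as the smallest integer with $(1-\mu\eta_k)^{T_k}D_{k-1}^2 \le E_k$, i.e.\ $T_k \asymp \tfrac{1}{\mu\eta_k}\log(D_{k-1}^2/E_k)$. An induction argument then yields $D_k^2\lesssim E_k$ at the end of each epoch. The crucial observation for summing the epoch lengths is that the halving schedule keeps the ratio $E_{k-1}/E_k$ bounded by an absolute constant (because $E_k$ is, up to a constant, the value of the convex function $\eta\mapsto \eta\sigma^2/\mu+(\Delta/\mu\eta)^2$ at $\eta_k$, and this function changes by only a constant factor when $\eta_k$ is halved toward its minimizer $\eta_\star$). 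Consequently $\log(D_{k-1}^2/E_k)=O(1)$ for all $k\ge 1$, and the initial epoch bears the full logarithmic burden: $T_0\asymp(L/\mu)\log(\|x_0-x_0^\star\|^2/E_0)$ with $E_0\asymp \mathcal{E}$ in the high drift-to-noise regime, whereas in the low drift-to-noise regime the starting $E_0$ is only a constant factor above $\|x_0-x_0^\star\|^2$ times a bounded quantity, so the first-epoch cost collapses cleanly to $(L/\mu)\log(\|x_0-x_0^\star\|^2/\mathcal{E})$.

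For the remaining epochs I would bound $\sum_{k\ge 1} T_k \lesssim \sum_{k\ge 1}\tfrac{1}{\mu\eta_k}$. Because the step sizes decay geometrically down to $\eta_\star$, this geometric series is dominated by its final term, so $\sum_{k\ge 1}T_k \lesssim \tfrac{1}{\mu\eta_\star}$. Finally I would translate $\tfrac{1}{\mu\eta_\star}$ into the stated complexity by case analysis on the drift-to-noise regime: in the low regime $\eta_\star\asymp(\Delta^2/\mu\sigma^2)^{1/3}$ and $\mathcal{E}\asymp(\Delta\sigma^2/\mu^2)^{2/3}$, so $\tfrac{1}{\mu\eta_\star}\asymp\sigma^2/(\mu^2\mathcal{E})$; in the high regime $\eta_\star\asymp 1/L$ and $\sigma^2/(\mu^2\mathcal{E})\lesssim L/\mu$, so the sublinear term is subsumed by the logarithmic one. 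Combining the first-epoch cost with this tail gives the stated bound $t\lesssim (L/\mu)\log(\|x_0-x_0^\star\|^2/\mathcal{E})+\sigma^2/(\mu^2\mathcal{E})$.

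I expect the main obstacle to be the bookkeeping at the boundary between epochs together with verifying that $E_{k-1}/E_k$ stays bounded under the specific decay rule; without this, the $\log(D_{k-1}^2/E_k)$ factors could compound and destroy the $\sigma^2/(\mu^2\mathcal{E})$ scaling. A second subtlety is that $\eta_\star$ appears inside a $\min$, so the schedule and regime analysis must be carried out uniformly in both cases; handling the two regimes separately and then unifying the final bound is the cleanest route.
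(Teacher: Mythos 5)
Your proposal follows the same step-decay epochs strategy as the paper's formal proof (Theorem~\ref{thm:time_to_track}): halve the gap $\eta_k-\eta_\star$ geometrically starting from $\eta_0=1/2L$, run each epoch long enough to reduce the carried-over error by a constant factor, verify by induction that the end-of-epoch error is within a constant of the current ``noise\,$+$\,drift'' level $E_k$, and bound $\sum_{k\geq1}T_k\lesssim\sum_{k\geq1}1/(\mu\eta_k)\lesssim1/(\mu\eta_\star)$ by the geometric decay of $\eta_k$ toward $\eta_\star$. The only cosmetic differences are that the paper freezes the drift denominator at $\eta_\star$ in its definition of $\bar E_k$ (so only the noise part of the floor decreases across epochs, which makes the ratio bound $\bar E_{k-1}\leq2\bar E_k$ immediate), and that the paper states the epoch lengths deterministically as $T_k=\lceil\log(4)/(\mu\eta_k)\rceil$ for $k\geq1$ rather than describing them adaptively via the unknowable $D_{k-1}^2$ --- your argument correctly reduces the adaptive choice to the same $\Theta(1/(\mu\eta_k))$ scaling, so the two presentations are equivalent.
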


The efficiency estimate in Theorem~\ref{thm:dist_schedul} is strikingly similar to the efficiency estimate in the static setting \citep{ghadimi2013optimal}, with $\mathcal{E}$ playing the role of the target accuracy $\varepsilon$. An elementary computation shows that in the low drift-to-noise regime, Theorem~\ref{thm:dist_schedul} improves the constant learning rate efficiency estimate when $\mathcal{E}$ is small, e.g., when $\mathcal{E}\leq\|x_0-x_0^{\star}\|^2/e^2$. Theorems~\ref{thm:exp_dist} and \ref{thm:dist_schedul} provide useful baseline guarantees for the performance of Algorithm~\ref{alg:sgd}. Nonetheless, these guarantees are all stated in terms of the \emph{expected} tracking error $\mathbb{E}\|x_t-x_{t}^{\star}\|^2$, and are therefore only meaningful if the entire algorithm can be repeated from scratch multiple times. There is no shortage of situations in which a learning algorithm is operating in real time and the time drift is irreversible; in such settings, the algorithm may only be executed once. These situations call for efficiency estimates that hold with high probability, rather than only in expectation.

\subsection{High-Probability Guarantees}\label{sec:hp1}
We next present high-probability guarantees for the tracking error $\|x_t-x_t^{\star}\|^2$. Proofs appear in Section~\ref{minimizerproofsprob}. We make the following standard light-tail assumptions on the minimizer drift and gradient noise~\citep{pmlr-v99-harvey19a,lan2012optimal, nemirovski2009robust}.

\begin{assumption}[Sub-Gaussian drift and noise]\label{assumpt_light}
	\textup{There exist constants $\Delta,\sigma >0$ such that the following two conditions hold for all $t\geq0$:
	\begin{enumerate}[label=(\roman*)]
		\item {\bf (Drift) } The drift $\Delta_t^2$ is sub-exponential conditioned on $\mathcal{F}_t$ with  parameter $\Delta^2$:
		\begin{equation*}
			\mathbb{E}\big[{\exp}(\lambda \Delta_t^2)\,|\, \mathcal{F}_t\big]\leq \exp(\lambda \Delta^2) \quad \text{for all} \quad  0\leq\lambda\leq \Delta^{-2}. 
		\end{equation*} 
		\item {\bf (Noise) } The noise $z_t$ is norm sub-Gaussian conditioned on $\mathcal{F}_t$ with parameter $\sigma/2$:
		\begin{equation*}
			\mathbb{P}\big\{\|z_t\| \geq \tau \,|\,\mathcal{F}_t\big\}\leq 2\exp(-2\tau^2/\sigma^2) \quad \text{for all} \quad \tau>0.
		\end{equation*}
	\end{enumerate}
	}
\end{assumption}

Note that the first item of Assumption~\ref{assumpt_light} is equivalent to asserting that the minimizer drift $\Delta_t$ is sub-Gaussian conditioned on $\mathcal{F}_t$ \citep[see][Lemma~2.7.6]{vershynin2018high}. Clearly Assumption~\ref{assumpt_light} implies Assumption~\ref{assump:sec_moment} with the same constants $\Delta,\sigma$. It is worthwhile to note some common settings in which Assumption \ref{assumpt_light} holds; the claims in Remark~\ref{rem:subg} below follow from standard results on sub-Gaussian random variables \citep{jin2019short,vershynin2018high}. 

\begin{remark}[Common settings for Assumption \ref{assumpt_light}]\label{rem:subg}
\textup{Fix constants $\Delta,\sigma>0$. If $\Delta_t$ is bounded by $\Delta$, then clearly $\Delta_t^2$ is sub-exponential (conditioned on $\mathcal{F}_t$) with parameter $\Delta^2$. Similarly, if $\|z_t\|$ is bounded by $\sigma/2$,
	then $z_t$ is norm sub-Gaussian (conditioned on $\mathcal{F}_t$) with parameter $\sigma/2$ (by Markov's inequality). Alternatively, if the increment $x_t^{\star} - x_{t+1}^{\star}$ is mean-zero sub-Gaussian conditioned on $\mathcal{F}_t$ with parameter $\Delta/\sqrt{d}$, then $x_t^{\star} - x_{t+1}^{\star}$ is mean-zero norm sub-Gaussian conditioned on $\mathcal{F}_t$ with parameter $2\sqrt{2}\cdot\Delta$ and hence $\Delta_t^2$ is sub-exponential conditioned on $\mathcal{F}_t$ with parameter $c\cdot\Delta^2$ for some absolute constant $c>0$. Similarly, if $z_t$ is sub-Gaussian conditioned on $\mathcal{F}_t$ with parameter $\sigma/4\sqrt{2d}$, then  $z_t$ is norm sub-Gaussian conditioned on $\mathcal{F}_t$ with parameter $\sigma/2$.}
\end{remark}

The following theorem shows that if Assumption~\ref{assumpt_light} holds, then the expected bound on $\|x_t-x_t^{\star}\|^2$ derived in Theorem~\ref{thm:exp_dist} holds with high probability.

\begin{theorem}[High-probability distance tracking]\label{HPtrack}
	Let $\{x_t\}$ be the iterates produced by Algorithm~\ref{alg:sgd} with constant learning rate $\eta\leq1/2L$, and suppose that Assumption~\ref{assumpt_light} holds.  
	Then there is an absolute constant $c>0$ such that for any specified $t\in \mathbb{N}$ and $\delta\in(0,1)$, the following estimate holds with probability at least $1-\delta$:
	\begin{samepage}
	\begin{equation*}
		\|x_t-x^\star_t\|^2\leq\big(1-\tfrac{\mu\eta}{2}\big)^t\|x_0-x^\star_0\|^2+c\left(\frac{\eta\sigma^2}{\mu}+\left(\frac{\Delta}{\mu\eta}\right)^2\right)\log\!\left(\frac{e}{\delta}\right)\!.
	\end{equation*}
	\end{samepage}
\end{theorem}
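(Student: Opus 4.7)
Write $D_t:=\|x_t-x_t^\star\|^2$ and $\rho:=1-\mu\eta/2$. Starting from Lemma~\ref{lem:onestep0} with $x=x_t^\star$ and $\eta_t=\eta$, I use the $\mu$-strong convexity of $\varphi_t$ to bound $\varphi_t(x_{t+1})-\varphi_t(x_t^\star)\geq(\mu/2)\|x_{t+1}-x_t^\star\|^2$ and invoke $\eta\leq 1/(2L)$ to get $\eta^2/(1-L\eta)\leq 2\eta^2$. Inserting the Young inequality $\|x_{t+1}-x_{t+1}^\star\|^2\leq(1+\mu\eta/2)\|x_{t+1}-x_t^\star\|^2+(1+2/(\mu\eta))\Delta_t^2$ then introduces the minimizer drift into the recursion, and one checks that the resulting coefficient on $D_t$ is at most $\rho$. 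The outcome is the stochastic recursion
\[D_{t+1}\leq\rho D_t+2\eta\langle z_t,x_t-x_t^\star\rangle+2\eta^2\|z_t\|^2+\tfrac{C}{\mu\eta}\Delta_t^2,\]
where $C$ is an absolute constant.

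\textbf{Unrolling and the easy terms.} Iterating this recursion gives $D_t\leq\rho^t D_0+M_t+N_t+P_t$ with
\[M_t:=2\eta\!\sum_{i=0}^{t-1}\rho^{t-1-i}\langle z_i,x_i-x_i^\star\rangle,\quad N_t:=2\eta^2\!\sum_{i=0}^{t-1}\rho^{t-1-i}\|z_i\|^2,\quad P_t:=\tfrac{C}{\mu\eta}\!\sum_{i=0}^{t-1}\rho^{t-1-i}\Delta_i^2.\]
Since $\sum_i\rho^{t-1-i}\leq 1/(1-\rho)=2/(\mu\eta)$, the expectations of $N_t$ and $P_t$ already reproduce the two steady-state terms in Theorem~\ref{thm:exp_dist}. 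Under Assumption~\ref{assumpt_light}, both $\|z_i\|^2$ and $\Delta_i^2$ are conditionally sub-exponential with parameters $O(\sigma^2)$ and $O(\Delta^2)$, so a standard successive-conditioning MGF (Bernstein-type) argument for weighted sums of conditionally sub-exponential variables yields $N_t\lesssim(\eta\sigma^2/\mu)\log(e/\delta)$ and $P_t\lesssim(\Delta^2/(\mu\eta)^2)\log(e/\delta)$, each with probability at least $1-\delta/3$.

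\textbf{The hard martingale term.} The norm sub-Gaussianity of $z_i$ implies $\mathbb{E}[\exp(\lambda\langle z_i,x_i-x_i^\star\rangle)\mid\mathcal{F}_i]\leq\exp(c\lambda^2\sigma^2 D_i)$, so the conditional variance of each increment of $M_t$ is proportional to the random quantity $D_i$ that one is trying to bound. This is exactly the situation handled by the \citet{pmlr-v99-harvey19a} framework: I would construct a supermartingale of the form $W_t=\exp\bigl(\lambda(D_t-\rho^t D_0-\gamma(N_t+P_t))\bigr)$ and tune $\gamma=\gamma(\lambda)$ so that the surplus $\lambda^2\sigma^2 D_i$ arising from the MGF of $\langle z_i,x_i-x_i^\star\rangle$ is absorbed by the contraction term $\rho D_i$ in the one-step recursion. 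Applying Markov's inequality to $W_t$, taking a union bound with the tails of $N_t$ and $P_t$, and optimizing over $\lambda$ then delivers the claimed estimate. The hard part is the precise verification that $W_t$ is a supermartingale for a feasible range of $\lambda$: this requires a one-step MGF calculation that simultaneously tracks the linear contraction $\rho$, the quadratic variance of the martingale increments coupled to $D_i$, and the sub-exponential surplus from noise and drift. Without this delicate coupling, a naive Freedman-type argument would require an a priori uniform bound on $D_i$ that is not available and would destroy the logarithmic $\log(e/\delta)$ scaling.
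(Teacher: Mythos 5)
Your one-step recursion, the unrolling into $\rho^t D_0 + M_t + N_t + P_t$, and the sub-exponential tail bounds for $N_t$ and $P_t$ are all correct, and you have correctly identified the central difficulty: the conditional variance of $M_t$ is proportional to $D_i$ itself. The gap is in the proposed supermartingale. Write $\tilde S_t := D_t - \rho^t D_0 - \gamma(N_t+P_t)$. Because $N_{t+1}=\rho N_t + n_t$ and $P_{t+1}=\rho P_t + p_t$, the recursion yields $\tilde S_{t+1}\le \rho\tilde S_t + m_t + (1-\gamma)(n_t+p_t)$, and after Hölder the conditional MGF of $W_{t+1}=\exp(\lambda\tilde S_{t+1})$ produces a surplus of order $\lambda^2\sigma^2\eta^2 D_t = \lambda^2\sigma^2\eta^2\bigl(\tilde S_t + \rho^t D_0 + \gamma(N_t+P_t)\bigr)$. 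To make $W_t$ a supermartingale you would need to absorb this surplus into the contraction $\lambda(1-\rho)\tilde S_t$; but $\tilde S_t$ can be negative, in which case the contraction provides no slack, and the $\rho^t D_0 + \gamma(N_t+P_t)$ part of the surplus is additive and does not scale with $\tilde S_t$ at all, so no choice of $\gamma(\lambda)$ fixes this. Subtracting $N_t+P_t$ from the exponent is the wrong move: the surplus is tied to $D_t$, not to $N_t+P_t$, and recentering destroys the nonnegativity needed for absorption.

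The paper's argument (Proposition~\ref{proprec}, a light-tail extension of Claim~D.1 of \citealp{pmlr-v99-harvey19a}, applied in Theorem~\ref{HPtrack1}) bounds $\mathbb{E}[\exp(\lambda V_t)]$ with $V_t = \|x_t - x_t^\star\|^2$ \emph{directly}, with no recentering and no decomposition. In one step, Hölder's inequality inside the conditional expectation separates the martingale factor $\exp(\lambda D_t\sqrt{V_t})$, whose MGF surplus $\exp(\lambda^2\sigma_t^2 V_t)$ is absorbed into the contraction $\exp(\lambda\alpha_t V_t)$ precisely because $V_t\ge 0$ (yielding effective rate $(1+\alpha_t)/2 = 1-\mu\eta/2$), from the noise-plus-drift factor $\exp(\lambda X_t)$, whose conditional MGF is bounded by a \emph{deterministic} constant $\exp(\lambda\nu_t)$. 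These deterministic factors accumulate geometrically, producing the $\bigl(\eta\sigma^2/\mu + (\Delta/\mu\eta)^2\bigr)$ term, and one Markov step finishes—no union bound is needed. The strategy you sketch, unroll first and then estimate a self-regulating martingale separately via generalized Freedman and a union bound, is the one the paper uses for the \emph{function gap} bound (Theorem~\ref{thm:hpbound2}, via Theorem~\ref{thmfreed} and Proposition~\ref{prop:tailbd1}), where no simple recursion on the MGF of $\varphi_t(\hat x_t)-\varphi_t^\star$ is available; for the squared-distance bound the direct MGF recursion is both available and substantially cleaner.
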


The proof of Theorem \ref{HPtrack} employs a technique used by \citet{pmlr-v99-harvey19a}. The idea is to build a careful recursion for the moment generating function of $\|x_t-x_t^{\star}\|^2$, leading to a one-sided sub-exponential tail bound. As a consequence of Theorem \ref{HPtrack}, we can again implement a step decay schedule to obtain the following efficiency estimate with high probability; see Theorem~\ref{thm:time_to_track2} for the formal statement.

\begin{theorem}[Time to track with high probability, informal]\label{thm:dist_schedul2}
	Suppose that Assumption~\ref{assumpt_light} holds and that we are in the low drift-to-noise regime $\Delta/\sigma<\sqrt{\mu/16L^3}$. Then there is a learning rate schedule $\{\eta_t\}$ such that for any specified $\delta\in(0,1)$, Algorithm~\ref{alg:sgd} produces a point $x_t$ satisfying 
	\begin{equation*}
		\|x_t-x_{t}^{\star}\|^2\lesssim \mathcal{E}\log\!\left(\frac{e}{\delta}\right)
	\end{equation*}
	with probability at least $1-\delta$ in time 
	\begin{equation*}
		t\lesssim \frac{L}{\mu}\log\!\left(\frac{ \|x_0-x_0^\star\|^2}{\mathcal{E}}\right)+\frac{\sigma^2}{\mu^2\mathcal{E}}.
	\end{equation*}
\end{theorem}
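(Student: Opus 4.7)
The plan is to run Algorithm~\ref{alg:sgd} on the same step decay schedule used in the expectation-based Theorem~\ref{thm:dist_schedul}, then upgrade the analysis to high probability by iterating Theorem~\ref{HPtrack} epoch-by-epoch. Partition the time horizon into epochs $k=0,1,\ldots,K$ of lengths $T_k$, on each of which PSG uses the constant step size $\eta^{(k)} := \max\{\eta_\star,\, 2^{-k}/(2L)\}$. In the low drift-to-noise regime $\eta_\star<1/(2L)$, so the step size halves each epoch until it reaches $\eta_\star$ after $K\asymp\log_2(1/(2L\eta_\star))\asymp\log_2(\sigma^2/(\mu L \mathcal{E}))$ epochs. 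Following the Ghadimi-Lan template, $T_0$ is chosen large enough to drive the initial optimization error $\|x_0-x_0^\star\|^2$ down to the noise+drift level of the first epoch, while $T_k$ for $k\geq 1$ is chosen so that the optimization term contracts by a constant factor during epoch $k$. Summing $T_0+\cdots+T_K$ then recovers the target time $(L/\mu)\log(\|x_0-x_0^\star\|^2/\mathcal{E})+\sigma^2/(\mu^2\mathcal{E})$, exactly as in the expectation case.

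For the high-probability analysis, I would apply Theorem~\ref{HPtrack} conditionally on the $\sigma$-algebra $\mathcal{F}_{t_k}$ at the start of epoch $k$, where $t_k := T_0+\cdots+T_{k-1}$ and $y_k:=x_{t_k}$. The conditional version is legitimate because the moment generating function recursion behind Theorem~\ref{HPtrack} runs conditionally on the initial $\sigma$-algebra. This yields, with conditional probability at least $1-\delta_k$,
\begin{equation*}
	\|y_{k+1}-x^\star_{t_{k+1}}\|^2\leq\bigl(1-\tfrac{\mu\eta^{(k)}}{2}\bigr)^{T_k}\|y_k-x^\star_{t_k}\|^2+c\left(\frac{\eta^{(k)}\sigma^2}{\mu}+\Bigl(\frac{\Delta}{\mu\eta^{(k)}}\Bigr)^2\right)\log(e/\delta_k).
\end{equation*}
Choosing $\{\delta_k\}$ so that $\sum_k\delta_k\leq\delta$ and unioning over epochs, this recursion holds simultaneously on an event of probability at least $1-\delta$. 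Unwinding it with the chosen $T_k$ forces the telescoped optimization contribution $\|x_0-x_0^\star\|^2\prod_k(1-\mu\eta^{(k)}/2)^{T_k}$ below $\mathcal{E}$, while the accumulated noise+drift contributions sum to a constant multiple of $\mathcal{E}\log(e/\delta)$.

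The main technical obstacle is arranging the $\delta_k$ so that the union-bound inflation does not pollute the clean $\log(e/\delta)$ dependence in the final error. A geometric schedule such as $\delta_k\propto\delta\cdot 2^{-(K-k)}$ makes $\log(e/\delta_k)\lesssim\log(e/\delta)+(K-k)$, which in turn is dominated by the geometric decay of the epoch-$k$ noise+drift term $\eta^{(k)}\sigma^2/\mu$ in the noise-dominated range $\eta^{(k)}>\eta_\star$. Consequently, the weighted sum $\sum_k\beta_k\prod_{j>k}(1-\mu\eta^{(j)}/2)^{T_j}$, where $\beta_k$ denotes the $k$-th epoch's noise+drift contribution above, is dominated by the final epoch's contribution $\beta_K\asymp\mathcal{E}\log(e/\delta)$, as required. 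Verifying these geometric-series estimates together with the careful tracking of absolute constants and the correct handling of the terminal regime $\eta^{(K)}=\eta_\star$ forms the bulk of the technical work; the rest is a direct invocation of Theorem~\ref{HPtrack} on each epoch.
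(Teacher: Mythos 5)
Your proposal is correct and shares the same backbone as the paper's proof of Theorem~\ref{thm:time_to_track2}: run Algorithm~\ref{alg:sgd} in epochs with a geometrically decaying learning rate that converges to $\eta_\star$, apply Theorem~\ref{HPtrack} epoch by epoch with the random initial point $x_{t_k}$ taking the role of $x_0$, and control the accumulated failure probability so that the final bound reads $\lesssim\mathcal{E}\log(e/\delta)$ with probability $1-\delta$. The genuine difference is in how the failure probability is budgeted across epochs. You allocate a fixed geometric budget $\delta_k\propto\delta\cdot 2^{-(K-k)}$, union over all epochs at once, and then verify that the term $(K-k)$ introduced by $\log(e/\delta_k)$ is absorbed by the geometric decay of the per-epoch contraction factors---a valid but calculation-heavy route requiring a careful geometric-series estimate of $\sum_k E_k\,\log(e/\delta_k)\prod_{j>k}(1-\tfrac{\mu\eta^{(j)}}{2})^{T_j}$. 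The paper instead proves by induction the uniform-in-$\delta$ claim that $\|X_k-\bar{X}_k\|^2\leq 3\bar{E}_{k-1}\log(e/\delta)$ holds with probability at least $1-\delta$ for every $\delta\in(0,1)$: each inductive step invokes the hypothesis and Theorem~\ref{HPtrack} with parameter $\delta/2$, takes a two-event union bound, and then recovers the clean $\log(e/\delta)$ dependence by using $\log(2e/\delta)\leq 2\log(e/\delta)$, the factor of $2$ being absorbed by the slack between the contraction factor $1/12$ and the ratio $\bar{E}_k/\bar{E}_{k-1}\geq 1/2$. This avoids ever writing down the explicit weighted sum over epochs and keeps the constants tighter. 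Both routes are sound; yours is the more standard ``union-bound-then-sum'' argument at the cost of a slightly longer geometric-series bookkeeping, whereas the paper's induction hides the same allocation inside the inductive hypothesis. Your minor change from the paper's midpoint schedule $\eta_k=(\eta_{k-1}+\eta_\star)/2$ to the halving schedule $\eta^{(k)}=\max\{\eta_\star, 2^{-k}/(2L)\}$ is immaterial.
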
 \section{Tracking the Minimum Value}
\label{sec:trckminimum} 
The results outlined so far have focused on tracking the minimizer $x_t^{\star}$. In this section, we present results for tracking the minimum value $\varphi_{t}^{\star}$. These two goals are fundamentally different. Generally speaking, good bounds on the function gap along with strong convexity imply good bounds on the distance to the minimizer; the reverse implication is false. To this end, we require a stronger assumption on the variation of the functions $f_t$ in time $t$: rather than merely controlling the minimizer drift $\Delta_t$, we will assume control on the \emph{gradient drift} $$G_{i,t}:=\sup_{x}\|\nabla f_i(x)-\nabla f_t(x)\|.$$

Our strategy is to track the minimum value along the running average $\hat x_t$ of the iterates $x_t$ produced by Algorithm~\ref{alg:sgd}, as defined in Algorithm~\ref{alg:sgdavg} below.
The reason behind using this particular running average is brought to light in Section~\ref{minimalvalproofsexp}, where we apply a standard averaging technique (Appendix~\ref{apdx:avg}) to a one-step improvement along $x_t$ (Lemma~\ref{lem:avgrecdec}) to obtain the desired progress along $\hat{x}_t$ (Proposition~\ref{gapbound}). 

\begin{algorithm}[h!]
	\caption{Averaged Online Proximal Stochastic Gradient\hfill $\mathtt{\overline{PSG}}(x_{0},\mu,\{\eta_t\},T)$} \label{alg:sgdavg}
	
	{\bf Input}: initial $x_0 = \hat{x}_0$, strong convexity parameter $\mu$, and step sizes $\{\eta_t\}_{t=0}^{T-1}\subset (0,1/\mu)$
	
	{\bf Step} $t=0,\ldots,T-1$: 
	\begin{equation*}
		\begin{aligned}
			&\text{Select~} g_t = \widetilde{\nabla}f_t(x_t)\\
			&\text{Set~} x_{t+1}=\prox_{\eta_t r_t}(x_t-\eta_t g_t)\\
			&\text{Set~} \hat{x}_{t+1}=\Big(1-\tfrac{\mu\eta_t}{2-\mu\eta_t}\Big){\hat x}_t+\tfrac{\mu\eta_t}{2-\mu\eta_t}x_{t+1}
		\end{aligned}
	\end{equation*}
	
	{\bf Return} $\hat{x}_T$
	
\end{algorithm}

\subsection{Bounds in Expectation}\label{sec:minvalexp}
We begin with bounding the expected value $\mathbb{E}[\varphi_t(\hat x_t)-\varphi_t^\star]$. Proofs appear in Section~\ref{minimalvalproofsexp}. Analogous to Assumption \ref{assump:sec_moment}, we make the following assumption regarding drift and noise.

\begin{assumption}[Bounded second moments]\label{assmp:funcgap0}
\textup{The regularizers $r_t\equiv r$ are identical for all times $t$ and there exist constants $\Delta,\sigma>0 $ such that the following two conditions hold for all $0\leq i<t$:
\begin{enumerate}[label=(\roman*)]
	\item {\bf (Drift)} The gradient drift $G_{i,t}$ satisfies $\mathbb{E}\,G_{i,t}^2\leq (\mu\Delta|i-t|)^2.$
	\item {\bf (Noise)} The gradient noise $z_i$ satisfies $\mathbb{E}\|z_i\|^2\leq\sigma^2$ and $\mathbb{E}\langle z_i, x_t^\star \rangle=0$.
\end{enumerate}
}
\end{assumption}

These two assumptions are natural indeed. Taking into account Lemma~\ref{lem:grad_vs_dist}, it is clear that Assumption~\ref{assmp:funcgap0} implies the earlier Assumption~\ref{assump:sec_moment} with the same constants $\Delta, \sigma$. The drift assumption intuitively asserts that second moment of $G_{i,t}$ grows at most quadratically in time $|i-t|$. In particular, returning to Example~\ref{ex:2}, suppose that the distribution map $\mathcal{D}(\cdot)$ is $\varepsilon$-Lipschitz continuous in the Wasserstein-1 distance, the loss $\ell(\cdot,\xi)$ is $C^1$-smooth for all $\xi$, and the gradient $\nabla \ell (x,\cdot)$ is $\beta$-Lipschitz continuous for all $x$. Then the Kantorovich-Rubinste\u{\i}n duality theorem \citep{kantorovich1958space} directly implies $\mathbb{E}\,G_{i,t}^2 \leq (\varepsilon\beta)^2\,\mathbb{E}\|v_i-v_t\|^2$. Therefore, as long as the second moment $\mathbb{E}\|v_i-v_t\|^2$ scales quadratically in $|i-t|$, the desired drift assumption holds.
The assumption on the gradient noise stipulates a uniform bound on the second moment $\mathbb{E}\|z_i\|^2$ and that the condition $\mathbb{E}\langle z_i, x_t^\star \rangle=0$ holds. The latter property confers a weak form of uncorrelatedness between the gradient noise $z_i$ and the future minimizer $x_t^\star$, and holds automatically if the gradient noise and the minimizers evolve independently of each other, as would typically be the case for instance in Example~\ref{ex:2}.

The following theorem establishes an expected improvement guarantee for Algorithm~\ref{alg:sgdavg}.

\begin{theorem}[Expected function gap]\label{thm:exp_gap}
	Let $\{\hat{x}_t\}$ be the iterates produced by Algorithm~\ref{alg:sgdavg} with constant learning rate $\eta \leq 1/2L$, and suppose that Assumption~\ref{assmp:funcgap0} holds. Then the following bound holds for all $t\geq 0$:
	\begin{equation*}
		\mathbb{E}\big[\varphi_t(\hat x_t)-\varphi_t^\star\big] \lesssim  \underbrace{\big(1 - \tfrac{\mu\eta}{2}\big)^t\big(\varphi_0(x_0) - \varphi_0^\star\big)}_\text{optimization} + \underbrace{\eta\sigma^2}_\text{noise} + \underbrace{\frac{\Delta^2}{\mu\eta^2}}_\text{drift}.
	\end{equation*}
\end{theorem}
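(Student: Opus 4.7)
The plan is to anchor the one-step improvement of Lemma~\ref{lem:onestep0} at the \emph{terminal} minimizer $x_t^\star$ (rather than the running minimizer $x_i^\star$), convert $\varphi_i$-suboptimality into $\varphi_t$-suboptimality via the gradient drift, and lift the resulting distance-gap recursion to a bound on $\varphi_t(\hat x_t) - \varphi_t^\star$ using the weighted averaging of Appendix~\ref{apdx:avg} with weights $\alpha = \mu\eta/(2-\mu\eta)$.

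First, apply Lemma~\ref{lem:onestep0} at each $i<t$ with $x = x_t^\star$. Since $r_t\equiv r$, the difference $\varphi_t - \varphi_i = f_t - f_i$ has gradient bounded in norm by $G_{i,t}$, so the fundamental theorem of calculus along the segment from $x_t^\star$ to $x_{i+1}$ gives
\begin{equation*}
\varphi_t(x_{i+1}) - \varphi_t^\star \leq \bigl(\varphi_i(x_{i+1}) - \varphi_i(x_t^\star)\bigr) + G_{i,t}\|x_{i+1}-x_t^\star\|.
\end{equation*}
Substituting this into the one-step bound and absorbing $2\eta G_{i,t}\|x_{i+1}-x_t^\star\|$ via Young's inequality with weight $\mu\eta$---chosen precisely so that the coefficient of $\|x_{i+1}-x_t^\star\|^2$ becomes $(1-\mu\eta)$, matching that of $\|x_i-x_t^\star\|^2$---yields the descent
\begin{equation*}
2\eta\bigl(\varphi_t(x_{i+1}) - \varphi_t^\star\bigr) + (1-\mu\eta)\|x_{i+1}-x_t^\star\|^2 \leq (1-\mu\eta)\|x_i-x_t^\star\|^2 + E_i,
\end{equation*}
where $E_i := 2\eta\langle z_i, x_i-x_t^\star\rangle + \tfrac{\eta^2}{1-L\eta}\|z_i\|^2 + \tfrac{\eta G_{i,t}^2}{\mu}$.

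Next, using convexity of $\varphi_t$ along the unrolled combination $\hat x_t = (1-\alpha)^t x_0 + \alpha\sum_{i=0}^{t-1}(1-\alpha)^{t-1-i}x_{i+1}$, weight the above descent by $(1-\alpha)^{t-1-i}$ and telescope to get a pointwise bound on $\varphi_t(\hat x_t) - \varphi_t^\star$. Taking expectation, the cross term $\mathbb{E}\langle z_i, x_i-x_t^\star\rangle$ vanishes by Assumption~\ref{assmp:funcgap0}(ii) ($\mathbb{E}[z_i\,|\,\mathcal{F}_i]=0$ handles the $x_i$-contribution, while $\mathbb{E}\langle z_i, x_t^\star\rangle = 0$ handles the $x_t^\star$-contribution), and $\mathbb{E}\|z_i\|^2\leq \sigma^2$ and $\mathbb{E} G_{i,t}^2\leq (\mu\Delta(t-i))^2$ control the remaining terms. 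The noise contribution reduces to $\alpha\eta\sigma^2\sum_i(1-\alpha)^{t-1-i}\leq \eta\sigma^2$, and the drift contribution to $\alpha\mu\Delta^2\sum_i(1-\alpha)^{t-1-i}(t-i)^2 \lesssim \mu\Delta^2/\alpha^2 \asymp \Delta^2/(\mu\eta^2)$, matching the two fluctuation terms of the theorem. The residual distance terms from the imperfect telescope are controlled by combining Theorem~\ref{thm:exp_dist} with the bound $\mathbb{E}\|x_i^\star-x_t^\star\|^2 \leq (t-i)^2\Delta^2$ (a consequence of Lemma~\ref{lem:grad_vs_dist} applied to Assumption~\ref{assmp:funcgap0}(i)), and the averaging-initial-term $(1-\alpha)^t(\varphi_t(x_0)-\varphi_t^\star)$ is handled by one final invocation of the gradient drift between times $0$ and $t$, combined with $\mu\|x_0-x_0^\star\|^2 \leq 2(\varphi_0(x_0)-\varphi_0^\star)$.

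The main obstacle is the bookkeeping in this final step: three different weights interact---the one-step contraction $(1-\mu\eta)$, the averaging factor $(1-\alpha)^{t-1-i}$, and the polynomial growth $(t-i)^2$ from the gradient drift---and each must be handled carefully to avoid losing a factor of $\mu$, $\eta$, or $t$ in the final bound. The rate $(1-\mu\eta/2)^t$, rather than the naively available $(1-\mu\eta)^t$ or $(1-\alpha)^t$, is recovered only through the algebraic identity $(1-\alpha)(1-\mu\eta/2) = 1-\mu\eta$ built into the averaging weights, which implies $(1-\alpha)^{t-1-i}(1-\mu\eta)^i = (1-\alpha)^{t-1}(1-\mu\eta/2)^i$ and converts the initial-condition contribution into the claimed exponentially decaying term.
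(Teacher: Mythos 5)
Your argument is correct, but it follows a genuinely different route from the paper. Both proofs anchor the one-step improvement at the terminal minimizer $x_t^\star$ and use the gradient drift $G_{i,t}$ to convert $\varphi_i$-suboptimality into $\varphi_t$-suboptimality (the paper does this via Lemma~\ref{lem:funcgap} inside Lemma~\ref{lem:avgrecdec}); they part ways at the Young step. You choose the Young weight $\mu\eta$ so that $\|x_i-x_t^\star\|^2$ and $\|x_{i+1}-x_t^\star\|^2$ carry the same coefficient $1-\mu\eta$, producing a \emph{non-contracting}, perfectly-telescoping recursion---which in the language of Lemma~\ref{lem:avg} means $c_1+c_2=0$, so that lemma cannot be applied. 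You compensate by imposing the geometric weights $(1-\alpha)^{t-1-i}$ by hand, which leaves a residual sum $\alpha\sum_i(1-\alpha)^{t-1-i}\|x_i-x_t^\star\|^2$; to control it you must import Theorem~\ref{thm:exp_dist} together with $\mathbb{E}\|x_i^\star-x_t^\star\|^2\le\Delta^2(t-i)^2$ and the identity $(1-\alpha)(1-\mu\eta/2)=1-\mu\eta$. The paper instead takes the Young weight $\mu\eta/2$, leaving $\|x_{i+1}-x_t^\star\|^2$ with coefficient $1-\mu\eta/2$ so that $c_1+c_2=\mu/4>0$; Lemma~\ref{lem:avg} then applies directly, yielding Proposition~\ref{gapbound} with rate $(1-\hat\rho)^t$, $\hat\rho=\mu\eta/(2-\mu\eta)$, with no residual distance terms and no dependence on the distance-tracking theorem. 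Tracing through your bookkeeping confirms the residual terms do close to $\lesssim(1-\mu\eta/2)^t(\varphi_0(x_0)-\varphi_0^\star)+\eta\sigma^2+\Delta^2/(\mu\eta^2)$, so your route is valid; but halving the Young weight buys the geometric contraction for free and makes the function-gap argument self-contained rather than parasitic on Theorem~\ref{thm:exp_dist}.
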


The ``noise\,+\,drift'' error term in Theorem~\ref{thm:exp_gap} coincides with $\mu$ times the error term in Theorem~\ref{thm:exp_dist}, as expected. With Theorem~\ref{thm:exp_gap} in hand, we are led to define the following asymptotic tracking error of Algorithm~\ref{alg:sgdavg} corresponding to $\mathbb{E}[\varphi_t(\hat{x}_t) - \varphi_t^\star]$:
\begin{equation*}
	\mathcal{G}:=\mu\mathcal{E}=\min_{\eta\in (0,1/2L]} \left\{\eta\sigma^2 + \frac{\Delta^2}{\mu\eta^2}\right\}\!.
\end{equation*}
The corresponding asymptotically optimal choice of $\eta$ is again given by $$\eta_\star = \min\left\{\frac{1}{2L}, \left(\frac{2\Delta^2}{\mu\sigma^2}\right)^{1/3}\right\}\!,$$ and the dichotomy governed by the drift-to-noise ratio $\Delta/\sigma$ remains:
\begin{equation*}
	\mathcal{G}\asymp\begin{cases} 
		\frac{\sigma^2}{L}+\frac{(L\Delta)^2}{\mu} & \text{if~} \frac{\Delta}{\sigma}\geq \sqrt{\frac{\mu}{16L^3}} \\
		\mu\!\left(\frac{\Delta\sigma^2}{\mu^2}\right)^{2/3} & \text{otherwise}.
	\end{cases}
\end{equation*}

In the high drift-to-noise regime $\Delta/\sigma \geq \sqrt{\mu/16L^3}$, Theorem~\ref{thm:exp_gap} directly implies that Algorithm~\ref{alg:sgdavg} with the constant learning rate $\eta_\star=1/2L$ finds a point $\hat{x}_t$ satisfying \mbox{$\mathbb{E}[\varphi_t(\hat{x}_t) - \varphi_t^\star] \lesssim \mathcal{G}$} in time $t \lesssim (L/\mu)\log((\varphi_0(x_0) - \varphi_0^\star)/\mathcal{G})$. In the low drift-to-noise regime $\Delta/\sigma< \sqrt{\mu/16L^3}$, another direct application of Theorem~\ref{thm:exp_gap} shows that Algorithm~\ref{alg:sgdavg} 
with the constant learning rate $\eta_\star = (2\Delta^2/\mu\sigma^2)^{1/3}$ finds a point $\hat{x}_t$ satisfying $\mathbb{E}[\varphi_t(\hat{x}_t) - \varphi_t^\star] \lesssim \mathcal{G}$ in time $t \lesssim (\sigma^2/\mu\mathcal{G})\log((\varphi_0(x_0) - \varphi_0^\star)/\mathcal{G})$. As before, this efficiency estimate can be significantly improved by implementing a step decay schedule. The end result is the following theorem; see Theorem~\ref{thm:time_to_track_gap_exp} for the formal statement.

\begin{theorem}[Time to track in expectation, informal]\label{thm:gap_schedul}
	Suppose that Assumption~\ref{assmp:funcgap0} holds. Then there is a learning rate schedule $\{\eta_t\}$ such that Algorithm~\ref{alg:sgdavg} produces a point $\hat{x}_t$ satisfying $$\mathbb{E}\big[\varphi_t(\hat{x}_t) - \varphi_t^\star\big] \lesssim \mathcal{G}\quad\text{in time}\quad t\lesssim \frac{L}{\mu}\log\!\left(\frac{\varphi_0(x_0) - \varphi_0^\star}{\mathcal{G}}\right)+\frac{\sigma^2}{\mu\mathcal{G}}.$$
\end{theorem}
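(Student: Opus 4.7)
My plan is to lift the Ghadimi--Lan step-decay schedule \citep{ghadimi2013optimal} to the drifting setting, using Theorem~\ref{thm:exp_gap} as the one-shot guarantee inside each epoch. I organize the learner into epochs $k = 0,1,2,\ldots$ by repeatedly invoking Algorithm~\ref{alg:sgdavg} with a constant step size $\eta_k$ for $N_k$ iterations, taking the output $\hat x$ of epoch $k$ as both $x_0$ and $\hat x_0$ for epoch $k+1$; this respects the $x_0 = \hat x_0$ convention built into Algorithm~\ref{alg:sgdavg}. Choose $\eta_0 = 1/(2L)$ and $\eta_{k+1} = (\eta_k + \eta_\star)/2$, so that $\eta_k - \eta_\star = 2^{-k}(\eta_0-\eta_\star)$; once $\eta_k$ comes within a constant factor of $\eta_\star$, fix all subsequent step sizes at $\eta_\star$. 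In the high drift-to-noise regime $\eta_\star = 1/(2L)$, so a single call to Algorithm~\ref{alg:sgdavg} combined with Theorem~\ref{thm:exp_gap} already yields the bound; the plan below targets the more delicate low drift-to-noise regime.

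Set $D_k := \mathbb{E}[\varphi_{T_k}(x_0^{(k)}) - \varphi_{T_k}^\star]$ and the noise\,+\,drift floor $\mathcal{G}_k := \eta_k\sigma^2 + \Delta^2/(\mu\eta_k^2)$, where $T_k := N_0 + \cdots + N_{k-1}$. Applying Theorem~\ref{thm:exp_gap} to epoch $k$ yields the recursion
\[
D_{k+1} \;\lesssim\; \bigl(1-\tfrac{\mu\eta_k}{2}\bigr)^{N_k} D_k \;+\; \mathcal{G}_k.
\]
Since $\eta_k \geq \eta_\star$, the noise term dominates and $\mathcal{G}_k \asymp \eta_k \sigma^2$, so $\{\mathcal{G}_k\}$ also decays geometrically toward $\mathcal{G}\asymp \eta_\star \sigma^2$. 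Choose the epoch length $N_k \asymp \frac{1}{\mu\eta_k}\log(D_k/\mathcal{G}_k)$ so that the optimization term is absorbed by the floor, giving $D_{k+1}\lesssim \mathcal{G}_k$. Consequently $D_k \lesssim \mathcal{G}_{k-1}\asymp 2\mathcal{G}_k$ for each $k \geq 1$, so the logarithm collapses to an absolute constant and $N_k \asymp 1/(\mu\eta_k)$ for all $k\geq 1$; only the first epoch carries the initial-optimization logarithm.

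Summing epoch lengths yields the total time. The first epoch costs $N_0 \asymp \frac{1}{\mu\eta_0}\log(D_0/\mathcal{G}_0) \lesssim \frac{L}{\mu}\log((\varphi_0(x_0)-\varphi_0^\star)/\mathcal{G})$, using $\mathcal{G}_0 \geq \mathcal{G}$. The remaining epochs satisfy $\sum_{k\geq 1} N_k \asymp \sum_{k\geq 1} \sigma^2/(\mu\mathcal{G}_k)$, a geometric sum whose terminal term $\sigma^2/(\mu\mathcal{G})$ dominates; a final run of $O(\sigma^2/(\mu\mathcal{G}))$ iterations at $\eta_\star$ then forces $D_k$ down to $\mathcal{G}$. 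Adding the two contributions matches the claimed bound $\tfrac{L}{\mu}\log((\varphi_0(x_0)-\varphi_0^\star)/\mathcal{G}) + \sigma^2/(\mu\mathcal{G})$.

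I expect the main obstacle to lie in making the chaining step $D_{k+1} \lesssim \mathcal{G}_k$ fully rigorous. Theorem~\ref{thm:exp_gap} is stated at absolute time $0$, so one must verify that the one-step inequality (Lemma~\ref{lem:onestep0}) underlying it is invariant under time shifts and applies conditionally on $\mathcal{F}_{T_k}$; a tower-property argument then converts the conditional per-epoch bound into an unconditional recursion on $D_k$. A second subtlety is that the ``initial'' function gap for epoch $k+1$ is measured against $\varphi_{T_{k+1}}$ rather than $\varphi_{T_k}$, so the gradient drift between consecutive times must be shown to incur no penalty beyond the $\Delta^2/(\mu\eta_k^2)$ term already present in $\mathcal{G}_k$; this is precisely what the quadratic-in-time growth of $\mathbb{E}\,G_{i,t}^2$ in Assumption~\ref{assmp:funcgap0} is engineered for, and it is already folded into Theorem~\ref{thm:exp_gap}. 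Once these two points are settled, the geometric summation proceeds without further surprises.
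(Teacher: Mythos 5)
Your plan matches the paper's proof of Theorem~\ref{thm:time_to_track_gap_exp}: the same step-decay schedule $\eta_{k+1}=(\eta_k+\eta_\star)/2$, the same recursion driven by Theorem~\ref{thm:exp_gap} (via Corollary~\ref{cor2}), the same observation that the per-epoch logarithm collapses to a constant so only the first epoch carries $\log((\varphi_0(x_0)-\varphi_0^\star)/\mathcal{G})$, and the same geometric bound on $\sum_k 1/\eta_k \lesssim 1/\eta_\star$. You also correctly flag both technical subtleties the paper handles --- time-shift invariance of the one-step inequality, and the cross-epoch gradient drift absorbed into the floor via the quadratic-in-time growth assumption and an $e^{-\mu\eta t/2}\mu t^2\lesssim 1/(\mu\eta^2)$ bound --- so your argument is essentially the paper's.
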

In the low drift-to-noise regime, Theorem~\ref{thm:gap_schedul} improves the constant learning rate efficiency estimate when $\mathcal{G}$ is small, e.g., when $\mathcal{G} \leq (\varphi_0({x}_0) - \varphi_0^\star)/e^2$. 

\subsection{High-Probability Guarantees}\label{sec:minvalprob}
Next, we obtain high-probability analogues of Theorems~\ref{thm:exp_gap} and \ref{thm:gap_schedul}. Proofs appear in Section~\ref{minimalvalproofsprob}.
Naturally, such results should rely on light-tail assumptions on the gradient drift $G_{i,t}$ and the norm of the gradient noise $\|z_i\|$. We state the guarantees under an assumption of sub-Gaussian drift and noise (Assumption~\ref{assmp:funcgap2} below). In particular, we require that the gradient noise $z_i$ is mean-zero conditioned on the $\sigma$-algebra $$\mathcal{F}_{i,t}:=\sigma(\mathcal{F}_i,x_t^\star)$$ for all $0 \leq i < t$; the property $\mathbb{E}[z_i\,|\,\mathcal{F}_{i,t}]=0$ would follow from independence of the gradient noise $z_i$ and the future minimizer $x_t^\star$ and is very reasonable in light of Examples~\ref{ex:1} and \ref{ex:2}.

\begin{assumption}[Sub-Gaussian drift and noise]\label{assmp:funcgap2}
	\textup{The regularizers $r_t\equiv r$ are identical for all times $t$ and there exist constants $\Delta,\sigma>0 $ such that the following two conditions hold for all $0\leq i<t$:
	\begin{enumerate}[label=(\roman*)]
		\item {\bf (Drift)} The squared gradient drift $G_{i,t}^2$ is sub-exponential with parameter $(\mu\Delta|i-t|)^2$:
		\begin{equation*}
			\mathbb{E}\big[{\exp}\big(\lambda G_{i,t}^2\big)\big]\leq {\exp}\big(\lambda (\mu\Delta|i-t|)^2\big) \quad \text{for all} \quad  0\leq\lambda\leq (\mu\Delta|i-t|)^{-2}. 
		\end{equation*} 
		\item {\bf (Noise)} The gradient noise $z_i$ is mean-zero norm sub-Gaussian conditioned on $\mathcal{F}_{i,t}$ with parameter $\sigma/2$, i.e.,
		$\mathbb{E}[z_i \,|\, \mathcal{F}_{i,t}]=0$ and  
		\begin{equation*}
			\mathbb{P}\big\{\|z_i\| \geq \tau \,|\,\mathcal{F}_{i,t}\big\}\leq 2\exp(-2\tau^2/\sigma^2) \quad\text{for all}\quad \tau > 0.
		\end{equation*}
	\end{enumerate}
	}
\end{assumption}
Clearly the chain of implications holds:
\begin{quote}\centering 
	Assumption~\ref{assmp:funcgap2}$\,\implies\,$Assumption~\ref{assmp:funcgap0}$\,\implies\,$Assumption~\ref{assump:sec_moment}.
\end{quote}

The following theorem shows that if Assumption~\ref{assmp:funcgap2} holds, then the expected bound on $\varphi_t(\hat x_t)-\varphi_t^\star$ derived in Theorem~\ref{thm:exp_gap} holds with high probability.

\begin{theorem}[Function gap with high probability]\label{thm:hpbound2}
	Let $\{\hat{x}_t\}$ be the iterates produced by Algorithm \ref{alg:sgdavg} with constant learning rate $\eta \leq 1/2L$, and suppose that Assumption~\ref{assmp:funcgap2} holds. Then there is an absolute constant $c>0$ such that for any specified $t\in\mathbb{N}$ and $\delta\in(0,1)$, the following estimate holds with probability at least $1-\delta$:
	\begin{align*}
		\varphi_t(\hat x_t)-\varphi_t^\star \leq c\left(\big(1 - \tfrac{\mu\eta}{2}\big)^t\big(\varphi_0(x_0) - \varphi_0^\star\big) + \eta\sigma^2 + \frac{\Delta^2}{\mu\eta^2}\right)\log\!\left(\frac{e}{\delta}\right)\!.
	\end{align*}
\end{theorem}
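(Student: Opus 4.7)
The plan is to derive a high-probability analogue of Theorem~\ref{thm:exp_gap} by retracing the proof of that expected bound and replacing each use of expectation by an appropriate tail estimate. First I would apply the one-step improvement of Lemma~\ref{lem:onestep0} at the comparator $x = x_t^\star$ (the future minimizer, aligned with the function value we ultimately want to track), use $\mu$-strong convexity together with the gradient-drift bound $\|\nabla f_i(x) - \nabla f_t(x)\| \le G_{i,t}$ to convert suboptimality with respect to $\varphi_i$ into suboptimality with respect to $\varphi_t$, and then apply the averaging scheme of Appendix~\ref{apdx:avg} to transfer the resulting recursion in $x_t$ onto a bound along the averaged iterate $\hat x_t$. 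This should yield a deterministic decomposition of the form
\begin{equation*}
\varphi_t(\hat x_t) - \varphi_t^\star \;\le\; (1-\mu\eta/2)^t\bigl(\varphi_0(x_0)-\varphi_0^\star\bigr) \;+\; M_t \;+\; N_t \;+\; D_t,
\end{equation*}
where geometric weights $w_i \asymp (1-\mu\eta/2)^{t-1-i}\eta$ appear throughout, $M_t = \sum_{i<t} w_i\langle z_i, u_i\rangle$ is a martingale-difference sum with vectors $u_i$ depending on $x_i$ and $x_t^\star$, and $N_t=\sum_{i<t}w_i\|z_i\|^2$, $D_t=\sum_{i<t}w_i\, G_{i,t}^2/\mu$ aggregate the squared gradient noise and squared gradient drift, respectively.

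Next I would control each of the three stochastic quantities with probability at least $1-\delta/3$. For the martingale $M_t$ I would invoke the generalized Freedman inequality of \citet{pmlr-v99-harvey19a}, working in the enlarged filtration $(\mathcal{F}_{i,t})_i$ so that the identity $\mathbb{E}[z_i\mid\mathcal{F}_{i,t}]=0$ from Assumption~\ref{assmp:funcgap2} preserves the martingale property; the conditional variance proxies are then of order $\sigma^2\|u_i\|^2$ by norm-sub-Gaussianity, and $\|u_i\|$ is controlled with high probability by combining Theorem~\ref{HPtrack} for $\|x_i-x_i^\star\|$ with a chained sub-Gaussian bound on $\|x_i^\star - x_t^\star\|$ obtained from the minimizer-drift increments (themselves controlled via $G_{i,t}$ and Lemma~\ref{lem:grad_vs_dist}). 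For the weighted sums $N_t$ and $D_t$, which are sums of sub-exponential random variables with parameters $\sigma^2$ and $(\mu\Delta|i-t|)^2$ respectively, I would apply standard Bernstein-type tail bounds for weighted sub-exponentials. Because the weights $w_i$ decay geometrically, only the last $O(1/\mu\eta)$ summands contribute effectively, so the resulting tails match the expected orders $\eta\sigma^2$ and $\Delta^2/(\mu\eta^2)$ up to a single $\log(e/\delta)$ factor. A union bound over the three events, followed by absorbing constants, then delivers the advertised estimate.

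The main obstacle I anticipate is the careful handling of the coupling between the noise $z_i$ and the future minimizer $x_t^\star$ that appears inside the inner product $\langle z_i, u_i\rangle$. To apply the Harvey--Liaw--Randhawa machinery, $M_t$ must be a martingale in a filtration to which $u_i$ is adapted; one cannot simply condition on $\mathcal{F}_i$ since $u_i$ involves $x_t^\star$. This is precisely why Assumption~\ref{assmp:funcgap2} is stated conditionally on the enlarged $\sigma$-algebra $\mathcal{F}_{i,t}=\sigma(\mathcal{F}_i,x_t^\star)$: in that filtration the martingale structure is preserved and the sub-Gaussian parameters carry through unchanged. Once this bookkeeping is set up and the geometric-weight identities are verified, the remainder of the argument is an exercise in balancing the three tail contributions through the union bound and absorbing the resulting constants into the single $\log(e/\delta)$ factor that appears in the statement.
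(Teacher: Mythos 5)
Your roadmap is mostly consistent with the paper's proof---you correctly identify the one-step improvement lemma applied at the future minimizer $x_t^\star$, the averaging scheme of Appendix~\ref{apdx:avg}, the resulting geometric-weight decomposition (Proposition~\ref{gapbound}), the need for the enlarged filtration $\mathcal{F}_{i,t}=\sigma(\mathcal{F}_i,x_t^\star)$ to preserve the martingale property, and the generalized Freedman inequality as the central probabilistic tool. The gap is at the step where you control the noise martingale $M_t = \hat{\rho}\sum_{i<t}\langle z_i, x_i - x_t^\star\rangle(1-\hat{\rho})^{t-1-i}$. You propose to dominate the total conditional variance $\sum_i V_i \asymp \sigma^2\sum_{i<t}\|x_i-x_t^\star\|^2(1-\hat{\rho})^{2(t-1-i)}$ directly by bounding each $\|x_i - x_t^\star\|^2$ with high probability via Theorem~\ref{HPtrack} and a chained bound on $\|x_i^\star-x_t^\star\|^2$. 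This effectively sets $\alpha_i=0$ in Theorem~\ref{thmfreed}, reducing it to a classical Freedman bound with a fully deterministic variance envelope. But controlling all of $\|x_0-x_t^\star\|^2,\ldots,\|x_{t-1}-x_t^\star\|^2$ simultaneously forces a union bound over $i$: a uniform split $\delta_i=\delta/t$ produces a $\log t$ factor that diverges as $t\to\infty$, while a geometrically weighted split produces a $\log(1/\hat{\rho})\asymp\log(1/\mu\eta)$ factor that is unbounded precisely in the low drift-to-noise regime of interest. Neither factor appears in the theorem's bound, so the route as proposed delivers a strictly weaker conclusion. (A secondary mismatch: Theorem~\ref{HPtrack} is proved under Assumption~\ref{assumpt_light}, which requires $\Delta_t^2$ to be sub-exponential \emph{conditionally} on $\mathcal{F}_t$, whereas Assumption~\ref{assmp:funcgap2} only posits \emph{unconditional} sub-exponentiality of $G_{i,t}^2$, so the cited theorem does not apply verbatim.)

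The idea you are missing is the \emph{self-regulation} of the martingale, which is exactly what the generalized Freedman inequality is designed to exploit with $\alpha_i\neq0$ and which the paper establishes in Proposition~\ref{prop:avgrec2}. Unrolling the one-step improvement (Lemma~\ref{lem:avgrecdec}) applied at the fixed comparator $x_t^\star$ yields the \emph{pathwise}, union-bound-free inequality
\[
\sum_{i<t}\|x_i - x_t^\star\|^2(1-\hat{\rho})^{2(t-1-i)} \;\le\; \sum_{j<t}\alpha_j\,\langle z_j, x_j - x_t^\star\rangle(1-\hat{\rho})^{t-1-j} \;+\; (\text{sub-exponential remainder}),
\]
with $\alpha_j\lesssim\eta/\hat{\rho}$, so the total conditional variance is dominated by an affine transformation of the martingale itself plus light-tailed terms that are controlled by simple Markov bounds. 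Feeding this into Theorem~\ref{thmfreed} with $\alpha\asymp\eta\sigma^2/\hat{\rho}$ gives a tail of order $\hat{\rho}\alpha\log(1/\delta) + \hat{\rho}\sqrt{\beta(\delta)\log(1/\delta)}\lesssim\bigl(\eta\sigma^2 + \Delta^2/\mu\eta^2\bigr)\log(1/\delta)$ with no spurious $\log t$ or $\log(1/\mu\eta)$ factors; that pathwise recursion is the missing ingredient you would need to make your proof close.
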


The proof of Theorem~\ref{thm:hpbound2} is based on combining the generalized Freedman inequality of \citet{pmlr-v99-harvey19a} with careful control on the drift and noise in improvement guarantees for the proximal stochastic gradient method. The key observation is that although we do not have simple recursive control on the moment generating function of $\varphi_t(\hat x_t)-\varphi_t^\star$ (as we do with $\|x_t - x_t^\star\|^2$), we can instead control the tracking error $\varphi_t(\hat x_t)-\varphi_t^\star$  by leveraging control on the martingale $\sum_{i=0}^{t-1}\langle z_i, x_i - x_t^\star \rangle \zeta^{t - 1 - i}$, where $ \zeta = 1 - \mu\eta/(2 - \mu\eta)$. This martingale is self-regulating in the sense that its total conditional variance is bounded by the history of the process; the generalized Freedman inequality is precisely suited to bound such martingales with high probability. 

With Theorem~\ref{thm:hpbound2} in hand, we may implement a step decay schedule as before to obtain the following efficiency estimate; see Theorem~\ref{thm:time_to_track_gap_hp2} for the formal statement.

\begin{theorem}[Time to track with high probability, informal]\label{thm:gap_schedulhp2}
	Suppose that Assumption~\ref{assmp:funcgap2} holds and that we are in the low drift-to-noise regime $\Delta/\sigma<\sqrt{\mu/16L^3}$. Fix $\delta\in(0,1)$. Then there is a learning rate schedule $\{\eta_t\}$ such that Algorithm~\ref{alg:sgdavg} produces a point $\hat{x}_t$ satisfying 
	\begin{equation*}
		\varphi_t(\hat{x}_t) - \varphi_t^\star\lesssim \mathcal{G}\log\!\left(\frac{e}{\delta}\right)
	\end{equation*}
	with probability at least $1-K\delta$ in time 
	\begin{equation*}
		t\lesssim \frac{L}{\mu}\log\!\left(\frac{\varphi_0(x_0) - \varphi_0^\star}{\mathcal{G}}\right)+\frac{\sigma^2}{\mu\mathcal{G}}\log\!\left(\log\!\left(\frac{e}{\delta}\right)\!\right)\!, \quad \text{where~~}K\lesssim \log_2\!\left(\frac{1}{L}\cdot\left(\frac{\sigma^2\mu}{\Delta^2}\right)^{1/3}\right)\!.
	\end{equation*}
\end{theorem}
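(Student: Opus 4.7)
The plan is to mimic the step-decay scheme used in Theorems~\ref{thm:dist_schedul2} and \ref{thm:gap_schedul}, but to feed each epoch into the high-probability one-shot bound of Theorem~\ref{thm:hpbound2}. Partition the run into $K \lesssim \log_2\bigl(L^{-1}(\sigma^2\mu/\Delta^2)^{1/3}\bigr)$ epochs with constant learning rates $\eta_0 = 1/(2L)$ and $\eta_{k} = \tfrac{1}{2}(\eta_{k-1} + \eta_\star)$, so that $\eta_k$ decays geometrically toward $\eta_\star$ and $\eta_K \asymp \eta_\star$. Write $\mathcal{G}_k := \eta_k \sigma^2 + \Delta^2/(\mu \eta_k^2)$, which is also geometric in $k$ with $\mathcal{G}_K \asymp \mathcal{G}$. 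Within epoch $k$, run $\mathtt{\overline{PSG}}$ for $t_k$ steps starting from the averaged iterate produced at the end of the previous epoch; apply Theorem~\ref{thm:hpbound2} to each epoch with failure probability $\delta$ and union bound over the $K$ epochs to obtain total failure probability $\leq K\delta$.

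\paragraph{Sizing the epochs.}
Denote by $E_k$ the tracking error $\varphi_{T_k}(\hat x_{T_k}) - \varphi_{T_k}^\star$ at the end of epoch $k$. I would prove by induction that, on the good event of epochs $1,\ldots,k$, one has $E_k \leq c'\,\mathcal{G}_k \log(e/\delta)$ for a suitable absolute constant $c'$. The inductive step: conditional on the previous epoch's good event, Theorem~\ref{thm:hpbound2} applied to epoch $k$ gives, with conditional probability $\geq 1-\delta$,
\begin{equation*}
E_k \;\leq\; c\bigl((1 - \mu\eta_k/2)^{t_k}\, E_{k-1} \;+\; \mathcal{G}_k\bigr)\log(e/\delta).
\end{equation*}
Since $\mathcal{G}_{k-1}/\mathcal{G}_k = O(1)$, to close the induction it suffices to choose
\begin{equation*}
t_k \;\gtrsim\; \frac{1}{\mu\eta_k}\log\!\Bigl(c'\mathcal{G}_{k-1}\log(e/\delta)/\mathcal{G}_k\Bigr) \;\asymp\; \frac{1}{\mu\eta_k}\bigl(1 + \log\log(e/\delta)\bigr),
\end{equation*}
with the first epoch requiring the additional $(L/\mu)\log((\varphi_0(x_0)-\varphi_0^\star)/\mathcal{G}_0)$ term to process the initial suboptimality. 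Summing and using that the $1/\eta_k$ grow geometrically toward $1/\eta_\star$, so that $\sum_k 1/\eta_k \asymp 1/\eta_\star$, produces the stated bound via the identity $1/(\mu\eta_\star) \asymp \sigma^2/(\mu\mathcal{G})$ that already appeared in the expectation analysis (Theorem~\ref{thm:gap_schedul}).

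\paragraph{Main obstacle.}
The delicate point is that in Theorem~\ref{thm:hpbound2} the factor $\log(e/\delta)$ multiplies the optimization term as well as the noise-plus-drift term. This means the initial error fed into epoch $k$ already carries a $\log(e/\delta)$ factor, so to drive it below $\mathcal{G}_k \log(e/\delta)$ one must contract by an extra factor of $\log(e/\delta)$; this is precisely what produces the $\log\log(e/\delta)$ term in the dominant $\sigma^2/(\mu\mathcal{G})$ piece of the iteration count, whereas the initial logarithmic term $(L/\mu)\log((\varphi_0(x_0)-\varphi_0^\star)/\mathcal{G})$ needs no such correction since it absorbs the $\log\log$ inside the outer logarithm. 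A secondary bookkeeping obstacle is the union bound: the factor $K \lesssim \log_2\bigl(L^{-1}(\sigma^2\mu/\Delta^2)^{1/3}\bigr)$ arises from the number of halvings required to bring $\eta_0 = 1/(2L)$ down to $\eta_\star$ in the low drift-to-noise regime, and it must be traded off against the per-epoch failure probability $\delta$ to produce the overall probability $1-K\delta$ of success.
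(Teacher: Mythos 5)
Your proposal matches the paper's actual proof (formalized in Theorem~\ref{thm:time_to_track_gap_hp2}, via the decision-dependent analogue) in essentially every detail: a step-decay schedule with $\eta_k$ halving its distance to $\eta_\star$, per-epoch application of the high-probability gap bound of Theorem~\ref{thm:hpbound2}, an induction of the form $\psi_{t_k}(X_k)-\psi_{t_k}^\star\lesssim\widehat{G}_{k-1}\log(e/\delta)$, and a union bound over the $K$ epochs yielding $1-K\delta$. You also correctly pinpoint the source of the $\log\log(e/\delta)$: each inner epoch must contract a previously accumulated $\log(e/\delta)$ factor, so the paper sets $T_k\asymp(\hat\mu\eta_k)^{-1}\log\!\big(c\log(e/\delta)\big)$ for $k\geq 1$, while $T_0$ escapes this because the base case starts from the deterministic initial gap. (The only cosmetic divergence is that the paper freezes the drift piece of $\widehat{G}_k$ at $\bar\Delta^2/\hat\mu\hat\eta_\star^2$ rather than using $\Delta^2/\mu\eta_k^2$, which makes $\widehat{G}_k$ monotone and the ratio $\widehat{G}_{k-1}/\widehat{G}_k\leq 2$ immediate; your choice also works but requires a slightly more careful ratio bound.)
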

 \section{Extension to the Decision-Dependent Setting}
\label{decision-dependent}
In this section, we extend the framework and results of the previous sections to a much wider class of tracking problems. In particular, the material in this section is a strict generalization of all the results in the previous sections and can model the performative prediction framework of \citet{perdomo2020performative} in a time-dependent setting. 

Setting the stage, suppose that we have a family of functions $\{f_{t,x}(\cdot)\}$ indexed by time $t\in \mathbb{N}$ and points $x\in \mathbb{R}^d$. Upon replacing the function $f_t$ in the time-dependent problem \eqref{eqn:prob_online} by the function $f_{t,x}$ depending not only on the time $t$ but also on the decision variable $x$, we obtain the sequence of decision-dependent stochastic optimization problems
\begin{equation}\label{eq:prob_onlinedecnoeq}
	\min_{x\in\R^d}\, f_{t,x}(x) + r_t(x)
\end{equation}
indexed by $t$. Tracking the solutions to \eqref{eq:prob_onlinedecnoeq} is typically a challenging task due to the dual dependency of $f_{t,x}(x)$ on the decision $x$. To obtain a more tractable tracking problem, we may decouple this dependency on $x$ by introducing an auxiliary decision variable $u$ and considering the family of stochastic optimization problems
\begin{equation}\label{eq:prob_onlinedecdecoupled}
	\min_{u\in\R^d}\, f_{t,x}(u) + r_t(u)
\end{equation}
indexed by both $t$ and $x$. Instead of tracking the \emph{optimal} decisions solving \eqref{eq:prob_onlinedecnoeq}, our aim becomes to track the \emph{stable} decisions 
\begin{equation}\label{eqpoint}
	\bar{x}_t\in \argmin_{u\in\R^d}\, f_{t,\bar x_t}(u) + r_t(u)
\end{equation}
arising from \eqref{eq:prob_onlinedecdecoupled}. We call a point $\bar x_t$ satisfying \eqref{eqpoint} an \emph{equilibrium point} of \eqref{eq:prob_onlinedecdecoupled} at time $t$; observe that $\bar x_t$ is stable in the sense that it is a fixed point of the map $x\mapsto\argmin_u\{ f_{t,x}(u) + r_t(u) \}$.

 Reasonable regularity assumptions on the family $\{f_{t,x}(\cdot)\}$ ensure that \eqref{eq:prob_onlinedecdecoupled} admits a unique equilibrium point at each time $t$ (see Assumption~\ref{assmp:spacegradvar} and Lemma~\ref{lem:eqpoint} below).
 When the functions $f_{t,x}$ are independent of $x$, the equilibrium points are simply the minimizers of $f_t +r_t$---the content of the previous sections. Our goal in this section is to track the equilibrium points $\bar{x}_t$, or equivalently to track the minimizers of the time-dependent stochastic optimization problem 
\begin{equation}\label{eqn:prob_onlinedec}
	\min_{u\in\R^d}\, f_{t,\bar x_t}(u) + r_t(u).
\end{equation}
Formally, \eqref{eqn:prob_onlinedec} is an example of \eqref{eqn:prob_online}, but this viewpoint is not directly useful since ${\bar x}_t$ is unknown.
 This more general framework allows us to model more dynamic settings. The main example stems from the setting of performative prediction introduced by \citet{perdomo2020performative}. This will be a running example throughout the section.

\begin{example}[Performative prediction]\label{ex:perfpred}
{\rm Within the framework of performative prediction, the functions take the form $f_{t,x}(u) = \mathbb{E}_{\xi\sim \mathcal{D}(t,x)}\ell(u,\xi)$ for some family of distributions $\mathcal{D}(t,x)$ indexed by both the time $t$ and the decision variable $x$. The motivation for the dependence of the distribution on $x$ is that often deployment of a learning rule parametrized by $x$ causes the population to change their profile to increase the likelihood of a better personal outcome---a process called ``gaming''. In other words, the population data is a function of the decision taken by the learner. Moreover, the dependence of the population data on time appears naturally when the population evolves due to exogenous temporal effects (e.g., seasonal, economic). The equilibrium points ${\bar x}_t$ have a clear meaning in this context. Namely, ${\bar x}_t$ is an equilibrium point if the learner has no reason deviate from the learning rule ${\bar x}_t$ based on the response distribution $\mathcal{D}(t,{\bar x}_t)$ alone. 
	
Whenever we refer back to this example, we will impose the following assumptions that are direct extensions of \citet{perdomo2020performative} to the time-dependent setting.	
Namely, fix a nonempty metric space $M$ equipped with its Borel $\sigma$-algebra and let $P_1(M)$ denote the space of Radon probability measures on $M$ with finite first moment, equipped with the Wasserstein-1 distance $W_1$. We make the natural assumption that there exist constants $\theta,\varepsilon\geq0$ such that the distribution map $\mathcal{D}(\cdot,\cdot)$
satisfies the following Lipschitz condition:
\begin{equation*}\label{distvar}
	W_1\big(\mathcal{D}(i,x),\mathcal{D}(t,y)\big)\leq \theta|i-t| + \varepsilon\|x-y\|  \quad\text{for all}\quad(i,x),(t,y)\in\mathbb{N}\times\mathbb{R}^d.
\end{equation*}
Moreover, we suppose that the loss function $\ell\colon\mathbb{R}^d\times M\rightarrow\mathbb{R}$ has the following three properties: $\ell(u,\cdot)\in L^1(\pi)$ for all $u\in\mathbb{R}^d$ and $\pi\in P_1(M)$; $\ell(\cdot,\xi)$ is $C^1$-smooth for all $\xi\in M$; and there is a constant $\beta\geq0$ such that the map $\xi\mapsto\nabla\ell(u,\xi)$ is $\beta$-Lipschitz continuous for all $u\in\mathbb{R}^d$, where $\nabla \ell(u,\xi)$ denotes the gradient of $\ell(\cdot,\xi)$ evaluated at $u$.
These assumptions directly imply the following stability property of the gradients with respect to distributional perturbations \citep[see][Lemma~2.1]{drusvyatskiy2020stochastic}:
\begin{equation}\label{jointgradvar}
	\sup_{u\in\mathbb{R}^d}\|\nabla f_{i,x}(u) - \nabla f_{t,y}(u)\| \leq \theta\beta|i-t| + \varepsilon\beta\|x-y\|  \quad\text{for all}\quad(i,x),(t,y)\in\mathbb{N}\times\mathbb{R}^d.
\end{equation}
Suppose now that each expected loss $f_{t,x}(\cdot)$ is $\mu$-strongly convex. In this case, \citet{perdomo2020performative} identified the optimal parameter regime $\varepsilon\beta<\mu$ wherein the repeated minimization procedure $y_{k+1} = \argmin_u\{ f_{t,y_{k}}(u) + r_t(u)\}$ converges to the unique equilibrium point $\bar{x}_t$ at a linear rate as $k\rightarrow\infty$  \citep[see][Theorem~3.5 and  Proposition~3.6]{perdomo2020performative}. The gradient stability property \eqref{jointgradvar} will lead us to the corresponding parameter regime in our generalized setting. 
}
\end{example}

\subsection{Decision-Dependent Framework}\label{sec:decframework}

We begin by recording the assumptions of our framework. Similar to the previous sections, we assume that the following standard regularity conditions hold:
\begin{enumerate}[label=(\roman*)]
\item Each function $f_{t,x}\colon\mathbb{R}^d\to\mathbb{R}$ is $\mu$-strongly convex and $C^1$-smooth with $L$-Lipschitz continuous gradient for some common parameters $\mu,L>0$.
\item Each regularizer $r_t\colon\mathbb{R}^d\to\mathbb{R}\cup\{\infty\}$ is proper, closed, and convex.
\end{enumerate}
For each $t\in\mathbb{N}$ and $x,u\in\mathbb{R}^d$, we let $\nabla f_{t,x}(u)$ denote the gradient of the function $f_{t,x}(\cdot)$ evaluated at $u$. In order to control the variation of the family $\{f_{t,x}(\cdot)\}$ in the decision variable $x$, we introduce the parameter
	\begin{equation*}
	\gamma:= \text{\raisebox{3pt}{$\sup_{\substack{t\in\mathbb{N},\, u, x, y\in\mathbb{R}^d \\ x\ne y}}$}} \frac{\|\nabla f_{t,x}(u) - \nabla f_{t,y}(u)\|	}{\|x-y\|}
	\end{equation*}
	and impose throughout Section~\ref{decision-dependent} the following stability property of the gradients with respect to the decision variable.

\begin{assumption}[Gradient stability in the decision variable]\label{assmp:spacegradvar}
	\textup{The following parameter regime holds: $\gamma < \mu.$}
\end{assumption}

In particular, $\gamma$ is finite and the following Lipschitz bound holds: 
		\begin{equation*}
				\sup_{t\in\mathbb{N},\,u\in\mathbb{R}^d}\|\nabla f_{t,x}(u) - \nabla f_{t,y}(u)\| \leq \gamma \|x-y\|\quad\text{for all}\quad x,y\in\mathbb{R}^d.
		\end{equation*}
Returning to Example~\ref{ex:perfpred}, it follows from (\ref{jointgradvar}) that Assumption~\ref{assmp:spacegradvar} holds whenever $\varepsilon\beta<\mu$. As the following lemma shows, the requirement $\gamma<\mu$ guarantees that for each $t\in\mathbb{N}$, the equilibrium point $\bar{x}_t$ is well defined and unique.

\begin{lemma}[Existence of equilibrium]\label{lem:eqpoint}
	For each $t\in\mathbb{N}$, the map $S_t\colon\R^d\rightarrow\R^d$ given by 
	\begin{equation*}
		S_t(x) = \argmin_{u\in\R^d}\, f_{t,x}(u)+r_t(u)
	\end{equation*}
	is $(\gamma/\mu)$-contractive and therefore has a unique fixed point $\bar{x}_t$ to which the repeated minimization procedure $y_{k+1} = S_t(y_{k})$ converges at a linear rate as $k\rightarrow\infty$.
\end{lemma}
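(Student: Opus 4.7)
The plan is to establish contractivity of $S_t$ directly from the first-order optimality conditions, and then invoke the Banach fixed-point theorem. Fix $t\in\mathbb{N}$ and two points $x,y\in\R^d$, and set $u_x := S_t(x)$ and $u_y := S_t(y)$. The first step is to write down the optimality conditions: since $u_x$ minimizes $f_{t,x}+r_t$, we have $-\nabla f_{t,x}(u_x) \in \partial r_t(u_x)$, and similarly $-\nabla f_{t,y}(u_y) \in \partial r_t(u_y)$. The monotonicity of the convex subdifferential $\partial r_t$ then yields
\begin{equation*}
\langle \nabla f_{t,x}(u_x) - \nabla f_{t,y}(u_y),\, u_y - u_x \rangle \geq 0.
\end{equation*}

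Next, I will split the left-hand side by adding and subtracting $\nabla f_{t,x}(u_y)$ and use $\mu$-strong convexity of $f_{t,x}$ on the first piece:
\begin{equation*}
\langle \nabla f_{t,x}(u_x) - \nabla f_{t,x}(u_y),\, u_y - u_x \rangle \leq -\mu\|u_x - u_y\|^2.
\end{equation*}
Rearranging the monotonicity inequality and applying Cauchy--Schwarz to the cross term then gives
\begin{equation*}
\mu\|u_x - u_y\|^2 \leq \langle \nabla f_{t,x}(u_y) - \nabla f_{t,y}(u_y),\, u_y - u_x \rangle \leq \|\nabla f_{t,x}(u_y) - \nabla f_{t,y}(u_y)\|\cdot\|u_x-u_y\|.
\end{equation*}
The Lipschitz bound in the decision variable (deduced from Assumption~\ref{assmp:spacegradvar}) upper-bounds the first factor by $\gamma\|x-y\|$, so dividing through by $\|u_x-u_y\|$ (the inequality is trivial if it vanishes) yields $\|S_t(x)-S_t(y)\|\leq (\gamma/\mu)\|x-y\|$, which is the desired contractivity constant.

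Finally, since $\gamma/\mu < 1$ by Assumption~\ref{assmp:spacegradvar} and $\R^d$ is complete, the Banach fixed-point theorem supplies a unique fixed point $\bar{x}_t$ of $S_t$, and the Picard iteration $y_{k+1} = S_t(y_k)$ converges to $\bar{x}_t$ at the linear rate $(\gamma/\mu)^k$. The only mildly delicate point is ensuring that $S_t$ is well defined as a single-valued map $\R^d\to\R^d$, which follows from the $\mu$-strong convexity of each $f_{t,x}+r_t$ (guaranteeing existence and uniqueness of the argmin); everything else is a routine manipulation of the optimality conditions. I do not anticipate a real obstacle here.
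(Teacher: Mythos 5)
Your proposal is correct and takes essentially the same route as the paper. The paper packages the key step as the coercivity consequence of $\mu$-strong convexity of $\varphi_{t,x}=f_{t,x}+r_t$ — namely that $w\in\partial\varphi_{t,x}(u)$ and $w'\in\partial\varphi_{t,x}(u')$ imply $\mu\|u-u'\|\leq\|w-w'\|$ — applied with $w=0$ at $S_t(x)$ and $w'=\nabla f_{t,x}(S_t(y))-\nabla f_{t,y}(S_t(y))\in\partial\varphi_{t,x}(S_t(y))$, whereas you unpack exactly that fact into its proof: monotonicity of $\partial r_t$ plus strong monotonicity of $\nabla f_{t,x}$ plus Cauchy--Schwarz, arriving at the identical bound $\|S_t(x)-S_t(y)\|\leq(\gamma/\mu)\|x-y\|$.
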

\begin{proof}
	Note first that $S_t$ is well defined by the strong convexity of each function $\varphi_{t,x}:= f_{t,x}+r_t$. Next, given $x,y\in\mathbb{R}^d$, observe that we have the first-order optimality conditions $0\in\partial\varphi_{t,x}(S_t(x))$ and $0\in\partial\varphi_{t,y}(S_t(y))$; this last inclusion implies $-\nabla f_{t,y}(S_t(y)) \in\partial r_t(S_t(y))$ and hence $\nabla f_{t,x}(S_t(y)) - \nabla f_{t,y}(S_t(y))\in \partial\varphi_{t,x}(S_t(y))$. On the other hand, the $\mu$-strong convexity of $\varphi_{t,x}$ implies that for all $u,u'\in \dom\varphi_{t,x}$, $w\in\partial\varphi_{t,x}(u)$, and $w'\in\partial\varphi_{t,x}(u')$, we have 
		\begin{equation*}
			\mu\|u - u'\| \leq \|w - w'\|. 
		\end{equation*}
		Thus, taking $u = S_t(x)$, $w = 0$, $u' = S_t(y)$, and $w'=\nabla f_{t,x}(S_t(y)) - \nabla f_{t,y}(S_t(y))$ yields $$\mu\|S_t(x)-S_t(y)\| \leq \|\nabla f_{t,x}(S_t(y)) - \nabla f_{t,y}(S_t(y))\| \leq \gamma\|x-y\|,$$ where the last inequality holds by the definition of $\gamma$. Hence $\|S_t(x)-S_t(y)\| \leq (\gamma/\mu)\|x-y\|$, so $S_t$ is $(\gamma/\mu)$-contractive since $\gamma<\mu$. An application of the Banach fixed-point theorem completes the proof.
\end{proof}

It is easy to see that the parameter regime $\gamma <\mu$ is optimal in the sense that equilibrium points can fail to exist whenever $\gamma \geq \mu$, as illustrated in the following example.

\begin{example}[Optimality of the regime $\gamma<\mu$]
	\textup{Consider the time-independent family of functions given by $f_{x}(u) = \tfrac{1}{2}\|u - ax - b\|^2$ for any fixed constant $a\geq 1$ and vector $b\in\mathbb{R}^d$. Then $f_x$ is $1$-strongly convex and smooth with $1$-Lipschitz continuous gradient, and  
	    \begin{equation*}
		\gamma = \text{\raisebox{3pt}{$\sup_{\substack{u, x, y\in\mathbb{R}^d \\ x\ne y}}$}} \frac{\|\nabla f_{x}(u) - \nabla f_{y}(u)\|	}{\|x-y\|} = a\geq 1 = \mu.
		\end{equation*}
	   Now let $\mathcal{X}\subset\mathbb{R}^d$ be a nonempty closed convex set and take $r = \delta_\mathcal{X}$ to be the  convex indicator of $\mathcal{X}$. Then $\bar{x}$ is an equilibrium point of the decoupled family of problems $\min_u\{f_{x}(u)+r(u)\}$ if and only if
	    $$\bar{x}\in \argmin_{u\in\R^d}\{f_{\bar x}(u)+r(u)\} = \{\proj_{\mathcal{X}}(a\bar{x}+b)\}.$$
	Taking $a=1$, $b\neq0$, and $\mathcal{X}=\mathbb{R}^d$, we see that $\gamma = \mu$ and no equilibrium point exists. On the other hand, if we take $\mathcal{X} = \mathbb{R}_{+}^d$ to be the nonnegative orthant and $b>0$, then for any $a \geq 1$ and $x\in \mathcal{X}$ we have
	\begin{equation*}
		x < ax + b = \proj_{\mathcal{X}}(ax+b)
	\end{equation*}
	and hence no equilibrium point exists for this problem with any value $\gamma \in [\mu,\infty)$.}
\end{example}

Next, we turn to tracking the equilibria $\bar{x}_t$ furnished by Lemma~\ref{lem:eqpoint} using a decision-dependent proximal stochastic gradient method. Specifically, we make the standing assumption that at every time $t$, and at every query point $x$, the learner may obtain an \emph{unbiased estimator} $\widetilde{\nabla}f_{t,x}(x)$ of the true gradient $\nabla f_{t,x}(x)$. With this oracle access, the decision-dependent proximal stochastic gradient method---recorded as Algorithm~\ref{alg:decsgd} below---selects in each iteration $t$ the stochastic gradient $g_t = \widetilde{\nabla}f_{t,x_t}(x_t)$ and takes the step 
\begin{equation*}
	x_{t+1} := \prox_{\eta_t r_t}(x_t-\eta_tg_t) = \argmin_{u\in\mathbb{R}^d}\left\{r_t(u)+\tfrac{1}{2\eta_t}\|u-(x_t-\eta_tg_t)\|^2\right\} 
\end{equation*}
using step size $\eta_t>0$. As before, our goal is to obtain efficiency estimates for this procedure that hold both in expectation and with high probability. 

\begin{algorithm}[h!]
	\caption{Decision-Dependent PSG\hfill $\mathtt{D\text{-}PSG}(x_{0},\{\eta_t\},T)$}\label{alg:decsgd}
	
	{\bf Input}: initial $x_0$ and step sizes $\{\eta_t\}_{t=0}^{T-1}\subset (0,\infty)$
	
	{\bf Step} $t=0,\ldots,T-1$: 
	\begin{equation*}
		\begin{aligned}
			&\text{Select~} g_t = \widetilde{\nabla} f_{t,x_t}(x_t) \\
			&\text{Set~} x_{t+1}=\prox_{\eta_t r_t}(x_t-\eta_t g_t)
		\end{aligned}
	\end{equation*}
	
	{\bf Return} $x_T$
	
\end{algorithm}

The guarantees we obtain allow both the iterates $x_t$ \emph{and the equilibria} $\bar{x}_t$ to evolve stochastically. Given $\{x_t\}$ and $\{g_t\}$ as in Algorithm~\ref{alg:decsgd}, we let $$z_t := \nabla f_{t,x_t}(x_t) - g_t$$ denote the \emph{gradient noise} at time $t$ and we impose the following assumption modeling stochasticity on a fixed probability space $(\Omega,\mathcal{F},\mathbb{P})$ throughout Section~\ref{decision-dependent}.

\begin{assumption}[Stochastic framework]\label{assmp:stochframdec}
	\textup{There exists a probability space $(\Omega,\mathcal{F},\mathbb{P})$ with filtration $(\mathcal{F}_t)_{t\geq0}$ such that $\mathcal{F}_0=\left\{\emptyset,\Omega\right\}$ and the following two conditions hold for all $t\geq0$:
		\begin{enumerate}[label=(\roman*)]
			\item\label{it1dec} $x_t,\bar{x}_t\colon\Omega\rightarrow\mathbb{R}^d$ are $\mathcal{F}_t$-measurable.
			\item\label{it2dec} $z_t\colon\Omega\rightarrow\mathbb{R}^d$ is $\mathcal{F}_{t+1}$-measurable with $\mathbb{E}[z_t\,|\,\mathcal{F}_t]=0$.
		\end{enumerate} 
	}
\end{assumption}

The first item of Assumption~\ref{assmp:stochframdec} formalizes the assertion that $x_t$ and $\bar{x}_t$ are fully determined by information up to time $t$. The second item of Assumption~\ref{assmp:stochframdec} formalizes the assertion that the gradient noise $z_t$ is fully determined by information up to time $t+1$ and has zero mean conditioned on the information up to time $t$, i.e., $g_t$ is an unbiased estimator of $\nabla f_{t,x_t}(x_t)$; for example, this holds naturally in Example~\ref{ex:perfpred} if we take $g_t = \nabla\ell(x_t,\xi_t)$ with $\xi_t\sim \mathcal{D}(t,x_t)$.

Finally, we fix some notation to be used henceforth. We define the positive parameter $$\bar{\mu}:=\mu - \gamma,$$ and we define the \emph{equilibrium drift} $\bar{\Delta}_t$ and the \emph{temporal gradient drift} $\bar{G}_{i,t}$ to be the random variables $$\bar{\Delta}_t := \|\bar{x}_t - \bar{x}_{t+1}\|\quad\text{and}\quad\bar{G}_{i,t}:=\sup_{u,x\in\mathbb{R}^d}\|\nabla f_{i,x}(u) - \nabla f_{t,x}(u)\|.$$ Note that in the setting of Example~\ref{ex:perfpred}, the estimate (\ref{jointgradvar}) implies $\bar{G}_{i,t}\leq \theta\beta|i-t|$ and hence $\bar{\Delta}_t \leq \theta\beta/\bar\mu$ by Lemma~\ref{lem:grad_vs_dist}, provided the regularizers $r_t\equiv r$ are identical for all times $t$. We also set $$\varphi_t :=f_{t,x_t} + r_t,\quad  x_t^\star:=\argmin\varphi_t,\quad\varphi_t^\star := \min\varphi_t$$ and $$\psi_t := f_{t,\bar{x}_t}+r_t\quad\text{and}\quad\psi_t^\star:=\min\psi_t.$$ In particular, the equilibrium point $\bar{x}_t$ is the minimizer of the \emph{equilibrium function} $\psi_t$, and $\psi_t^\star$ denotes its minimum value. Observe that when $\gamma=0$, we have $\varphi_t = \psi_t +c_t$ for some constant of integration $c_t$ and hence we recover the setting of Section~\ref{sec:framework} with $x_t^\star = \bar{x}_t$.

\subsection{Tracking the Equilibrium Point}\label{sec:eqtrck}

In a nutshell, the results of Section~\ref{sec:trckminimizer} extend directly to tracking the equilibrium points $\bar x_t$, with $\mu$ replaced by $\bar \mu$ and $\Delta$ replaced by $\bar \Delta$ (defined in Assumption~\ref{assump:sec_moment_dec} below). We begin with bounding the expected value $\mathbb{E}\|x_t-\bar{x}_t\|^2$. Due to the fact that Algorithm~\ref{alg:decsgd} takes steps on the current functions $\varphi_t$ but the minimizers we aim to track are those of the equilibrium functions $\psi_t$, we will rely at the outset on controlling the function gaps
\begin{equation*}
	\big[f_{i,x}(u) - f_{i,x}(v)\big] - \big[f_{t,y}(u) - f_{t,y}(v)\big] 
\end{equation*}
(at first for $i = t$, and later for general $i$ and $t$). We achieve this control in terms of the temporal gradient drift. 
\begin{lemma}[Function gap variation]\label{lem:funcgap}
	For all $(i,x),(t,y)\in\mathbb{N}\times\mathbb{R}^d$ and $u,v\in\mathbb{R}^d$, we have
	\begin{equation*}
		\big\lvert\big[f_{i,x}(u) - f_{i,x}(v)\big] - \big[f_{t,y}(u) - f_{t,y}(v)\big]\big\rvert \leq \big(\bar{G}_{i,t}+\gamma\|x-y\|\big)\|u-v\|.  
	\end{equation*}
\end{lemma}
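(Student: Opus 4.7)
The plan is to reduce the difference of gaps to the supremum norm of a gradient difference and then split that difference via triangle inequality into a purely temporal part controlled by $\bar{G}_{i,t}$ and a purely decision-variable part controlled by $\gamma$.

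First I would introduce the auxiliary function $h\colon \mathbb{R}^d \to \mathbb{R}$ defined by $h(w) := f_{i,x}(w) - f_{t,y}(w)$. Since each $f_{t,x}$ is $C^1$-smooth by our standing assumption, $h$ is continuously differentiable with gradient $\nabla h(w) = \nabla f_{i,x}(w) - \nabla f_{t,y}(w)$, and the quantity we wish to bound is exactly $|h(u) - h(v)|$.

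Next, I would apply the fundamental theorem of calculus along the segment from $v$ to $u$, writing
\begin{equation*}
h(u) - h(v) = \int_{0}^{1} \langle \nabla h(v + s(u-v)),\, u - v \rangle\, ds,
\end{equation*}
and using Cauchy--Schwarz to obtain $|h(u) - h(v)| \leq \|u-v\| \cdot \sup_{w\in \mathbb{R}^d} \|\nabla f_{i,x}(w) - \nabla f_{t,y}(w)\|$.

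Finally, I would add and subtract $\nabla f_{t,x}(w)$ and invoke the triangle inequality:
\begin{equation*}
\|\nabla f_{i,x}(w) - \nabla f_{t,y}(w)\| \leq \|\nabla f_{i,x}(w) - \nabla f_{t,x}(w)\| + \|\nabla f_{t,x}(w) - \nabla f_{t,y}(w)\|.
\end{equation*}
The first summand, upon taking the supremum over $w$, is at most $\bar{G}_{i,t}$ by definition, and the second summand is at most $\gamma\|x-y\|$ by the Lipschitz bound derived from Assumption~\ref{assmp:spacegradvar}. Combining these two bounds with the FTC estimate yields the claimed inequality. There is no real obstacle here; the argument is a routine application of the fundamental theorem of calculus together with a single triangle-inequality splitting designed to isolate the two distinct sources of variation.
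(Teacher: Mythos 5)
Your proof is correct and takes essentially the same route as the paper's: FTC along the segment from $v$ to $u$, Cauchy--Schwarz, and the triangle-inequality split $\|\nabla f_{i,x}-\nabla f_{t,y}\|\le\|\nabla f_{i,x}-\nabla f_{t,x}\|+\|\nabla f_{t,x}-\nabla f_{t,y}\|\le\bar G_{i,t}+\gamma\|x-y\|$. The only cosmetic difference is that you bound $|h(u)-h(v)|$ directly whereas the paper proves the one-sided inequality and then symmetrizes by swapping $(i,x)$ and $(t,y)$.
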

\begin{proof}
	Fix $(i,x),(t,y)\in\mathbb{N}\times\mathbb{R}^d$ and $u,v\in\mathbb{R}^d$, and set $u_\tau := v + \tau(u-v)$ for all $\tau\in[0,1]$. By the fundamental theorem of calculus and Cauchy-Schwarz, we have
	\begin{equation*}
		\begin{split}
			\big[f_{i,x}(u) - f_{i,x}(v)\big] - \big[f_{t,y}(u) - f_{t,y}(v)\big] &= \int_{0}^{1}\big\langle \nabla f_{i,x}(u_\tau) - \nabla f_{t,y}(u_\tau), u-v\big\rangle\,d\tau \\
			&\leq \big(\bar{G}_{i,t}+\gamma\|x-y\|\big)\|u-v\|.
		\end{split}
	\end{equation*}
	Switching $(i,x)$ and $(t,y)$ completes the proof.
\end{proof}

Using Lemmas~\ref{lem:onestep0} and \ref{lem:funcgap}, we obtain the following equilibrium one-step improvement. 

\begin{lemma}[Equilibrium one-step improvement]\label{lem:onestep0dec}
	The iterates $\{x_t\}$ produced by Algorithm~\ref{alg:decsgd} with $\eta_t<1/L$ satisfy the bound:
	\begin{equation*}
		\begin{split}
			2\eta_t(\psi_t(x_{t+1})-\psi_t^\star) \leq (1-\bar\mu\eta_t)\|x_t-\bar{x}_t\|^2 - (1 &- \gamma\eta_t)\|x_{t+1}-\bar{x}_t\|^2 \\
			&+ 2\eta_t\langle z_t, x_t - \bar{x}_t \rangle + \tfrac{\eta_t^2}{1-L\eta_t}\|z_t\|^2.
		\end{split}
	\end{equation*}
\end{lemma}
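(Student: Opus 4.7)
The plan is to reduce the equilibrium one-step improvement to the already-stated one-step improvement (Lemma~\ref{lem:onestep0}) applied to the current objective $\varphi_t = f_{t,x_t}+r_t$, and then convert the resulting progress on $\varphi_t$ into progress on the equilibrium objective $\psi_t = f_{t,\bar x_t}+r_t$ using the function gap variation bound of Lemma~\ref{lem:funcgap}.

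First, I would invoke Lemma~\ref{lem:onestep0} with $x=\bar{x}_t$ to get
\begin{equation*}
2\eta_t\bigl(\varphi_t(x_{t+1})-\varphi_t(\bar x_t)\bigr)\leq(1-\mu\eta_t)\|x_t-\bar x_t\|^2-\|x_{t+1}-\bar x_t\|^2+2\eta_t\langle z_t,x_t-\bar x_t\rangle+\tfrac{\eta_t^2}{1-L\eta_t}\|z_t\|^2.
\end{equation*}
The only ingredient missing is a comparison between $\varphi_t(x_{t+1})-\varphi_t(\bar x_t)$ and $\psi_t(x_{t+1})-\psi_t^\star$. Since $\psi_t^\star=\psi_t(\bar x_t)$ and $\varphi_t$ and $\psi_t$ share the regularizer $r_t$, their difference amounts to $[f_{t,x_t}(u)-f_{t,x_t}(v)]-[f_{t,\bar x_t}(u)-f_{t,\bar x_t}(v)]$ evaluated at $u=x_{t+1}$, $v=\bar x_t$. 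Applying Lemma~\ref{lem:funcgap} with $i=t$ (so $\bar G_{t,t}=0$), $x=x_t$, $y=\bar x_t$ yields the clean estimate
\begin{equation*}
\psi_t(x_{t+1})-\psi_t^\star\leq\bigl(\varphi_t(x_{t+1})-\varphi_t(\bar x_t)\bigr)+\gamma\|x_t-\bar x_t\|\cdot\|x_{t+1}-\bar x_t\|.
\end{equation*}

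Next I would multiply by $2\eta_t$, substitute the one-step improvement bound above, and tame the cross term via Young's inequality
\begin{equation*}
2\gamma\eta_t\|x_t-\bar x_t\|\cdot\|x_{t+1}-\bar x_t\|\leq\gamma\eta_t\|x_t-\bar x_t\|^2+\gamma\eta_t\|x_{t+1}-\bar x_t\|^2.
\end{equation*}
Collecting terms, the coefficient of $\|x_t-\bar x_t\|^2$ becomes $(1-\mu\eta_t)+\gamma\eta_t=1-\bar\mu\eta_t$ using the definition $\bar\mu=\mu-\gamma$, and the coefficient of $\|x_{t+1}-\bar x_t\|^2$ becomes $-(1-\gamma\eta_t)$, which is exactly the claimed inequality.

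There is no real obstacle here; the proof is essentially an application of Lemma~\ref{lem:onestep0} plus Lemma~\ref{lem:funcgap} with a single use of Young's inequality. The one subtle point to flag is the appearance of $\gamma$ on both sides of the recursion—a larger $\gamma$ weakens the contraction factor on $\|x_t-\bar x_t\|^2$ and erodes the negative term on $\|x_{t+1}-\bar x_t\|^2$, which is ultimately why the decision-dependent analysis requires $\gamma<\mu$ (i.e., $\bar\mu>0$) to mirror the pure time-drift results of Section~\ref{sec:trckminimizer}.
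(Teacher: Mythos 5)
Your proof is correct and is essentially identical to the paper's: both invoke Lemma~\ref{lem:onestep0} with $x=\bar x_t$, apply Lemma~\ref{lem:funcgap} (at $i=t$, so $\bar G_{t,t}=0$) to pass from $\varphi_t$ to $\psi_t$, and use Young's inequality to split the cross term $\gamma\|x_t-\bar x_t\|\|x_{t+1}-\bar x_t\|$ evenly between the two squared distances. The only difference is cosmetic ordering (the paper applies Young's inequality before multiplying by $2\eta_t$; you do it after), and the resulting coefficient bookkeeping is the same.
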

\begin{proof}
	By Lemma~\ref{lem:funcgap}, we have
	\begin{equation*}
		\begin{split}	
			\big[\psi_t(x_{t+1})-\psi_t(\bar{x}_t)\big] - &\big[\varphi_t(x_{t+1}) - \varphi_t(\bar{x}_t)\big]\\ &= 	\big[f_{t,\bar{x}_t}(x_{t+1})-f_{t,\bar{x}_t}(\bar{x}_t)\big] - \big[f_{t,x_t}(x_{t+1})-f_{t,x_t}(\bar{x}_t)\big]\\
			&\leq \gamma\|x_t-\bar{x}_t\|\|x_{t+1}-\bar{x}_t\|.
		\end{split} 
	\end{equation*}
	Hence
	\begin{equation*}
		\psi_t(x_{t+1})-\psi_t^\star \leq \varphi_t(x_{t+1}) - \varphi_t(\bar{x}_t) + \gamma\|x_t-\bar{x}_t\|\|x_{t+1}-\bar{x}_t\|.
	\end{equation*}
	Moreover, Young's inequality implies
	\begin{equation*}
		 \gamma\|x_t-\bar{x}_t\|\|x_{t+1}-\bar{x}_t\| \leq \tfrac{\gamma}{2}\|x_t - \bar{x}_t\|^2 + \tfrac{\gamma}{2}\|x_{t+1}-\bar{x}_t\|^2.
	\end{equation*}
	Multiplying through by $2\eta_t$ and applying Lemma~\ref{lem:onestep0} completes the proof.
\end{proof}

For simplicity, we state the main results under the assumption that the second moments $\mathbb{E}\,\bar{\Delta}_t^2$ and $\mathbb{E}\|z_t\|^2$ are uniformly bounded; more general guarantees that take into account weighted averages of the moments and allow for time-dependent learning rates follow from Lemma~\ref{lem:onestep0dec} as well. 

\begin{assumption}[Bounded second moments]\label{assump:sec_moment_dec}
	\textup{There exist constants $\bar{\Delta},\sigma>0$ such that the following two conditions hold for all $t\geq0$:
		\begin{enumerate}[label=(\roman*)]
			\item {\bf (Drift) } The equilibrium drift $\bar{\Delta}_t$ satisfies $\mathbb{E}\,\bar{\Delta}_t^2 \leq \bar{\Delta}^2$. 
			\item {\bf (Noise) } The gradient noise $z_t$ satisfies $\mathbb{E}\|z_t\|^2\leq\sigma^2$.
		\end{enumerate}
	}
\end{assumption}

The following theorem establishes an expected improvement guarantee for Algorithm~\ref{alg:decsgd}, thereby extending Theorem~\ref{thm:exp_dist}; see Section~\ref{minimizerproofsexp} for the precise statement (Corollary~\ref{cor1}) and proof. 

\begin{theorem}[Expected distance]\label{thm:exp_dist_dec}
	Suppose that Assumption~\ref{assump:sec_moment_dec} holds. Then the iterates produced by Algorithm~\ref{alg:decsgd} with constant learning rate $\eta\leq 1/2L$ satisfy the  bound:  
	\begin{equation*}
		\mathbb{E}\|x_t-\bar{x}_t\|^2 \lesssim \underbrace{(1-\bar{\mu}\eta)^t\|x_0-\bar{x}_0\|^2}_\text{optimization} + \underbrace{\frac{\eta\sigma^2}{\bar\mu}}_\text{noise} + \underbrace{\bigg(\frac{\bar\Delta}{\bar{\mu}\eta}\bigg)^2}_\text{drift}\!.
	\end{equation*}
\end{theorem}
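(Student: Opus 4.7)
The plan is to chain Lemma~\ref{lem:onestep0dec} into a one-step recursion on $\mathbb{E}\|x_t-\bar{x}_t\|^2$ and unroll it geometrically, following the strategy of Theorem~\ref{thm:exp_dist} but accommodating two new ingredients: (i) the leftover term $(1-\gamma\eta_t)\|x_{t+1}-\bar{x}_t\|^2$ on the right-hand side of Lemma~\ref{lem:onestep0dec}, which must be combined with the nonnegative function gap $\psi_t(x_{t+1})-\psi_t^\star$ to recover a contraction factor driven by the reduced strong convexity modulus $\bar{\mu}=\mu-\gamma$; and (ii) the mismatch between the distance $\|x_{t+1}-\bar{x}_t\|$ that appears naturally after one step and the distance $\|x_{t+1}-\bar{x}_{t+1}\|$ that we actually want to track, which brings in the equilibrium drift.

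First I would apply $\mu$-strong convexity of $\psi_t=f_{t,\bar{x}_t}+r_t$ at its minimizer $\bar{x}_t$ to write $\psi_t(x_{t+1})-\psi_t^\star \geq \tfrac{\mu}{2}\|x_{t+1}-\bar{x}_t\|^2$. Substituting this into Lemma~\ref{lem:onestep0dec} and using the identity $\mu\eta_t + (1-\gamma\eta_t) = 1+\bar{\mu}\eta_t$ produces
\begin{equation*}
    (1+\bar{\mu}\eta_t)\|x_{t+1}-\bar{x}_t\|^2 \leq (1-\bar{\mu}\eta_t)\|x_t-\bar{x}_t\|^2 + 2\eta_t\langle z_t,\, x_t-\bar{x}_t\rangle + \tfrac{\eta_t^2}{1-L\eta_t}\|z_t\|^2.
\end{equation*}
Specializing to constant step size $\eta\leq 1/2L$ and taking total expectations, the cross term vanishes because Assumption~\ref{assmp:stochframdec} makes $x_t,\bar{x}_t$ measurable with respect to $\mathcal{F}_t$ and $z_t$ conditionally mean-zero given $\mathcal{F}_t$, while $\tfrac{\eta^2}{1-L\eta}\mathbb{E}\|z_t\|^2 \leq 2\eta^2\sigma^2$ by Assumption~\ref{assump:sec_moment_dec}. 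Dividing by $(1+\bar{\mu}\eta)$ and using the elementary bound $(1-\bar{\mu}\eta)/(1+\bar{\mu}\eta)\leq 1-\bar{\mu}\eta$ yields the intermediate recursion $\mathbb{E}\|x_{t+1}-\bar{x}_t\|^2 \leq (1-\bar{\mu}\eta)\mathbb{E}\|x_t-\bar{x}_t\|^2 + 2\eta^2\sigma^2$.

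To swap $\bar{x}_t$ for $\bar{x}_{t+1}$ on the left, I would apply Young's inequality: for any $\alpha>0$,
\begin{equation*}
    \|x_{t+1}-\bar{x}_{t+1}\|^2 \leq (1+\alpha)\|x_{t+1}-\bar{x}_t\|^2 + \big(1+\tfrac{1}{\alpha}\big)\bar{\Delta}_t^2.
\end{equation*}
Choosing $\alpha = \tfrac{\bar{\mu}\eta/2}{1-\bar{\mu}\eta}$ so that $(1+\alpha)(1-\bar{\mu}\eta) = 1-\bar{\mu}\eta/2$, and invoking $\mathbb{E}\,\bar{\Delta}_t^2\leq \bar{\Delta}^2$ together with $1+1/\alpha \lesssim 1/(\bar{\mu}\eta)$, I obtain a self-improving recursion of the form
\begin{equation*}
    \mathbb{E}\|x_{t+1}-\bar{x}_{t+1}\|^2 \leq \big(1-\tfrac{\bar{\mu}\eta}{2}\big)\mathbb{E}\|x_t-\bar{x}_t\|^2 + 4\eta^2\sigma^2 + \tfrac{C\bar{\Delta}^2}{\bar{\mu}\eta}
\end{equation*}
for an absolute constant $C>0$.

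Unrolling this linear recursion over $t$ iterations and bounding the geometric sum $\sum_{k=0}^{t-1}(1-\bar{\mu}\eta/2)^k$ by $2/(\bar{\mu}\eta)$ distributes the two constant terms into exactly the claimed decomposition: an optimization term proportional to $(1-\bar{\mu}\eta/2)^t\|x_0-\bar{x}_0\|^2$, a noise term of order $\eta\sigma^2/\bar{\mu}$, and a drift term of order $\bar{\Delta}^2/(\bar{\mu}\eta)^2$, with all absolute constants absorbed into $\lesssim$ (the informal factor $(1-\bar{\mu}\eta)^t$ in the theorem statement is really $(1-c\bar{\mu}\eta)^t$ for an absolute $c\in(0,1)$, as will be recorded in the formal Corollary~\ref{cor1}). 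The main obstacle I anticipate is the bookkeeping in the Young step: the parameter $\alpha$ must be chosen small enough that the multiplicative penalty $(1+\alpha)$ degrades the contraction rate only by an absolute constant factor, yet large enough that $1+1/\alpha = O(1/(\bar{\mu}\eta))$ so that after summing the geometric series the drift contribution scales as $\bar{\Delta}^2/(\bar{\mu}\eta)^2$ rather than something larger; the scaling $\alpha\asymp\bar{\mu}\eta$ is the unique choice that achieves both simultaneously.
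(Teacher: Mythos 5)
Your overall strategy — start from Lemma~\ref{lem:onestep0dec}, invoke $\mu$-strong convexity of $\psi_t$ to absorb the function gap, swap $\bar{x}_t$ for $\bar{x}_{t+1}$ via Young's inequality, take expectations, and unroll — is exactly the paper's route to Theorem~\ref{thm:exp_dist_dec} (via Lemma~\ref{lem:distrecur}, Proposition~\ref{distbound}, and Corollary~\ref{cor1}). The noise and drift terms come out with the correct scalings $\eta\sigma^2/\bar{\mu}$ and $(\bar\Delta/\bar\mu\eta)^2$.

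However, there is a real loss in your ordering of the two algebraic moves. You first divide the estimate $(1+\bar\mu\eta)\|x_{t+1}-\bar{x}_t\|^2 \leq (1-\bar\mu\eta)\|x_t-\bar{x}_t\|^2 + \cdots$ through by $(1+\bar\mu\eta)$ — already weakening the contraction to roughly $1-\bar\mu\eta$ — and only \emph{then} apply Young's inequality with $\alpha$ tuned so that $(1+\alpha)(1-\bar\mu\eta)=1-\bar\mu\eta/2$, compounding a second loss. The net contraction factor you obtain is $(1-\bar\mu\eta/2)^t$. The paper avoids this entirely by applying Young's inequality \emph{first}, directly to $\|x_{t+1}-\bar{x}_{t+1}\|^2$ with the specific parameter $\bar\mu\eta_t$, to get
\begin{equation*}
\|x_{t+1}-\bar{x}_{t+1}\|^2 \leq (1+\bar\mu\eta_t)\|x_{t+1}-\bar{x}_t\|^2 + \Big(1+\tfrac{1}{\bar\mu\eta_t}\Big)\bar\Delta_t^2,
\end{equation*}
so the $(1+\bar\mu\eta_t)$ on the right cancels exactly against the $(1+\bar\mu\eta_t)$ produced by strong convexity, and the factor $(1-\bar\mu\eta_t)$ passes through undamaged. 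The upshot is $(1-\bar\mu\eta)^t$, not $(1-\bar\mu\eta/2)^t$, and these are not within a numerical constant of each other uniformly in $t$ and $\eta$. Your parenthetical remark that the formal Corollary~\ref{cor1} records $(1-c\bar\mu\eta)^t$ for some $c<1$ is incorrect: Corollary~\ref{cor1} records precisely $(1-\bar\mu\eta)^t\|x_0-\bar{x}_0\|^2 + 2(\eta\sigma^2/\bar\mu + (\bar\Delta/\bar\mu\eta)^2)$. So as written your argument proves a genuinely weaker bound than the one claimed; the fix is simply to reverse the order of the Young step and the division, which recovers the paper's argument verbatim.
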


With Theorem~\ref{thm:exp_dist_dec} in hand, we are led to define the following asymptotic tracking error of Algorithm~\ref{alg:decsgd} corresponding to $\mathbb{E}\|x_t-\bar{x}_t\|^2$, together with the corresponding optimal step size:
\begin{equation*}
	\bar{\mathcal{E}}:=\min_{\eta\in (0,1/2L]} \left\{\frac{\eta\sigma^2}{\bar\mu} + \left(\frac{\bar\Delta}{\bar\mu\eta}\right)^2\right\} \quad\text{and}\quad \bar\eta_\star := \min\left\{\frac{1}{2L}, \left(\frac{2\bar{\Delta}^2}{\bar\mu\sigma^2}\right)^{1/3}\right\}\!.
\end{equation*}
Plugging $\bar\eta_{\star}$ into the definition of $\bar{\mathcal{E}}$, we see that Algorithm~\ref{alg:decsgd} exhibits qualitatively different behaviors in settings corresponding to high or low drift-to-noise ratio $\bar\Delta/\sigma$:
\begin{equation*}
	\bar{\mathcal{E}}\asymp\begin{cases} 
		\frac{\sigma^2}{\bar\mu L}+\left(\frac{L\bar\Delta}{\bar\mu}\right)^2 & \text{if } \frac{\bar\Delta}{\sigma}\geq \sqrt{\frac{\bar\mu}{16L^3}} \\
		\left(\frac{\bar\Delta\sigma^2}{\bar\mu^2}\right)^{2/3} & \text{otherwise}.
	\end{cases}
\end{equation*}
As before, the \emph{high drift-to-noise regime} $\bar\Delta/\sigma \geq \sqrt{\bar\mu/16L^3}$ is uninteresting from the viewpoint of stochastic optimization and we focus on the \emph{low drift-to-noise regime} $\bar\Delta/\sigma< \sqrt{\bar\mu/16L^3}$. The following theorem extends Theorem~\ref{thm:dist_schedul}; see Theorem~\ref{thm:time_to_track} for the formal statement and proof.

\begin{theorem}[Time to track in expectation, informal]\label{thm:dist_schedul_dist}
	Suppose that Assumption~\ref{assump:sec_moment_dec} holds. Then there is a learning rate schedule $\{\eta_t\}$ such that Algorithm~\ref{alg:decsgd} produces a point $x_t$ satisfying $$\mathbb{E}\|x_t-\bar{x}_t\|^2\lesssim \bar{\mathcal{E}} \quad\text{in time}\quad t\lesssim \frac{L}{\bar\mu}\log\!\left(\frac{ \|x_0-\bar{x}_0\|^2}{\bar{\mathcal{E}}}\right) + \frac{\sigma^2}{\bar\mu^2\bar{\mathcal{E}}}.$$
\end{theorem}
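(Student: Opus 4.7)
The plan is to mirror the step decay argument that underlies Theorem~\ref{thm:dist_schedul} from the time-only setting, substituting $\bar\mu$ for $\mu$ and $\bar\Delta$ for $\Delta$ throughout and invoking Theorem~\ref{thm:exp_dist_dec} in place of Theorem~\ref{thm:exp_dist}. I would first dispose of the high drift-to-noise regime, in which $\bar\eta_\star = 1/2L$ and a single direct application of Theorem~\ref{thm:exp_dist_dec} with constant step $\eta \equiv 1/2L$ already yields $\mathbb{E}\|x_t-\bar x_t\|^2 \lesssim \bar{\mathcal{E}}$ after $t \lesssim (L/\bar\mu)\log(\|x_0-\bar x_0\|^2/\bar{\mathcal{E}})$ iterations; the $\sigma^2/(\bar\mu^2\bar{\mathcal{E}})$ term in the target bound is vacuous here. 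The remaining work is in the low drift-to-noise regime $\bar\Delta/\sigma<\sqrt{\bar\mu/16L^3}$, where $\bar\eta_\star = (2\bar\Delta^2/\bar\mu\sigma^2)^{1/3} < 1/2L$ has a nontrivial scaling.

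Next, I would partition the run of Algorithm~\ref{alg:decsgd} into epochs indexed by $k=0,1,\ldots,K$. Take $\eta_0 = 1/2L$ and, in each subsequent epoch, halve the step size toward the target: $\eta_{k+1} := \max\{(\eta_k+\bar\eta_\star)/2,\bar\eta_\star\}$, so after $K \asymp \log_2(1/(L\bar\eta_\star))$ epochs one reaches $\eta_k \equiv \bar\eta_\star$ and holds it fixed thereafter. For each epoch $k\geq 1$, I would condition on the $\sigma$-algebra at its start and apply Theorem~\ref{thm:exp_dist_dec} as a warm start, using the final iterate of epoch $k-1$ as initialization. The noise+drift floor at step $\eta_k$ is $(\eta_k\sigma^2/\bar\mu)+(\bar\Delta/\bar\mu\eta_k)^2$, and since $\eta_{k-1}\leq 2\eta_k$ the entering error is within an absolute constant factor of this floor. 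Hence only $\tau_k \asymp 1/(\bar\mu\eta_k)$ iterations suffice to bring the optimization term back down to the noise+drift level at $\eta_k$.

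For bookkeeping of the iteration count, the first epoch is the one that needs to kill the large initial distance $\|x_0-\bar x_0\|^2$; with $\eta_0 = 1/2L$, Theorem~\ref{thm:exp_dist_dec} gives $\tau_0 \asymp (L/\bar\mu)\log(\|x_0-\bar x_0\|^2/\bar{\mathcal{E}})$. The remaining epochs form a geometric series in $1/\eta_k$ dominated by its last term, so
\begin{equation*}
\sum_{k=1}^{K}\tau_k \asymp \frac{1}{\bar\mu\bar\eta_\star} = \frac{1}{\bar\mu}\left(\frac{\bar\mu\sigma^2}{2\bar\Delta^2}\right)^{\!1/3} \asymp \frac{\sigma^2}{\bar\mu^2\bar{\mathcal{E}}},
\end{equation*}
where the final equivalence uses $\bar{\mathcal{E}}\asymp(\bar\Delta\sigma^2/\bar\mu^2)^{2/3}$ in the low drift-to-noise regime. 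Summing the two contributions yields exactly the claimed bound. Propagating the error across epochs uses the tower law together with $\mathcal{F}_t$-measurability of $x_t,\bar x_t$ from Assumption~\ref{assmp:stochframdec}.

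I expect no genuine obstacle here: the decision-dependent subtleties were already absorbed into Lemma~\ref{lem:onestep0dec} and Theorem~\ref{thm:exp_dist_dec} through the replacement $\mu \rightsquigarrow \bar\mu = \mu-\gamma$, so the step decay analysis reduces to the same geometric-sum argument of Ghadimi--Lan that drives Theorem~\ref{thm:dist_schedul}. The only point requiring mild care is that the target $\bar x_t$ is itself random, so the warm-start inequality at epoch $k$ must be taken in expectation conditional on the history up to the start of that epoch; iterating expectations then gives the unconditional bound. The formal statement of the schedule and constants is deferred to Theorem~\ref{thm:time_to_track}.
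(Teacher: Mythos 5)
Your proposal is correct and follows the same route as the paper's Theorem~\ref{thm:time_to_track}: a step decay schedule with $\eta_k$ taken as the midpoint of $\eta_{k-1}$ and $\bar\eta_\star$, epoch lengths $T_0 \asymp (L/\bar\mu)\log(\cdot)$ and $T_k\asymp 1/(\bar\mu\eta_k)$ for $k\geq1$, and a geometric iteration count dominated by the final term $\asymp \sigma^2/(\bar\mu^2\bar{\mathcal{E}})$. The paper formalizes your warm-start reasoning as an induction showing $\mathbb{E}\|X_k-\bar X_k\|^2\leq 2\bar E_{k-1}$ with $\bar E_k := (2/\bar\mu)\big(\eta_k\sigma^2 + \bar\Delta^2/(\bar\mu\bar\eta_\star^2)\big)$, which is precisely what your observation that $\eta_{k-1}\leq 2\eta_k$ keeps the entering error within a constant of the current floor delivers; one minor imprecision is that the midpoint rule never lets $\eta_k$ equal $\bar\eta_\star$ exactly, but this is immaterial since the argument only needs $\eta_{K-1}-\bar\eta_\star$ to be small enough.
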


Next, we present high-probability guarantees for the tracking error $\|x_t-\bar{x}_t\|^2$ under the following standard light-tail assumption on the equilibrium drift and gradient noise. 

\begin{assumption}[Sub-Gaussian drift and noise]\label{assumpt_light_dec}
	\textup{There exist constants $\bar{\Delta},\sigma >0$ such that the following two conditions hold for all $t\geq0$:
		\begin{enumerate}[label=(\roman*)]
			\item {\bf (Drift) } The drift $\bar{\Delta}_t^2$ is sub-exponential conditioned on $\mathcal{F}_t$ with  parameter $\bar{\Delta}^2$:
			\begin{equation*}
				\mathbb{E}\big[{\exp}(\lambda \bar{\Delta}_t^2)\,|\, \mathcal{F}_t\big]\leq \exp(\lambda \bar{\Delta}^2) \quad \text{for all} \quad  0\leq\lambda\leq \bar{\Delta}^{-2}. 
			\end{equation*} 
			\item {\bf (Noise) } The noise $z_t$ is norm sub-Gaussian conditioned on $\mathcal{F}_t$ with parameter $\sigma/2$:
			\begin{equation*}
				\mathbb{P}\big\{\|z_t\| \geq \tau \,|\,\mathcal{F}_t\big\}\leq 2\exp(-2\tau^2/\sigma^2) \quad \text{for all} \quad \tau>0.
			\end{equation*}
		\end{enumerate}
	}
\end{assumption}

Note that the first item of Assumption~\ref{assumpt_light_dec} is equivalent to asserting that the equilibrium drift $\bar{\Delta}_t$ is sub-Gaussian conditioned on $\mathcal{F}_t$, and that this condition holds trivially in the setting of Example~\ref{ex:perfpred} with $\bar\Delta = \theta\beta/\bar\mu$ provided the regularizers $r_t\equiv r$ are identical for all times $t$. Clearly Assumption~\ref{assumpt_light_dec} implies Assumption~\ref{assump:sec_moment_dec} with the same constants $\bar{\Delta},\sigma$. The following theorem shows that if Assumption~\ref{assumpt_light_dec} holds, then the expected bound on $\|x_t-\bar{x}_t\|^2$ derived in Theorem~\ref{thm:exp_dist_dec} holds with high probability.

\begin{theorem}[High-probability distance tracking]\label{HPtrackdec}
	Let $\{x_t\}$ be the iterates produced by Algorithm~\ref{alg:decsgd} with constant learning rate $\eta\leq1/2L$, and suppose that Assumption~\ref{assumpt_light_dec} holds. 
	Then there is an absolute constant $c>0$ such that for any specified $t\in \mathbb{N}$ and $\delta\in(0,1)$, the following estimate holds with probability at least $1-\delta$:
	\begin{equation}\label{ineq:HPtrackdec}
		\|x_t-\bar{x}_t\|^2\leq\big(1-\tfrac{\bar\mu\eta}{2}\big)^t\|x_0-\bar{x}_0\|^2+c\left(\frac{\eta\sigma^2}{\bar\mu} + \bigg(\frac{\bar\Delta}{\bar{\mu}\eta}\bigg)^2\right)\log\!\left(\frac{e}{\delta}\right)\!.
	\end{equation}
\end{theorem}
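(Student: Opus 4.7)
The plan is to mirror the strategy behind Theorem~\ref{HPtrack}, adapting the moment generating function recursion of \citet{pmlr-v99-harvey19a} to the decision-dependent setting by using the equilibrium one-step improvement Lemma~\ref{lem:onestep0dec} in place of Lemma~\ref{lem:onestep0}. The ``extra'' decision-dependence parameter $\gamma<\mu$ only enters through the factor $(1-\gamma\eta)$ multiplying $\|x_{t+1}-\bar x_t\|^2$, which can be absorbed into absolute constants since $\bar\mu = \mu - \gamma$ appears as the effective strong convexity modulus.

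First, I would derive a clean one-step recursion for $D_t := \|x_t - \bar x_t\|^2$. Starting from Lemma~\ref{lem:onestep0dec}, dropping the nonnegative gap $\psi_t(x_{t+1}) - \psi_t^\star$, dividing by $1-\gamma\eta$, and then applying Young's inequality
\begin{equation*}
\|x_{t+1}-\bar x_{t+1}\|^2 \leq (1+\alpha)\|x_{t+1}-\bar x_t\|^2 + (1+\alpha^{-1})\bar\Delta_t^2
\end{equation*}
with $\alpha$ chosen so that the effective contraction becomes $\rho := 1-\bar\mu\eta/2$, I expect a bound of the form
\begin{equation*}
D_{t+1} \leq \rho D_t + 2c_0\eta\langle z_t, x_t-\bar x_t\rangle + c_1\eta^2\|z_t\|^2 + c_2\,\bar\Delta_t^2/(\bar\mu\eta),
\end{equation*}
for absolute constants $c_0,c_1,c_2>0$. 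This is the decision-dependent analogue of the key recursion underlying Theorem~\ref{HPtrack}.

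Next, I would carry out the MGF recursion. Conditional on $\mathcal{F}_t$, the random vectors $x_t,\bar x_t$ are fixed, $z_t$ is norm sub-Gaussian with parameter $\sigma/2$, and $\bar\Delta_t^2$ is sub-exponential with parameter $\bar\Delta^2$ by Assumption~\ref{assumpt_light_dec}; also $\|z_t\|^2$ is norm sub-exponential with parameter $O(\sigma^2)$. Using the standard MGF bounds $\mathbb{E}[\exp(\lambda \langle z_t,v\rangle)\mid\mathcal{F}_t]\leq \exp(C\lambda^2\sigma^2\|v\|^2)$ for $\mathcal{F}_t$-measurable $v$, together with sub-exponential MGF bounds for $\|z_t\|^2$ and $\bar\Delta_t^2$, I expect to obtain, for $\lambda$ in an appropriate range,
\begin{equation*}
\mathbb{E}\!\left[\exp(\lambda D_{t+1})\,\middle|\,\mathcal{F}_t\right] \leq \exp\!\Big((\lambda\rho + C_1\lambda^2\eta^2\sigma^2) D_t + C_2\lambda\!\left(\tfrac{\eta\sigma^2}{\bar\mu}+\big(\tfrac{\bar\Delta}{\bar\mu\eta}\big)^2\right)\!\Big).
\end{equation*}
Restricting $\lambda$ so that $C_1\lambda\eta^2\sigma^2 \leq \bar\mu\eta/2$, the leading coefficient in front of $D_t$ stays bounded by $\lambda$, so iterating and taking total expectation gives
\begin{equation*}
\mathbb{E}[\exp(\lambda D_t)] \leq \exp\!\Big(\lambda\rho^t D_0 + C_3\lambda\!\left(\tfrac{\eta\sigma^2}{\bar\mu}+\big(\tfrac{\bar\Delta}{\bar\mu\eta}\big)^2\right)\!\Big).
\end{equation*}
A Chernoff bound followed by optimizing $\lambda$ against its admissible upper bound $\Theta(\bar\mu/(\eta\sigma^2))$ converts the sub-exponential MGF into a one-sided tail bound, yielding \eqref{ineq:HPtrackdec} with the $\log(e/\delta)$ factor.

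The main obstacle is the bookkeeping in the MGF recursion: the cross term $\langle z_t, x_t-\bar x_t\rangle$ has ``variance'' proportional to the random quantity $D_t$, so it injects a $\lambda^2$ contribution into the exponent of $D_t$ that must be absorbed into the contraction $\rho$. This is exactly the technical point Harvey et al.\ address in the static case, and I expect their argument to go through almost verbatim once Step~1 is carried out; the only new ingredient is tracking the effect of $\gamma$, which I handle by noting $\bar\mu < \mu$ and choosing $\alpha$ of order $\bar\mu\eta$ in Young's inequality so that $(1+\alpha)(1-\bar\mu\eta)/(1-\gamma\eta) \leq 1-\bar\mu\eta/2$ for $\eta \leq 1/2L$.
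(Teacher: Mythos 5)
Your plan for the moment-generating-function recursion (Proposition~\ref{proprec} in the paper) is on track and matches what the paper does, but there is a genuine gap in Step~1: the derivation of the clean one-step recursion for $D_t=\|x_t-\bar x_t\|^2$ does not go through the way you describe.

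You propose to start from Lemma~\ref{lem:onestep0dec}, \emph{drop} the nonnegative gap $\psi_t(x_{t+1})-\psi_t^\star$, and divide by $1-\gamma\eta$. That yields the factor $(1-\bar\mu\eta)/(1-\gamma\eta)$ multiplying $\|x_t-\bar x_t\|^2$. Since $\bar\mu=\mu-\gamma$,
\begin{equation*}
\frac{1-\bar\mu\eta}{1-\gamma\eta}=\frac{1-(\mu-\gamma)\eta}{1-\gamma\eta},
\end{equation*}
and this ratio exceeds $1$ whenever $\gamma>\mu/2$ (e.g.\ $\gamma=0.9\mu$ gives approximately $1+0.8\mu\eta$). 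Assumption~\ref{assmp:spacegradvar} only requires $\gamma<\mu$, so the regime $\gamma\in(\mu/2,\mu)$ is very much in scope for Theorem~\ref{HPtrackdec}. In that regime, no choice of $\alpha>0$ can make $(1+\alpha)(1-\bar\mu\eta)/(1-\gamma\eta)\leq 1-\bar\mu\eta/2$, so your claimed bookkeeping at the end of the proposal fails. The culprit is that the gap $\psi_t(x_{t+1})-\psi_t^\star$ is not expendable: the paper retains it and lower-bounds it via $\mu$-strong convexity of $\psi_t$ by $\tfrac{\mu}{2}\|x_{t+1}-\bar x_t\|^2$. Moving that term to the left turns the coefficient $1-\gamma\eta$ on $\|x_{t+1}-\bar x_t\|^2$ into $1-\gamma\eta+\mu\eta=1+\bar\mu\eta>1$, which is strictly larger than $1$ for every $\gamma<\mu$. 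Applying Young's inequality for $\|x_{t+1}-\bar x_{t+1}\|^2$ with weight $\alpha=\bar\mu\eta$ then produces the matching factor $(1+\bar\mu\eta)$ and cancels it exactly, yielding the contraction rate $1-\bar\mu\eta$ and the drift weight $(1+1/(\bar\mu\eta))$ in Lemma~\ref{lem:distrecur}. Once you adopt this step, the rest of your argument (conditional sub-Gaussianity of $\langle z_t,v_t\rangle$, sub-exponentiality of $\|z_t\|^2$ and $\bar\Delta_t^2$, the recursive MGF bound, and the Chernoff step) tracks the paper's proof essentially verbatim.
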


Theorem~\ref{HPtrackdec} is an extension of Theorem~\ref{HPtrack}. As a consequence of Theorem~\ref{HPtrackdec}, we can again implement a step decay schedule in the low drift-to-noise regime to obtain the following efficiency estimate with high probability, thereby extending Theorem~\ref{thm:dist_schedul2}; see Section~\ref{minimizerproofsprob} for the precise statements (Theorems~\ref{HPtrack1} and \ref{thm:time_to_track2}) and proofs.

\begin{theorem}[Time to track with high probability, informal]\label{thm:dist_schedul2_dec} Suppose that Assumption \ref{assumpt_light_dec} holds and that we are in the low drift-to-noise regime $\bar{\Delta}/\sigma<\sqrt{\bar{\mu}/16L^3}$. Then there is a learning rate schedule $\{\eta_t\}$ such that for any specified $\delta\in(0,1)$, Algorithm~\ref{alg:decsgd} produces a point $x_t$ satisfying 
	\begin{equation*}
		\|x_t-\bar{x}_t\|^2\lesssim \bar{\mathcal{E}}\log\!\left(\frac{e}{\delta}\right)
	\end{equation*}
	with probability at least $1-\delta$ in time 
	\begin{equation*}
		t\lesssim \frac{L}{\bar\mu}\log\!\left(\frac{ \|x_0-\bar{x}_0\|^2}{\bar{\mathcal{E}}}\right) + \frac{\sigma^2}{\bar\mu^2\bar{\mathcal{E}}}.
	\end{equation*}
\end{theorem}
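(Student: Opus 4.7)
The plan is to reduce the claim to the one-epoch guarantee of Theorem~\ref{HPtrackdec} by running Algorithm~\ref{alg:decsgd} in a sequence of epochs with a geometrically decaying step size, in exact analogy with the proof of Theorem~\ref{thm:dist_schedul2} but with $\mu$ replaced by $\bar\mu$ and $\Delta$ replaced by $\bar\Delta$. Since $\bar\eta_\star \asymp (\bar\Delta^2/(\bar\mu\sigma^2))^{1/3}$ in the low drift-to-noise regime and $\bar\eta_\star \ll 1/(2L)$, there is a nontrivial gap between the initial admissible step $1/(2L)$ and the asymptotically optimal $\bar\eta_\star$ that we shall traverse by halving.

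First I would define the step decay schedule: set $\eta_0 = 1/(2L)$, define $\eta_{k+1} = (\eta_k + \bar\eta_\star)/2$ so that $\eta_k - \bar\eta_\star = 2^{-k}(1/(2L) - \bar\eta_\star)$, and take $K := \lceil \log_2(1/(2L\bar\eta_\star)) \rceil + 1$ so that $\eta_K \leq 2\bar\eta_\star$. Note $K \lesssim \log((\bar\mu\sigma^2/\bar\Delta^2)^{1/3}/L)$, which is bounded by the $\log(\|x_0-\bar x_0\|^2/\bar{\mathcal{E}}) + \log(\sigma^2/(\bar\mu^2\bar{\mathcal{E}}))$ appearing in the final bound. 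In epoch $k$ I run Algorithm~\ref{alg:decsgd} with constant learning rate $\eta_k$ for $T_k$ steps, where $T_k$ is the smallest integer satisfying $(1 - \bar\mu\eta_k/2)^{T_k} \leq 1/4$, so $T_k \lesssim 1/(\bar\mu\eta_k)$.

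Second, I apply Theorem~\ref{HPtrackdec} inside each epoch with failure probability $\delta/K$. Writing $E_k$ for the squared distance $\|x - \bar x\|^2$ at the start of epoch $k$, the one-epoch guarantee gives, with probability at least $1 - \delta/K$,
\begin{equation*}
E_{k+1} \leq \tfrac{1}{4}E_k + c\!\left(\frac{\eta_k\sigma^2}{\bar\mu} + \Big(\frac{\bar\Delta}{\bar\mu\eta_k}\Big)^{\!2}\right)\!\log\!\left(\frac{eK}{\delta}\right)\!.
\end{equation*}
Since $\eta_k$ decreases geometrically toward $\bar\eta_\star$, both $\eta_k\sigma^2/\bar\mu$ and $(\bar\Delta/(\bar\mu\eta_k))^2$ are uniformly $O(\bar{\mathcal{E}})$: the first is controlled by $\eta_0\sigma^2/\bar\mu = \sigma^2/(2L\bar\mu)\lesssim\bar{\mathcal{E}}$ (this is the high drift-to-noise bound, which remains an upper bound in the low regime as well), while the second is controlled by $(\bar\Delta/(\bar\mu\bar\eta_\star))^2 \lesssim \bar{\mathcal{E}}$.

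Third, I unroll the recursion. A geometric sum then gives
\begin{equation*}
E_K \leq 4^{-K}E_0 + \tfrac{4c}{3}\bar{\mathcal{E}}\log(eK/\delta) \lesssim \bar{\mathcal{E}}\log(e/\delta),
\end{equation*}
where in the last step I use $4^{-K}E_0 \lesssim \bar{\mathcal{E}}$ (by choosing $K$ large enough to absorb the initial error, adding at most a constant number of extra epochs at step size $\eta_0$ or $\bar\eta_\star$) and the fact that $\log K$ contributes only a constant factor since $K$ depends only logarithmically on problem parameters. A subsequent epoch run at $\bar\eta_\star$ preserves this bound by a second application of Theorem~\ref{HPtrackdec}. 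The total number of iterations is
\begin{equation*}
t = \sum_{k=0}^{K-1}T_k \lesssim \sum_{k=0}^{K-1}\frac{1}{\bar\mu\eta_k} \lesssim \frac{L}{\bar\mu}\sum_{k: \eta_k \asymp \eta_0/2^k}\!\!2^k\; +\; \frac{K_\star}{\bar\mu\bar\eta_\star} \lesssim \frac{L}{\bar\mu}\log\!\left(\frac{\|x_0-\bar x_0\|^2}{\bar{\mathcal{E}}}\right) + \frac{\sigma^2}{\bar\mu^2\bar{\mathcal{E}}},
\end{equation*}
where the first sum is a geometric series dominated by its largest term and the last equality uses the low drift-to-noise identity $1/(\bar\mu\bar\eta_\star) \asymp \sigma^2/(\bar\mu^2\bar{\mathcal{E}})$. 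A union bound over the $K$ epochs produces the overall $1-\delta$ success probability.

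The main obstacle is the bookkeeping rather than any conceptual difficulty: one has to check carefully that $4^{-K}E_0 \lesssim \bar{\mathcal{E}}$ after the prescribed number of epochs, that the $\log K$ penalty from the union bound truly gets absorbed into an absolute constant (as opposed to leaking into the iteration count, as in Theorem~\ref{thm:gap_schedulhp2}), and that the geometric sum of epoch lengths cleanly decouples into the stated "burn-in" and "noise-limited" contributions. The formal version is Theorem~\ref{thm:time_to_track2}, which the authors invoke and which is proved by exactly this template in Section~\ref{minimizerproofsprob}.
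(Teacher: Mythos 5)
Your epoch schedule and the reduction to Theorem~\ref{HPtrackdec} match the paper's Theorem~\ref{thm:time_to_track2}, but the probabilistic bookkeeping contains a genuine gap that the paper's proof resolves with a different mechanism. You propose to apply the one-epoch bound with failure probability $\delta/K$ in each epoch and union-bound over all $K$ epochs; this yields a final error of order $\bar{\mathcal{E}}\log(eK/\delta) = \bar{\mathcal{E}}\bigl(\log K + \log(e/\delta)\bigr)$. You then assert that ``$\log K$ contributes only a constant factor since $K$ depends only logarithmically on problem parameters,'' but this is false: $K \asymp \log_2\bigl(L^{-1}(\sigma^2\bar\mu/\bar\Delta^2)^{1/3}\bigr)$ is problem-dependent and unbounded, so $\log K$ is a genuine $\log\log$ of the condition numbers, not an absolute constant. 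Consequently the naive $\delta/K$ union bound does not deliver the advertised $\|x_t-\bar x_t\|^2 \lesssim \bar{\mathcal{E}}\log(e/\delta)$ with an absolute implicit constant.

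The paper sidesteps this by never union-bounding over all $K$ epochs at once. Instead, the inductive hypothesis in the proof of Theorem~\ref{thm:time_to_track2} is a full tail bound---``for all $\delta\in(0,1)$, $\|X_k-\bar X_k\|^2 \le 3\bar E_{k-1}\log(e/\delta)$ with probability $\ge 1-\delta$''---and each inductive step spends only a single factor of two: the hypothesis is invoked at level $\delta/2$ (giving $\log(2e/\delta)$), Theorem~\ref{HPtrackdec} is invoked at level $\delta/2$ for the current epoch, and a two-term union bound yields probability $1-\delta$. The resulting bound $\tfrac{3}{2}\bar E_k\log(2e/\delta)$ is then pushed back below $3\bar E_k\log(e/\delta)$ using $\log(2e/\delta)\le 2\log(e/\delta)$, so the doubling of $\delta$ is absorbed multiplicatively by the contraction slack $e^{-\bar\mu\eta_kT_k/2} \le 1/12$ built into the epoch length. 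This is exactly why $T_k \asymp \log(12)/(\bar\mu\eta_k)$ rather than $\log(4)/(\bar\mu\eta_k)$: the extra contraction pays for the per-step cost. Your approach would either leak a problem-dependent $\log K$ into the tracking error or (if you lengthen each epoch to absorb it) a $\log\log$ factor into the iteration count, neither of which matches the theorem statement; contrast this with Theorem~\ref{thm:time_to_track_gap_hp2}, where the analogous recursion genuinely cannot close without costing a $\log\log(1/\delta)$ factor and a degraded $1-K\delta$ probability.
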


\subsection{Tracking the Equilibrium Value}\label{sec:eqvaltrck}

The results outlined so far have focused on tracking the equilibrium point $\bar{x}_t$, i.e., the minimizer of $\psi_t$. In this section, we present results for tracking the equilibrium value $\psi_{t}^{\star}$ in the parameter regime 
\begin{equation}\label{eqgapregime}
	\gamma<\mu/2.
\end{equation}
The regime \eqref{eqgapregime} matches the one used in Theorem 7.3 of \citet{drusvyatskiy2020stochastic} to obtain function gap bounds for biased PSG along an average iterate, and we employ a similar averaging technique to obtain our bounds.

Imposing the regime (\ref{eqgapregime}), we define the positive parameter $$\hat\mu:=\mu - 2\gamma.$$ 
Our strategy is to track the equilibrium value $\psi_t^\star$ along the running average $\hat x_t$ of the iterates $x_t$ produced by Algorithm~\ref{alg:decsgd}, as defined in Algorithm~\ref{alg:decsgdavg} below. 
In a nutshell, the results of Section~\ref{sec:trckminimum} extend directly to tracking the equilibrium value $\psi_t^\star$, with $\mu$ replaced by $\hat \mu$ and $\Delta$ replaced by $\bar \Delta$ (defined in Assumption~\ref{assmp:funcgap0dec} below).

\begin{algorithm}[h!]
	\caption{Averaged Decision-Dependent PSG \hfill $\mathtt{D\text{-}\overline{PSG}}(x_{0},\mu,\gamma,\{\eta_t\},T)$} \label{alg:decsgdavg}
	
	{\bf Input}: initial $x_0 = \hat{x}_0$, strong convexity parameter $\mu$, gradient drift parameter $\gamma\in [0,\mu/2)$, and step sizes $\{\eta_t\}_{t=0}^{T-1}\subset (0,1/\bar{\mu})$, where $\bar{\mu} = \mu - \gamma$; set $\hat\mu = \mu-2\gamma$
		
	{\bf Step} $t=0,\ldots,T-1$: 
	\begin{equation*}
		\begin{aligned}
			&\text{Select~} g_t = \widetilde{\nabla}f_{t, x_t}(x_t)\\
			&\text{Set~} x_{t+1}=\prox_{\eta_t r_t}(x_t-\eta_t g_t)\\
			&\text{Set~} \hat{x}_{t+1}=\Big(1-\tfrac{\hat\mu\eta_t}{2-\mu\eta_t}\Big){\hat x}_t+\tfrac{\hat\mu\eta_t}{2-\mu\eta_t}x_{t+1}
		\end{aligned}
	\end{equation*}
	{\bf Return} $\hat{x}_T$
\end{algorithm}

We begin with bounding the expected value $\mathbb{E}[\psi_t(\hat x_t)-\psi_t^\star]$. This requires a weak form of uncorrelatedness between the gradient noise $z_i$ and the future equilibrium point $\bar{x}_t$, which we stipulate in the following analogue of Assumption~\ref{assmp:funcgap0}.

	\begin{assumption}[Bounded second moments]\label{assmp:funcgap0dec}
		\textup{The regularizers $r_t\equiv r$ are identical for all times $t$ and there exist constants $\bar{\Delta},\sigma>0 $ such that the following two conditions hold for all $0\leq i<t$:
			\begin{enumerate}[label=(\roman*)]
				\item {\bf (Drift)} The temporal gradient drift $\bar{G}_{i,t}$ satisfies $\mathbb{E}\,\bar{G}_{i,t}^2 \leq (\hat{\mu}\bar{\Delta}|i-t|)^2.$
				\item {\bf (Noise)} The gradient noise $z_i$ satisfies $\mathbb{E}\|z_i\|^2\leq\sigma^2$ and $\mathbb{E}\langle z_i, \bar{x}_t \rangle=0$.
			\end{enumerate}
		}
	\end{assumption}

Taking into account Lemma~\ref{lem:grad_vs_dist}, it is clear that Assumption~\ref{assmp:funcgap0dec} implies the earlier Assumption~\ref{assump:sec_moment_dec} with the same constants $\bar\Delta, \sigma$. Further, the condition on the drift holds trivially in the setting of Example~\ref{ex:perfpred} with $\bar\Delta = \theta\beta/\hat\mu$ provided $\mu>2\varepsilon\beta$. The following theorem presents an expected improvement guarantee for Algorithm~\ref{alg:decsgdavg}, thereby extending Theorem~\ref{thm:exp_gap}; see Corollary~\ref{cor2} for the precise statement and proof. 

\begin{theorem}[Expected function gap]\label{thm:exp_gap_dec}
	Let $\{\hat{x}_t\}$ be the iterates produced by Algorithm~\ref{alg:decsgdavg} with constant learning rate $\eta \leq 1/2L$, and suppose that Assumption~\ref{assmp:funcgap0dec} holds. Then the following bound holds for all $t\geq 0$:
	\begin{equation*}
		\mathbb{E}\big[\psi_t(\hat x_t)-\psi_t^\star\big] \lesssim  \underbrace{\big(1 - \tfrac{\hat\mu\eta}{2}\big)^t\big(\psi_0(x_0) - \psi_0^\star\big)}_\text{optimization} + \underbrace{\eta\sigma^2}_\text{noise} + \underbrace{\frac{\bar\Delta^2}{\hat\mu\eta^2}}_\text{drift}.
	\end{equation*}
\end{theorem}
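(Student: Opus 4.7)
The plan is to follow the proof template of the time-only counterpart Theorem~\ref{thm:exp_gap}, treating $\psi_t = f_{t,\bar x_t} + r$ as the time-varying target and regarding Algorithm~\ref{alg:decsgdavg} as a biased proximal stochastic gradient scheme for it: the stochastic gradient $g_t$ estimates $\nabla f_{t,x_t}(x_t)$ instead of $\nabla f_{t,\bar x_t}(x_t)$, and the bias is controlled by $\gamma$. Lemma~\ref{lem:onestep0dec} already performs the first half of this reduction, yielding the contraction factor $\bar\mu = \mu - \gamma$ on a one-step bound for $\psi_t(x_{t+1}) - \psi_t^\star$. The smaller parameter $\hat\mu = \mu - 2\gamma$ will arise from a \emph{second}, independent use of $\gamma$ when transferring function gaps across time.

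Concretely, I would first build a one-step recursion with a contraction on distances, by starting from Lemma~\ref{lem:onestep0dec}, using $\eta_t \leq 1/2L$ to control $\eta_t^2/(1-L\eta_t)$, and splitting $\|x_{t+1} - \bar x_t\|^2 \geq \tfrac{1}{2}\|x_{t+1} - \bar x_{t+1}\|^2 - \bar\Delta_t^2$ to propagate the distance to the \emph{next} equilibrium. I would then invoke the averaging recipe of Appendix~\ref{apdx:avg} with the weights $\tfrac{\hat\mu\eta_t}{2-\mu\eta_t}$ hard-coded into Algorithm~\ref{alg:decsgdavg} to aggregate these one-step inequalities and, by convexity of $\psi_t$, land on $\psi_t(\hat x_t) - \psi_t^\star$ on the left-hand side.

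The crux is shifting from the $\psi_i$ that naturally appear in the one-step inequalities to $\psi_t$ at the final time; applying Lemma~\ref{lem:funcgap} with $x = \bar x_i$, $y = \bar x_t$ gives
\begin{equation*}
	\big|[\psi_i(u) - \psi_i(v)] - [\psi_t(u) - \psi_t(v)]\big| \leq \big(\bar G_{i,t} + \gamma\|\bar x_i - \bar x_t\|\big)\|u-v\|.
\end{equation*}
The critical observation is that, because the regularizers are constant in time, Lemma~\ref{lem:grad_vs_dist} applied to the equilibrium functions yields $\hat\mu\|\bar x_i - \bar x_t\| \leq \bar G_{i,t}$, so the correction factor simplifies to $\tfrac{\bar\mu}{\hat\mu}\bar G_{i,t}$; this is precisely how $\hat\mu$ enters the final bound. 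The residual cross terms of the form $\bar G_{i,t}\|x_{i+1} - \bar x_t\|$ are absorbed into the noise and drift pockets by Young's inequality at scale $\hat\mu\eta_t$. Taking expectations, $\mathbb{E}\langle z_i, x_i - \bar x_t\rangle = 0$ follows from the measurability setup together with Assumption~\ref{assmp:funcgap0dec}(ii); then $\mathbb{E}\|z_i\|^2 \leq \sigma^2$ furnishes the noise contribution $\eta\sigma^2$, while $\mathbb{E}\,\bar G_{i,t}^2 \leq (\hat\mu\bar\Delta|i-t|)^2$ feeds a geometric-type sum $\sum_i(1-\hat\mu\eta/2)^{t-1-i}|i-t|^2\eta^2$ that collapses to a constant multiple of $\bar\Delta^2/(\hat\mu\eta^2)$.

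The main obstacle is the bookkeeping in this last step: one must route every $\gamma\|\bar x_i - \bar x_t\|$ term through the bound $\hat\mu\|\bar x_i - \bar x_t\|\leq \bar G_{i,t}$ to avoid singular behavior as $\gamma \uparrow \mu/2$, and simultaneously calibrate Young's inequality and the geometric series so that the drift tail does not leak an extra factor of $\hat\mu^{-1}$ and ruin the claimed scaling $\bar\Delta^2/(\hat\mu\eta^2)$.
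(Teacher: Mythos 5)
Your outline gets the broad strokes right (one-step improvement, Lemma~\ref{lem:funcgap}, Young's inequality, and the averaging machinery of Appendix~\ref{apdx:avg}), but the central claim about how $\hat\mu$ enters the bound is wrong, and it is accompanied by an incorrect application of Lemma~\ref{lem:grad_vs_dist}. Applying Lemma~\ref{lem:grad_vs_dist} to the equilibrium functions $\psi_i = f_{i,\bar x_i}+r$ and $\psi_t = f_{t,\bar x_t}+r$ (each $\mu$-strongly convex, same regularizer) gives $\mu\|\bar x_i - \bar x_t\| \leq \|\nabla f_{i,\bar x_i}(\bar x_t) - \nabla f_{t,\bar x_t}(\bar x_t)\| \leq \bar G_{i,t} + \gamma\|\bar x_i - \bar x_t\|$, hence $\bar\mu\|\bar x_i - \bar x_t\| \leq \bar G_{i,t}$. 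The factor is $\bar\mu = \mu - \gamma$, not $\hat\mu = \mu - 2\gamma$, and the resulting correction factor on $\bar G_{i,t}$ is $1 + \gamma/\bar\mu = \mu/\bar\mu < 2$ in the regime $\gamma < \mu/2$ (not $\bar\mu/\hat\mu$, which blows up as $\gamma\uparrow\mu/2$). More importantly, this is not where $\hat\mu$ comes from at all.

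In the paper's argument (Lemma~\ref{lem:avgrecdec} and Proposition~\ref{gapbound}), the one-step inequality never passes through $\psi_i$. Lemma~\ref{lem:onestep1} is applied with $x = \bar x_t$ and Lemma~\ref{lem:funcgap} is applied with $(i,x_i)$ against $(t,\bar x_t)$, so the discrepancy between $\varphi_i$ and $\psi_t$ is $\bar G_{i,t} + \gamma\|x_i - \bar x_t\|$; the cross-term $(\bar G_{i,t} + \gamma\|x_i-\bar x_t\|)\|x_{i+1}-\bar x_t\|$ is then split by Young's inequality with a free parameter $\alpha$, charging $\gamma/2$ against $\|x_i-\bar x_t\|^2$ and $(\gamma+\alpha)/2$ against $\|x_{i+1}-\bar x_t\|^2$. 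Choosing $\alpha = \hat\mu/2$ makes $\gamma+\alpha = \mu/2$, and feeding the resulting recursion (with $c_1 = \bar\mu/2$, $c_2 = -\mu/4$, fixed reference $\bar x_t$) into the averaging lemma yields the effective contraction $c_1 + c_2 = \bar\mu/2 - \mu/4 = \hat\mu/4$. So $\hat\mu$ is an artifact of the decision-dependent term being charged twice against the squared distances to a \emph{fixed} reference point during averaging --- a fixed-time phenomenon, not a cost of transferring $\psi_i$ to $\psi_t$. Relatedly, your step ``split $\|x_{t+1}-\bar x_t\|^2 \geq \tfrac12\|x_{t+1}-\bar x_{t+1}\|^2 - \bar\Delta_t^2$'' is the moving-reference-point recursion for distance tracking (Lemma~\ref{lem:distrecur}); it is incompatible with Lemma~\ref{lem:avg}, which requires $V_i = \|x_i - \bar x_t\|^2$ for a single fixed $t$ throughout the epoch.
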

 
With Theorem~\ref{thm:exp_gap_dec} in hand, we are led to define the following asymptotic tracking error of Algorithm~\ref{alg:decsgdavg} corresponding to $\mathbb{E}[\psi_t(\hat{x}_t) - \psi_t^\star]$, together with the corresponding optimal step size:
\begin{equation*}
	\widehat{\mathcal{G}}:=\min_{\eta\in (0,1/2L]} \left\{\eta\sigma^2 + \frac{\bar\Delta^2}{\hat\mu\eta^2}\right\} \quad\text{and}\quad \hat\eta_\star := \min\left\{\frac{1}{2L}, \left(\frac{2\bar\Delta^2}{\hat\mu\sigma^2}\right)^{1/3}\right\}\!.
\end{equation*}
A familiar dichotomy governed by the drift-to-noise ratio $\bar\Delta/\sigma$ arises. We again focus on the \emph{low drift-to-noise regime} $\bar\Delta/\sigma < \sqrt{\hat\mu/16L^3}$. The following theorem extends Theorem~\ref{thm:gap_schedul}; see Theorem~\ref{thm:time_to_track_gap_exp} for the formal statement.

\begin{theorem}[Time to track in expectation, informal]\label{thm:gap_schedul_dec}
	Suppose that Assumption~\ref{assmp:funcgap0dec} holds. Then there is a learning rate schedule $\{\eta_t\}$ such that Algorithm~\ref{alg:decsgdavg} produces a point $\hat{x}_t$ satisfying $$\mathbb{E}[\psi_t(\hat{x}_t) - \psi_t^\star] \lesssim \widehat{\mathcal{G}} \quad\text{in time}\quad t\lesssim \frac{L}{\hat\mu}\log\!\left(\frac{\psi_0(x_0) - \psi_0^\star}{\widehat{\mathcal{G}}}\right) + \frac{\sigma^2}{\hat\mu\widehat{\mathcal{G}}}.$$
\end{theorem}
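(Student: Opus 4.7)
The plan is to deploy a step decay learning rate schedule on top of the constant-rate guarantee from Theorem~\ref{thm:exp_gap_dec}, in direct analogy with the strategy used to derive Theorem~\ref{thm:gap_schedul} from Theorem~\ref{thm:exp_gap}. The idea is to partition time into epochs, halve the learning rate between consecutive epochs until it reaches $\hat\eta_\star$, and control the expected equilibrium-value suboptimality at the end of each epoch via Theorem~\ref{thm:exp_gap_dec}.

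First, I would index epochs by $k=0,1,2,\ldots$, with learning rates $\eta_0 = 1/2L$ and $\eta_{k+1} = \max\{\hat\eta_\star, \eta_k/2\}$, and with the averaging sequence $\{\hat{x}_t\}$ of Algorithm~\ref{alg:decsgdavg} reset to the current iterate at the beginning of each epoch. Writing $\varepsilon_k$ for an upper bound on $\mathbb{E}[\psi_{t_k}(\hat{x}_{t_k}) - \psi_{t_k}^\star]$ at the start of epoch $k$, Theorem~\ref{thm:exp_gap_dec} applied to an epoch of length $T_k$ at constant rate $\eta_k$ yields
\begin{equation*}
	\varepsilon_{k+1} \lesssim (1 - \hat\mu\eta_k/2)^{T_k}\varepsilon_k + \eta_k\sigma^2 + \frac{\bar\Delta^2}{\hat\mu\eta_k^2}.
\end{equation*}
Choosing $T_k$ of order $1/(\hat\mu\eta_k)$ with a sufficiently large absolute constant makes the first term at most $\varepsilon_k/4$, so that $\varepsilon_{k+1} \leq \max\{\varepsilon_k/2,\, C\widehat{\mathcal{G}}\}$ for some absolute constant $C$, using that $\eta_k\sigma^2 + \bar\Delta^2/(\hat\mu\eta_k^2) \lesssim \widehat{\mathcal{G}}$ whenever $\eta_k\asymp\hat\eta_\star$ and that this additive term is dominated by the geometrically decaying $\varepsilon_k$ while $\eta_k > \hat\eta_\star$.

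Next I would sum epoch lengths. In the optimization phase, where $\eta_k > \hat\eta_\star$, the number of epochs needed to drive $\varepsilon_k$ down to order $\widehat{\mathcal{G}}$ is at most $O(\log(\varepsilon_0/\widehat{\mathcal{G}}))$ where $\varepsilon_0 = \psi_0(x_0) - \psi_0^\star$; each such epoch contributes $O(1/(\hat\mu\eta_k))$ iterations, and since the $\eta_k$ form a geometric sequence the total is dominated by the last term, giving at most $O(1/(\hat\mu\hat\eta_\star))\lesssim L/\hat\mu$ iterations multiplied by the logarithmic factor. Once $\eta_k = \hat\eta_\star$ and the learning rate is frozen, Theorem~\ref{thm:exp_gap_dec} shows that additional iterations maintain $\mathbb{E}[\psi_t(\hat{x}_t)-\psi_t^\star]\lesssim \widehat{\mathcal{G}}$, and the number of such iterations needed to reach that threshold from the end of the optimization phase is $O(1/(\hat\mu\hat\eta_\star))$, which in the low drift-to-noise regime simplifies to $\sigma^2/(\hat\mu\widehat{\mathcal{G}})$. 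Combining the two phases yields the claimed complexity.

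The main obstacle is the bookkeeping around the reset of the averaging sequence at epoch boundaries: since Theorem~\ref{thm:exp_gap_dec} is stated in terms of the gap $\psi_0(x_0)-\psi_0^\star$ at the initial point with $x_0=\hat{x}_0$, each invocation on an epoch requires reinitializing the algorithm at the returned averaged iterate and verifying that the drift hypothesis of Assumption~\ref{assmp:funcgap0dec} on the shifted time window is inherited from the global assumption (which it is, since the condition on $\bar G_{i,t}$ holds for all indices). A minor additional subtlety is that the ``initial'' gap at the start of epoch $k+1$ is relative to the epoch's starting time, so one must also absorb the constant factor linking consecutive $\varepsilon_k$'s into the absolute constant $C$; this is routine but needs to be tracked carefully to ensure the geometric halving survives.
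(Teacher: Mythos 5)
The high-level plan — run the algorithm in epochs with a decaying step size, apply the constant-rate bound of Theorem~\ref{thm:exp_gap_dec} per epoch, and contract the error multiplicatively until $\eta$ reaches $\hat\eta_\star$ — is the right idea and matches the paper's approach in spirit. However, your specific schedule and the accounting of the ``optimization phase'' have a real gap that leaves you with a worse complexity than claimed.

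You halve $\eta_k$ from $\eta_0=1/2L$ at each epoch with $T_k\asymp 1/(\hat\mu\eta_k)$, and then assert that the total over the optimization phase ``is dominated by the last term, giving at most $O(1/(\hat\mu\hat\eta_\star))\lesssim L/\hat\mu$ iterations multiplied by the logarithmic factor.'' The estimate $1/(\hat\mu\hat\eta_\star)\lesssim L/\hat\mu$ is false in the low drift-to-noise regime: there $\hat\eta_\star<1/2L$, so $1/\hat\eta_\star>2L$. In fact $1/(\hat\mu\hat\eta_\star)\asymp\sigma^2/(\hat\mu\widehat{\mathcal{G}})$, which is precisely the term that must appear \emph{without} a logarithmic factor. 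The underlying problem is structural: if you halve $\eta$ from the very first epoch, then the number of halving epochs is fixed at $K\asymp\log_2(\eta_0/\hat\eta_\star)$ independent of $\varepsilon_0=\psi_0(x_0)-\psi_0^\star$, and after those $K$ epochs the residual optimization error is $\asymp\varepsilon_0/4^K$, which need not be $\lesssim\widehat{\mathcal{G}}$. You then propose to finish at constant step $\hat\eta_\star$, but that costs another $(1/(\hat\mu\hat\eta_\star))\log(\varepsilon_0/(4^K\widehat{\mathcal{G}}))$ iterations, so the log factor ends up multiplying $\sigma^2/(\hat\mu\widehat{\mathcal{G}})$ rather than the much cheaper $L/\hat\mu$. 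The missing ingredient is the paper's \emph{long initial epoch at the large step size}: epoch $0$ is run at $\eta_0=1/2L$ for $T_0\asymp(L/\hat\mu)\log(LD/\sigma^2)$ iterations, burning off the initial error at the fast rate $(1-\hat\mu/2L)$ and paying only $L/\hat\mu$ per halving of the error. Only afterward does the step size start decaying (the paper uses the midpoint rule $\eta_k=(\eta_{k-1}+\hat\eta_\star)/2$, interchangeable with halving up to constants), with a geometric tail summing to $\asymp 1/(\hat\mu\hat\eta_\star)$ without any log. This split is exactly what produces $\frac{L}{\hat\mu}\log(\cdot)+\frac{\sigma^2}{\hat\mu\widehat{\mathcal{G}}}$.

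A secondary point: you describe the bookkeeping at epoch boundaries as absorbing ``a constant factor.'' It is not quite a constant. Invoking the precise version of Theorem~\ref{thm:exp_gap_dec} (Corollary~\ref{cor2}) after a restart introduces a term $\mathbb{E}\big[\tfrac{\hat\mu}{4}\|X_k-\bar{X}_{k+1}\|^2\big]$ that grows like $\hat\mu\bar\Delta^2 T_k^2$ with the epoch length and must be beaten down by the exponential factor $(1-\hat\mu\eta_k/2)^{T_k}$ via the inequality $e^{-\hat\mu\eta t/2}\hat\mu t^2\leq 16/(\hat\mu\eta^2)$, which folds it into the drift term $\bar\Delta^2/(\hat\mu\eta_k^2)$. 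This requires an explicit estimate, not just a larger absolute constant.
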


Next, we obtain high-probability analogues of Theorems~\ref{thm:exp_gap_dec} and \ref{thm:gap_schedul_dec}. Naturally, such results should rely on light-tail assumptions on the temporal gradient drift $\bar{G}_{i,t}$ and the norm of the gradient noise $\|z_i\|$. We state the guarantees under an assumption of sub-Gaussian drift and noise (Assumption~\ref{assmp:funcgap2dec} below). In particular, we require that the gradient noise $z_i$ is mean-zero conditioned on the $\sigma$-algebra $$\mathcal{F}_{i,t}:=\sigma(\mathcal{F}_i,\bar{x}_t)$$ for all $0 \leq i < t$; the property $\mathbb{E}[z_i\,|\,\mathcal{F}_{i,t}]=0$ would follow from independence of the gradient noise $z_i$ and the future equilibrium point $\bar{x}_t$.

\begin{assumption}[Sub-Gaussian drift and noise]\label{assmp:funcgap2dec}
	\textup{The regularizers $r_t\equiv r$ are identical for all times $t$ and there exist constants $\bar\Delta,\sigma>0 $ such that the following two conditions hold for all $0\leq i<t$:
		\begin{enumerate}[label=(\roman*)]
			\item {\bf (Drift)} The drift $\bar{G}_{i,t}^2$ is sub-exponential with parameter $(\hat{\mu}\bar{\Delta}|i-t|)^2$:
			\begin{equation*}
				\mathbb{E}\big[{\exp}\big(\lambda \bar{G}_{i,t}^2\big)\big]\leq {\exp}\big(\lambda (\hat{\mu}\bar{\Delta}|i-t|)^2\big) \quad \text{for all} \quad  0\leq\lambda\leq (\hat{\mu}\bar{\Delta}|i-t|)^{-2}. 
			\end{equation*} 
			\item {\bf (Noise)} The noise $z_i$ is mean-zero norm sub-Gaussian conditioned on $\mathcal{F}_{i,t}$ with parameter $\sigma/2$, i.e.,
			$\mathbb{E}[z_i \,|\, \mathcal{F}_{i,t}]=0$ and  
			\begin{equation*}
				\mathbb{P}\big\{\|z_i\| \geq \tau \,|\,\mathcal{F}_{i,t}\big\}\leq 2\exp(-2\tau^2/\sigma^2) \quad\text{for all}\quad \tau>0.
			\end{equation*}
		\end{enumerate}
	}
\end{assumption}

Clearly the chain of implications
\begin{quote}\centering 
	Assumption~\ref{assmp:funcgap2dec}$\,\implies\,$Assumption~\ref{assmp:funcgap0dec}$\,\implies\,$Assumption~\ref{assump:sec_moment_dec}
\end{quote}
holds, and the condition on the drift in Assumption~\ref{assmp:funcgap2dec} holds trivially in the setting of Example~\ref{ex:perfpred} with $\bar\Delta = \theta\beta/\hat\mu$ provided $\mu>2\varepsilon\beta$. The following theorem shows that if Assumption~\ref{assmp:funcgap2dec} holds, then the expected bound on $\psi_t(\hat x_t)-\psi_t^\star$ derived in Theorem~\ref{thm:exp_gap_dec} holds with high probability, thereby extending Theorem~\ref{thm:hpbound2}.

\begin{theorem}[Function gap with high probability]\label{thm:hpbound2dec}
	Let $\{\hat{x}_t\}$ be the iterates produced by Algorithm \ref{alg:decsgdavg} with constant learning rate $\eta \leq 1/2L$, and suppose that Assumption~\ref{assmp:funcgap2dec} holds. Then there is an absolute constant $c>0$ such that for any specified $t\in\mathbb{N}$ and $\delta\in(0,1)$, the following estimate holds with probability at least $1-\delta$:
	\begin{align}\label{ineq:hpbound2dec}
		\psi_t(\hat x_t)-\psi_t^\star \leq c\left(\big(1 - 	\tfrac{\hat\mu\eta}{2}\big)^t\big(\psi_0(x_0) - \psi_0^\star\big) + \eta\sigma^2 + \frac{\bar\Delta^2}{\hat\mu\eta^2}\right)\log\!\left(\frac{e}{\delta}\right)\!.
	\end{align}
\end{theorem}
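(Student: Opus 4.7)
The plan is to mirror the proof of Theorem~\ref{thm:hpbound2}, substituting the equilibrium objects $(\psi_t, \bar x_t, \bar G_{i,t}, \hat\mu)$ for $(\varphi_t, x_t^\star, G_{i,t}, \mu)$ and the decision-dependent one-step improvement of Lemma~\ref{lem:onestep0dec} for Lemma~\ref{lem:onestep0}. The first step is a re-anchoring: for a fixed target time $t$, I re-center the one-step inequality of Lemma~\ref{lem:onestep0dec} around $\bar x_t$ rather than the running $\bar x_i$. Writing $\|x_i - \bar x_i\|^2 = \|x_i - \bar x_t + \bar x_t - \bar x_i\|^2$ and invoking Young's inequality, and separately invoking Lemma~\ref{lem:funcgap} (with the index pairs $(i,\bar x_i)$ and $(t,\bar x_t)$) to convert $\psi_i(x_{i+1}) - \psi_i^\star$ into $\psi_t(x_{i+1}) - \psi_t^\star$ at the cost of a $(\bar G_{i,t}+\gamma\|\bar x_i-\bar x_t\|)\|x_{i+1}-\bar x_t\|$ remainder, yields an $\hat\mu$-contractive recursion
\begin{equation*}
	2\eta\big(\psi_t(x_{i+1}) - \psi_t^\star\big) \leq (1-\hat\mu\eta)\|x_i - \bar x_t\|^2 - (1-\mu\eta)\|x_{i+1}-\bar x_t\|^2 + R_{i,t} + 2\eta\langle z_i, x_i-\bar x_t\rangle + \tfrac{\eta^2}{1-L\eta}\|z_i\|^2,
\end{equation*}
where the deterministic remainder $R_{i,t}$ is controlled by $\hat\mu^{-1}\bar G_{i,t}^2$ plus a contribution in $\|\bar x_i - \bar x_t\|^2$, which itself telescopes into $\bar G$-type terms via Lemma~\ref{lem:grad_vs_dist}.

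Applying the weighted averaging technique of Appendix~\ref{apdx:avg} to this recursion (exactly as in the proof of Corollary~\ref{cor2}) converts $\psi_t(x_{i+1})$ into the running average $\psi_t(\hat x_t)$ and yields a bound of the shape
\begin{equation*}
	\psi_t(\hat x_t) - \psi_t^\star \;\lesssim\; \big(1-\tfrac{\hat\mu\eta}{2}\big)^t\big(\psi_0(x_0)-\psi_0^\star\big) + S_{\mathrm{drift}} + S_{\mathrm{noise}} + S_{\mathrm{mart}},
\end{equation*}
where, with $\zeta = 1 - \hat\mu\eta/(2-\mu\eta) \in (0,1)$, the three stochastic tails have the form $S_{\mathrm{drift}} \asymp \hat\mu^{-1}\sum_{i<t}\zeta^{t-1-i}\bar G_{i,t}^2$, $S_{\mathrm{noise}} \asymp \eta^2\sum_{i<t}\zeta^{t-1-i}\|z_i\|^2$, and $S_{\mathrm{mart}} \asymp \eta\sum_{i<t}\zeta^{t-1-i}\langle z_i, x_i - \bar x_t\rangle$.

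Each stochastic piece is then controlled with probability at least $1-\delta/3$. Under Assumption~\ref{assmp:funcgap2dec}(i) the summands of $S_{\mathrm{drift}}$ are sub-exponential with parameter $(\hat\mu\bar\Delta|i-t|)^2$, and the geometric weights $\zeta^{t-1-i}$ render the sum tail-dominated; a weighted Bernstein-type bound then gives $S_{\mathrm{drift}} \lesssim (\bar\Delta^2/\hat\mu\eta^2)\log(e/\delta)$. The norm sub-Gaussian condition on $z_i$ makes $\|z_i\|^2$ sub-exponential with parameter $\sigma^2$, whence the same argument yields $S_{\mathrm{noise}} \lesssim \eta\sigma^2\log(e/\delta)$. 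The crucial step is $S_{\mathrm{mart}}$: for $t$ fixed, $\{\langle z_i, x_i - \bar x_t\rangle \zeta^{t-1-i}\}_{i<t}$ is a martingale difference sequence with respect to $(\mathcal{F}_{i,t})$ because $x_i$ and $\bar x_t$ are $\mathcal{F}_{i,t}$-measurable and $\mathbb{E}[z_i\mid\mathcal{F}_{i,t}]=0$ by Assumption~\ref{assmp:funcgap2dec}(ii). Each increment is sub-Gaussian with variance proxy of order $\sigma\|x_i-\bar x_t\|$, and the total conditional variance is bounded by $\sigma^2\sum_{i<t}\zeta^{2(t-1-i)}\|x_i-\bar x_t\|^2$; iterating the recursion of the first step shows that this quantity is self-regulating, i.e., already bounded by the very right-hand side we are proving. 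Applying the generalized Freedman inequality of \citet{pmlr-v99-harvey19a} then bounds $S_{\mathrm{mart}}$ by a constant multiple of the right-hand side of \eqref{ineq:hpbound2dec}, and a union bound finishes the argument.

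The main obstacle will be the martingale step: the increments are inner products involving $x_i - \bar x_t$, so the conditional-variance bound is self-referential in the very quantity we want to control, and only the flexibility of the generalized Freedman inequality closes the loop. Assumption~\ref{assmp:funcgap2dec}(ii) is essential, as it permits conditioning on the future-dependent filtration $\mathcal{F}_{i,t}$ while preserving the martingale structure, and thereby makes the decision-dependent temporal setting amenable to the same proof template as Theorem~\ref{thm:hpbound2}.
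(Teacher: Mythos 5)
Your overall architecture matches the paper's proof exactly: (i) obtain a one-step recursion on $\psi_t(x_{i+1})-\psi_t^\star$ anchored at the fixed future equilibrium $\bar x_t$ with a $\bar G_{i,t}^2$ remainder (Lemma~\ref{lem:avgrecdec}), (ii) apply the weighted-averaging lemma to pass to $\psi_t(\hat x_t)$ (Proposition~\ref{gapbound}), (iii) bound the geometrically weighted drift and noise sums via sub-exponential tails, and (iv) control the martingale $\sum_i \langle z_i, x_i-\bar x_t\rangle\zeta^{t-1-i}$ by showing its total conditional variance is self-regulating (Proposition~\ref{prop:avgrec2}) and invoking the generalized Freedman inequality (Proposition~\ref{prop:tailbd1}), then union-bound.

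The one place where your proposal deviates is the route to step (i). You propose starting from the already-anchored Lemma~\ref{lem:onestep0dec} (anchored at the running $\bar x_i$) and re-centering via $\|x_i-\bar x_i\|^2 = \|x_i-\bar x_t + \bar x_t-\bar x_i\|^2$ plus Young's inequality. Notice that this re-centering does not only produce a deterministic remainder: the noise inner product $\langle z_i, x_i-\bar x_i\rangle$ splits as $\langle z_i, x_i-\bar x_t\rangle + \langle z_i, \bar x_t-\bar x_i\rangle$, and the second piece is an additional stochastic term you do not account for. It is manageable (it is $\mathcal{F}_{i,t}$-conditionally mean-zero and its magnitude is controlled by $\bar G_{i,t}/\bar\mu$ via a Lemma~\ref{lem:grad_vs_dist}-style argument), but it complicates the martingale and variance bookkeeping. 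The paper sidesteps this entirely: Lemma~\ref{lem:onestep1} holds for an arbitrary reference point $x$, so the paper plugs in $x=\bar x_t$ directly and then applies Lemma~\ref{lem:funcgap} once (with index pairs $(i,x_i)$ and $(t,\bar x_t)$) to convert $\varphi_i$-gaps into $\psi_t$-gaps, yielding Lemma~\ref{lem:avgrecdec} with the clean single martingale increment $\langle z_i, x_i-\bar x_t\rangle$. If you adopt that derivation of the anchored recursion, the rest of your proof goes through as written.
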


With Theorem~\ref{thm:hpbound2dec} in hand, we may implement a step decay schedule as before to obtain the following efficiency estimate, thereby extending Theorem~\ref{thm:gap_schedulhp2}; see Section~\ref{minimalvalproofsprob} for the precise statements (Theorems~\ref{thm:hpbound2precise} and \ref{thm:time_to_track_gap_hp2}) and proofs.

\begin{theorem}[Time to track with high probability, informal]\label{thm:gap_schedulhp2dec}
	Suppose that Assumption~\ref{assmp:funcgap2dec} holds and that we are in the low drift-to-noise regime $\bar{\Delta}/\sigma<\sqrt{\hat{\mu}/16L^3}$. Fix $\delta\in(0,1)$. Then there is a learning rate schedule $\{\eta_t\}$ such that Algorithm~\ref{alg:decsgdavg} produces a point $\hat{x}_t$ satisfying 
	\begin{equation*}
		\psi_t(\hat{x}_t) - \psi_t^\star\lesssim \widehat{\mathcal{G}}\log\!\left(\frac{e}{\delta}\right)
	\end{equation*}
	with probability at least $1-K\delta$ in time 
	\begin{equation*}
		t\lesssim \frac{L}{\hat\mu}\log\!\left(\frac{\psi_0(x_0) - \psi_0^\star}{\widehat{\mathcal{G}}}\right) + \frac{\sigma^2}{\hat\mu\widehat{\mathcal{G}}}\log\!\left(\log\!\left(\frac{e}{\delta}\right)\!\right)\!, \quad \text{where~~}K\lesssim \log_2\!\left(\frac{1}{L}\cdot\left(\frac{\sigma^2\hat\mu}{\bar\Delta^2}\right)^{1/3}\right)\!.
	\end{equation*}
\end{theorem}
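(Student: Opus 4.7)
My plan is to follow the step-decay recipe established for Theorem~\ref{thm:gap_schedulhp2} in the purely time-dependent setting, using Theorem~\ref{thm:hpbound2dec} as the one-shot high-probability bound in each epoch in place of Theorem~\ref{thm:hpbound2}. The structural observation making this work is that Theorem~\ref{thm:hpbound2dec} has exactly the same form as Theorem~\ref{thm:hpbound2} after the substitutions $\mu\mapsto \hat\mu$, $\Delta\mapsto \bar\Delta$, $\varphi_t\mapsto \psi_t$, and $\mathcal{G}\mapsto \widehat{\mathcal{G}}$, so the surrounding meta-analysis should port almost verbatim to the decision-dependent setting.

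Concretely, I would first define a decreasing sequence of step sizes by $\eta_0 = 1/(2L)$ and $\eta_{k+1} = \max\{\hat\eta_\star, \eta_k/2\}$, so that $\eta_k$ halves at each stage until reaching the asymptotically optimal value $\hat\eta_\star = (2\bar\Delta^2/\hat\mu\sigma^2)^{1/3}$. In the low drift-to-noise regime this terminates after $K \lesssim \log_2\bigl((1/L)(\sigma^2\hat\mu/\bar\Delta^2)^{1/3}\bigr)$ stages, which is precisely the factor appearing in the failure probability. I would then partition the run of Algorithm~\ref{alg:decsgdavg} into $K$ epochs: epoch $k$ runs at constant rate $\eta_k$ for $T_k$ iterations, initialized at the endpoint of epoch $k-1$. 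Applying Theorem~\ref{thm:hpbound2dec} inside each epoch with failure probability $\delta$ and taking a union bound across the $K$ epochs yields total failure probability at most $K\delta$.

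The remaining task is to choose the epoch lengths $T_k$ and sum them. In each epoch, I would pick $T_k$ just large enough that the optimization term $(1-\hat\mu\eta_k/2)^{T_k}(\psi_{t_k}(\hat x_{t_k})-\psi_{t_k}^\star)$ is dominated by the new noise-plus-drift floor $(\eta_k\sigma^2 + \bar\Delta^2/\hat\mu\eta_k^2)\log(e/\delta)$. Since $\eta_{k-1}/\eta_k = 2$ in the intermediate epochs, this requires $T_k \lesssim 1/(\hat\mu\eta_k)$ up to a logarithmic overhead; the resulting geometric sum telescopes to the two stated contributions, with the ``high step'' phase accumulating $(L/\hat\mu)\log((\psi_0(x_0)-\psi_0^\star)/\widehat{\mathcal{G}})$ to drive the initial suboptimality down to $\widehat{\mathcal{G}}$ and the final epoch at rate $\hat\eta_\star$ contributing the $(\sigma^2/(\hat\mu\widehat{\mathcal{G}}))\log\log(e/\delta)$ term.

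The $\log\log(e/\delta)$ factor is the subtle part of the bookkeeping and is where I expect to spend the most care: it arises because the initial suboptimality entering the last epoch already carries a $\log(e/\delta)$ factor inherited from the high-probability bound applied to the previous epoch, so shrinking this residual down to the final target $\widehat{\mathcal{G}}\log(e/\delta)$ costs an extra $\log\log(e/\delta)$ factor inside that epoch length. This is exactly the tracking already carried out in Theorem~\ref{thm:time_to_track_gap_hp2} for the time-dependent setting, and once Theorem~\ref{thm:hpbound2dec} is in hand, the rest of the argument is a mechanical translation of parameters.
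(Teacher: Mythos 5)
Your proposal is correct and follows essentially the same route as the paper's proof of the formal version (Theorem~\ref{thm:time_to_track_gap_hp2}): a step-decay schedule run in epochs, Theorem~\ref{thm:hpbound2dec} applied once per epoch, a union bound over the $K$ epochs, and epoch lengths inflated by a $\log\log(e/\delta)$ factor to damp the $\log(e/\delta)$-inflated suboptimality entering each epoch. The only cosmetic differences are that the paper uses the midpoint decay $\eta_k = (\eta_{k-1}+\hat\eta_\star)/2$ rather than your halving-with-floor schedule, and the $\log\log(e/\delta)$ overhead must sit in \emph{every} intermediate epoch length (not only the last one), though the total remains dominated by the later epochs so the final bound is unaffected.
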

 \section{Proofs of Main Results}
\label{mainproofs}
\paragraph{Roadmap.} In this section, we derive the results of the preceding sections under the unified framework presented in Section~\ref{sec:decframework}; we impose the assumptions and notation of Section~\ref{sec:decframework} henceforth. Sections~\ref{minimizerproofsexp} and \ref{minimizerproofsprob} handle distance tracking in expectation and with high probability, respectively; this corresponds to the results presented in Section~\ref{sec:eqtrck} (entailing those of Sections~\ref{sec:minimizerexp} and \ref{sec:hp1}). Then Sections~\ref{minimalvalproofsexp} and \ref{minimalvalproofsprob} handle function gap tracking in expectation and with high probability, respectively; this corresponds to the results presented in Section~\ref{sec:eqvaltrck} (entailing those of Sections~\ref{sec:minvalexp} and \ref{sec:minvalprob}).

\subsection{Tracking the Equilibrium Point: Bounds in Expectation}
\label{minimizerproofsexp}
The proof of Theorem~\ref{thm:exp_dist_dec} follows a familiar pattern in stochastic optimization. We begin by recalling Lemma~\ref{lem:onestep0}, which gives a standard one-step improvement guarantee for the proximal stochastic gradient method on the fixed problem $\min \varphi_t$.

\begin{lemma}[One-step improvement]\label{lem:onestep1}
	For all $x\in \mathbb{R}^d$, the iterates $\{x_t\}$ produced by Algorithm~\ref{alg:decsgd} with $\eta_t<1/L$ satisfy the bound:
	\begin{equation*}
		2\eta_t(\varphi_t(x_{t+1})-\varphi_t(x)) \leq (1-\mu\eta_t)\|x_t-x\|^2 - \|x_{t+1}-x\|^2 + 2\eta_t\langle z_t,x_t-x \rangle + \tfrac{\eta^2_t}{1-L\eta_t}\|z_t\|^2.
	\end{equation*}
\end{lemma}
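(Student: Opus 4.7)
The plan is to derive this one-step improvement using the standard three-step recipe for proximal gradient analysis: (i) exploit the proximal optimality condition to produce a subgradient of $r_t$ at $x_{t+1}$, (ii) combine strong convexity of $f_{t,x_t}$ at $x$ with $L$-smoothness at $x_{t+1}$, and (iii) invoke the three-point cosine identity together with Young's inequality to isolate the noise term.

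First I would write the proximal optimality condition for $x_{t+1}=\prox_{\eta_t r_t}(x_t-\eta_t g_t)$: the vector $v_t:=\eta_t^{-1}(x_t-x_{t+1})-g_t$ belongs to $\partial r_t(x_{t+1})$. Convexity of $r_t$ then yields $r_t(x_{t+1})-r_t(x)\le \langle v_t,x_{t+1}-x\rangle$. Next, for $f_t:=f_{t,x_t}$, $\mu$-strong convexity gives the lower bound $f_t(x)\ge f_t(x_t)+\langle\nabla f_t(x_t),x-x_t\rangle+\tfrac{\mu}{2}\|x-x_t\|^2$, and $L$-smoothness gives the descent inequality $f_t(x_{t+1})\le f_t(x_t)+\langle\nabla f_t(x_t),x_{t+1}-x_t\rangle+\tfrac{L}{2}\|x_{t+1}-x_t\|^2$. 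Substituting $\nabla f_t(x_t)=g_t+z_t$ and summing the $f_t$ and $r_t$ estimates yields
\[
\varphi_t(x_{t+1})-\varphi_t(x)\le \tfrac{1}{\eta_t}\langle x_t-x_{t+1},x_{t+1}-x\rangle+\langle z_t,x_{t+1}-x\rangle+\tfrac{L}{2}\|x_{t+1}-x_t\|^2-\tfrac{\mu}{2}\|x-x_t\|^2.
\]

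After multiplying through by $2\eta_t$, I would apply the three-point identity $2\langle x_t-x_{t+1},x_{t+1}-x\rangle=\|x_t-x\|^2-\|x_{t+1}-x\|^2-\|x_t-x_{t+1}\|^2$ to convert the cross term into squared distances. The final maneuver is to split $\langle z_t,x_{t+1}-x\rangle=\langle z_t,x_t-x\rangle+\langle z_t,x_{t+1}-x_t\rangle$ and control the second piece via Young's inequality with weight tuned exactly to kill the $\|x_{t+1}-x_t\|^2$ residual: $2\eta_t\langle z_t,x_{t+1}-x_t\rangle\le (1-L\eta_t)\|x_{t+1}-x_t\|^2+\tfrac{\eta_t^2}{1-L\eta_t}\|z_t\|^2$, which is legitimate because $\eta_t<1/L$. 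The quadratics in $\|x_{t+1}-x_t\|^2$ then cancel exactly, leaving the claimed inequality.

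There is no real obstacle in this proof; the only point requiring care is the choice of Young weight in the last step, since the coefficient must match $L\eta_t-1$ precisely in order to eliminate $\|x_{t+1}-x_t\|^2$ and produce the clean factor $\eta_t^2/(1-L\eta_t)$ on $\|z_t\|^2$. Once that is executed, strong convexity packages $\|x_t-x\|^2-\mu\eta_t\|x-x_t\|^2$ into $(1-\mu\eta_t)\|x_t-x\|^2$ and the stated bound emerges directly.
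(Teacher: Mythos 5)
Your proof is correct and takes essentially the same route as the paper's: both arguments combine $L$-smoothness of $f_t$, $\mu$-strong convexity of $f_t$, a three-point bound supplied by the proximal step, and a Young inequality whose weight is tuned to cancel the $\|x_{t+1}-x_t\|^2$ residual exactly, leaving the factor $\eta_t^2/(1-L\eta_t)$ on $\|z_t\|^2$. The only cosmetic difference is organizational: you invoke the prox optimality inclusion $\eta_t^{-1}(x_t-x_{t+1})-g_t\in\partial r_t(x_{t+1})$ together with convexity of $r_t$ and the explicit three-point identity, and apply Young at the end, whereas the paper encodes the same three-point structure through the $\eta_t^{-1}$-strong convexity of the prox subproblem and applies Young early with a free parameter $\delta_t$ that is then set to $\eta_t/(1-L\eta_t)$; the two bookkeeping schemes yield identical intermediate estimates.
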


\begin{proof}
Since $f_t:=f_{t,x_t}$ is $L$-smooth, we have
\begin{equation*}
	\begin{split}
		\varphi_t(x_{t+1})&=f_t(x_{t+1})+r_t(x_{t+1})\\
		&\leq f_t(x_t)+\langle\nabla f_t(x_t),x_{t+1}-x_t\rangle+\tfrac{L}{2}\|x_{t+1}-x_t\|^2+r_t(x_{t+1})\\
		&=f_t(x_t)+r_t(x_{t+1})+\langle g_t,x_{t+1}-x_t\rangle+\tfrac{L}{2}\|x_{t+1}-x_t\|^2+\langle z_t,x_{t+1}-x_t\rangle.
	\end{split}
\end{equation*}	
Next, given any $\delta_t>0$, Cauchy-Schwarz and Young's inequality yield
\begin{equation*}
	\langle z_t,x_{t+1}-x_t\rangle\leq\tfrac{\delta_t}{2}\|z_t\|^2+\tfrac{1}{2\delta_t}\|x_{t+1}-x_t\|^2.
\end{equation*}
Therefore, given any $x\in\mathbb{R}^d$, we have
\begin{equation*}
	\begin{split}
		\varphi_t(x_{t+1})&\leq f_t(x_t)+r_t(x_{t+1})+\langle g_t,x_{t+1}-x_t\rangle+\tfrac{\delta_t^{-1}+L}{2}\|x_{t+1}-x_t\|^2+\tfrac{\delta_t}{2}\|z_t\|^2 \\
		&= f_t(x_t)+r_t(x_{t+1})+\langle g_t,x_{t+1}-x_t\rangle+\tfrac{1}{2\eta_t}\|x_{t+1}-x_t\|^2\\&\qquad +\tfrac{\delta_t^{-1}+L-\eta_t^{-1}}{2}\|x_{t+1}-x_t\|^2+\tfrac{\delta_t}{2}\|z_t\|^2 \\
		&\leq f_t(x_t)+r_t(x)+\langle g_t,x-x_t\rangle+\tfrac{1}{2\eta_t}\|x-x_t\|^2-\tfrac{1}{2\eta_t}\|x-x_{t+1}\|^2\\
		&\qquad+\tfrac{\delta_t^{-1}+L-\eta_t^{-1}}{2}\|x_{t+1}-x_t\|^2+\tfrac{\delta_t}{2}\|z_t\|^2,	
	\end{split}
\end{equation*}
where the last inequality holds because $x_{t+1}=\prox_{\eta_t r_t}(x_t-\eta_t g_t)$ is the minimizer of the $\eta_t^{-1}$-strongly convex function $r_t+\langle g_t,\cdot-x_t\rangle+\frac{1}{2\eta_t}\|\cdot-x_t\|^2$. Now we estimate 
\begin{equation*}
	\begin{split}
		f_t(x_t)+r_t(x)+\langle g_t,x-x_t\rangle &= f_t(x_t)+\langle \nabla f_t (x_t), x - x_t\rangle + r_t(x) + \langle z_t, x_t - x\rangle \\
		&\leq f_t(x)-\tfrac{\mu}{2}\|x-x_t\|^2 + r_t(x)+\langle z_t, x_t - x\rangle \\
		&=\varphi_t(x)-\tfrac{\mu}{2}\|x-x_t\|^2+\langle z_t, x_t - x\rangle
	\end{split}
\end{equation*}
using the $\mu$-strong convexity of $f_t$. Thus,
\begin{equation*}
	\begin{split}
		\varphi_t(x_{t+1})\leq\varphi_t(x)-\tfrac{\mu}{2}\|x-x_t\|^2+\langle z_t, x_t - x\rangle + \tfrac{1}{2\eta_t}\|x-x_t\|^2-\tfrac{1}{2\eta_t}\|x-x_{t+1}\|^2  \\
		+\tfrac{\delta_t^{-1}+L-\eta_t^{-1}}{2}\|x_{t+1}-x_t\|^2+\tfrac{\delta_t}{2}\|z_t\|^2. 
	\end{split}
\end{equation*}
Finally, taking
$
\delta_t=\eta_t/(1-L\eta_t)
$
and rearranging (note that $\varphi_t(x_{t+1})$ is finite) yields
\begin{equation*}
	2\eta_t(\varphi_t(x_{t+1})-\varphi_t(x)) \leq (1-\mu\eta_t)\|x_t-x\|^2 - \|x_{t+1}-x\|^2 + 2\eta_t\langle z_t,x_t-x \rangle + \tfrac{\eta^2_t}{1-L\eta_t}\|z_t\|^2,
\end{equation*}
as claimed.
\end{proof}

It is critically important that the one-step improvement estimate in Lemma~\ref{lem:onestep1} holds with respect to any reference point $x$. In particular, as we already showed in Section~\ref{sec:eqtrck}, taking $x=\bar{x}_t$ and applying Lemma~\ref{lem:funcgap} yields Lemma~\ref{lem:onestep0dec}:

\begin{lemma}[Equilibrium one-step improvement]\label{lem:onestep0dec2}
	The iterates $\{x_t\}$ produced by Algorithm~\ref{alg:decsgd} with $\eta_t<1/L$ satisfy the bound:
	\begin{equation*}
		\begin{split}
			2\eta_t(\psi_t(x_{t+1})-\psi_t^\star) \leq (1-\bar\mu\eta_t)\|x_t-\bar{x}_t\|^2 - (1 &- \gamma\eta_t)\|x_{t+1}-\bar{x}_t\|^2 \\
			&+ 2\eta_t\langle z_t, x_t - \bar{x}_t \rangle + \tfrac{\eta_t^2}{1-L\eta_t}\|z_t\|^2.
		\end{split}
	\end{equation*}
\end{lemma}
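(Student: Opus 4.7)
The plan is to deduce the equilibrium one-step improvement from the ``anchored'' one-step bound of Lemma~\ref{lem:onestep1} by replacing the reference function $\varphi_t = f_{t,x_t} + r_t$ with the equilibrium function $\psi_t = f_{t,\bar{x}_t} + r_t$, paying a controlled price for this replacement via Lemma~\ref{lem:funcgap}. The key observation is that Lemma~\ref{lem:onestep1} holds for arbitrary reference points $x$, so I can instantiate it at $x = \bar{x}_t$ to obtain a bound on $2\eta_t(\varphi_t(x_{t+1}) - \varphi_t(\bar{x}_t))$ in terms of $\|x_t - \bar{x}_t\|^2$, $\|x_{t+1} - \bar{x}_t\|^2$, the noise inner product $\langle z_t, x_t - \bar{x}_t\rangle$, and $\|z_t\|^2$.

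Next I would compare $\psi_t(x_{t+1}) - \psi_t^\star$ with $\varphi_t(x_{t+1}) - \varphi_t(\bar{x}_t)$. Since $\psi_t^\star = \psi_t(\bar{x}_t)$ and the common regularizer $r_t$ cancels in the difference, Lemma~\ref{lem:funcgap} applied with $(i,x) = (t,x_t)$, $(t,y) = (t,\bar{x}_t)$, $u = x_{t+1}$, $v = \bar{x}_t$ (using $\bar{G}_{t,t} = 0$) yields
\begin{equation*}
\psi_t(x_{t+1}) - \psi_t^\star \;\leq\; \varphi_t(x_{t+1}) - \varphi_t(\bar{x}_t) + \gamma\,\|x_t - \bar{x}_t\|\,\|x_{t+1} - \bar{x}_t\|.
\end{equation*}
The cross term is then split by Young's inequality into $\tfrac{\gamma}{2}\|x_t - \bar{x}_t\|^2 + \tfrac{\gamma}{2}\|x_{t+1} - \bar{x}_t\|^2$, which is essential so that the $\|x_{t+1} - \bar{x}_t\|^2$ piece can be absorbed into the $-\|x_{t+1} - \bar{x}_t\|^2$ coefficient coming from Lemma~\ref{lem:onestep1}.

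Finally, I would multiply through by $2\eta_t$, substitute the bound from Lemma~\ref{lem:onestep1} at $x = \bar{x}_t$, and collect coefficients: the $\|x_t - \bar{x}_t\|^2$ term becomes $(1 - \mu\eta_t + \gamma\eta_t)\|x_t - \bar{x}_t\|^2 = (1 - \bar{\mu}\eta_t)\|x_t - \bar{x}_t\|^2$ since $\bar{\mu} = \mu - \gamma$, while the $\|x_{t+1} - \bar{x}_t\|^2$ term becomes $-(1 - \gamma\eta_t)\|x_{t+1} - \bar{x}_t\|^2$. The noise and variance terms pass through untouched, yielding exactly the stated bound.

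There is essentially no conceptual obstacle here, as both ingredients (Lemma~\ref{lem:onestep1} and Lemma~\ref{lem:funcgap}) are already in place. The only care required is in the accounting: one must ensure Lemma~\ref{lem:funcgap} is applied to $f_{t,x_t} - f_{t,\bar{x}_t}$ rather than $\varphi_t - \psi_t$ directly (so that the $r_t$ terms cancel via subtraction rather than by the hypothesis that regularizers are constant in time, which is not needed here), and one must split the cross term symmetrically so that the resulting coefficient on $\|x_{t+1} - \bar{x}_t\|^2$ is exactly $-(1 - \gamma\eta_t)$ rather than something weaker. Provided $\gamma\eta_t < 1$, this coefficient is negative, which is what enables the recursion to contract in subsequent arguments.
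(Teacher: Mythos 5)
Your proof is correct and takes essentially the same approach as the paper's: instantiate Lemma~\ref{lem:onestep1} at $x=\bar{x}_t$, compare $\psi_t$ to $\varphi_t$ via Lemma~\ref{lem:funcgap} with $i=t$ (so that $\bar{G}_{t,t}=0$ and the $r_t$ terms cancel in the difference), split the cross term symmetrically with Young's inequality, and collect coefficients to obtain $(1-\bar\mu\eta_t)$ and $-(1-\gamma\eta_t)$. The accounting and the ordering of the two ingredients exactly match the paper's argument.
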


With Lemma~\ref{lem:onestep0dec2} in  hand, we obtain the following recursion on $\|x_t - \bar{x}_t\|^2$.

\begin{lemma}[Distance recursion]\label{lem:distrecur}
	The iterates $\{x_t\}$ produced by Algorithm~\ref{alg:decsgd} with step size $\eta_t<1/L$ satisfy the bound:
	\begin{equation*}
		\|x_{t+1}-\bar{x}_{t+1}\|^2 \leq (1-\bar{\mu}\eta_t)\|x_t-\bar{x}_t\|^2  + 2\eta_t\langle z_t,x_t-\bar{x}_t \rangle + \tfrac{\eta^2_t}{1-L\eta_t}\|z_t\|^2 +\Big(1+\tfrac{1}{\bar{\mu}\eta_t}\Big)\bar{\Delta}^2_t.
	\end{equation*}
\end{lemma}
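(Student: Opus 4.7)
The plan is to start from Lemma~\ref{lem:onestep0dec2} and convert the function-gap information $\psi_t(x_{t+1}) - \psi_t^\star$ on its left-hand side into a quadratic lower bound via strong convexity, then pay a Young's inequality price to replace $\bar{x}_t$ by $\bar{x}_{t+1}$ on the left-hand side of the resulting distance recursion. Since $f_{t,\bar{x}_t}$ is $\mu$-strongly convex and $\bar{x}_t$ minimizes $\psi_t := f_{t,\bar{x}_t}+r_t$, strong convexity yields $\psi_t(x_{t+1}) - \psi_t^\star \geq \tfrac{\mu}{2}\|x_{t+1}-\bar{x}_t\|^2$.

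Plugging this into Lemma~\ref{lem:onestep0dec2} and moving the resulting $\mu\eta_t\|x_{t+1}-\bar{x}_t\|^2$ term to the right-hand side gives, after using the identity $1 - \gamma\eta_t + \mu\eta_t = 1 + \bar\mu\eta_t$, the intermediate bound
\begin{equation*}
(1+\bar\mu\eta_t)\|x_{t+1}-\bar{x}_t\|^2 \leq (1-\bar\mu\eta_t)\|x_t-\bar{x}_t\|^2 + 2\eta_t\langle z_t, x_t-\bar{x}_t\rangle + \tfrac{\eta_t^2}{1-L\eta_t}\|z_t\|^2.
\end{equation*}
Next I would decompose $x_{t+1}-\bar{x}_{t+1} = (x_{t+1}-\bar{x}_t) + (\bar{x}_t-\bar{x}_{t+1})$ and apply Young's inequality with the tailor-made parameter $\alpha = \bar\mu\eta_t$ to obtain
\begin{equation*}
\|x_{t+1}-\bar{x}_{t+1}\|^2 \leq (1+\bar\mu\eta_t)\|x_{t+1}-\bar{x}_t\|^2 + \bigl(1+\tfrac{1}{\bar\mu\eta_t}\bigr)\bar\Delta_t^2.
\end{equation*}
Substituting the intermediate bound then produces the claimed recursion exactly, since the inflation factor $(1+\bar\mu\eta_t)$ from Young cancels the denominator implicit in the intermediate bound.

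There is no real obstacle; the entire argument hinges on choosing the Young's parameter to be $\bar\mu\eta_t$, which is dictated by the need to match the $(1+\bar\mu\eta_t)$ coefficient that emerges naturally after using strong convexity of $\psi_t$. The only conceptual point worth highlighting is that the contraction factor $(1-\bar\mu\eta_t)$ in the final bound comes not from $\mu$-strong convexity alone but from the interplay between $\mu$ and the gradient-stability parameter $\gamma$ through $\bar\mu=\mu-\gamma$, with the positive $\gamma\eta_t\|x_{t+1}-\bar{x}_t\|^2$ term in Lemma~\ref{lem:onestep0dec2} offset by the $\mu\eta_t\|x_{t+1}-\bar{x}_t\|^2$ term gained from strong convexity of $\psi_t$.
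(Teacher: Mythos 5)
Your proof is correct and follows essentially the same argument as the paper's: both rely on $\mu$-strong convexity of $\psi_t$ applied to Lemma~\ref{lem:onestep0dec2} to obtain $(1+\bar\mu\eta_t)\|x_{t+1}-\bar{x}_t\|^2$ on the left, and then Young's inequality with parameter $\bar\mu\eta_t$ to shift from $\bar{x}_t$ to $\bar{x}_{t+1}$, only in the opposite order. The observation that $\bar\mu = \mu - \gamma$ arises precisely from combining the $\mu\eta_t$ gained from strong convexity with the $\gamma\eta_t$ coefficient in Lemma~\ref{lem:onestep0dec2} is exactly the right way to read the argument.
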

\begin{proof} 
	First, note
	\begin{align*}
		\|x_{t+1}-\bar{x}_{t+1}\|^2  &= \|x_{t+1} - \bar{x}_t\|^2 + \|\bar{x}_t - \bar{x}_{t+1}\|^2 + 2 \langle x_{t+1}-\bar{x}_t, \bar{x}_t - \bar{x}_{t+1}\rangle \\
		&\leq (1+\bar{\mu}\eta_t)\|x_{t+1}-\bar{x}_t\|^2 + \Big(1+\tfrac{1}{\bar{\mu}\eta_t}\Big)\|\bar{x}_t - \bar{x}_{t+1}\|^2
	\end{align*}
	by Cauchy-Schwarz and Young's inequality. Further, the $\mu$-strong convexity of $\psi_t$ implies 
	$\frac{\mu}{2}\|x_{t+1}-\bar{x}_t\|^2 \leq \psi_t(x_{t+1})-\psi_t^\star$, which together with Lemma~\ref{lem:onestep0dec2} implies
	\begin{equation*}
		(1+\bar{\mu}\eta_t)\|x_{t+1}-\bar{x}_t\|^2 \leq (1-\bar{\mu}\eta_t)\|x_t-\bar{x}_t\|^2  + 2\eta_t\langle z_t,x_t-\bar{x}_t \rangle + \tfrac{\eta^2_t}{1-L\eta_t}\|z_t\|^2.
	\end{equation*}
	The result follows.
\end{proof}

Applying Lemma \ref{lem:distrecur} recursively furnishes a bound on $\|x_t - \bar{x}_t\|^2$. When the step size is constant, the next proposition follows immediately.

\begin{proposition}[Last-iterate progress]\label{distbound}
	The iterates $\{x_t\}$ produced by Algorithm~\ref{alg:decsgd} with constant step size $\eta<1/L$ satisfy the bound: 
	\begin{equation*}
		\begin{split}
			\|x_t - \bar{x}_t\|^2 \leq (1&-\bar{\mu}\eta)^t\|x_0-\bar{x}_0\|^2+ 2\eta\sum_{i=0}^{t-1}\langle z_i,x_i - \bar{x}_i\rangle(1-\bar{\mu}\eta)^{t-1-i} \\
			&+
			\tfrac{\eta^2}{1-L\eta}\sum_{i=0}^{t-1}\|z_i\|^{2}(1-\bar{\mu}\eta)^{t-1-i} + \left(1+\tfrac{1}{\bar{\mu}\eta}\right)\sum_{i=0}^{t-1}\bar{\Delta}_i^{2}(1-\bar{\mu}\eta)^{t-1-i}.	
		\end{split}	
	\end{equation*}
\end{proposition}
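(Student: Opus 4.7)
The plan is to unroll the one-step recursion of Lemma~\ref{lem:distrecur} specialized to the constant step size $\eta_t \equiv \eta$. Indeed, setting $D_t := \|x_t - \bar{x}_t\|^2$ and
\[
N_t := 2\eta\langle z_t,x_t-\bar{x}_t \rangle + \tfrac{\eta^2}{1-L\eta}\|z_t\|^2 +\Big(1+\tfrac{1}{\bar{\mu}\eta}\Big)\bar{\Delta}^2_t,
\]
Lemma~\ref{lem:distrecur} asserts the scalar recursion $D_{t+1} \leq (1-\bar{\mu}\eta) D_t + N_t$ for all $t\geq 0$. Since $\eta<1/L \leq 1/\bar\mu$, the contraction factor $(1-\bar\mu\eta)$ lies in $[0,1)$, so the recursion may be safely iterated without sign issues.

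The main work is a routine induction on $t$. The base case $t=0$ is the trivial equality $D_0 = (1-\bar\mu\eta)^0 D_0$, with empty sums on the right-hand side; for the inductive step, assuming
\[
D_t \leq (1-\bar{\mu}\eta)^t D_0 + \sum_{i=0}^{t-1} (1-\bar\mu\eta)^{t-1-i}\,N_i,
\]
I apply the one-step recursion and distribute the factor $(1-\bar\mu\eta)$ through the sum to obtain
\[
D_{t+1} \leq (1-\bar{\mu}\eta)^{t+1} D_0 + \sum_{i=0}^{t-1} (1-\bar\mu\eta)^{t-i}\,N_i + N_t = (1-\bar\mu\eta)^{t+1} D_0 + \sum_{i=0}^{t} (1-\bar\mu\eta)^{t-i}\,N_i,
\]
which is the claim at time $t+1$. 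Finally, expanding $N_i$ and grouping terms of the same type across $i$ gives exactly the three weighted sums appearing in the statement (the martingale-like sum of $\langle z_i, x_i - \bar{x}_i\rangle$, the sum of $\|z_i\|^2$, and the sum of $\bar{\Delta}_i^2$), each weighted by $(1-\bar\mu\eta)^{t-1-i}$.

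There is no substantive obstacle here: the only things to watch out for are (i) ensuring the index shift is handled correctly so that the exponent is $t-1-i$ rather than $t-i$ or $t-i+1$, and (ii) verifying that the condition $\eta < 1/L$ is enough to apply Lemma~\ref{lem:distrecur} at every step and to keep the quantity $\eta^2/(1-L\eta)$ well defined, both of which hold by assumption. All subsequent proofs of the bounds in expectation and with high probability (Theorem~\ref{thm:exp_dist_dec}, Theorem~\ref{HPtrackdec}, etc.)\ take Proposition~\ref{distbound} as their starting point and reason about the three error sums separately, so this clean decomposition is exactly the form we want.
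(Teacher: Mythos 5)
Your proposal is correct and matches the paper's argument: the paper also obtains Proposition~\ref{distbound} by iterating Lemma~\ref{lem:distrecur} with constant step size, simply stating that the result ``follows immediately'' without spelling out the induction. Your explicit verification that $1-\bar\mu\eta\in(0,1)$ (via $\bar\mu\le\mu\le L$ and $\eta<1/L$) is a sound sanity check, though it is not strictly needed to unroll the recursion---nonnegativity of the contraction factor alone suffices.
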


By taking expectations in Proposition~\ref{distbound}, we obtain the following precise version of Theorem~\ref{thm:exp_dist_dec}.

\begin{corollary}[Expected distance]\label{cor1}
	Suppose that Assumption~\ref{assump:sec_moment_dec} holds. Then the iterates $\{x_t\}$ generated by Algorithm~\ref{alg:decsgd} with constant learning rate $\eta\leq 1/2L$ satisfy the bound:  
	\begin{equation*}
		\mathbb{E}\|x_t-\bar{x}_t\|^2 \leq (1-\bar{\mu}\eta)^t\|x_0-\bar{x}_0\|^2 + 2\bigg(\frac{\eta\sigma^2}{\bar\mu} + \left(\frac{\bar\Delta}{\bar{\mu}\eta}\right)^2\bigg).
	\end{equation*}
\end{corollary}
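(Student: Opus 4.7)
The plan is to obtain Corollary~\ref{cor1} by taking expectations termwise in the pathwise bound of Proposition~\ref{distbound} and then controlling each of the resulting geometric sums. Writing $q = 1 - \bar\mu\eta \in (0,1)$ for brevity (since $\bar\mu \leq \mu \leq L$ and $\eta \leq 1/2L$ give $\bar\mu\eta \leq 1/2$), the three stochastic terms on the right-hand side of Proposition~\ref{distbound} are a weighted martingale sum in the noise, a weighted sum of squared noise norms, and a weighted sum of squared minimizer drifts. I would handle each in turn.

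First, for the cross term $2\eta\sum_{i=0}^{t-1} \langle z_i, x_i - \bar{x}_i\rangle q^{t-1-i}$, by Assumption~\ref{assmp:stochframdec} the vector $x_i - \bar{x}_i$ is $\mathcal{F}_i$-measurable, while $\mathbb{E}[z_i \mid \mathcal{F}_i] = 0$. Hence the tower property gives $\mathbb{E}\langle z_i, x_i - \bar{x}_i\rangle = 0$ for each $i$, so the entire cross term vanishes in expectation. (Implicit here is a mild integrability check so that Fubini/tower is legitimate; this is routine given bounded second moments on $z_i$ and the recursive control in Lemma~\ref{lem:distrecur}.)

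Second, for the noise term $\tfrac{\eta^2}{1-L\eta}\sum_{i=0}^{t-1}\|z_i\|^2 q^{t-1-i}$, I would apply Assumption~\ref{assump:sec_moment_dec}(ii) ($\mathbb{E}\|z_i\|^2 \leq \sigma^2$) together with the geometric sum bound $\sum_{i=0}^{t-1} q^{t-1-i} \leq \sum_{k=0}^\infty q^k = 1/(\bar\mu\eta)$. Since $\eta \leq 1/2L$ implies $1 - L\eta \geq 1/2$, this term is at most $2\eta\sigma^2/\bar\mu$. Third, for the drift term $(1 + 1/(\bar\mu\eta))\sum_{i=0}^{t-1}\bar\Delta_i^2 q^{t-1-i}$, I would use Assumption~\ref{assump:sec_moment_dec}(i) ($\mathbb{E}\bar\Delta_i^2 \leq \bar\Delta^2$) and the same geometric sum, obtaining at most $(1 + 1/(\bar\mu\eta)) \cdot \bar\Delta^2/(\bar\mu\eta)$. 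The key simplification is that $\bar\mu\eta \leq 1/2$ makes $1 \leq 1/(\bar\mu\eta)$, so $1 + 1/(\bar\mu\eta) \leq 2/(\bar\mu\eta)$, and this term collapses to $2\bar\Delta^2/(\bar\mu\eta)^2$.

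Summing these three bounds with the deterministic first term $q^t\|x_0-\bar x_0\|^2$ gives exactly the claimed inequality. There is no serious obstacle here; the only point requiring care is the tower-property argument for the cross term, which relies crucially on the measurability structure of Assumption~\ref{assmp:stochframdec}\ref{it1dec}--\ref{it2dec} (so that $x_i - \bar x_i$ is frozen when conditioning on $\mathcal{F}_i$). Everything else is a matter of summing geometric series and invoking $\eta \leq 1/2L$ to convert $1-L\eta$ and $\bar\mu\eta$ into harmless absolute constants.
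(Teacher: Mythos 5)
Your proof is correct and follows essentially the same route as the paper, which simply remarks that Corollary~\ref{cor1} is obtained ``by taking expectations in Proposition~\ref{distbound}'' and leaves the geometric-series bookkeeping implicit. Your treatment of the three terms (vanishing cross term via the tower property and $\mathcal{F}_i$-measurability, noise term using $1-L\eta\geq 1/2$, drift term using $\bar\mu\eta\leq 1/2$) fills in exactly the computation the paper intends.
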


With Corollary~\ref{cor1} in hand, we can now prove an expected efficiency estimate for the online proximal stochastic gradient method using a step decay schedule, wherein the algorithm is implemented in epochs with the new learning rate chosen to be the midpoint between the current learning rate and the asymptotically optimal learning rate $\bar{\eta}_{\star}$. The following theorem provides a formal version of Theorem~\ref{thm:dist_schedul_dist} (note that in the high drift-to-noise regime $\bar{\Delta}/\sigma \geq \sqrt{\bar{\mu}/16L^3}$, Theorem~\ref{thm:dist_schedul_dist} holds trivially with the constant learning rate $\bar{\eta}_\star=1/2L$). The argument is close in spirit to the justifications of the restart schemes used by \citet{ghadimi2013optimal}.
\begin{theorem}[Time to track in expectation]\label{thm:time_to_track}
	Suppose that Assumption \ref{assump:sec_moment_dec} holds and that we are in the low drift-to-noise regime $\bar{\Delta}/\sigma<\sqrt{\bar\mu/16L^3}$. Set $\bar{\eta}_\star = (2\bar{\Delta}^2/\bar{\mu}\sigma^2)^{1/3}$ and $\bar{\mathcal{E}}=(\bar{\Delta}\sigma^2/\bar{\mu}^2)^{2/3}$. Suppose moreover that we have available a positive upper bound on the initial squared distance $D\geq \|x_0-\bar{x}_0\|^2$.
	Consider running Algorithm~\ref{alg:decsgd} in $k=0,\ldots, K-1$ epochs, namely, set $X_0=x_0$ and iterate the process
	$$X_{k+1}=\mathtt{D\text{-}PSG}(X_{k},\eta_k,T_k)\quad \text{for}\quad k=0,\ldots, K-1,$$ where the number of epochs is $$K=1+\left\lceil\log_2\!\left(\frac{1}{L}\cdot\left(\frac{\sigma^2\bar\mu}{\bar{\Delta}^2}\right)^{1/3}\right)\right\rceil$$ 
	and we set\,\footnote{We use here the notation $a^+ = a \vee 0 = \max\{a, 0\}$ to denote the positive part of a real number $a$; note that for small $D$, the logarithms $\log(\bar\mu L D/\sigma^2)$ and $\log(D/\bar{\mathcal{E}})$ may be negative.}
	$$\eta_0=\frac{1}{2L},\quad T_0=\left\lceil{\frac{2L}{\bar\mu}\log\!\left(\frac{\bar\mu LD}{\sigma^2}\right)^+}\right\rceil\quad \text{and}\quad \eta_k=\frac{\eta_{k-1}+ \bar{\eta}_{\star}}{2},\quad T_k=\left\lceil{\frac{\log(4)}{\bar\mu\eta_k}}\right\rceil\quad \forall k\geq 1.$$ Then the time horizon $T=T_0+\cdots+T_{K-1}$ satisfies
	$$T\lesssim \frac{L}{\bar\mu}\log\!\left(\frac{\bar\mu L D}{\sigma^2}\right)^+ + \frac{\sigma^2}{\bar{\mu}^2\bar{\mathcal{E}}} \leq \frac{L}{\bar\mu}\log\!\left(\frac{D}{\bar{\mathcal{E}}}\right)^+ + \frac{\sigma^2}{\bar{\mu}^2\bar{\mathcal{E}}}$$
	and the corresponding tracking error satisfies
	$\mathbb{E}\|X_K-\bar{X}_K\|^2\lesssim \bar{\mathcal{E}}$,
	where $\bar{X}_K$ denotes the minimizer of $\psi_T$.
\end{theorem}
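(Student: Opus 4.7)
The plan is to track the squared distance $D_k := \mathbb{E}\|X_k-\bar{X}_k\|^2$ epoch by epoch via Corollary~\ref{cor1}, which yields the one-epoch recursion
\[
D_{k+1}\leq (1-\bar\mu\eta_k)^{T_k}D_k + 2\mathcal{E}_k, \qquad \mathcal{E}_k:=\frac{\eta_k\sigma^2}{\bar\mu}+\left(\frac{\bar\Delta}{\bar\mu\eta_k}\right)^{\!2}.
\]
In the low drift-to-noise regime, $\mathcal{E}(\eta)$ is within a constant of its noise component $\eta\sigma^2/\bar\mu$ on the range $\eta\in[\bar\eta_\star,1/(2L)]$, so in particular $\mathcal{E}_0\asymp \sigma^2/(\bar\mu L)$ and $\mathcal{E}_{K-1}\asymp\bar{\mathcal{E}}$ (because $\eta_{K-1}\in[\bar\eta_\star,2\bar\eta_\star]$ by the definition of $K$).

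First I would handle the warm-up epoch $k=0$ with the large step $\eta_0=1/(2L)$. Using $(1-\bar\mu\eta_0)^{T_0}\leq e^{-T_0\bar\mu/(2L)}$ and the choice of $T_0$, the optimization term contracts down to $\mathcal{E}_0$, so $D_1\leq 3\mathcal{E}_0$. Since $\mathcal{E}_0\asymp\sigma^2/(\bar\mu L)$, the logarithm $\log(D/\mathcal{E}_0)$ is of the same order as $\log(\bar\mu LD/\sigma^2)$, matching the stated $T_0$.

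Next I would establish by induction the invariant $D_k\leq 4\mathcal{E}_{k-1}$ for $k\geq 1$. The crucial monotonicity step is that $\eta_k=(\eta_{k-1}+\bar\eta_\star)/2\geq\eta_{k-1}/2$, hence $\mathcal{E}_{k-1}\leq 2\mathcal{E}_k$ and consequently $D_k\leq 8\mathcal{E}_k$. With $T_k=\lceil\log(4)/(\bar\mu\eta_k)\rceil$, the bound $(1-\bar\mu\eta_k)^{T_k}\leq 1/4$ yields
\[
D_{k+1}\leq \tfrac{1}{4}\cdot 8\mathcal{E}_k + 2\mathcal{E}_k = 4\mathcal{E}_k,
\]
closing the induction. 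Applying this at $k=K-1$ together with $\mathcal{E}_{K-1}\asymp \bar{\mathcal{E}}$ gives $\mathbb{E}\|X_K-\bar{X}_K\|^2\lesssim\bar{\mathcal{E}}$.

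For the total horizon I would sum: $T_0$ contributes $(L/\bar\mu)\log(\bar\mu LD/\sigma^2)^+$, and for $k\geq 1$ one has $T_k\lesssim 1/(\bar\mu\eta_k)$. Writing $\eta_k=\bar\eta_\star+(\eta_0-\bar\eta_\star)2^{-k}$, the dyadic decay of $\eta_k-\bar\eta_\star$ makes $\sum_{k\geq 1}1/\eta_k$ telescope to $O(1/\bar\eta_\star)$ (not a geometric sum of $1/\bar\eta_\star$ terms as a naive bound would suggest). Since $1/(\bar\mu\bar\eta_\star)\asymp \sigma^2/(\bar\mu^2\bar{\mathcal{E}})$, this produces the second term of the claimed horizon. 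The final inequality $\log(\bar\mu LD/\sigma^2)\leq\log(D/\bar{\mathcal{E}})$ is the one-line consequence of $\bar{\mathcal{E}}\lesssim\sigma^2/(\bar\mu L)$, which itself is exactly the low drift-to-noise condition.

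The main obstacle is the sharp bookkeeping in the geometric sum $\sum 1/\eta_k$; a careless bound $\sum 1/\eta_k\leq K/\bar\eta_\star$ would introduce an extra $\log(1/(L\bar\eta_\star))$ factor and spoil the sublinear term. Exploiting the explicit formula $\eta_k-\bar\eta_\star=(\eta_0-\bar\eta_\star)/2^k$ and splitting the sum at the crossover index $k^\star\asymp\log_2(\eta_0/\bar\eta_\star)$ eliminates this factor cleanly, after which all remaining steps are routine.
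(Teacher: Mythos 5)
Your proposal is correct and follows the same route as the paper's proof: the one-epoch recursion from Corollary~\ref{cor1}, an induction that keeps $\mathbb{E}\|X_k-\bar X_k\|^2$ within a constant factor of a per-epoch ``noise\,+\,drift'' error, and a geometric bound on $\sum_k T_k$ exploiting $\eta_k\geq \eta_0/2^k$. The only cosmetic difference is that the paper freezes the drift term at $\bar\eta_\star$ inside its per-epoch error $\bar{E}_k$ (so only the noise term evolves, and the monotonicity $\bar{E}_{k-1}\leq 2\bar{E}_k$ is immediate), whereas you let both terms track $\eta_k$; both work, and your constant $3\mathcal{E}_0$ in the warm-up should read $4\mathcal{E}_0$, which is what your stated invariant actually needs.
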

\begin{proof}
	For each index $k$, let $t_k:=T_0+\cdots+T_{k-1}$ (with $t_0 := 0$), $\bar{X}_k$ be the minimizer of the corresponding equilibrium function $\psi_{t_k}$, and $$\bar{E}_k:=\frac{2}{\bar\mu}\left(\eta_k\sigma^2 + \frac{\bar{\Delta}^2}{\bar{\mu}\bar{\eta}^2_{\star}}\right)\!.$$
	Then taking into account $\eta_k \geq \bar{\eta}_{\star}$, Corollary~\ref{cor1} directly implies
	\begin{align*}
		\mathbb{E}\|X_{k+1} - \bar{X}_{k+1}\|^2 &\leq (1-\bar{\mu}\eta_k)^{T_k}\mathbb{E}\|X_k - \bar{X}_k\|^2 + \frac{2}{\bar\mu}\!\left(\eta_k\sigma^2 + \frac{\bar{\Delta}^2}{\bar\mu\eta^2_k}\right)\\
		&\leq e^{-\bar\mu\eta_kT_k}\mathbb{E}\|X_k - \bar{X}_k\|^2 + \bar{E}_k.
	\end{align*}
	
	We will verify by induction that the estimate $\mathbb{E}\|X_{k} - \bar{X}_{k}\|^2\leq 2\bar{E}_{k-1}$
	holds for all indices $k\geq1$. To see the base case, observe
	$$\mathbb{E}\|X_1 - \bar{X}_1\|^2 \leq e^{-\bar\mu\eta_0 T_0}\|X_0 - \bar{X}_0\|^2 + \bar{E}_0 \leq 2\bar{E}_0.$$
	Now assume that the claim holds for some index $k\geq1$. We then conclude
	\begin{align*}
		\mathbb{E}\|X_{k+1} - \bar{X}_{k+1}\|^2&\leq e^{-\bar\mu\eta_k T_k}\mathbb{E}\|X_k - \bar{X}_k\|^2 + \bar{E}_k\\
		&\leq \frac{1}{4}\mathbb{E}\|X_k - \bar{X}_k\|^2 + \bar{E}_k\\
		&\leq \frac{\bar{E}_k}{2\bar{E}_{k-1}}\mathbb{E}\|X_k - \bar{X}_k\|^2 + \bar{E}_k \leq 2\bar{E}_k,
	\end{align*}
	thereby completing the induction. Hence $\mathbb{E}\|X_K - \bar{X}_K\|^2\leq  2\bar{E}_{K-1}$.
	
	Next, observe
	$$\bar{E}_{K-1}-\sqrt[3]{54}\left(\frac{\bar\Delta\sigma^2}{\bar{\mu}^2}\right)^{2/3}=\frac{2\sigma^2}{\bar\mu}(\eta_{K-1} - \bar{\eta}_\star) = \frac{2\sigma^2}{\bar\mu}\cdot \frac{\eta_0 - \bar{\eta}_\star}{2^{K-1}} \leq \left(\frac{\bar\Delta\sigma^2}{\bar{\mu}^2}\right)^{2/3} = \bar{\mathcal{E}},$$ so $$\mathbb{E}\|X_K - \bar{X}_K\|^2 \leq  2(1+\sqrt[3]{54})\left(\frac{\bar\Delta\sigma^2}{\bar{\mu}^2}\right)^{2/3} \asymp \bar{\mathcal{E}}.$$ 
	Finally, note 
	$$T \lesssim \frac{L}{\bar\mu}\log\!\left(\frac{\bar\mu LD}{\sigma^2}\right)^+ + \frac{1}{\bar\mu}\sum_{k=1}^{K-1}\frac{1}{\eta_k}$$ and $$\sum_{k=1}^{K-1}\frac{1}{\eta_k}\leq 2L\sum_{k=1}^{K-1} 2^k \leq 2L\cdot 2^{K} = 8L\cdot 2^{K-2} \leq 8 \left(\frac{\sigma^2\bar\mu}{\bar{\Delta}^2}\right)^{1/3} = \frac{8\sigma^2}{\bar\mu} \cdot \left(\frac{\bar\Delta\sigma^2}{\bar{\mu}^2}\right)^{-2/3} \asymp \frac{\sigma^2}{\bar\mu\bar{\mathcal{E}}}.$$  
	This completes the proof.
\end{proof}

\subsection{Tracking the Equilibrium Point: High-Probability Guarantees}
\label{minimizerproofsprob}
The proof of Theorem~\ref{HPtrackdec} is based on recursively controlling the moment generating function of $\|x_t - \bar{x}_t\|^2$.
Namely, Lemma \ref{lem:distrecur} in the regime $\eta_t\leq1/2L$ directly yields
\begin{equation}\label{onestep}
	\|x_{t+1}-\bar{x}_{t+1}\|^2 \leq (1-\bar\mu\eta_t)\|x_t - \bar{x}_t\|^2  + 2\eta_t\langle z_t, v_t\rangle\|x_t - \bar{x}_t\| + 2\eta_t^2\|z_t\|^2 +\tfrac{2}{\bar\mu\eta_t}\bar{\Delta}_t^2,
\end{equation}
where we set 
\begin{equation*}
	v_t := \begin{cases} 
		\frac{x_t - \bar{x}_t}{\|x_t - \bar{x}_t\|} & \text{if~} x_t \neq \bar{x}_t \\
		0 & \text{otherwise}.
	\end{cases}
\end{equation*}
The goal is now to control the moment generating function $\mathbb{E}\big[e^{\lambda\|x_t - \bar{x}_t\|^2}\big]$ through the recursive inequality \eqref{onestep}. The basic probabilistic tool to achieve this in similar settings under bounded noise assumptions was developed by \citet{pmlr-v99-harvey19a}; the following proposition is a slight generalization of Claim D.1 of \citet{pmlr-v99-harvey19a} to the light-tail setting we require. 

\begin{proposition}[Recursive control on MGF]\label{proprec}
	Consider scalar stochastic processes $(V_t)$, $(D_t)$, and $(X_t)$ on a probability space with filtration $(\mathcal{H}_t)$ such that $V_t$ is nonnegative and $\mathcal{H}_t$-measurable and the inequality 
	\begin{equation*}
		V_{t+1}\leq \alpha_tV_t+D_t\sqrt{V_t}+X_t+\kappa_t 
	\end{equation*} 
	holds for some deterministic constants $\alpha_t\in(-\infty,1]$ and $\kappa_t\in\R$. Suppose that the moment generating functions of $D_t$ and $X_t$ conditioned on $\mathcal{H}_t$ satisfy the following inequalities for some deterministic constants $\sigma_t, \nu_t>0$:
	\begin{itemize}[leftmargin=0.5cm]
		
		\item $\mathbb{E}[\exp(\lambda D_t)\,|\,\mathcal{H}_t]\leq \exp(\lambda^2\sigma_t^2/2)$ for all $\lambda\geq 0$ (e.g., $D_t$ is mean-zero sub-Gaussian conditioned on $\mathcal{H}_t$ with parameter $\sigma_t$).
		\item $\mathbb{E}[\exp(\lambda X_t)\,|\,\mathcal{H}_t]\leq \exp(\lambda \nu_t)$ for all $0\leq\lambda\leq 1/\nu_t$ (e.g., $X_t$ is nonnegative and sub-exponential conditioned on $\mathcal{H}_t$ with parameter $\nu_t$).	
	\end{itemize}
	Then the inequality
	\begin{equation*}
		\mathbb{E}[\exp(\lambda V_{t+1})]\leq {\exp}\big(\lambda(\nu_t+\kappa_t)\big)\,\mathbb{E}\bigg[{\exp}\bigg(\lambda\bigg(\frac{1+\alpha_t}{2}\bigg)V_t\bigg)\bigg]
	\end{equation*}
	holds for all $
	0\leq\lambda\leq\min\!\big\{\frac{1-\alpha_t}{2\sigma_t^2},\frac{1}{2\nu_t}\big\}$.
\end{proposition}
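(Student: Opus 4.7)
The plan is to condition on $\mathcal{H}_t$ so that $V_t$ (and hence $\alpha_t V_t + \kappa_t$) can be treated as a constant, and then bound the conditional MGF of the random contribution $D_t\sqrt{V_t} + X_t$. Concretely, from the recursion and monotonicity of the exponential, for any admissible $\lambda \geq 0$ I have
\begin{equation*}
\mathbb{E}\bigl[\exp(\lambda V_{t+1}) \,\big|\, \mathcal{H}_t\bigr] \;\leq\; \exp\bigl(\lambda(\alpha_t V_t + \kappa_t)\bigr)\cdot \mathbb{E}\bigl[\exp\bigl(\lambda D_t\sqrt{V_t} + \lambda X_t\bigr)\,\big|\,\mathcal{H}_t\bigr].
\end{equation*}

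The key step, and the one place care is required, is to decouple $D_t$ and $X_t$ inside the conditional expectation, since the hypotheses do not assume they are conditionally independent. I would apply Cauchy-Schwarz to the product $\exp(\lambda D_t\sqrt{V_t})\cdot \exp(\lambda X_t)$ conditionally on $\mathcal{H}_t$, which yields
\begin{equation*}
\mathbb{E}\bigl[\exp\bigl(\lambda D_t\sqrt{V_t} + \lambda X_t\bigr)\,\big|\,\mathcal{H}_t\bigr] \;\leq\; \mathbb{E}\bigl[\exp(2\lambda \sqrt{V_t}\, D_t)\,\big|\,\mathcal{H}_t\bigr]^{1/2}\cdot \mathbb{E}\bigl[\exp(2\lambda X_t)\,\big|\,\mathcal{H}_t\bigr]^{1/2}.
\end{equation*}
Since $\sqrt{V_t}\geq 0$ is $\mathcal{H}_t$-measurable, the sub-Gaussian hypothesis on $D_t$ applied with parameter $2\lambda\sqrt{V_t}\geq 0$ gives the bound $\exp(2\lambda^2 V_t \sigma_t^2)$ on the first factor. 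The sub-exponential hypothesis on $X_t$ applied with $2\lambda$ (valid provided $\lambda \leq 1/(2\nu_t)$) gives $\exp(2\lambda \nu_t)$ on the second factor. Taking square roots and multiplying yields
\begin{equation*}
\mathbb{E}\bigl[\exp\bigl(\lambda D_t\sqrt{V_t} + \lambda X_t\bigr)\,\big|\,\mathcal{H}_t\bigr] \;\leq\; \exp\bigl(\lambda^2 V_t \sigma_t^2 + \lambda \nu_t\bigr).
\end{equation*}

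Combining with the earlier display gives
\begin{equation*}
\mathbb{E}\bigl[\exp(\lambda V_{t+1})\,\big|\,\mathcal{H}_t\bigr] \;\leq\; \exp\bigl(\lambda(\nu_t+\kappa_t)\bigr)\cdot \exp\bigl(\lambda V_t (\alpha_t + \lambda\sigma_t^2)\bigr).
\end{equation*}
To absorb the stray $\lambda^2 \sigma_t^2 V_t$ term into a linear function of $V_t$ with coefficient $\tfrac{1+\alpha_t}{2}$, I require $\alpha_t + \lambda \sigma_t^2 \leq \tfrac{1+\alpha_t}{2}$, i.e.\ $\lambda \leq \tfrac{1-\alpha_t}{2\sigma_t^2}$; together with the earlier constraint $\lambda \leq 1/(2\nu_t)$, this is exactly the admissibility range in the statement. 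Taking total expectations and using the tower property then yields the claimed bound. The main (and essentially only) obstacle is the Cauchy-Schwarz decoupling: without it, one would need to assume $D_t \perp X_t \mid \mathcal{H}_t$, but the factor $2$ produced by Cauchy-Schwarz is precisely what gets paid back in halving the admissible $\lambda$-range and in the prefactor $\tfrac{1+\alpha_t}{2}$ rather than $\alpha_t$.
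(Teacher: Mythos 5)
Your proof is correct and follows essentially the same route as the paper: condition on $\mathcal{H}_t$, apply Cauchy-Schwarz (the paper calls it H\"older) to decouple $D_t\sqrt{V_t}$ from $X_t$, bound each factor via the respective conditional MGF hypotheses (using $\mathcal{H}_t$-measurability of $\sqrt{V_t}$), and then observe that $\lambda\leq\frac{1-\alpha_t}{2\sigma_t^2}$ absorbs the $\lambda^2\sigma_t^2$ term into the $\frac{1+\alpha_t}{2}$ coefficient. The only cosmetic difference is that you condition first and take total expectations at the end, while the paper writes the tower rule up front; the content is identical.
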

\begin{proof}
For any index $t$ and any scalar $\lambda\geq0$, the tower rule implies
\begin{equation*}
	\begin{split}
		\mathbb{E}[\exp(\lambda V_{t+1})]&\leq \mathbb{E}\big[{\exp}\big(\lambda\big(\alpha_tV_t+D_t\sqrt{V_t}+X_t+\kappa_t\big)\big)\big] \\
		&= \exp(\lambda\kappa_t)\,\mathbb{E}\Big[{\exp}(\lambda\alpha_tV_t)\,\mathbb{E}\big[{\exp}\big(\lambda D_t\sqrt{V_t}\big)\exp(\lambda X_t)\,|\,\mathcal{H}_t\big]\Big]. 
	\end{split}
\end{equation*}
H\"{o}lder's inequality in turn yields
\begin{equation*}
	\begin{split}
		\mathbb{E}\big[{\exp}\big(\lambda D_t\sqrt{V_t}\big)\exp(\lambda X_t)\,|\,\mathcal{H}_t\big] &\leq \sqrt{\mathbb{E}\big[{\exp}\big(2\lambda\sqrt{V_t}D_t\big)\,|\,\mathcal{H}_t\big]\cdot\mathbb{E}\big[{\exp}(2\lambda X_t)\,|\,\mathcal{H}_t\big]} \\
		&\leq \sqrt{\exp(2\lambda^2V_t\sigma_t^2)\exp(2\lambda \nu_t)} \\
		&= \exp(\lambda^2\sigma_t^2V_t)\exp(\lambda \nu_t)
	\end{split}
\end{equation*}
provided
$
0\leq\lambda\leq\frac{1}{2\nu_t}.$
Thus, if $
0\leq\lambda\leq\min\!\big\{\frac{1-\alpha_t}{2\sigma_t^2},\frac{1}{2\nu_t}\big\}
$, then the following estimate holds:
\begin{equation*}
	\begin{split}
		\mathbb{E}\big[{\exp}(\lambda V_{t+1})\big]&\leq \exp(\lambda\kappa_t)\,\mathbb{E}\big[{\exp}(\lambda\alpha_tV_t)\exp(\lambda^2\sigma_t^2V_t)\exp(\lambda \nu_t)\big] \\
		&=  {\exp}\big(\lambda(\nu_t+\kappa_t)\big)\,\mathbb{E}\big[{\exp}(\lambda(\alpha_t+\lambda\sigma_t^2)V_t)\big] \\
		&\leq  {\exp}\big(\lambda(\nu_t+\kappa_t)\big)\,\mathbb{E}\bigg[{\exp}\bigg(\lambda\bigg(\frac{1+\alpha_t}{2}\bigg)V_t\bigg)\bigg].
	\end{split}
\end{equation*}
The proof is complete.
\end{proof}

We may now use Proposition \ref{proprec} to derive the following precise version of Theorem \ref{HPtrackdec}.

\begin{theorem}[High-probability distance tracking]\label{HPtrack1}
	Let $\{x_t\}$ be the iterates produced by Algorithm~\ref{alg:decsgd} with constant learning rate $\eta\leq1/2L$, and suppose that Assumption \ref{assumpt_light_dec} holds. Then there exists an absolute constant\,\footnote{Explicitly, one can take any $c\geq1$ such that $\|z_t\|^2$ is sub-exponential conditioned on $\mathcal{F}_t$ with parameter $c\sigma^2$ and $z_t$ is mean-zero sub-Gaussian conditioned on $\mathcal{F}_t$ with parameter $c\sigma$ for all $t$.} $c>0$ such that for any specified $t\in \mathbb{N}$ and $\delta\in(0,1)$, the following estimate holds with probability at least $1-\delta$:
	$$\|x_t - \bar{x}_t\|^2 \leq \big(1-\tfrac{\bar\mu\eta}{2}\big)^t\|x_0 - \bar{x}_0\|^2 + \left(\frac{8\eta(c\sigma)^2}{\bar\mu} + 4\bigg(\frac{\bar\Delta}{\bar\mu\eta}\bigg)^2\right)\log\!\left(\frac{e}{\delta}\right)\!.$$
\end{theorem}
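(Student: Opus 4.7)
The approach is a direct application of Proposition~\ref{proprec} to the one-step inequality~\eqref{onestep}. Take $V_t := \|x_t - \bar{x}_t\|^2$ and $\mathcal{H}_t := \mathcal{F}_t$. Reading the right-hand side of~\eqref{onestep} term by term, I identify $\alpha_t = 1 - \bar\mu\eta$, $\kappa_t = 0$,
$$D_t := 2\eta\langle z_t, v_t\rangle, \qquad X_t := 2\eta^2\|z_t\|^2 + \tfrac{2}{\bar\mu\eta}\bar\Delta_t^2,$$
so that~\eqref{onestep} reads $V_{t+1} \leq \alpha_t V_t + D_t\sqrt{V_t} + X_t$. Note the contraction factor $(1+\alpha_t)/2$ appearing in the conclusion of Proposition~\ref{proprec} is exactly $1 - \bar\mu\eta/2$, matching the factor in the target bound.

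Next, I would verify the conditional light-tail hypotheses on $D_t$ and $X_t$. Since $v_t$ is $\mathcal{F}_t$-measurable with $\|v_t\| \leq 1$, and Assumption~\ref{assumpt_light_dec}(ii) makes $z_t$ mean-zero norm sub-Gaussian conditioned on $\mathcal{F}_t$ with parameter $\sigma/2$, the scalar $\langle z_t, v_t\rangle$ is mean-zero sub-Gaussian conditioned on $\mathcal{F}_t$ with parameter $\lesssim \sigma$; thus $D_t$ meets the sub-Gaussian hypothesis with parameter $\sigma_t \lesssim \eta\sigma$. By the standard equivalence between norm sub-Gaussianity and sub-exponentiality of the squared norm, $\|z_t\|^2$ is sub-exponential conditioned on $\mathcal{F}_t$ with parameter $\lesssim \sigma^2$; combining this with Assumption~\ref{assumpt_light_dec}(i) shows that $X_t$ is nonnegative sub-exponential conditioned on $\mathcal{F}_t$ with parameter $\nu_t \lesssim \eta^2 \sigma^2 + \bar\Delta^2/(\bar\mu\eta)$.

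With these parameters in hand, Proposition~\ref{proprec} delivers the one-step MGF recursion
$$\mathbb{E}[\exp(\lambda V_{t+1})] \leq \exp(\lambda \nu_t)\,\mathbb{E}\!\left[\exp\!\left(\lambda\big(1 - \tfrac{\bar\mu\eta}{2}\big)V_t\right)\right]$$
for all $\lambda$ in a fixed interval $[0, \lambda_{\max}]$ determined by $\sigma_t$ and $\nu_t$. Iterating this $t$ times (the successive MGF arguments are $\lambda, \lambda\rho, \lambda\rho^2, \ldots$ with $\rho := 1 - \bar\mu\eta/2 < 1$, so the constraint $\lambda \leq \lambda_{\max}$ is preserved), summing the geometric series $\sum_{i=0}^{t-1}\rho^i \leq 2/(\bar\mu\eta)$, and using that $V_0$ is deterministic yield
$$\mathbb{E}[\exp(\lambda V_t)] \leq \exp\!\left(\lambda \rho^t V_0 + \lambda A\right), \qquad A \asymp \frac{\eta\sigma^2}{\bar\mu} + \frac{\bar\Delta^2}{\bar\mu^2\eta^2}.$$
An elementary comparison using $\eta \leq 1/(2L)$ and $\bar\mu \leq L$ shows $1/A \lesssim \lambda_{\max}$, so a Chernoff bound applied at $\lambda = c_0/A$ for an appropriate absolute $c_0 \in (0,1)$ gives $\mathbb{P}\{V_t \geq \rho^t V_0 + M\} \leq \delta$ for $M \asymp A\log(e/\delta)$, which is exactly the claimed estimate.

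The main obstacle is the bookkeeping of absolute constants: specifically, (i) tracking the explicit constant in the norm-sub-Gaussian to squared-sub-exponential conversion for $\|z_t\|^2$, and (ii) verifying that the valid MGF interval $[0, \lambda_{\max}]$ contains a point of the form $c_0/A$ with $c_0$ independent of the problem parameters. Both are routine consequences of $\eta \leq 1/(2L)$ and $\bar\mu \leq L$, and together they produce the single absolute constant $c$ in the theorem statement. The structural content is already packaged into Proposition~\ref{proprec}; the rest is a faithful translation of the expected-value argument leading to Corollary~\ref{cor1} into the MGF/Chernoff idiom of \citet{pmlr-v99-harvey19a}.
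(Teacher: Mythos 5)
Your proposal is correct and follows the paper's own proof essentially verbatim: it applies Proposition~\ref{proprec} to the one-step inequality~\eqref{onestep} with the same identifications of $V_t, D_t, X_t, \alpha_t$, verifies the same conditional sub-Gaussian/sub-exponential hypotheses (tracked in the paper by an explicit absolute constant $c\geq 1$ from \citet[Lemma~3]{jin2019short}), iterates the MGF recursion with the same geometric series bound $\sum_i (1-\bar\mu\eta/2)^i \leq 2/(\bar\mu\eta)$, and closes with a Chernoff/Markov bound at $\lambda$ on the order of the reciprocal of the noise-plus-drift term. The only difference is that you leave the absolute constants implicit where the paper makes them explicit, but you correctly identify both places where they matter (the $\|z_t\|$-to-$\|z_t\|^2$ conversion and the check that the Chernoff evaluation point lies in the valid MGF interval).
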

\begin{proof} Note first that under Assumption \ref{assumpt_light_dec}, there exists an absolute constant $c\geq1$ such that $\|z_t\|^2$ is sub-exponential conditioned on $\mathcal{F}_t$ with parameter $c\sigma^2$ and $z_t$ is mean-zero sub-Gaussian conditioned on $\mathcal{F}_t$ with parameter $c\sigma$ for all $t$ \citep[see][Lemma~3]{jin2019short}. Therefore $\langle z_t, v_t \rangle$ is mean-zero sub-Gaussian conditioned on $\mathcal{F}_t$ with parameter $c\sigma$, while $\bar\Delta_t^2$ is sub-exponential conditioned on $\mathcal{F}_t$ with parameter $\bar{\Delta}^2$ by Assumption \ref{assumpt_light_dec}. Thus, in light of inequality (\ref{onestep}), we may apply Proposition \ref{proprec} with $\mathcal{H}_t = \mathcal{F}_t$, $V_t = \|x_t - \bar{x}_t\|^2$, $D_t = 2\eta_t\langle z_t, v_t \rangle$, $X_t=2\eta_t^2\|z_t\|^2+2\bar{\Delta}_t^2/\bar\mu\eta_t$, $\alpha_t=1-\bar\mu\eta_t$, $\kappa_t=0$, $\sigma_t = 2\eta_tc\sigma$, and $\nu_t = 2\eta_t^2c\sigma^2+2\bar{\Delta}^2/\bar\mu\eta_t$, yielding the estimate
	\begin{equation}\label{MGFrec}
		\mathbb{E}\big[{\exp}\big(\lambda \|x_{t+1}-\bar{x}_{t+1}\|^2\big)\big] \leq \exp\!\left(\!\lambda\!\left(2\eta_t^2c\sigma^2+\frac{2\bar{\Delta}^2}{\bar\mu\eta_t}\right)\!\right) \mathbb{E}\big[{\exp}\big(\lambda\big(1-\tfrac{\bar\mu\eta_t}{2}\big)\|x_t - \bar{x}_t\|^2\big)\big]
	\end{equation}
	for all
	\begin{equation*}
		0\leq\lambda\leq\min{\left\{\frac{\bar\mu}{8\eta_t(c\sigma)^2},\frac{1}{4\eta_t^2c\sigma^2+4\bar{\Delta}^2/\bar\mu\eta_t}\right\}}. 
	\end{equation*}

	Taking into account $\eta_t\equiv\eta$ and iterating the recursion (\ref{MGFrec}), we deduce
	\begin{equation*}
		\begin{split}
			\mathbb{E}\big[{\exp}\big(\lambda\|x_t - \bar{x}_t\|^2\big)\big]&\leq \exp\!\left(\lambda\big(1-\tfrac{\bar\mu\eta}{2}\big)^t\|x_0 - \bar{x}_0\|^2+\lambda\!\left(2\eta^2c\sigma^2+\frac{2\bar{\Delta}^2}{\bar\mu\eta}\right)\!\sum_{i=0}^{t-1}\big(1-\tfrac{\bar\mu\eta}{2}\big)^i\right) \\
			&\leq \exp\!\left(\lambda\!\left(\big(1-\tfrac{\bar\mu\eta}{2}\big)^t\|x_0 - \bar{x}_0\|^2 +\frac{4\eta c\sigma^2}{\bar\mu} + 4\bigg(\frac{\bar\Delta}{\bar\mu\eta}\bigg)^2\right)\right) \\
		\end{split}
	\end{equation*}
	for all
	\begin{equation*}
		0\leq\lambda\leq\min{\left\{\frac{\bar\mu}{8\eta(c\sigma)^2},\frac{1}{4\eta^2c\sigma^2+4\bar{\Delta}^2/\bar\mu\eta}\right\}}. 
	\end{equation*}
	Moreover, setting 
	$$ \nu := \frac{8\eta(c\sigma)^2}{\bar\mu}+4\bigg(\frac{\bar\Delta}{\bar\mu\eta}\bigg)^2 $$
	and taking into account $c\geq1$ and $\bar\mu\eta\leq1$, we have
	\begin{equation*}
		\frac{4\eta c\sigma^2}{\bar\mu} + 4\bigg(\frac{\bar\Delta}{\bar\mu\eta}\bigg)^2 \leq \nu
	\end{equation*}
	and
	\begin{equation*}
		\frac{1}{\nu} = \frac{\bar\mu}{8\eta(c\sigma)^2+4\bar{\Delta}^2/\bar\mu\eta^2} \leq \min{\left\{\frac{\bar\mu}{8\eta(c\sigma)^2},\frac{1}{4\eta^2c\sigma^2+4\bar{\Delta}^2/\bar\mu\eta}\right\}}.
	\end{equation*}
	Hence
	\begin{equation*}
		\mathbb{E}\Big[{\exp}\!\left(\lambda\!\left(\|x_t - \bar{x}_t\|^2 - \big(1-\tfrac{\bar\mu\eta}{2}\big)^t\|x_0 - \bar{x}_0\|^2\right)\!\right)\!\Big] \leq \exp(\lambda \nu) \quad \text{for all} \quad 0\leq\lambda\leq 1/\nu.
	\end{equation*}
	Taking $\lambda = 1/\nu$ and applying Markov's inequality completes the proof. 
\end{proof}

With Theorem \ref{HPtrack1} in hand, we can now prove a high-probability efficiency estimate for Algorithm~\ref{alg:decsgd} using a step decay schedule. The following theorem provides a formal version of Theorem \ref{thm:dist_schedul2_dec}. The argument follows the same reasoning as in the proof of Theorem~\ref{thm:time_to_track}, with Theorem~\ref{HPtrack1} playing the role of Corollary~\ref{cor1}. The proof appears in Appendix~\ref{missingproofs} (see Section~\ref{pf:time_to_track2}).

\begin{theorem}[Time to track with high probability]\label{thm:time_to_track2}
	Suppose that Assumption \ref{assumpt_light_dec} holds and that we are in the low drift-to-noise regime $\bar{\Delta}/\sigma<\sqrt{\bar{\mu}/16L^3}$. Set $\bar{\eta}_\star = (2\bar{\Delta}^2/\bar{\mu}\sigma^2)^{1/3}$ and $\bar{\mathcal{E}}=(\bar\Delta\sigma^2/\bar{\mu}^2)^{2/3}$. Suppose moreover that we have available an upper bound on the initial squared distance $D\geq \|x_0 - \bar{x}_0\|^2$. Consider running Algorithm~\ref{alg:decsgd} in $k=0,\ldots, K-1$ epochs, namely, set $X_0=x_0$ and iterate the process
	$$X_{k+1}=\mathtt{D\text{-}PSG}(X_{k},\eta_k,T_k)\quad \text{for}\quad k=0,\ldots, K-1,$$ where the number of epochs is $$K=1+\left\lceil{\log_2\!\left(\frac{1}{L}\cdot\left(\frac{\sigma^2\bar\mu}{\bar{\Delta}^2}\right)^{1/3}\right)}\right\rceil$$ and we set
	$$\eta_0=\frac{1}{2L},\quad T_0=\left\lceil{\frac{4L}{\bar\mu}\log\!\left(\frac{\bar\mu LD}{\sigma^2}\right)^+}\right\rceil\quad \text{and}\quad \eta_k=\frac{\eta_{k-1}+ \bar{\eta}_{\star}}{2},\quad T_k=\left\lceil{\frac{2\log(12)}{\bar\mu\eta_k}}\right\rceil\quad \forall k\geq 1.$$
	Then the time horizon $T=T_0+\cdots+T_{K-1}$ satisfies
	$$T\lesssim \frac{L}{\bar\mu}\log\!\left(\frac{\bar\mu LD}{\sigma^2}\right)^+ + \frac{\sigma^2}{\bar\mu^2\bar{\mathcal{E}}} \leq \frac{L}{\bar\mu}\log\!\left(\frac{D}{\bar{\mathcal{E}}}\right)^+ + \frac{\sigma^2}{\bar{\mu}^2\bar{\mathcal{E}}},$$
	and for any specified $\delta\in(0,1)$, the corresponding tracking error satisfies
	$$\|X_K - \bar{X}_K\|^2 \lesssim \bar{\mathcal{E}} \log\!\left(\frac{e}{\delta}\right)$$
	with probability at least $1-\delta$, where $\bar{X}_K$ denotes the minimizer of $\psi_T$.
\end{theorem}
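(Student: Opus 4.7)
The plan is to mimic the proof of Theorem~\ref{thm:time_to_track} (the expectation version), replacing the one-step recursion of Corollary~\ref{cor1} with the high-probability one-step recursion of Theorem~\ref{HPtrack1}, and combining the epoch-wise bounds via a union bound. I would reuse the epoch bookkeeping: set $t_k := T_0+\cdots+T_{k-1}$, let $\bar{X}_k := \bar{x}_{t_k}$, and set $\bar{E}'_k := \eta_k\sigma^2/\bar\mu + (\bar\Delta/\bar\mu\eta_k)^2$. Because Algorithm~\ref{alg:decsgd} is memoryless, epoch $k$ is a fresh PSG run started at $X_k$ (which is $\mathcal{F}_{t_k}$-measurable) with constant learning rate $\eta_k$ for $T_k$ iterations; applying Theorem~\ref{HPtrack1} conditionally on $\mathcal{F}_{t_k}$ yields, for any $\delta_k \in (0,1)$,
$$\|X_{k+1}-\bar{X}_{k+1}\|^2 \leq \bigl(1-\tfrac{\bar\mu\eta_k}{2}\bigr)^{T_k}\|X_k-\bar{X}_k\|^2 + C\,\bar{E}'_k\log(e/\delta_k)$$
with conditional probability $\geq 1-\delta_k$, and hence unconditionally by the tower rule.

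Next, I would allocate the failure budget geometrically, taking $\delta_k = \delta\cdot 2^{-(K-k)}$ so that $\sum_{k=0}^{K-1}\delta_k \leq \delta$; a union bound then guarantees that the displayed recursion holds simultaneously for all $k$ on an event $\mathcal{A}$ of probability $\geq 1-\delta$. On $\mathcal{A}$ the analysis becomes pathwise. The choice of $T_0$ forces $(1-\bar\mu\eta_0/2)^{T_0} \leq \sigma^2/(\bar\mu L D)$, which absorbs the initial optimization error into a $\bar{E}'_0$-scale noise term, and the choice $T_k \geq 2\log(12)/(\bar\mu\eta_k)$ for $k\geq 1$ gives the contraction $(1-\bar\mu\eta_k/2)^{T_k} \leq 1/12$. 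A pathwise induction, analogous to the one establishing $\mathbb{E}\|X_k-\bar{X}_k\|^2 \leq 2\bar{E}_{k-1}$ in the proof of Theorem~\ref{thm:time_to_track}, then yields
$$\|X_K-\bar{X}_K\|^2 \lesssim \sum_{k=0}^{K-1}(1/12)^{K-1-k}\,\bar{E}'_k\log(e/\delta_k),$$
and the dominant term ($k=K-1$), where $\eta_{K-1}$ is within a factor of two of $\bar\eta_\star$ so that $\bar{E}'_{K-1}\asymp\bar{\mathcal{E}}$ and $\log(e/\delta_{K-1})\asymp\log(e/\delta)$, delivers the desired $O(\bar{\mathcal{E}}\log(e/\delta))$ bound. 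The time-horizon computation $T_0\lesssim (L/\bar\mu)\log(\bar\mu LD/\sigma^2)^+$ and $\sum_{k\geq 1}T_k \lesssim \bar\mu^{-1}\sum_{k\geq 1}\eta_k^{-1} \lesssim L\cdot 2^K/\bar\mu \lesssim \sigma^2/(\bar\mu^2\bar{\mathcal{E}})$ is identical to the expectation proof.

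The main obstacle is the bookkeeping in the telescoped sum above: the epoch-$k$ noise contribution $\bar{E}'_k\log(e/\delta_k)$ is inflated by the geometric allocation to $\bar{E}'_k\bigl((K-k)\log 2+\log(e/\delta)\bigr)$, while being damped by $(1/12)^{K-1-k}$ through the later contractions. For the sum to collapse to $O(\bar{\mathcal{E}}\log(e/\delta))$, I would exploit that $\bar{E}'_k - \bar{\mathcal{E}} \lesssim (\eta_k-\bar\eta_\star)\sigma^2/\bar\mu = (\eta_0-\bar\eta_\star)\sigma^2/(\bar\mu 2^k)$ decays geometrically in $k$, together with the low drift-to-noise assumption, which via the choice of $K$ ensures $(\eta_0\sigma^2/\bar\mu)\cdot 2^{-K} \lesssim \bar{\mathcal{E}}$. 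These two geometric decays beat the polynomial $(K-k)$ inflation, so the deterministic sum collapses to an absolute-constant multiple of $\bar{\mathcal{E}}\log(e/\delta)$, completing the argument.
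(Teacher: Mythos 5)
Your proposal is correct and reaches the same conclusion, but the bookkeeping of the union bound is genuinely different from the paper's. You allocate the failure budget geometrically up front ($\delta_k = \delta\cdot 2^{-(K-k)}$), union bound once over all epochs, and then perform a purely deterministic telescope on the favorable event $\mathcal{A}$, winding up with a sum $\sum_{k=0}^{K-1}(1/12)^{K-1-k}\,\bar{E}'_k\log(e/\delta_k)$ that you then collapse by exploiting the geometric decays. The paper instead proves by induction that for \emph{each} $k\geq1$, the estimate $\|X_k-\bar{X}_k\|^2\leq 3\bar{E}_{k-1}\log(e/\delta)$ holds with probability $\geq 1-\delta$ for \emph{all} $\delta\in(0,1)$: at the inductive step it invokes the hypothesis at $\delta/2$, the fresh one-step bound (Theorem~\ref{HPtrack1}) at $\delta/2$, union bounds these two events, and then absorbs the resulting $\log(2e/\delta)$ back into $\log(e/\delta)$ via $\log 2 \leq \log(e/\delta)$. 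That amortized halving trick keeps the failure probability at $\delta$ (not $K\delta$) without ever making the geometric sum explicit; the contraction $(1-\bar\mu\eta_k/2)^{T_k}\leq 1/12$, combined with $\bar{E}_{k-1}\leq 2\bar{E}_k$, gives the net factor $1/6$ that closes the recursion in one line. Your approach makes the geometric structure fully explicit, at the cost of verifying that the $(1/6)^m$ decay beats the $(m+1)\log 2$ inflation and that the $k=K-1$ term dominates with $\bar{E}'_{K-1}\lesssim\bar{\mathcal{E}}$ (which you correctly reduce to $\eta_{K-1}\leq 2\bar\eta_\star$ and the choice of $K$). Both arguments are valid; the paper's recursion is slicker and shorter, while yours isolates more transparently where the geometric decays in the step schedule buy the $\mathcal{O}(\bar{\mathcal{E}}\log(e/\delta))$ bound. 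One cosmetic difference: the paper defines $\bar{E}_k$ with $\bar\eta_\star$ (rather than $\eta_k$) in the drift term, i.e., $\bar{E}_k:=c\bigl(\eta_k\sigma^2/\bar\mu + (\bar\Delta/\bar\mu\bar\eta_\star)^2\bigr)$, which makes the sequence monotone decreasing and simplifies the ratio bound $\bar{E}_{k-1}\leq 2\bar{E}_k$; your $\bar{E}'_k$ works too since $\eta_k\geq\bar\eta_\star$ makes it no larger.
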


\subsection{Tracking the Equilibrium Value: Bounds in Expectation}
\label{minimalvalproofsexp}
We turn now to tracking the equilibrium value. To begin, we require a more flexible version of Lemma~\ref{lem:onestep0dec2} which holds in the static regularizer setting $r_t\equiv r$.

\begin{lemma}[Equilibrium one-step improvement]\label{lem:avgrecdec}
	The iterates $\{x_t\}$ produced by Algorithm~\ref{alg:decsgd} with $r_t\equiv r$ and $\eta_t<1/L$ satisfy the following bound for all indices $i,t\in\mathbb{N}$ and arbitrary $\alpha>0$:
	\begin{equation*}
		\begin{split}
			2\eta_i\big(\psi_t(x_{i+1})-\psi_t^\star\big) \leq (1-\bar\mu\eta_i)\|x_i - \bar{x}_t\|^2 &- \big(1 - (\gamma + \alpha)\eta_i\big)\|x_{i+1} - \bar{x}_t\|^2 \\
			&+ 2\eta_i\langle z_i, x_i - \bar{x}_t \rangle + \tfrac{\eta_i^2}{1-L\eta_i}\|z_i\|^2 + \tfrac{\eta_i}{\alpha}\bar{G}_{i,t}^2.
		\end{split}
	\end{equation*}
\end{lemma}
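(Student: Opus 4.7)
The goal is to compare the equilibrium function $\psi_t = f_{t,\bar{x}_t} + r$ (at time $t$) against the progress of an iterate $x_{i+1}$ that was produced by a step on $\varphi_i = f_{i,x_i} + r$ (at a possibly different time $i$). The plan is to reduce to the one-step guarantee that is already available for $\varphi_i$ and to absorb the mismatch between $\psi_t$ and $\varphi_i$ using the gap-variation estimate in Lemma~\ref{lem:funcgap}, together with two applications of Young's inequality.

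The starting point is Lemma~\ref{lem:onestep1} applied at the reference point $x=\bar{x}_t$: since that one-step improvement is valid for any $x\in\mathbb{R}^d$, it yields
\begin{equation*}
2\eta_i\big(\varphi_i(x_{i+1})-\varphi_i(\bar{x}_t)\big) \leq (1-\mu\eta_i)\|x_i-\bar{x}_t\|^2 - \|x_{i+1}-\bar{x}_t\|^2 + 2\eta_i\langle z_i,\,x_i-\bar{x}_t\rangle + \tfrac{\eta_i^2}{1-L\eta_i}\|z_i\|^2.
\end{equation*}
Next, since $r_t\equiv r$, we have $\psi_t(u)-\psi_t(\bar{x}_t) - [\varphi_i(u)-\varphi_i(\bar{x}_t)] = [f_{t,\bar{x}_t}(u)-f_{t,\bar{x}_t}(\bar{x}_t)] - [f_{i,x_i}(u)-f_{i,x_i}(\bar{x}_t)]$, so Lemma~\ref{lem:funcgap} applied with $u=x_{i+1}$, $v=\bar{x}_t$ and the pairs $(t,\bar{x}_t),(i,x_i)$ gives
\begin{equation*}
\psi_t(x_{i+1})-\psi_t^\star \leq \varphi_i(x_{i+1})-\varphi_i(\bar{x}_t) + \big(\bar{G}_{i,t}+\gamma\|x_i-\bar{x}_t\|\big)\|x_{i+1}-\bar{x}_t\|,
\end{equation*}
using $\psi_t^\star=\psi_t(\bar{x}_t)$ and the symmetry $\bar{G}_{t,i}=\bar{G}_{i,t}$.

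Now I bound the cross term by two Young inequalities: one with parameter $\alpha$ to separate $\bar{G}_{i,t}$ from $\|x_{i+1}-\bar{x}_t\|$, giving $\bar{G}_{i,t}\|x_{i+1}-\bar{x}_t\|\leq \tfrac{1}{2\alpha}\bar{G}_{i,t}^2+\tfrac{\alpha}{2}\|x_{i+1}-\bar{x}_t\|^2$, and a symmetric one for the $\gamma$-term, giving $\gamma\|x_i-\bar{x}_t\|\|x_{i+1}-\bar{x}_t\|\leq \tfrac{\gamma}{2}\|x_i-\bar{x}_t\|^2+\tfrac{\gamma}{2}\|x_{i+1}-\bar{x}_t\|^2$. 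Multiplying the resulting inequality by $2\eta_i$, plugging in the $\varphi_i$ one-step bound, and collecting the $\|x_i-\bar{x}_t\|^2$ coefficients into $1-\mu\eta_i+\gamma\eta_i = 1-\bar\mu\eta_i$ and the $\|x_{i+1}-\bar{x}_t\|^2$ coefficients into $-(1-(\gamma+\alpha)\eta_i)$ yields precisely the claimed inequality.

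I expect no real obstacle here: once one recognizes that Lemma~\ref{lem:onestep1} can be invoked with an arbitrary reference point $x=\bar{x}_t$ (not $x_t^\star$), the only substantive choice is the free parameter $\alpha$ in the Young split for the drift term $\bar{G}_{i,t}$, which is deliberately left open to be tuned later in the averaging argument that will use this lemma. The $\gamma$-term is handled by a symmetric Young split rather than an $\alpha$-parameterized one because its contribution to the $\|x_i-\bar{x}_t\|^2$ coefficient is exactly what is needed to convert $\mu$ into $\bar\mu$.
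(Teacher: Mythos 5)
Your proof is correct and follows essentially the same route as the paper's: apply Lemma~\ref{lem:onestep1} at the reference point $\bar{x}_t$, use Lemma~\ref{lem:funcgap} (with $r_t\equiv r$) to convert the $\varphi_i$-gap into a $\psi_t$-gap at the cost of $(\bar{G}_{i,t}+\gamma\|x_i-\bar{x}_t\|)\|x_{i+1}-\bar{x}_t\|$, split this by Young with parameter $\alpha$ on the $\bar{G}_{i,t}$ term and symmetrically on the $\gamma$-term, and collect coefficients so that $1-\mu\eta_i+\gamma\eta_i=1-\bar\mu\eta_i$. The only difference from the paper is presentation order, which is immaterial.
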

\begin{proof}
	Taking into account $r_t\equiv r$ and applying Lemma~\ref{lem:funcgap}, we have
	\begin{equation*}
		\begin{split}	
			\big[\psi_t(x_{i+1}) - \psi_t(\bar{x}_t)\big] - &\big[\varphi_i(x_{i+1}) - \varphi_i(\bar{x}_t)\big]\\ &~~~~~~~~~~~~= 	\big[f_{t,\bar{x}_t}(x_{i+1})-f_{t,\bar{x}_t}(\bar{x}_t)\big] - \big[f_{i,x_i}(x_{t+1})-f_{i,x_i}(\bar{x}_t)\big]\\
			&~~~~~~~~~~~~\leq \big(\bar{G}_{i,t} + \gamma\|x_i - \bar{x}_t\|\big)\|x_{i+1}-\bar{x}_t\|.
		\end{split} 
	\end{equation*}
	Hence
	\begin{equation*}
		\psi_t(x_{i+1})-\psi_t^\star \leq \varphi_i(x_{i+1}) - \varphi_i(\bar{x}_t) + \big(\bar{G}_{i,t} + \gamma\|x_i - \bar{x}_t\|\big)\|x_{i+1}-\bar{x}_t\|.
	\end{equation*}
	Moreover, Young's inequality implies
	\begin{equation*}
		\big(\bar{G}_{i,t} + \gamma\|x_i - \bar{x}_t\|\big)\|x_{i+1}-\bar{x}_t\| \leq \tfrac{\gamma}{2}\|x_i - \bar{x}_t\|^2 + \tfrac{\gamma + \alpha}{2}\|x_{i+1}-\bar{x}_t\|^2 + \tfrac{1}{2\alpha}\bar{G}_{i,t}^2.
	\end{equation*}
	Multiplying through by $2\eta_i$ and applying Lemma~\ref{lem:onestep1} completes the proof.
\end{proof}

Turning the estimate in Lemma~\ref{lem:avgrecdec} into an efficiency guarantee for the average iterate is essentially standard and follows for example from the averaging techniques used by \citet{drusvyatskiy2020stochastic}, \citet{ghadimi2012optimal}, and \citet{kulunchakov2020estimate}. The resulting progress along the average iterate is summarized in the following proposition, while the description of the key averaging lemma is placed in Appendix~\ref{apdx:avg}. Henceforth, we impose the regime (\ref{eqgapregime}): $\gamma<\mu/2$. 

\begin{proposition}[Progress along the average iterate]\label{gapbound}
	The iterates $\{\hat{x}_t\}$ produced by Algorithm \ref{alg:decsgdavg} with $r_t\equiv r$ and constant step size $\eta\leq1/2L$ satisfy the bound 
	\begin{equation*}
		\begin{split}
			\psi_t(\hat x_t) - \psi_t^\star \leq (1&- \hat \rho)^t \big(\psi_t(x_0) - \psi_t^\star + \tfrac{\hat\mu}{4}\|x_0 - \bar{x}_t\|^2\big) +
			\hat{\rho}\sum_{i=0}^{t-1} \langle z_i, x_i - \bar{x}_t \rangle(1 - \hat \rho)^{t-1-i} \\
			&+ \hat{\rho}\eta \sum_{i=0}^{t-1}\|z_i\|^2(1 - \hat \rho)^{t-1-i} + \tfrac{\hat\rho}{\hat\mu}\sum_{i=0}^{t-1}\bar{G}_{i,t}^2(1 - \hat \rho)^{t-1-i},
		\end{split}
	\end{equation*}
	where $\hat{\rho}:=\hat\mu\eta/(2-\mu\eta)$.
\end{proposition}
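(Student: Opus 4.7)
The plan is to convert the one-step guarantee of Lemma~\ref{lem:avgrecdec} into a bound along the running average $\hat x_t$ via a weighted convex combination, choosing the free parameter $\alpha$ of Lemma~\ref{lem:avgrecdec} precisely so that the contraction factor produced by the one-step inequality aligns with the averaging weight $\hat\rho = \hat\mu\eta/(2-\mu\eta)$ used in Algorithm~\ref{alg:decsgdavg}.

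First, I would apply Lemma~\ref{lem:avgrecdec} with $\alpha = \hat\mu/2$, so that $\gamma + \alpha = \mu/2$. Dividing through by $2\eta$ and using $\eta \leq 1/(2L)$ to bound $\eta/(2(1-L\eta)) \leq \eta$, the one-step estimate becomes
\[
\psi_t(x_{i+1}) - \psi_t^\star \;\leq\; (1-\hat\rho)\,V_i \;-\; V_{i+1} \;+\; C_i,
\]
where I set $V_i := \tfrac{2-\mu\eta}{4\eta}\|x_i-\bar x_t\|^2$ and $C_i := \langle z_i,x_i-\bar x_t\rangle + \eta\|z_i\|^2 + \tfrac{1}{\hat\mu}\bar G_{i,t}^2$. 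The algebraic fact doing all the work is $\tfrac{1-\bar\mu\eta}{1-(\mu/2)\eta} = \tfrac{2(1-\bar\mu\eta)}{2-\mu\eta} = 1-\hat\rho$, so the contraction emerging from the one-step bound matches the averaging weight exactly.

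Next, unrolling the update $\hat x_{i+1} = (1-\hat\rho)\hat x_i + \hat\rho\, x_{i+1}$ inductively gives $\hat x_t = (1-\hat\rho)^t x_0 + \sum_{j=1}^{t}\hat\rho(1-\hat\rho)^{t-j} x_j$, a convex combination whose weights sum to $1$. Jensen's inequality applied to the convex function $\psi_t$ then yields
\[
\psi_t(\hat x_t)-\psi_t^\star \;\leq\; (1-\hat\rho)^t\bigl(\psi_t(x_0)-\psi_t^\star\bigr) + \sum_{j=1}^{t}\hat\rho(1-\hat\rho)^{t-j}\bigl(\psi_t(x_j)-\psi_t^\star\bigr).
\]
Substituting the previous one-step bound for each $\psi_t(x_j)-\psi_t^\star$ with $j\geq 1$ and shifting indices, the $V$-terms telescope: each interior $V_k$ picks up $+\hat\rho(1-\hat\rho)^{t-k}$ from the $(1-\hat\rho)V_k$ contribution (after shift) and $-\hat\rho(1-\hat\rho)^{t-k}$ from the $-V_k$ contribution, totaling zero. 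Only the boundary terms $\hat\rho(1-\hat\rho)^t V_0$ at $k=0$ and $-\hat\rho V_t \leq 0$ at $k=t$ survive, and $\hat\rho V_0 = \tfrac{\hat\mu}{4}\|x_0-\bar x_t\|^2$ by direct computation. Collecting the $C_i$ contributions with weights $\hat\rho(1-\hat\rho)^{t-1-i}$ then produces the three error terms in the stated bound.

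The conceptual crux is the choice $\alpha = \hat\mu/2$: it is the unique value for which the ratio of the two quadratic coefficients in Lemma~\ref{lem:avgrecdec} simplifies to the contraction $1-\hat\rho$ that is hard-coded into the averaging weights of Algorithm~\ref{alg:decsgdavg}. Once this pairing is set, the rest is deterministic bookkeeping—a convex combination, a single index shift, and the telescoping cancellation—and no probabilistic tools are invoked at this stage, so the proposition holds sample-wise.
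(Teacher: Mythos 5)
Your proof is correct and follows essentially the same route as the paper: both apply Lemma~\ref{lem:avgrecdec} with $\alpha=\hat\mu/2$, then combine the resulting one-step recursion with the convex-combination representation of $\hat x_t$ via Jensen's inequality and telescoping. The only difference is that you re-derive the averaging mechanism explicitly for constant step sizes, whereas the paper packages the same bookkeeping into Lemma~\ref{lem:avg} and invokes it with $\rho=2\eta$, $c_1=\bar\mu/2$, $c_2=-\mu/4$.
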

\begin{proof}
	Setting $\alpha = \hat\mu/2$ in Lemma \ref{lem:avgrecdec}, we obtain the following recursion for all indices $k\geq0$ and $t\geq1$:
	\begin{equation*}
		\rho\big(\psi_k(x_t)-\psi_k^\star\big) \leq (1-c_1\rho)V_{t-1}-(1+c_2\rho)V_t+\omega_t,
	\end{equation*} 
	where $\rho=2\eta$, $c_1=\bar\mu/2$, $c_2 = -\mu/4$, $V_i = \|x_i - \bar{x}_k\|^2$, and
	$\omega_t = 2\eta\langle z_{t-1}, x_{t-1} - \bar{x}_k \rangle + 2\eta^2\|z_{t-1}\|^2 + (2\eta/\hat\mu)\bar{G}_{t-1,k}^2$. The result follows by applying Lemma~\ref{lem:avg} with $h = \psi_k - \psi_k^\star$ and then taking $k=t$.
\end{proof}

Taking expectations in Proposition~\ref{gapbound} yields the following precise version of Theorem \ref{thm:exp_gap_dec}.

\begin{corollary}[Expected function gap]\label{cor2}
	Let $\{\hat{x}_t\}$ be the iterates produced by Algorithm \ref{alg:decsgdavg} with constant step size $\eta\leq1/2L$, set $\hat{\rho}:=\hat\mu\eta/(2-\mu\eta)$, and suppose that Assumption~\ref{assmp:funcgap0dec} holds. Then 
	\begin{equation}\label{eqn:cor2bd}
		\mathbb{E}\big[\psi_t(\hat x_t) - \psi_t^\star\big] \leq (1 - \hat \rho)^t\,\mathbb{E}\big[\psi_t(x_0)-\psi_t^\star + \tfrac{\hat\mu}{4}\|x_0 - \bar{x}_t\|^2\big] + \eta\sigma^2 + \frac{8\bar{\Delta}^2}{\hat\mu\eta^2}
	\end{equation}
	for all $t\geq0$. Consequently, we have
	\begin{equation*}
		\mathbb{E}\big[\psi_t(\hat x_t) - \psi_t^\star\big] \lesssim  (1-\hat{\rho})^t\big(\psi_0(x_0) - \psi_0^\star\big) + \eta\sigma^2 + \frac{\bar{\Delta}^2}{\hat\mu\eta^2}
	\end{equation*}
	for all $t\geq0$, and the following asymptotic error bound holds:
	\begin{equation*}
		\limsup_{t\to\infty}\mathbb{E}\big[\psi_t(\hat x_t) - \psi_t^\star\big] \leq \eta\sigma^2 + \frac{8\bar{\Delta}^2}{\hat\mu\eta^2}.
	\end{equation*}
\end{corollary}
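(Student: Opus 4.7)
The plan is to take expectations in Proposition~\ref{gapbound} and verify that each of its three summations collapses to the claimed quantity; the ``consequently'' simplification and the asymptotic limsup then fall out.

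First I would verify that the cross-term sum $\hat{\rho}\sum_{i=0}^{t-1}\mathbb{E}\langle z_i, x_i - \bar{x}_t\rangle(1-\hat{\rho})^{t-1-i}$ vanishes. The key observation is to split $\mathbb{E}\langle z_i, x_i - \bar{x}_t\rangle = \mathbb{E}\langle z_i, x_i\rangle - \mathbb{E}\langle z_i, \bar{x}_t\rangle$; the first piece vanishes by the tower rule since $x_i$ is $\mathcal{F}_i$-measurable and $\mathbb{E}[z_i\mid\mathcal{F}_i]=0$ (Assumption~\ref{assmp:stochframdec}), while the second vanishes by the hypothesis $\mathbb{E}\langle z_i,\bar{x}_t\rangle = 0$ from Assumption~\ref{assmp:funcgap0dec}. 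For the noise sum, combining $\mathbb{E}\|z_i\|^2\leq\sigma^2$ with the geometric-series bound $\sum_{i=0}^{t-1}(1-\hat{\rho})^{t-1-i}\leq 1/\hat{\rho}$ yields the clean $\eta\sigma^2$ contribution. For the drift sum, I would substitute $\mathbb{E}\bar{G}_{i,t}^2\leq(\hat{\mu}\bar{\Delta}(t-i))^2$, reindex $j = t-1-i$, and exploit the identity $\sum_{j\geq 0}(j+1)^2 x^j \leq 2/(1-x)^3$ with $x = 1-\hat{\rho}$; this produces $2\hat{\mu}\bar{\Delta}^2/\hat{\rho}^2$, which simplifies to $8\bar{\Delta}^2/(\hat{\mu}\eta^2)$ once the lower bound $\hat{\rho}\geq\hat{\mu}\eta/2$ (valid because $\eta\leq 1/(2L)\leq 1/(2\mu)$ forces $2-\mu\eta\leq 2$) is invoked. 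These three pieces combine to give (\ref{eqn:cor2bd}).

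To obtain the ``consequently'' simplification I would unpack the initial quantity $\mathbb{E}[\psi_t(x_0) - \psi_t^\star + (\hat{\mu}/4)\|x_0 - \bar{x}_t\|^2]$. Strong convexity of $\psi_t$ (with constant $\mu$) bounds the quadratic piece by $(\hat{\mu}/(2\mu))(\psi_t(x_0) - \psi_t^\star)\leq (1/2)(\psi_t(x_0)-\psi_t^\star)$. To compare $\psi_t(x_0) - \psi_t^\star$ with $\psi_0(x_0) - \psi_0^\star$ across the time drift, I would invoke Lemma~\ref{lem:funcgap} with $u=x_0$, $v=\bar{x}_t$, $(i,x)=(t,\bar{x}_t)$, and $(t,y)=(0,\bar{x}_0)$, producing a drift-dependent correction of the form $(\bar{G}_{0,t} + \gamma\|\bar{x}_t - \bar{x}_0\|)\|x_0 - \bar{x}_t\|$ that can be split by Young's inequality and absorbed using strong convexity once more. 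The resulting correction, in expectation, grows at most polynomially in $t$ (using $\mathbb{E}\bar{G}_{0,t}^2\leq(\hat{\mu}\bar{\Delta} t)^2$ together with $\mathbb{E}\|\bar{x}_t-\bar{x}_0\|^2\leq t^2\bar{\Delta}^2$ via telescoping and Cauchy--Schwarz), so that multiplying by $(1-\hat{\rho})^t$ and using $\sup_{t\geq 0}(1-\hat{\rho})^t t^2 \lesssim 1/\hat{\rho}^2$ renders it absorbable into the existing $\bar{\Delta}^2/(\hat{\mu}\eta^2)$ drift term. Finally, the asymptotic limsup follows by sending $t\to\infty$ in (\ref{eqn:cor2bd}): the factor $(1-\hat{\rho})^t$ dominates the polynomially-growing initial quantity and drives it to zero, leaving the claimed $\eta\sigma^2 + 8\bar{\Delta}^2/(\hat{\mu}\eta^2)$. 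The main obstacle is the absorption step in the ``consequently'' argument, which is the only place needing more than routine manipulation of the proposition together with the second-moment hypotheses.
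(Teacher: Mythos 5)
Your proposal matches the paper's proof: take expectations in Proposition~\ref{gapbound}, observe the noise martingale vanishes using $\mathbb{E}[z_i\,|\,\mathcal{F}_i]=0$ and $\mathbb{E}\langle z_i,\bar{x}_t\rangle=0$, bound the noise sum by $\sigma^2/\hat\rho$ and the drift sum by $\lesssim(\hat\mu\bar\Delta)^2/\hat\rho^3$, then control the initial term via Lemma~\ref{lem:funcgap}, Young's inequality, and strong convexity, and finally absorb the resulting $\mathcal{O}(t^2)$ factor using the exponential-polynomial bound (the paper's inequality (\ref{ineq:expquad})). The only minor deviation is that you bound $\mathbb{E}\|\bar{x}_0-\bar{x}_t\|^2\leq t^2\bar\Delta^2$ by telescoping and Cauchy--Schwarz, while the paper routes this through Lemma~\ref{lem:grad_vs_dist} to get $\bar\mu\|\bar{x}_0-\bar{x}_t\|\leq\bar{G}_{0,t}$; both give the same order and your route is equally valid.
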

\begin{proof}
	The bound (\ref{eqn:cor2bd}) follows by taking expectations in Proposition~\ref{gapbound} and noting
	\begin{equation*}
		\sum_{i=0}^{t-1}\mathbb{E}\|z_i\|^2(1-\hat{\rho})^{t-1-i}\leq\frac{\sigma^2}{\hat{\rho}}\quad\text{~~and~~}\quad\sum_{i=0}^{t-1}\mathbb{E}\,\bar{G}_{i,t}^2(1-\hat{\rho})^{t-1-i}\leq\frac{(\hat\mu\bar\Delta)^2(2-\hat{\rho})}{\hat{\rho}^3}
	\end{equation*} 
	by Assumption~\ref{assmp:funcgap0dec}. Next, applying Lemma~\ref{lem:funcgap}, Lemma~\ref{lem:grad_vs_dist}, and Young's inequality together with the $\mu$-strong convexity of $\psi_0$ yields
	\begin{equation}\label{initialgapbound0}
		\psi_t(x_0) - \psi_t^\star + \tfrac{\hat\mu}{4}\|x_0 - \bar{x}_t\|^2 \leq 3\big(\psi_0(x_0) - \psi_0^\star\big) + 5\bar{G}_{0,t}^2/\bar\mu,
	\end{equation}
	and then taking expectations and invoking Assumption~\ref{assmp:funcgap0dec} gives
	\begin{equation}\label{initialgapbound}
		\mathbb{E}\big[\psi_t(x_0) - \psi_t^\star + \tfrac{\hat\mu}{4}\|x_0 - \bar{x}_t\|^2\big] \leq 3\big(\psi_0(x_0) - \psi_0^\star\big) + 5\hat\mu\bar{\Delta}^2t^2.
	\end{equation}
	Further, the inequality 
	\begin{equation}\label{ineq:expquad}
		e^{-\hat\mu\eta t/2}\hat\mu t^2\leq16/\hat\mu\eta^2  \qquad\forall\hat\mu,\eta,t>0
	\end{equation}
	combines with inequality (\ref{initialgapbound}) to yield 
	\begin{equation*}
		(1 - \hat{\rho})^t\, \mathbb{E}\big[\psi_t(x_0) - \psi_t^\star + \tfrac{\hat\mu}{4}\|x_0 - \bar{x}_t\|^2\big]\leq 3(1 - \hat{\rho})^t\big(\psi_0(x_0) - \psi_0^\star\big) + \frac{80\bar{\Delta}^2}{\hat\mu\eta^2},
	\end{equation*}
	and the remaining assertions of the corollary follow.
\end{proof}

We may now apply Corollary~\ref{cor2} to obtain a formal version of Theorem~\ref{thm:gap_schedul_dec}; the proof closely follows that of Theorem~\ref{thm:time_to_track} and is included in Appendix~\ref{missingproofs} (see Section~\ref{pf:gap_schedul}).

\begin{theorem}[Time to track in expectation]\label{thm:time_to_track_gap_exp}
	Suppose that Assumption~\ref{assmp:funcgap0dec} holds and that we are in the low drift-to-noise regime $\bar\Delta/\sigma<\sqrt{\hat\mu/16L^3}$. Set $\hat\eta_\star = (2\bar{\Delta}^2/\hat\mu\sigma^2)^{1/3}$ and $\widehat{\mathcal{G}}=
	\hat\mu(\bar\Delta\sigma^2/\hat{\mu}^2)^{2/3}$. Suppose moreover that we have available a positive upper bound on the initial gap $D\geq \psi_0(x_0) - \psi_0^\star$.
	Consider running Algorithm~\ref{alg:decsgdavg} in $k=0,\ldots, K-1$ epochs, namely, set $X_0=x_0$ and iterate the process
	$$X_{k+1}=\mathtt{D\text{-}\overline{PSG}}(X_k,\mu,\gamma,\eta_k,T_k)\quad \text{for}\quad k=0,\ldots, K-1,$$ where the number of epochs is $$K=1+\left\lceil{\log_2\!\left(\frac{1}{L}\cdot\left(\frac{\sigma^2\hat\mu}{\bar{\Delta}^2}\right)^{1/3}\right)}\right\rceil$$ and we set
	$$\eta_0=\frac{1}{2L},\quad T_0=\left\lceil{\frac{4L}{\hat\mu}\log\!\left(\frac{LD}{\sigma^2}\right)^+}\right\rceil\quad \text{and}\quad \eta_k=\frac{\eta_{k-1}+ \hat{\eta}_{\star}}{2},\quad T_k=\left\lceil{\frac{2\log(12)}{\hat\mu\eta_k}}\right\rceil\quad \forall k\geq 1.$$ Then the time horizon $T=T_0+\cdots+T_{K-1}$ satisfies
	$$T\lesssim \frac{L}{\hat\mu}\log\!\left(\frac{L D}{\sigma^2}\right)^+ +\frac{\sigma^2}{\hat\mu\widehat{\mathcal{G}}} \leq \frac{L}{\hat\mu}\log\!\left(\frac{D}{\widehat{\mathcal{G}}}\right)^+ +\frac{\sigma^2}{\hat\mu\widehat{\mathcal{G}}}$$
	and the corresponding tracking error satisfies
	$\mathbb{E}\big[\psi_T(X_K) - \psi_T^\star\big] \lesssim \widehat{\mathcal{G}}.$
\end{theorem}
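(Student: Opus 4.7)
The plan is to follow the epoch-based argument of Theorem~\ref{thm:time_to_track}, with Corollary~\ref{cor2} replacing Corollary~\ref{cor1}. For each epoch $k\geq 0$, starting from $X_k$ at time $t_k := T_0 + \cdots + T_{k-1}$ and running $T_k$ iterations with constant step size $\eta_k$, Corollary~\ref{cor2} applied to the sub-problem beginning at time $t_k$ yields
\begin{equation*}
\mathbb{E}\bigl[\psi_{t_{k+1}}(X_{k+1}) - \psi_{t_{k+1}}^{\star}\bigr] \leq (1-\hat\rho_k)^{T_k}\mathbb{E}\bigl[\psi_{t_{k+1}}(X_k) - \psi_{t_{k+1}}^{\star} + \tfrac{\hat\mu}{4}\|X_k - \bar X_{k+1}\|^2\bigr] + \eta_k\sigma^2 + \tfrac{8\bar\Delta^2}{\hat\mu\eta_k^2},
\end{equation*}
where $\hat\rho_k := \hat\mu\eta_k/(2-\mu\eta_k)$ and $\bar X_{k+1}$ is the equilibrium point at time $t_{k+1}$. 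The time-shifted analogue of \eqref{initialgapbound0}, together with Assumption~\ref{assmp:funcgap0dec} and inequality~\eqref{ineq:expquad}, absorbs the resulting $5\hat\mu\bar\Delta^2 T_k^2$ term into $O(\bar\Delta^2/(\hat\mu\eta_k^2))$ to produce the clean per-epoch recursion
\begin{equation*}
\mathbb{E}\bigl[\psi_{t_{k+1}}(X_{k+1}) - \psi_{t_{k+1}}^{\star}\bigr] \leq 3(1-\hat\rho_k)^{T_k}\mathbb{E}\bigl[\psi_{t_k}(X_k) - \psi_{t_k}^{\star}\bigr] + c\Bigl(\eta_k\sigma^2 + \tfrac{\bar\Delta^2}{\hat\mu\eta_k^2}\Bigr)
\end{equation*}
for an absolute constant $c>0$.

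Next I would set $\hat E_k := c(\eta_k\sigma^2 + \bar\Delta^2/(\hat\mu\hat\eta_\star^2))$, which dominates the noise-plus-drift term above since $\eta_k \geq \hat\eta_\star$. The choice $T_k = \lceil 2\log(12)/(\hat\mu\eta_k)\rceil$ forces $3(1-\hat\rho_k)^{T_k} \leq 1/4$ (using $\hat\rho_k \geq \hat\mu\eta_k/2$), and the step-decay rule $\eta_k = (\eta_{k-1}+\hat\eta_\star)/2$ yields $\hat E_{k-1} \leq 2\hat E_k$ via the same algebra as in Theorem~\ref{thm:time_to_track}, now relying on the identity $\bar\Delta^2/(\hat\mu\hat\eta_\star^2) = \hat\eta_\star\sigma^2/2$. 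A routine induction then gives $\mathbb{E}[\psi_{t_k}(X_k) - \psi_{t_k}^{\star}] \leq 2\hat E_{k-1}$ for all $k\geq 1$, with the base case $k=1$ handled by $\eta_0 = 1/(2L)$ and $T_0 = \lceil 4L\log(LD/\sigma^2)^+/\hat\mu\rceil$, which contracts the initial gap $D$ to the level of $\hat E_0$.

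With $K \asymp 1 + \log_2(1/(L\hat\eta_\star))$, the geometric decay of $\eta_k$ toward $\hat\eta_\star$ ensures $\eta_{K-1} \leq 3\hat\eta_\star/2$, so $\hat E_{K-1} \asymp \hat\eta_\star\sigma^2 \asymp \widehat{\mathcal{G}}$; summing epoch lengths via $\sum_{k=1}^{K-1} 1/\eta_k \lesssim 1/\hat\eta_\star$ and the identity $1/(\hat\mu\hat\eta_\star) \asymp \sigma^2/(\hat\mu\widehat{\mathcal{G}})$ (as in Theorem~\ref{thm:time_to_track}) recovers the stated horizon. The main obstacle is the appearance in Corollary~\ref{cor2} of the ``final-time'' gap $\psi_{t_{k+1}}(X_k) - \psi_{t_{k+1}}^{\star}$ rather than the ``initial-time'' gap $\psi_{t_k}(X_k) - \psi_{t_k}^{\star}$: the drift penalty $5\hat\mu\bar\Delta^2 T_k^2$ incurred in passing between the two via \eqref{initialgapbound0} is quadratic in the epoch length $T_k$, and its absorption into $O(\bar\Delta^2/(\hat\mu\eta_k^2))$ after multiplication by the contraction factor---precisely the mechanism encoded in \eqref{ineq:expquad}---is what makes the epoch-based analysis viable. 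Once this step is in place, the remainder of the argument parallels the distance-tracking proof verbatim.
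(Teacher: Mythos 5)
Your proposal matches the paper's proof of Theorem~\ref{thm:time_to_track_gap_exp} in all essential respects: both apply Corollary~\ref{cor2} epoch by epoch, pass from the end-of-epoch gap $\psi_{t_{k+1}}(X_k)-\psi_{t_{k+1}}^\star$ to the start-of-epoch gap $\psi_{t_k}(X_k)-\psi_{t_k}^\star$ via the time-shifted version of inequality~(\ref{initialgapbound0}) together with Assumption~\ref{assmp:funcgap0dec}, absorb the resulting $5\hat\mu\bar\Delta^2T_k^2$ penalty using~(\ref{ineq:expquad}), and close with the same induction and epoch-length estimates as in Theorem~\ref{thm:time_to_track}. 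The only difference is bookkeeping: you fold the drift penalty into the additive error before running the induction (giving a clean $3(1-\hat\rho_k)^{T_k}$-contraction recursion and a $2\hat E_{k-1}$ induction target), whereas the paper carries the $5e^{-\hat\mu\eta_kT_k/2}\hat\mu\bar\Delta^2T_k^2$ term explicitly and absorbs it inside the inductive step with the $11\widehat{G}_{k-1}$ target.
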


\subsection{Tracking the Equilibrium Value: High-Probability Guarantees}
\label{minimalvalproofsprob}
In this section, we derive the high-probability analogues of the results in Section~\ref{minimalvalproofsexp}. In light of Proposition \ref{gapbound}, we seek upper bounds on the sums
$$\sum_{i=0}^{t-1} \langle z_i, x_i - \bar{x}_t \rangle(1 - \hat \rho)^{t-1-i},\quad 
\sum_{i=0}^{t-1}\|z_i\|^2(1 - \hat \rho)^{t-1-i}, \quad \sum_{i=0}^{t-1}\bar{G}_{i,t}^2(1 - \hat \rho)^{t-1-i}
$$
that hold with high probability. The last two sums can easily be estimated under boundedness or light-tail assumptions on $\|z_i\|$ and $\bar{G}_{i,t}$. Controlling the first sum is more challenging because the error  $\|x_i - \bar{x}_t\|$ may in principle grow large. In order to control this term, we will use a remarkable generalization of Freedman's inequality, recently proved by \citet{pmlr-v99-harvey19a} for the purpose of analyzing the stochastic gradient method on static nonsmooth problems (without a regularizer). 

The main idea is as follows. Fix a horizon $t$, assume $\mathbb{E}[z_i\,|\,\mathcal{F}_{i,t}]=0$ for all $0 \leq i < t$ (recall that $\mathcal{F}_{i,t}:=\sigma(\mathcal{F}_i,\bar{x}_t)$), and define the martingale difference sequence
$$d_i:=\langle z_i, x_i - \bar{x}_t \rangle(1 - \hat \rho)^{t-1-i}$$
adapted to the filtration $(\mathcal{F}_{i+1,t})_{i=0}^{t-1}$. Roughly speaking, under mild light-tail assumptions, the total conditional variance of the corresponding martingale $\sum_{i=0}^{t-1}d_i$ can be bounded above with high probability by an affine transformation of itself, i.e., by an affine combination of the sequence $\{d_i\}_{i=0}^{t-1}$. In this way, the martingale is self-regulating. This is the content of the following proposition. The proof follows from Lemma~\ref{lem:avgrecdec} and algebraic manipulation and is placed in Appendix~\ref{missingproofs} (see Section~\ref{pf:avgrec2}).

\begin{proposition}[Self-regulation]\label{prop:avgrec2}
	The iterates $\{x_t\}$ produced by Algorithm \ref{alg:decsgd} with $r_t\equiv r$ and constant step size $\eta\leq1/2L$ satisfy the following bound for all $\lambda\in(0, \bar\mu\eta]$:
	\begin{equation*}
		\begin{split}
			\sum_{i=0}^{t-1}\|x_i - \bar{x}_t\|^2(1-\lambda)^{2(t-1-i)}\leq &\sum_{j=0}^{t-2}\left(2\eta\sum_{i=j+1}^{t-1}(1-\lambda)^{t-2-i}\right)\!\langle z_j, x_j - \bar{x}_t\rangle (1-\lambda)^{t-1-j} \\ 
			&+ \tfrac{1}{\lambda}(1-\lambda)^{t-1}\|x_0 - \bar{x}_t\|^2 + \tfrac{2\eta^2}{\lambda}\sum_{j=0}^{t-2}\|z_j\|^2(1-\lambda)^{t-2-j} \\
			&+ \tfrac{\eta}{\bar\mu\lambda}\sum_{j=0}^{t-2}\bar{G}_{j,t}^2(1-\lambda)^{t-2-j}.
		\end{split}
	\end{equation*}
\end{proposition}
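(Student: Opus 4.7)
The plan is to derive a clean scalar recursion on $V_i := \|x_i - \bar{x}_t\|^2$ from the equilibrium one-step improvement in Lemma~\ref{lem:avgrecdec}, unroll it, and then sum against the weights $(1-\lambda)^{2(t-1-i)}$, being careful with the bookkeeping of the resulting double geometric sum.

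First, I specialize Lemma~\ref{lem:avgrecdec} to constant step size $\eta$ and the choice $\alpha = \bar\mu$, so that $\gamma + \alpha = \mu$, and invoke the $\mu$-strong convexity of $\psi_t$ in the form $\psi_t(x_{i+1}) - \psi_t^\star \geq (\mu/2)V_{i+1}$. Adding $\mu\eta V_{i+1}$ to both sides of the resulting inequality causes the coefficient of $V_{i+1}$ on the left to collapse to $1$; using $\eta \leq 1/(2L)$ to bound $\eta^2/(1-L\eta) \leq 2\eta^2$, this produces the recursion
\begin{equation*}
V_{i+1} \leq (1 - \bar\mu\eta) V_i + R_i, \qquad R_i := 2\eta\langle z_i, x_i - \bar{x}_t\rangle + 2\eta^2\|z_i\|^2 + \tfrac{\eta}{\bar\mu}\bar{G}_{i,t}^2.
\end{equation*}
Since $\lambda \leq \bar\mu\eta$, this strengthens to $V_{i+1} \leq (1-\lambda)V_i + R_i$.

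Unrolling the recursion yields $V_i \leq (1-\lambda)^i V_0 + \sum_{j=0}^{i-1}(1-\lambda)^{i-1-j} R_j$. Multiplying by $(1-\lambda)^{2(t-1-i)}$ and summing $i = 0,\ldots,t-1$, the $V_0$ piece evaluates to $V_0(1-\lambda)^{t-1}\sum_{k=0}^{t-1}(1-\lambda)^k \leq (1-\lambda)^{t-1}V_0/\lambda$, producing the first desired term. Swapping the order of summation in the $R_j$ piece and using the algebraic identity $(i-1-j) + 2(t-1-i) = (t-1-j) + (t-2-i)$, the inner factor multiplying $R_j$ becomes
\begin{equation*}
(1-\lambda)^{t-1-j}\sum_{i=j+1}^{t-1}(1-\lambda)^{t-2-i},
\end{equation*}
which is exactly the coefficient of $2\eta\langle z_j, x_j - \bar{x}_t\rangle$ in the statement.

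The only remaining subtlety is the differential treatment of the three contributions in $R_j$. For the inner-product piece, I keep the factored coefficient verbatim, since this precise structure is what the generalized Freedman inequality of \citet{pmlr-v99-harvey19a} consumes downstream, and no sign control on $\langle z_j, x_j - \bar{x}_t\rangle$ is available anyway. For the nonnegative pieces $2\eta^2\|z_j\|^2$ and $(\eta/\bar\mu)\bar{G}_{j,t}^2$, I further estimate $(1-\lambda)^{t-1-j}\sum_{i=j+1}^{t-1}(1-\lambda)^{t-2-i} \leq (1-\lambda)^{t-2-j}/\lambda$ via a straightforward geometric-sum bound, yielding the final two terms of the proposition verbatim. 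The argument is thus purely deterministic, and the only genuine obstacle is keeping the exponent bookkeeping straight so that the inner-product coefficient emerges in its exact, rather than bounded, form.
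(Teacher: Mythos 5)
Your proof is correct and follows essentially the same route as the paper: specialize Lemma~\ref{lem:avgrecdec} at $\alpha=\bar\mu$, absorb the $V_{i+1}$ terms via $\mu$-strong convexity of $\psi_t$ to get $V_{i+1}\leq(1-\bar\mu\eta)V_i+R_i$, relax $1-\bar\mu\eta\leq1-\lambda$, unroll, and then swap the double sum using the exponent identity $(i-1-j)+2(t-1-i)=(t-1-j)+(t-2-i)$, keeping the inner-product coefficient exact and bounding the nonnegative pieces by $(1-\lambda)^{t-2-j}/\lambda$. The only nit is the phrase ``adding $\mu\eta V_{i+1}$ to both sides'': what actually collapses the left-hand coefficient to $1$ is adding $(1-\mu\eta)V_{i+1}$ to both sides after invoking $2\eta(\psi_t(x_{i+1})-\psi_t^\star)\geq\mu\eta V_{i+1}$; the outcome is the same, but the bookkeeping sentence as written doesn't quite parse.
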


In order to bound the self-regulating martingale $\sum_{i=0}^{t-1}d_i$, we will use the generalized Freedman inequality developed by \citet{pmlr-v99-harvey19a}, or rather a direct consequence thereof \citep[see][Lemma~C.3]{pmlr-v99-harvey19a}.

\begin{theorem}[Consequence of generalized Freedman]\label{thmfreed}
	Let $(D_i)_{i=0}^n$ and $(V_i)_{i=0}^n$ be scalar stochastic processes on a probability space with filtration $(\mathcal{H}_i)_{i=0}^{n+1}$ satisfying
	\begin{equation*}
		\mathbb{E}[\exp(\lambda D_i)\,|\,\mathcal{H}_i]\leq \exp(\lambda^2 V_i/2) \quad \textit{for all} \quad \lambda\geq0.
	\end{equation*}
	Suppose that $D_i$ is $\mathcal{H}_{i+1}$-measurable with $\mathbb{E}|D_i|<\infty$ and $\mathbb{E}[D_i\,|\,\mathcal{H}_i]=0$, and that $V_i$ is nonnegative and $\mathcal{H}_i$-measurable. Suppose moreover that there are constants $\alpha_0,\ldots,\alpha_n\geq0$, $\delta\in[0,1]$, and $\beta(\delta)\geq0$ satisfying
	\begin{equation*}
		\mathbb{P}\left\{\sum_{i=0}^{n} V_i\leq \sum_{i=0}^{n}\alpha_iD_i + \beta(\delta) \right\}\geq 1-\delta.
	\end{equation*}
	Set $\alpha:=\max\{\alpha_0,\ldots, \alpha_n\}$. Then for all $\tau>0$, the following bound holds:
	\begin{equation*}
		\mathbb{P}\left\{\sum_{i=0}^{n} D_i\geq \tau \right\}\leq \delta + \exp\!\left(-\frac{\tau}{4\alpha+8\beta(\delta)/\tau}\right)\!.
	\end{equation*}
\end{theorem}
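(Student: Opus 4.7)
The statement is a direct consequence of Lemma~C.3 of \citet{pmlr-v99-harvey19a}, so the cleanest route is to translate the hypotheses of the theorem into those of their generalized Freedman inequality and invoke it as a black box. I outline below a self-contained plan that mirrors the structure of that proof.

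\textbf{Step 1: Exponential supermartingale.} Set $S_k := \sum_{i=0}^{k} D_i$ and $W_k := \sum_{i=0}^{k} V_i$, with the convention $S_{-1} = W_{-1} = 0$. The conditional MGF hypothesis $\mathbb{E}[\exp(\lambda D_i)\,|\,\mathcal{H}_i]\leq\exp(\lambda^2 V_i/2)$, combined with the $\mathcal{H}_{i+1}$-measurability of $D_i$ and the $\mathcal{H}_i$-measurability of $V_i$, shows that for every $\lambda\geq 0$ the process $M_k^{(\lambda)} := \exp(\lambda S_k - \tfrac{\lambda^2}{2}W_k)$ is a nonnegative supermartingale with $\mathbb{E}[M_{-1}^{(\lambda)}]=1$. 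Hence $\mathbb{E}[M_n^{(\lambda)}]\leq 1$, and Markov's inequality yields the conditional-variance-aware tail bound
\begin{equation*}
\mathbb{P}\!\left\{\lambda S_n - \tfrac{\lambda^2}{2}W_n \geq s\right\}\leq e^{-s} \qquad \text{for all } \lambda\geq 0,\; s>0.
\end{equation*}

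\textbf{Step 2: Incorporate self-regulation via a stopping time.} Let $E$ denote the high-probability event $\{W_n \leq \sum_{i=0}^{n}\alpha_i D_i + \beta(\delta)\}$, so $\mathbb{P}(E)\geq 1-\delta$. Decompose $\mathbb{P}\{S_n\geq \tau\}\leq \delta + \mathbb{P}\{S_n\geq\tau,\;E\}$. To handle the second term cleanly, introduce the stopping time $T := \inf\{k\leq n:\;S_k\geq\tau\}\wedge n$, so that on $\{S_n\geq\tau\}$ one has $T\leq n$ and $S_T\geq\tau$. Optional stopping keeps $\mathbb{E}[M_T^{(\lambda)}]\leq 1$, and on $E\cap\{S_n\geq\tau\}$ the inequality $W_T\leq W_n\leq \sum_i \alpha_i D_i + \beta(\delta)$ lets one replace $W_T$ by $\alpha S_T + \beta(\delta)$ up to sign corrections that are absorbed using $\alpha_i\leq\alpha$ and $V_i\geq 0$.

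\textbf{Step 3: Self-bounding Bernstein-type inequality and optimization of $\lambda$.} Substituting the bound on $W_T$ into the supermartingale estimate yields, on $E\cap\{S_n\geq\tau\}$, a self-bounding inequality of the form $\lambda\tau \leq \tfrac{\lambda^2}{2}(\alpha S_T + \beta(\delta)) + s$ with tail probability $e^{-s}$. Choosing $\lambda = \tau/(2\alpha\tau + 4\beta(\delta))$ (the value that balances the linear and quadratic contributions in the exponent) produces the Bernstein-flavored tail
\begin{equation*}
\mathbb{P}\{S_n\geq\tau,\;E\}\;\leq\;\exp\!\left(-\frac{\tau}{4\alpha + 8\beta(\delta)/\tau}\right)\!,
\end{equation*}
which combined with $\mathbb{P}(E^c)\leq\delta$ gives the claimed bound.

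\textbf{Main obstacle.} The delicate part is Step~2: since the $D_i$ are signed martingale differences, $\sum_i \alpha_i D_i$ is not bounded above by $\alpha S_n$ in general, so the self-regulation $W_n\leq \sum_i\alpha_i D_i + \beta(\delta)$ cannot be plugged straight into the supermartingale exponent. The resolution, following \citet{pmlr-v99-harvey19a}, is to stop the process at the first crossing of the level $\tau$ and to exploit $V_i\geq 0$ to constrain the sign pattern of $\sum_i\alpha_i D_i$ on the good event, so that only the ``positive excursion'' portion—bounded in terms of $S_T$—contributes to the quadratic term in $\lambda$. All remaining steps are routine optimization of $\lambda$ and Chernoff manipulation.
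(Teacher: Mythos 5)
The paper does not actually prove Theorem~\ref{thmfreed}; it is cited verbatim as Lemma~C.3 of \citet{pmlr-v99-harvey19a}, so there is no ``paper's own proof'' to compare against. Evaluating your sketch on its own merits: Step~1 is correct and standard---$M_k^{(\lambda)}=\exp(\lambda S_k-\tfrac{\lambda^2}{2}W_k)$ is indeed a nonnegative supermartingale with $\mathbb{E}[M_n^{(\lambda)}]\leq 1$. However, the resolution you propose for the obstacle in Step~2 does not hold up, and in fact no stopping time appears in the Harvey et al.\ argument. Stopping at the first crossing of $\tau$ lets you write $W_T\leq W_n$, but it does nothing toward bounding $\sum_i\alpha_i D_i$ in terms of $S_T$; and the claim that $V_i\geq 0$ ``constrains the sign pattern of $\sum_i\alpha_i D_i$'' is not a valid argument---nonnegativity of the conditional variances has no bearing on the signs of the increments $D_i$. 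As you correctly note, there is simply no pointwise inequality of the form $\sum_i\alpha_i D_i\leq\alpha S_n+(\text{small error})$ when the $D_i$ are signed and the $\alpha_i$ vary, and the stopping time does not create one.

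The actual mechanism is a slack-variable (Lagrangian) trick that sidesteps the need for any such comparison. Introduce a second parameter $\lambda\geq 0$ and set
\[
  Y_n \;:=\; S_n + \lambda\Big(\sum_{i=0}^n\alpha_i D_i + \beta(\delta) - W_n\Big)
       \;=\; \sum_{i=0}^n(1+\lambda\alpha_i)D_i \;-\; \lambda W_n \;+\; \lambda\beta(\delta).
\]
On the good event $A:=\{S_n\geq\tau\}\cap E$, the slack $\sum_i\alpha_i D_i+\beta(\delta)-W_n$ is nonnegative, so $Y_n\geq S_n\geq\tau$, i.e.\ $A\subseteq\{Y_n\geq\tau\}$. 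Now apply Markov to $\exp(\mu Y_n)$ for $\mu\geq 0$: because the coefficients $1+\lambda\alpha_i$ are deterministic and nonnegative, the conditional MGF hypothesis applied at scale $\mu(1+\lambda\alpha_i)$ shows that $\exp\big(\mu\sum_{i\leq k}(1+\lambda\alpha_i)D_i-\mu\lambda W_k\big)$ is a supermartingale with expectation at most~$1$ whenever $\mu(1+\lambda\alpha)^2\leq 2\lambda$ (using $\alpha_i\leq\alpha$ and $V_i\geq 0$). Hence $\mathbb{P}(A)\leq\exp(-\mu\tau+\mu\lambda\beta(\delta))$, and the choice $\lambda=\min\{1/\alpha,\,\tau/2\beta(\delta)\}$, $\mu=\lambda/2$ produces $\mathbb{P}(A)\leq\exp\big(-\tau/(4\alpha+8\beta(\delta)/\tau)\big)$; a union bound with $\mathbb{P}(E^c)\leq\delta$ completes the proof. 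This is the idea your sketch is missing: the self-regulation is not ``plugged into'' the supermartingale exponent at all, but folded into a new linear combination $Y_n$ whose MGF is still controllable precisely because its linear-in-$D_i$ coefficients are deterministic.
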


Combining Proposition~\ref{prop:avgrec2} and Theorem~\ref{thmfreed} yields the following tail bound for $\sum_{i=0}^{t-1}d_i$. 

\begin{proposition}[Noise martingale tail bound]\label{prop:tailbd1}
	Let $\{x_t\}$ be the iterates produced by Algorithm \ref{alg:decsgd} with constant step size $\eta\leq1/2L$, set $\hat{\rho}:=\hat\mu\eta/(2-\mu\eta)$, and suppose that Assumption~\ref{assmp:funcgap2dec} holds. Then there is an absolute constant $c>0$ such that for any specified $t\in\mathbb{N}$, $\delta\in(0,1)$, and $\tau>0$, the following bound holds:
	\begin{equation*}
		\mathbb{P}\left\{\sum_{i=0}^{t-1}\langle z_i,x_i - \bar{x}_t \rangle(1-\hat\rho)^{t-1-i}\geq \tau \right\} \leq \delta + \exp\!\left(-\frac{\tau}{4\alpha+8\beta_t\log(3e/\delta)/\tau}\right)\!,
	\end{equation*}
	where $\alpha := 3\eta(c\sigma)^2/\hat{\rho}$ and 
	\begin{equation*}
		\beta_t:=(1-\hat{\rho})^{t-1}\big(\|x_0 - \bar{x}_0\|^2 + \bar{\Delta}^2t^2\big)\frac{2(c\sigma)^2}{\hat{\rho}} + \frac{2\eta^2(c\sigma)^4}{\hat{\rho}^2} + \frac{3\hat\mu\bar{\Delta}^2\eta(c\sigma)^2}{\hat{\rho}^4}.
	\end{equation*}
\end{proposition}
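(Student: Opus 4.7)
The plan is to apply the generalized Freedman consequence (Theorem~\ref{thmfreed}) to the martingale difference sequence $d_i := \langle z_i, x_i - \bar{x}_t\rangle(1-\hat\rho)^{t-1-i}$ for $i = 0,\ldots,t-1$, adapted to the filtration $(\mathcal{F}_{i+1,t})$. The hypotheses of Theorem~\ref{thmfreed} are verified as follows: since $x_i - \bar{x}_t$ is $\mathcal{F}_{i,t}$-measurable and $\mathbb{E}[z_i\,|\,\mathcal{F}_{i,t}] = 0$ by Assumption~\ref{assmp:funcgap2dec}, we get $\mathbb{E}[d_i\,|\,\mathcal{F}_{i,t}] = 0$. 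Moreover, because $z_i$ is norm sub-Gaussian conditioned on $\mathcal{F}_{i,t}$ with parameter $\sigma/2$, the standard fact recalled in the footnote to Theorem~\ref{HPtrack1} yields that $\langle z_i, v\rangle$ is mean-zero sub-Gaussian conditioned on $\mathcal{F}_{i,t}$ with parameter $c\sigma\|v\|$ whenever $v$ is $\mathcal{F}_{i,t}$-measurable; hence the conditional sub-Gaussian hypothesis of Theorem~\ref{thmfreed} holds with $V_i = (c\sigma)^2\|x_i - \bar{x}_t\|^2(1-\hat\rho)^{2(t-1-i)}$.

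The crux of the argument is the self-regulating affine bound required by Theorem~\ref{thmfreed}. For this I would invoke Proposition~\ref{prop:avgrec2} with $\lambda = \hat\rho$, which is legitimate since $\hat\rho \leq \hat\mu\eta \leq \bar\mu\eta$. That proposition expresses $\sum_{i=0}^{t-1}\|x_i-\bar{x}_t\|^2(1-\hat\rho)^{2(t-1-i)}$ as an explicit affine combination of the $d_j$'s, with coefficients of size $O(\eta/\hat\rho)$, plus a nonnegative remainder $R_t$ given by
\begin{equation*}
R_t = \tfrac{(1-\hat\rho)^{t-1}}{\hat\rho}\|x_0-\bar{x}_t\|^2 + \tfrac{2\eta^2}{\hat\rho}\sum_{j=0}^{t-2}\|z_j\|^2(1-\hat\rho)^{t-2-j} + \tfrac{\eta}{\bar\mu\hat\rho}\sum_{j=0}^{t-2}\bar{G}_{j,t}^2(1-\hat\rho)^{t-2-j}.
\end{equation*}
Multiplying through by $(c\sigma)^2$ yields $\sum_i V_i \leq \sum_j [(c\sigma)^2\alpha_j]\,d_j + (c\sigma)^2 R_t$ with $\max_j(c\sigma)^2\alpha_j \leq 3\eta(c\sigma)^2/\hat\rho$, matching the constant $\alpha$ in the statement.

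It then remains to show that $(c\sigma)^2 R_t \leq \beta_t\log(3e/\delta)$ with probability at least $1-\delta$, which I would accomplish via a union bound over three events of probability $\delta/3$ each. For the first term, split $\|x_0-\bar{x}_t\|^2 \leq 2\|x_0-\bar{x}_0\|^2 + 2\|\bar{x}_0-\bar{x}_t\|^2$ and apply Lemma~\ref{lem:grad_vs_dist} (valid by the static-regularizer clause of Assumption~\ref{assmp:funcgap2dec}) to bound $\bar\mu^2\|\bar{x}_0-\bar{x}_t\|^2 \leq \bar{G}_{0,t}^2$; then invoke the sub-exponential tail of $\bar{G}_{0,t}^2$, whose parameter is $(\hat\mu\bar\Delta t)^2$, together with $\hat\mu\leq\bar\mu$, to obtain $\|\bar{x}_0-\bar{x}_t\|^2 \lesssim \bar\Delta^2 t^2\log(1/\delta)$ with high probability. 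For the remaining two terms, apply Bernstein-type tail bounds to the geometrically weighted sums of the sub-exponential random variables $\|z_j\|^2$ (parameter $\asymp\sigma^2$) and $\bar{G}_{j,t}^2$ (parameter $(\hat\mu\bar\Delta(t-j))^2$); the expectations of these weighted sums are of order $\sigma^2/\hat\rho$ and $(\hat\mu\bar\Delta)^2/\hat\rho^3$ respectively (the latter via $\sum_{k\geq 1}k^2(1-\hat\rho)^{k-1}\asymp 1/\hat\rho^3$), producing the second and third contributions in $\beta_t$ up to the $\log(3e/\delta)$ factor. With $R_t$ thus controlled, Theorem~\ref{thmfreed} delivers the claimed bound with $\alpha = 3\eta(c\sigma)^2/\hat\rho$ and $\beta(\delta) = \beta_t\log(3e/\delta)$.

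The main obstacle is the precise accounting of the three tail bounds so that the absolute constants line up cleanly with the displayed form of $\beta_t$; in particular, the $\bar{G}_{j,t}^2$ sum, whose per-term sub-exponential parameter grows quadratically in $t-j$ while being damped by the geometric weight $(1-\hat\rho)^{t-2-j}$, requires care to produce the $1/\hat\rho^4$ scaling in the third term (one factor of $1/\hat\rho$ coming from Proposition~\ref{prop:avgrec2} and $1/\hat\rho^3$ from the weighted sum itself).
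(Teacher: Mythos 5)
Your proposal follows the same route as the paper: apply the generalized Freedman consequence (Theorem~\ref{thmfreed}) to the martingale difference sequence $d_i = \langle z_i, x_i-\bar{x}_t\rangle(1-\hat\rho)^{t-1-i}$ adapted to $(\mathcal{F}_{i+1,t})$, identify $V_i = (c\sigma)^2\|x_i-\bar{x}_t\|^2(1-\hat\rho)^{2(t-1-i)}$, obtain the self-regulating affine bound from Proposition~\ref{prop:avgrec2} with $\lambda=\hat\rho$, and control the remainder $R_t$ by a three-way union bound over the initial-distance, $\|z_j\|^2$-sum, and $\bar{G}_{j,t}^2$-sum events, using the static-regularizer clause and Lemma~\ref{lem:grad_vs_dist} to convert $\|\bar x_0-\bar x_t\|^2$ into a sub-exponential quantity governed by $\bar{G}_{0,t}^2$. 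The accounting of powers of $\hat\rho$ you sketch for the third $\beta_t$ term matches the paper's, using $\sum_{k\geq1}k^2(1-\hat\rho)^{k-1}\lesssim1/\hat\rho^3$ together with $\hat\mu\leq\bar\mu$ and $1/(1-\hat\rho)\leq3/2$ in the regime $\eta\leq1/2L$. The only small omission is the integrability hypothesis $\mathbb{E}|D_i|<\infty$ required by Theorem~\ref{thmfreed}, which the paper verifies via H\"{o}lder's inequality, Assumption~\ref{assump:sec_moment_dec}, and Corollary~\ref{cor1}; this is routine but should be mentioned.
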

\begin{proof}
		By Assumption~\ref{assmp:funcgap2dec}, there exists an absolute constant $c\geq1$ such that $\|z_i\|^2$ is sub-exponential conditioned on $\mathcal{F}_{i,t}$ with parameter $c\sigma^2$ and $z_i$ is mean-zero sub-Gaussian conditioned on $\mathcal{F}_{i,t}$ with parameter $c\sigma$ for all indices $0\leq i < t$. Then for each $0\leq i < t$, the $\mathcal{F}_{i+1,t}$-measurable random variable $\langle z_i,x_i - \bar{x}_t \rangle$ is mean-zero sub-Gaussian conditioned on $\mathcal{F}_{i,t}$ with parameter $c\sigma\|x_i - \bar{x}_t\|$, so
	\begin{equation*}
		\mathbb{E}\big[{\exp}\big(\lambda\langle z_i, x_i - \bar{x}_t \rangle(1-\hat\rho)^{t-1-i}\big)\,|\,\mathcal{F}_{i,t}\big]\leq \exp\!\big(\lambda^2(c\sigma)^2\|x_i - \bar{x}_t\|^2(1-\hat\rho)^{2(t-1-i)}/2\big) \quad\forall\lambda\in\mathbb{R};
	\end{equation*}
	note also that $\mathbb{E}\vert\langle z_i,x_i - \bar{x}_t \rangle\rvert<\infty$ by H\"{o}lder's inequality, Assumption~\ref{assump:sec_moment_dec}, and Corollary~\ref{cor1}. Now fix $t\geq1$ and observe that Proposition~\ref{prop:avgrec2} yields the total conditional variance bound
	\begin{equation*}
		\sum_{i=0}^{t-1}(c\sigma)^2\|x_i - \bar{x}_t\|^2(1-\hat\rho)^{2(t-1-i)}\leq \sum_{j=0}^{t-2}\alpha_j\langle z_j, x_j - \bar{x}_t \rangle(1-\hat{\rho})^{t-1-j} + R_t,
	\end{equation*}
	where $0\leq\alpha_j\leq\alpha$ for all $0\leq j\leq t-2$ and
	\begin{equation*}
		R_t := \tfrac{(c\sigma)^2}{\hat\rho}(1-\hat{\rho})^{t-1}\|x_0 - \bar{x}_t\|^2 + \tfrac{2\eta^2(c\sigma)^2}{\hat{\rho}}\sum_{j=0}^{t-2}\|z_j\|^2(1-\hat\rho)^{t-2-j} + \tfrac{\eta(c\sigma)^2}{\bar\mu\hat{\rho}}\sum_{j=0}^{t-2}\bar{G}_{j,t}^2(1-\hat\rho)^{t-2-j}.
	\end{equation*}
	We claim
	\begin{equation}\label{eqn:beta-bound}
		\mathbb{P}\left\{R_t \leq \beta_t\log\!\left(\frac{3e}{\delta}\right)\right\}\geq 1-\delta \qquad\forall\delta\in(0,1).
	\end{equation}
	To verify (\ref{eqn:beta-bound}), observe first that for all $n\geq0$, the sum $\sum_{i=0}^{n}\|z_i\|^2(1-\hat\rho)^{n-i}$ is sub-exponential with parameter $\sum_{i=0}^{n}c\sigma^2(1-\hat\rho)^{n-i}\leq(c\sigma)^2/\hat{\rho}$, so Markov's inequality implies
	\begin{equation}\label{sqnoisetail}
		\mathbb{P}\left\{	\sum_{i=0}^{n}\|z_i\|^2(1-\hat\rho)^{n-i} \leq \frac{(c\sigma)^2}{\hat{\rho}}\log\!\left(\frac{e}{\delta}\right)\right\}\geq 1-\delta\qquad\forall\delta\in(0,1).
	\end{equation}
	Further, for all $0\leq n<t$, it follows from Assumption~\ref{assmp:funcgap2dec} and Lemma~\ref{lem:grad_vs_dist} that $\|x_0 - \bar{x}_t\|^2$ is sub-exponential with parameter $2(\|x_0 - \bar{x}_0\|^2 + \bar{\Delta}^2t^2)$ and $\sum_{i=0}^{n}\bar{G}_{i,t}^2(1-\hat\rho)^{n-i}$ is sub-exponential with parameter
	\begin{equation*}
		\sum_{i=0}^{n}(\hat\mu\bar\Delta)^2(t-i)^2(1-\hat\rho)^{n-i} = (\hat\mu\bar\Delta)^2(1-\hat{\rho})^{n+1-t}\sum_{i=0}^{n}(t-i)^2(1-\hat\rho)^{t-i-1} \leq \frac{2(\hat\mu\bar\Delta)^2}{\hat{\rho}^3(1-\hat{\rho})^{t-1-n}},
	\end{equation*}
	so Markov's inequality implies
	\begin{equation}
		\mathbb{P}\left\{\|x_0 - \bar{x}_t\|^2\leq 2(\|x_0 - \bar{x}_0\|^2 + \bar{\Delta}^2t^2)\log\!\left(\frac{e}{\delta}\right)\right\}\geq 1-\delta\qquad\forall\delta\in(0,1)
	\end{equation}
	and
	\begin{equation}\label{funcgaptail}
		\mathbb{P}\left\{	\sum_{i=0}^{n}\bar{G}_{i,t}^2(1-\hat\rho)^{n-i}\leq\frac{2(\hat\mu\bar\Delta)^2}{\hat{\rho}^3(1-\hat{\rho})^{t-1-n}}\log\!\left(\frac{e}{\delta}\right)\right\}\geq 1-\delta \qquad\forall\delta\in(0,1).
	\end{equation}
	Thus, (\ref{sqnoisetail})--(\ref{funcgaptail}) and a union bound yield (\ref{eqn:beta-bound}). Consequently, Theorem \ref{thmfreed} implies that the following bound holds for all $\delta\in(0,1)$ and $\tau>0$:
	\begin{equation*}
		\mathbb{P}\left\{\sum_{i=0}^{t-1}\langle z_i, x_i - \bar{x}_t \rangle(1-\hat\rho)^{t-1-i}\geq\tau \right\}\leq\delta + \exp\!\left(-\frac{\tau}{4\alpha+8\beta_t\log(3e/\delta)/\tau}\right)\!,
	\end{equation*}
	as claimed.
\end{proof}

We may now deduce the following precise version of Theorem~\ref{thm:hpbound2dec} using the tail bound furnished by Proposition~\ref{prop:tailbd1}.

\begin{theorem}[Function gap with high probability]\label{thm:hpbound2precise}
	Let $\{\hat{x}_t\}$ be the iterates produced by Algorithm~\ref{alg:decsgdavg} with constant step size $\eta\leq1/2L$, set $\hat{\rho}:=\hat\mu\eta/(2-\mu\eta)$, and suppose that Assumption~\ref{assmp:funcgap2dec} holds. Then there is an absolute constant $c>0$ such that for any specified $t\in\mathbb{N}$ and $\delta\in(0,1)$, the following estimate holds with probability at least $1-\delta$: 
	\begin{equation*}
		\psi_t(\hat x_t) - \psi_t^\star\leq (1 - \hat \rho)^t \big(\psi_t(x_0) - \psi_t^\star + \tfrac{\hat\mu}{4}\|x_0 - \bar{x}_t\|^2\big) + \left(\eta(c\sigma)^2 + \frac{8\bar{\Delta}^2}{\hat\mu\eta^2} + 5\hat{\rho}\sqrt{8\beta_t}\right)\!\log\!\left(\frac{4e}{\delta}\right)\!,
	\end{equation*}
	where
	\begin{equation*}
		\beta_t:=(1-\hat{\rho})^{t-1}\big(\|x_0 - \bar{x}_0\|^2 + \bar{\Delta}^2t^2\big)\frac{2(c\sigma)^2}{\hat{\rho}} + \frac{2\eta^2(c\sigma)^4}{\hat{\rho}^2} + \frac{3\hat\mu\bar{\Delta}^2\eta(c\sigma)^2}{\hat{\rho}^4}.
	\end{equation*}
\end{theorem}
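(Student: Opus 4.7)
The plan is to combine Proposition~\ref{gapbound} with high-probability tail bounds for the three stochastic sums that appear in its conclusion, then take a union bound. Proposition~\ref{gapbound} controls $\psi_t(\hat x_t) - \psi_t^\star$ by the claimed \emph{optimization} term plus the three stochastic terms
\[
(\mathrm{a})\ \hat\rho\!\sum_{i=0}^{t-1}\!\langle z_i, x_i - \bar x_t\rangle(1-\hat\rho)^{t-1-i},\ \ (\mathrm{b})\ \hat\rho\eta\!\sum_{i=0}^{t-1}\!\|z_i\|^2(1-\hat\rho)^{t-1-i},\ \ (\mathrm{c})\ \frac{\hat\rho}{\hat\mu}\!\sum_{i=0}^{t-1}\!\bar G_{i,t}^2(1-\hat\rho)^{t-1-i}.
\]
I would bound each of these at failure level $\delta/4$ and union bound, which produces the $\log(4e/\delta)$ scaling in the conclusion.

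For (b) and (c), Assumption~\ref{assmp:funcgap2dec} makes $\|z_i\|^2$ sub-exponential (conditioned on $\mathcal{F}_{i,t}$) with parameter $c\sigma^2$ for some absolute $c\geq 1$, and $\bar G_{i,t}^2$ sub-exponential with parameter $(\hat\mu\bar\Delta|i-t|)^2$. Because sub-exponential parameters are additive along adapted sequences via the tower rule, a straightforward Markov argument on the MGF---identical to the one carried out in (\ref{sqnoisetail}) and (\ref{funcgaptail}) inside the proof of Proposition~\ref{prop:tailbd1}---delivers $\sum\|z_i\|^2(1-\hat\rho)^{t-1-i}\lesssim ((c\sigma)^2/\hat\rho)\log(1/\delta)$ and $\sum\bar G_{i,t}^2(1-\hat\rho)^{t-1-i}\lesssim (2(\hat\mu\bar\Delta)^2/\hat\rho^3)\log(1/\delta)$ with high probability. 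Multiplying by the prefactors $\hat\rho\eta$ and $\hat\rho/\hat\mu$ and invoking $\hat\rho\geq\hat\mu\eta/2$ recovers the $\eta(c\sigma)^2\log(4e/\delta)$ and $(8\bar\Delta^2/\hat\mu\eta^2)\log(4e/\delta)$ summands in the claimed bound.

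The main obstacle is (a), where the inner-product martingale involves the stochastic iterate distances $\|x_i - \bar x_t\|$. This is where the full weight of Proposition~\ref{prop:tailbd1} is needed: it couples the self-regulating estimate of Proposition~\ref{prop:avgrec2} with the generalized Freedman inequality of Theorem~\ref{thmfreed} to give, for all $\tau>0$,
\[
\mathbb{P}\!\left\{\sum_{i=0}^{t-1}\!\langle z_i, x_i - \bar x_t\rangle(1-\hat\rho)^{t-1-i} \geq \tau\right\} \leq \delta + \exp\!\left(-\frac{\tau}{4\alpha + 8\beta_t\log(3e/\delta)/\tau}\right)\!,
\]
with $\alpha = 3\eta(c\sigma)^2/\hat\rho$. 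Setting the exponential equal to $\delta$ produces the quadratic inequality $\tau^2 \geq 4\alpha\log(1/\delta)\,\tau + 8\beta_t\log(3e/\delta)\log(1/\delta)$; by the splitting $\sqrt{x+y}\leq\sqrt{x}+\sqrt{y}$, the choice $\tau = 4\alpha\log(1/\delta) + \sqrt{8\beta_t\log(3e/\delta)\log(1/\delta)}$ suffices, and $\sqrt{\log(1/\delta)\log(3e/\delta)}\leq\log(3e/\delta)$ then reduces this to $\tau\leq 4\alpha\log(1/\delta) + \sqrt{8\beta_t}\log(3e/\delta)$. Multiplying by $\hat\rho$ bounds (a) by $12\eta(c\sigma)^2\log(1/\delta) + \hat\rho\sqrt{8\beta_t}\log(3e/\delta)$ with probability at least $1-2\delta$.

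To finish, I would union bound the three events at level $\delta/4$ each (with (a) accounting for failure $2\cdot\delta/4$), converting $\log(1/\delta)$ to $\log(4/\delta)\leq\log(4e/\delta)$ and $\log(3e/\delta)$ to $\log(12e/\delta) = \log 3 + \log(4e/\delta)$. The numerical ratio $\log(12e/\delta)/\log(4e/\delta)$ is at most $1+\log(3)/\log(4e)< 2 < 5$ for every $\delta\in(0,1)$, so it is absorbed into the factor $5$ in front of $\hat\rho\sqrt{8\beta_t}$; similarly, the combined $13\eta(c\sigma)^2$ contribution from (a)+(b) is absorbed by enlarging the absolute constant $c$. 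This yields exactly the bound of Theorem~\ref{thm:hpbound2precise}.
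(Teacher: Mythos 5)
Your proposal is correct and follows essentially the same route as the paper: decompose via Proposition~\ref{gapbound}, bound the $\|z_i\|^2$ and $\bar G_{i,t}^2$ sums by the sub-exponential Markov estimates (\ref{sqnoisetail})--(\ref{funcgaptail}), bound the noise martingale via Proposition~\ref{prop:tailbd1}, and union bound. The only cosmetic difference is the choice of $\tau$ in the tail bound: you solve the quadratic directly and pick $\tau = 4\alpha\log(1/\delta)+\sqrt{8\beta_t}\log(3e/\delta)$, producing an additional $12\eta(c\sigma)^2$-type term that you then absorb by enlarging $c$ (which is legitimate, since $\beta_t$ is monotone in $c$ and Proposition~\ref{prop:tailbd1} remains valid with any larger $c$), whereas the paper picks $\tau=5\sqrt{8\beta_t}\log(e/\delta)$ outright, relying on the implicit bound $\alpha = 3\eta(c\sigma)^2/\hat\rho \le (3/\sqrt{2})\sqrt{\beta_t}$ (which follows from $\beta_t \ge 2\eta^2(c\sigma)^4/\hat\rho^2$) so that no extra $\alpha$-term appears; both lead to the stated estimate.
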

\begin{proof}
	A quick computation shows that given any $\delta\in(0,1)$, we may take
	\begin{equation*}
		\tau=5\sqrt{8\beta_t}\log\!\left(\frac{e}{\delta}\right)
	\end{equation*} 
	in Proposition~\ref{prop:tailbd1} to obtain 
	\begin{equation}\label{noisetail2}
		\mathbb{P}\left\{\sum_{i=0}^{t-1}\langle z_i,x_i - \bar{x}_t \rangle(1-\hat\rho)^{t-1-i} < 5\sqrt{8\beta_t}\log\!\left(\frac{e}{\delta}\right) \right\} \geq 1 - 2\delta.
	\end{equation}
	We may now combine (\ref{sqnoisetail}), (\ref{funcgaptail}), and (\ref{noisetail2}) together with Proposition \ref{gapbound} and a union bound to conclude that for all $\delta\in(0,1)$, the estimate
	\begin{equation*}
			\psi_t(\hat x_t) - \psi_t^\star \leq (1 - \hat \rho)^t\big(\psi_t(x_0) - \psi_t^\star + \tfrac{\hat\mu}{4}\|x_0 - \bar{x}_t\|^2\big)  + \left(\eta(c\sigma)^2 + \frac{2\hat\mu\bar{\Delta}^2}{\hat{\rho}^2} + 5\hat{\rho}\sqrt{8\beta_t}\right)\!\log\!\left(\frac{e}{\delta}\right)
	\end{equation*}
	holds with probability at least $1-4\delta$; noting $\hat{\rho}\geq \hat{\mu}\eta/2$ completes the proof. 
\end{proof}

\begin{remark}\label{rem:simp2}
	\textup{To see that Theorem \ref{thm:hpbound2precise} entails Theorem \ref{thm:hpbound2dec}, observe first that in the setting of Theorem~\ref{thm:hpbound2precise}, upon setting $C:=\max\{c,1\}$ and selecting any $t\in\mathbb{N}$, we have
	\begin{equation*}
		\begin{split}
			\hat{\rho}\sqrt{8\beta_t}\leq 4C^2\bigg(\sqrt{(1-\hat{\rho})^t\big(\|x_0 - \bar{x}_0\|^2+\bar{\Delta}^2t^2\big)\hat\mu\eta\sigma^2} + \eta\sigma^2+\sqrt{6}\frac{\bar\Delta\sigma}{\sqrt{\hat\mu\eta}}\bigg), 
		\end{split}
	\end{equation*}
	while the AM-GM inequality implies $$2\sqrt{(1-\hat{\rho})^t\big(\|x_0 - \bar{x}_0\|^2+\bar{\Delta}^2t^2\big)\hat\mu\eta\sigma^2} \leq (1-\hat{\rho})^t\big(\hat\mu\|x_0 - \bar{x}_0\|^2+\hat\mu\bar{\Delta}^2t^2\big) + \eta\sigma^2,$$ inequality (\ref{ineq:expquad}) implies
	\begin{equation*}
		(1-\hat{\rho})^t\big(\hat\mu\|x_0 - \bar{x}_0\|^2+\hat\mu\bar{\Delta}^2t^2\big) \leq 2(1- \hat \rho)^t \big(\psi_0(x_0) - \psi_0^\star\big) + \frac{16\bar{\Delta}^2}{\hat\mu\eta^2},
	\end{equation*}
	and Young's inequality implies $$\frac{2\bar{\Delta}\sigma}{\sqrt{\hat\mu\eta}} \leq \eta\sigma^2 + \frac{\bar{\Delta}^2}{\hat\mu\eta^2}.$$ Hence
	$$\hat{\rho}\sqrt{8\beta_t} \lesssim (1 - \hat{\rho})^t\big(\psi_0(x_0) - \psi_0^\star\big) + \eta\sigma^2 + \frac{\bar{\Delta}^2}{\hat\mu\eta^2}.$$
	Further, inequalities (\ref{initialgapbound0}) and (\ref{ineq:expquad}) together with Assumption~\ref{assmp:funcgap2dec} imply that the estimate
	\begin{equation*}
		(1- \hat \rho)^t \big(\psi_t(x_0) - \psi_t^\star + \tfrac{\hat\mu}{4}\|x_0 - \bar{x}_t\|^2\big) \leq 3(1- \hat \rho)^t \big(\psi_0(x_0) - \psi_0^\star\big) + \frac{80\bar{\Delta}^2}{\hat\mu\eta^2}\log\!\left(\frac{e}{\delta}\right)
	\end{equation*}
	holds with probability at least $1-\delta$ for all $\delta\in(0,1)$. 
	On the other hand, Theorem~\ref{thm:hpbound2precise} shows that the estimate
		\begin{align*}
			\psi_t(\hat x_t) - \psi_t^\star \leq (1 - \hat \rho)^t \big(\psi_t(x_0) - \psi_t^\star + \tfrac{\hat\mu}{4}\|x_0 - \bar{x}_t\|^2\big) + \left(\eta(c\sigma)^2 + \frac{8\bar{\Delta}^2}{\hat\mu\eta^2} + 5\hat{\rho}\sqrt{8\beta_t}\right)\!\log\!\left(\frac{4e}{\delta}\right)
		\end{align*}
	holds with probability at least $1 - \delta$ for all $\delta\in(0,1)$.
	Thus, a union bound reveals that the estimate
	\begin{equation*}
		\psi_t(\hat x_t) - \psi_t^\star \lesssim \left((1 - \hat{\rho})^t\big(\psi_0(x_0) - \psi_0^\star\big) + \eta\sigma^2 + \frac{\bar{\Delta}^2}{\hat\mu\eta^2}\right)\!\log\!\left(\frac{e}{\delta}\right)
	\end{equation*}
	holds with probability at least $1-\delta$ for all $\delta\in(0,1)$.}
\end{remark}

We may now apply Theorem~\ref{thm:hpbound2dec} to obtain a formal version of Theorem~\ref{thm:gap_schedulhp2dec}; the proof is analogous to that of Theorem~\ref{thm:time_to_track2} and appears in Appendix~\ref{missingproofs} (see Section~\ref{pf:time_to_track_gap_hp2}). 

\begin{theorem}[Time to track with high probability]\label{thm:time_to_track_gap_hp2}
	Suppose that Assumption~\ref{assmp:funcgap2dec} holds and that we are in the low drift-to-noise regime $\bar{\Delta}/\sigma<\sqrt{\hat\mu/16L^3}$. Set $\hat{\eta}_\star = (2\bar{\Delta}^2/\hat\mu\sigma^2)^{1/3}$ and $\widehat{\mathcal{G}}=
	\hat\mu(\bar\Delta\sigma^2/\hat{\mu}^2)^{2/3}$. Suppose moreover that we have available a positive upper bound on the initial gap $D\geq \psi_0(x_0) - \psi_0^\star$.
	Fix $\delta\in(0,1)$ and consider running Algorithm~\ref{alg:decsgdavg} in $k=0,\ldots, K-1$ epochs, namely, set $X_0=x_0$ and iterate the process
	$$X_{k+1}=\mathtt{D\text{-}\overline{PSG}}(X_k,\mu,\gamma,\eta_k,T_k)\quad \text{for}\quad k=0,\ldots, K-1,$$ where the number of epochs is $$K=1+\left\lceil{\log_2\!\left(\frac{1}{L}\cdot\left(\frac{\sigma^2\hat\mu}{\bar{\Delta}^2}\right)^{1/3}\right)}\right\rceil$$ and we set
	$$\eta_0=\frac{1}{2L},\quad T_0=\left\lceil\frac{4L}{\hat\mu}\log\!\left(\frac{LD}{\sigma^2}\right)^+\right\rceil\quad \text{and}\quad \eta_k=\frac{\eta_{k-1}+ \hat{\eta}_{\star}}{2},\quad T_k=\left\lceil{\frac{2\log\!\big(4c\log(e/\delta)\big)^+}{\hat\mu\eta_k}}\right\rceil$$ for all $k\geq1$, where $c>0$ is the absolute constant furnished by the bound (\ref{ineq:hpbound2dec}). Then the time horizon $T=T_0+\cdots+T_{K-1}$ satisfies
	$$T\lesssim \frac{L}{\hat\mu}\log\!\left(\frac{L D}{\sigma^2}\right)^+ + \frac{\sigma^2}{\hat\mu\widehat{\mathcal{G}}}\!\left(1 \vee \log\log\frac{e}{\delta} \right) \leq \frac{L}{\hat\mu}\log\!\left(\frac{D}{\widehat{\mathcal{G}}}\right)^+ + \frac{\sigma^2}{\hat\mu\widehat{\mathcal{G}}}\left(1 \vee \log\log\frac{e}{\delta}\right)$$
	and the corresponding tracking error satisfies
	$$\psi_T(X_K) - \psi_T^\star \lesssim \widehat{\mathcal{G}}\log\!\left(\frac{e}{\delta}\right)$$
	with probability at least $1-K\delta$.
\end{theorem}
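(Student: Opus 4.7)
The plan is to prove Theorem~\ref{thm:time_to_track_gap_hp2} by an epoch-wise induction, applying the single-run high-probability bound of Theorem~\ref{thm:hpbound2dec} within each epoch and taking a union bound over the $K$ epochs to pay a total failure probability of $K\delta$. Define the per-epoch ``noise+drift'' target
\begin{equation*}
	\hat{E}_k := c\!\left(\eta_k\sigma^2 + \frac{\bar{\Delta}^2}{\hat\mu\eta_k^2}\right)\!\log\!\left(\frac{e}{\delta}\right)\!,
\end{equation*}
where $c$ is the absolute constant furnished by Theorem~\ref{thm:hpbound2dec}, and aim to prove by induction on $k\geq 1$ that $\psi_{t_k}(X_k)-\psi_{t_k}^\star\leq 2\hat{E}_{k-1}$ with probability at least $1-k\delta$.

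Applied within the $k$-th epoch starting from $X_k$ with step size $\eta_k$ for $T_k$ iterations, Theorem~\ref{thm:hpbound2dec} (used with time-translation invariance of the stochastic framework) yields, on an event of conditional probability at least $1-\delta$,
\begin{equation*}
	\psi_{t_{k+1}}(X_{k+1})-\psi_{t_{k+1}}^\star \leq c\big(1-\tfrac{\hat\mu\eta_k}{2}\big)^{T_k}\big(\psi_{t_k}(X_k)-\psi_{t_k}^\star\big)\log\!\left(\tfrac{e}{\delta}\right) + \hat{E}_k.
\end{equation*}
The choice $T_k\geq 2\log(4c\log(e/\delta))/(\hat\mu\eta_k)$ guarantees $(1-\hat\mu\eta_k/2)^{T_k}\leq 1/(4c\log(e/\delta))$, so the leading multiplicative factor is at most $1/4$. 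Pairing this with the elementary monotonicity observation $\hat{E}_{k-1}\leq 2\hat{E}_k$ (which follows from $\eta_{k-1} = 2\eta_k-\hat\eta_\star\leq 2\eta_k$ and $\eta_{k-1}\geq\eta_k$) gives the inductive step $\psi_{t_{k+1}}(X_{k+1})-\psi_{t_{k+1}}^\star\leq \hat{E}_{k-1}/2+\hat{E}_k\leq 2\hat{E}_k$. The base case ($k=0$) is handled separately: when $LD\leq\sigma^2$ the definition gives $T_0=0$ and the bound $D\leq 2\hat{E}_0$ follows since $\hat{E}_0\gtrsim \sigma^2/(2L)\log(e/\delta)\geq D/2$; when $LD>\sigma^2$, the choice of $T_0$ yields $(1-\hat\mu\eta_0/2)^{T_0}\leq \sigma^2/(LD)$, so the contribution of the initial gap collapses to $c\sigma^2\log(e/\delta)/L\lesssim \hat{E}_0$. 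A union bound over the $K$ epochs then gives the overall failure probability $K\delta$.

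For the time budget, summing the choices of $T_k$ produces
\begin{equation*}
	T \lesssim \frac{L}{\hat\mu}\log\!\left(\frac{LD}{\sigma^2}\right)^+ + \frac{\log(4c\log(e/\delta))}{\hat\mu}\sum_{k=1}^{K-1}\frac{1}{\eta_k},
\end{equation*}
and the geometric step decay yields $\sum_{k=1}^{K-1}1/\eta_k\leq 2L\cdot 2^K\asymp (\sigma^2\hat\mu/\bar\Delta^2)^{1/3}$ by the choice of $K$, which after dividing by $\hat\mu$ produces exactly the target $\sigma^2/(\hat\mu\widehat{\mathcal{G}})$. Since $\log(4c\log(e/\delta))\lesssim 1\vee\log\log(e/\delta)$ for $\delta\in(0,1)$, the claimed time bound follows; the second form of the bound is obtained as in Theorem~\ref{thm:time_to_track} by comparing $\log(LD/\sigma^2)$ with $\log(D/\widehat{\mathcal{G}})$ via $L\geq\hat\mu$ and $\widehat{\mathcal{G}}\gtrsim\sigma^2/L$ in the low drift-to-noise regime. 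Finally, the termination bound follows from $\hat{E}_{K-1}\asymp \widehat{\mathcal{G}}\log(e/\delta)$: the choice of $K$ forces $\eta_{K-1}\asymp\hat\eta_\star$, so both terms of $\hat{E}_{K-1}$ are of order $\widehat{\mathcal{G}}\log(e/\delta)$.

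The main obstacle is bookkeeping with the constants carefully enough to absorb the extra factor $\log(e/\delta)$ that Theorem~\ref{thm:hpbound2dec} introduces compared to the in-expectation bound of Corollary~\ref{cor2}; this is precisely why $T_k$ must scale like $\log\log(e/\delta)/(\hat\mu\eta_k)$ rather than the $\log(12)/(\hat\mu\eta_k)$ used in Theorem~\ref{thm:time_to_track_gap_exp}, and it is what produces the extra $1\vee\log\log(e/\delta)$ factor in the second term of the time bound. Everything else reduces to direct arithmetic analogous to the distance-tracking proof of Theorem~\ref{thm:time_to_track2}.
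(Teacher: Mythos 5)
Your proposal follows the paper's own proof in all essentials: an epoch-wise induction applying Theorem~\ref{thm:hpbound2dec} within each epoch, a step-decay schedule with $\eta_{k-1}\leq 2\eta_k$, a union bound over the $K$ epochs paying $K\delta$ in total, and the identification $\hat{E}_{K-1}\asymp\widehat{\mathcal{G}}\log(e/\delta)$ via $\eta_{K-1}\asymp\hat{\eta}_\star$. The time-budget arithmetic is also the same.

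Two small slips in the bookkeeping, neither fatal. First, your claimed inductive target $\psi_{t_k}(X_k)-\psi_{t_k}^\star\leq 2\hat{E}_{k-1}$ is not attained in the base case when $T_0>0$: the contribution of the initial gap collapses to $c\sigma^2\log(e/\delta)/L = 2c\eta_0\sigma^2\log(e/\delta)$, which is at most $2\hat{E}_0$ (not $\leq\hat{E}_0$), so the first epoch gives $\leq 3\hat{E}_0$. You therefore need the inductive target $3\hat{E}_{k-1}$; the step still closes since $\tfrac14\cdot 3\hat{E}_{k-1}+\hat{E}_k\leq \tfrac34\cdot 2\hat{E}_k+\hat{E}_k = \tfrac52\hat{E}_k\leq 3\hat{E}_k$, exactly as in the paper. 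Second, the inequality you invoke to pass to the second form of the time bound should read $\widehat{\mathcal{G}}\leq\sigma^2/L$, not $\widehat{\mathcal{G}}\gtrsim\sigma^2/L$; the low drift-to-noise regime $\bar\Delta/\sigma<\sqrt{\hat\mu/16L^3}$ gives $\widehat{\mathcal{G}}<\sigma^2/(16^{1/3}L)$, whence $LD/\sigma^2\leq D/\widehat{\mathcal{G}}$ and $\log(LD/\sigma^2)^+\leq\log(D/\widehat{\mathcal{G}})^+$ (the factor $L\geq\hat\mu$ is not needed here).
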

 \section{Numerical Illustrations}
\label{sec:exp} 
We investigate the empirical behavior of our finite-time bounds on numerical examples with synthetic data. 
We consider examples of
a) least-squares recovery; 
b) sparse least-squares recovery; c) $\ell_2^2$-regularized logistic regression; 
and investigate the behavior of  $\|x_t - x_t^\star\|^2$ and $\varphi_t(\hat{x}_t) - \varphi_t^\star$ in each case. 
The main findings are that our bounds exhibit: 1) the correct dependence on $\eta$, $\sigma$, and $\Delta$; 2) excellent coverage in Monte-Carlo simulations. Code is available online at \url{https://github.com/joshuacutler/TimeDriftExperiments}.

\smallskip
\paragraph{Least-squares recovery.}

Fix $x_0, x_0^\star\in\mathbb{R}^d$ and consider a Gaussian random walk $\{x_t^\star\}$  given by $x_{t+1}^\star = x_{t}^\star + v_t$, where $v_t$ is drawn uniformly from the sphere of radius $\Delta$ in $ \mathbb{R}^d$. Given a fixed rank-$d$ matrix $A\in\mathbb{R}^{n\times d}$ with minimum singular value $\sqrt{\mu}$ and maximum singular value $\sqrt{L}$, we aim to recover $\{x_t^\star\}$ via the online least-squares problem
\begin{equation*}
	\min_{x\in\mathbb{R}^d}\,\underset{y\sim\mathcal{P}_t}{\mathbb{E}}\tfrac{1}{2}\|Ax - y\|^2,
\end{equation*} 
where $\mathcal{P}_t = \mathsf{N}(Ax_t^\star,\Sigma_t)$ with covariance matrix $\Sigma_t$ satisfying $\tr{\Sigma_t} \leq \sigma^2/L$.  This amounts to the problem \eqref{eqn:prob_online} with $f_t(x) = \mathbb{E}_{y\sim\mathcal{P}_t}\tfrac{1}{2}\|Ax - y\|^2$ and $r_t = 0$, and the minimizer and gradient drift satisfy 
\begin{equation*}
	\|\nabla f_t(x) - \nabla f_{t+1}(x)\| = \|A^{\top}A(x^{\star}_t - x^{\star}_{t+1})\| \leq L\|x^{\star}_t - x^{\star}_{t+1}\| = L\Delta 
\end{equation*}
for all $x\in\mathbb{R}^d$. We implement Algorithms~\ref{alg:sgd} and \ref{alg:sgdavg} using the sample gradient $g_t = A^T(Ax_t - y_t)$ at step $t$ with $y_t\sim\mathcal{P}_t$; the gradient noise $z_t = A^{\top}(y_t - Ax^\star_t)\sim\mathsf{N}(0,A^{\top}\Sigma_{t}A)$ satisfies $\mathbb{E}\|z_t\|^2 \leq L\tr{\Sigma_t} \leq \sigma^2$.

In our simulations, we set $d=50$, $n=100$, and  $\Sigma_t = (\sigma^2/nL)I_n$ for all $t$, where $I_n$ denotes the $n\times n$ identity matrix. We initialize $x_0$ and $x_0^{\star}$ using standard Gaussian entries and generate $A$ via singular value decomposition with Haar-distributed orthogonal matrices. In Figures~\ref{fig:fig1} and \ref{fig:fig2}, we use default parameter values $\mu=L=1$, $\sigma = 10$,  $\Delta = 1$, and the corresponding asymptotically optimal step size $\eta = \eta_{\star}$. Since $f_t$ is $\mu$-strongly convex and $L$-smooth, this puts us in the low drift/noise regime in Figure~\ref{fig:fig1}: $\Delta/\sigma < \sqrt{\mu/16L^3}=1/4$. To estimate the expected values and confidence intervals of $\|x_t - x_t^\star\|^2$ and $\varphi_t(\hat{x}_t) - \varphi_t^\star$, we run $100$ trials with horizon $T=100$. 

\begin{figure}[h!]
	\centering
	\includegraphics[width=0.98\textwidth]{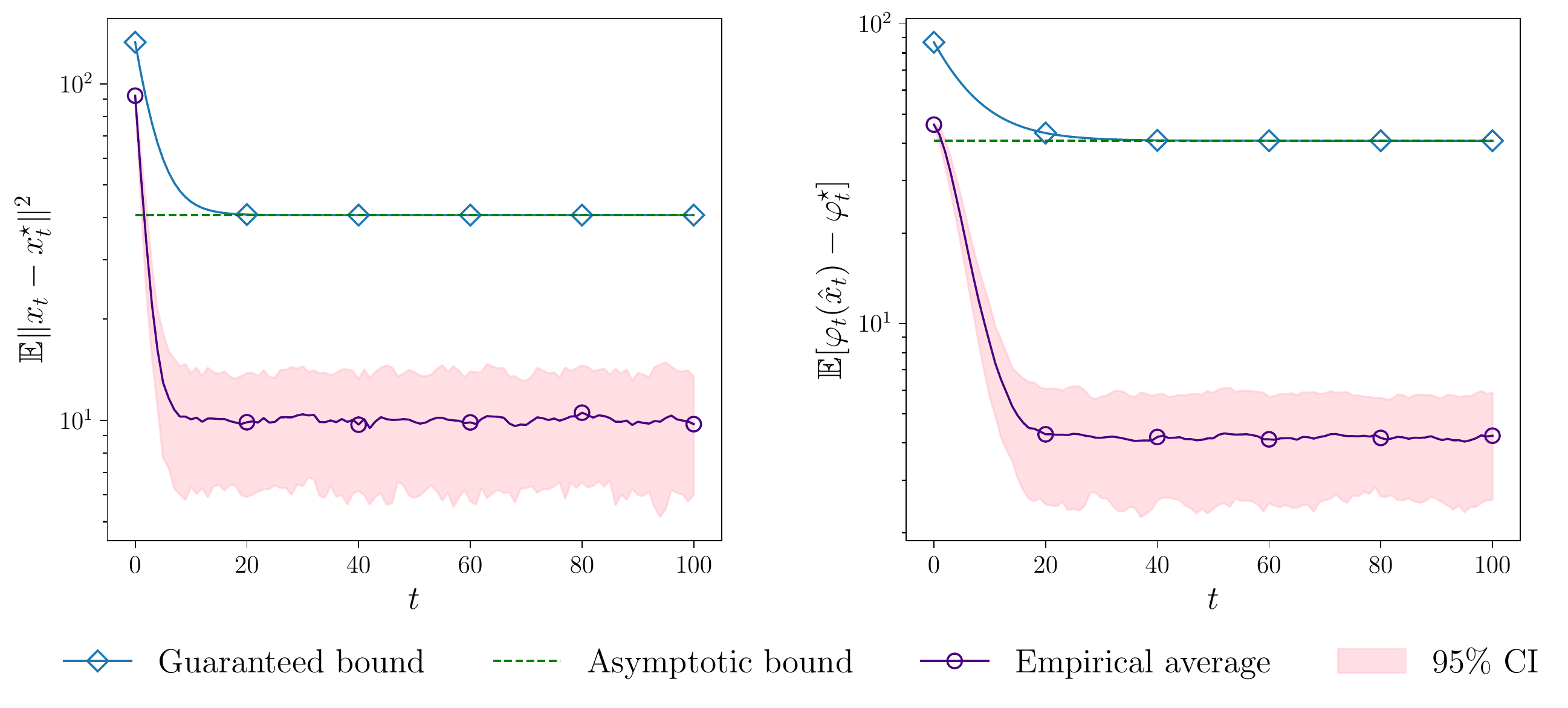}
	\caption{\small Semilog plots of guaranteed bounds and empirical tracking errors with respect to iteration $t$ for least-squares recovery. Shaded regions indicate the $95\%$ confidence intervals for $\|x_t - x_t^\star\|^2$ and $\varphi_t(\hat{x}_t) - \varphi_t^\star$; empirical averages and confidence intervals are computed over $100$ trials. Default parameter values: $\mu=L=1$, $\sigma = 10$,  $\Delta = 1$, and $\eta = \eta_{\star}$.}
	\label{fig:fig1}
\end{figure}

\begin{figure}[h!]
	\centering
	\includegraphics[width=0.98\textwidth]{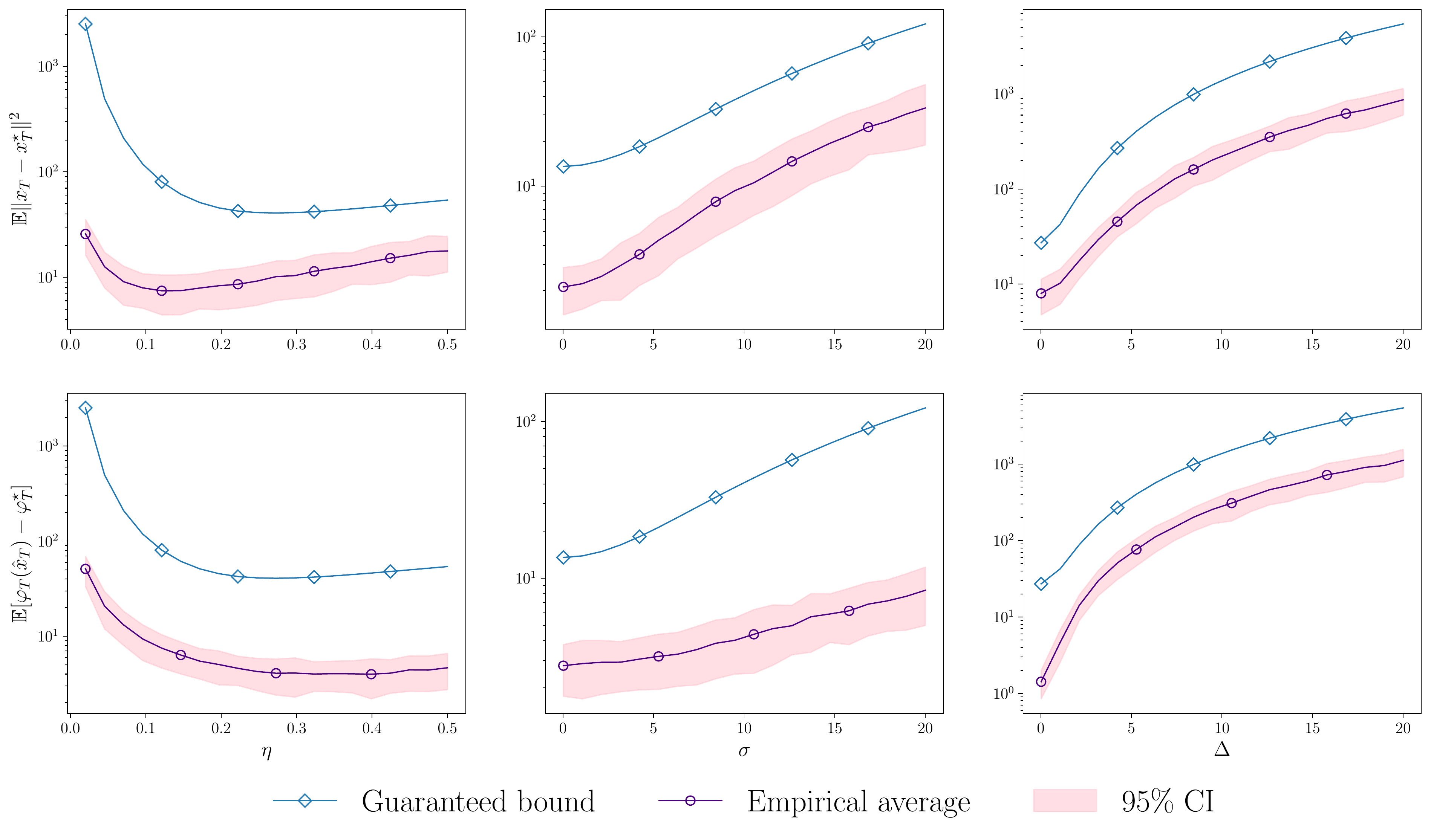}
	\caption{\small Semilog plots of guaranteed bounds and empirical tracking errors at horizon $T= 100$ with respect to $\eta$, $\sigma$, and $\Delta$ for least-squares recovery. Shaded regions indicate the $95\%$ confidence intervals for  $\|x_T - x_T^\star\|^2$ and $\varphi_T(\hat{x}_T) - \varphi_T^\star $; empirical averages and confidence intervals are computed over $100$ trials. Default parameter values: $\mu=L=1$, $\sigma = 10$,  $\Delta = 1$, and $\eta = \eta_{\star}$.}
	\label{fig:fig2}
\end{figure}

\clearpage
\paragraph{Sparse least-squares recovery.}
Next, we consider least-squares recovery constrained to the closed $\ell_1$-ball in $\mathbb{R}^d$, which we denote by $B_1$. We aim to recover a sequence of sparse vectors in $B_1$ generated as follows. Set $s = \lfloor\log{d} \rfloor$, draw a vector $u$ uniformly from the  $\ell_1$-ball in $\mathbb{R}^s$, fix $x_0^\star = (u,0)\in\mathbb{R}^d$, and select $\Delta\in(0,\sqrt{2}]$. At step $t$, with probability $p = (4-2\Delta^2)/(4-\Delta^2)$, we set $x_{t+1}^\star=x_t^\star + v_t $, where $v_t$ is selected to have the same support as $x_t^\star$ and satisfy $\|v_t\|=\Delta/\sqrt{2}$ and $x_t^\star + v_t \in B_1$; otherwise, with probability $1-p$, we obtain $x_{t+1}^\star$ from $x_t^\star$ by swapping precisely one nonzero coordinate with a zero coordinate. The resulting sequence $\{x_t^\star\}$ in $B_1$ satisfies $\mathbb{E}\| x_t^\star - x_{t+1}^\star\|^2 \leq \Delta^2$. Given a fixed rank-$d$ matrix $A\in\mathbb{R}^{n\times d}$ with minimum singular value $\sqrt{\mu}$ and maximum singular value $\sqrt{L}$, we aim to recover $\{x_t^\star\}$ via the online constrained least-squares problem
\begin{equation*}
	\min_{x\in B_1}\,\underset{y\sim\mathcal{P}_t}{\mathbb{E}}\tfrac{1}{2}\|Ax - y\|^2,
\end{equation*} 
where $\mathcal{P}_t = \mathsf{N}(Ax_t^\star,\Sigma_t)$ with covariance matrix $\Sigma_t$ satisfying $\tr{\Sigma_t} \leq \sigma^2/L$. This amounts to the problem \eqref{eqn:prob_online} with $f_t(x) = \mathbb{E}_{y\sim\mathcal{P}_t}\tfrac{1}{2}\|Ax - y\|^2$ and $r_t = \delta_{B_1}$ (the convex indicator of $B_1$), and the minimizer and gradient drift satisfy
\begin{equation*}
	\mathbb{E}\big[\textstyle{\sup_{x}}\|\nabla f_t(x) - \nabla f_{t+1}(x)\|^2\big] \leq L^2\,\mathbb{E}\|x^{\star}_t - x^{\star}_{t+1}\|^2 \leq (L\Delta)^2. 
\end{equation*}
Fixing $x_0$ drawn uniformly from $B_1$, we implement Algorithms~\ref{alg:sgd} and \ref{alg:sgdavg} initialized at $x_0$ using the sample gradient $g_t = A^T(Ax_t - y_t)$ at step $t$ with $y_t\sim\mathcal{P}_t$; hence $\mathbb{E}\|\nabla f_t(x_t) - g_t\|^2 \leq \sigma^2$. 

In our simulations, we set $d=50$, $n=100$, and  $\Sigma_t = (\sigma^2/nL)I_n$ for all $t$. We generate $A$ via singular value decomposition with Haar-distributed orthogonal matrices. In Figures~\ref{fig:fig3} and \ref{fig:fig4}, we use default parameter values $\mu=L=1$, $\sigma = 1/2$,  $\Delta = 1/20$, and the corresponding asymptotically optimal step size $\eta = \eta_{\star}$. Since $f_t$ is $\mu$-strongly convex and $L$-smooth, this puts us in the low drift/noise regime in Figure~\ref{fig:fig3}: $\Delta/\sigma < \sqrt{\mu/16L^3}=1/4$. To estimate the expected values and confidence intervals of $\|x_t - x_t^\star\|^2$ and $\varphi_t(\hat{x}_t) - \varphi_t^\star$, we run $100$ trials with horizon $T=100$. 

\begin{figure}[h!]
	\centering
	\includegraphics[width=0.98\textwidth]{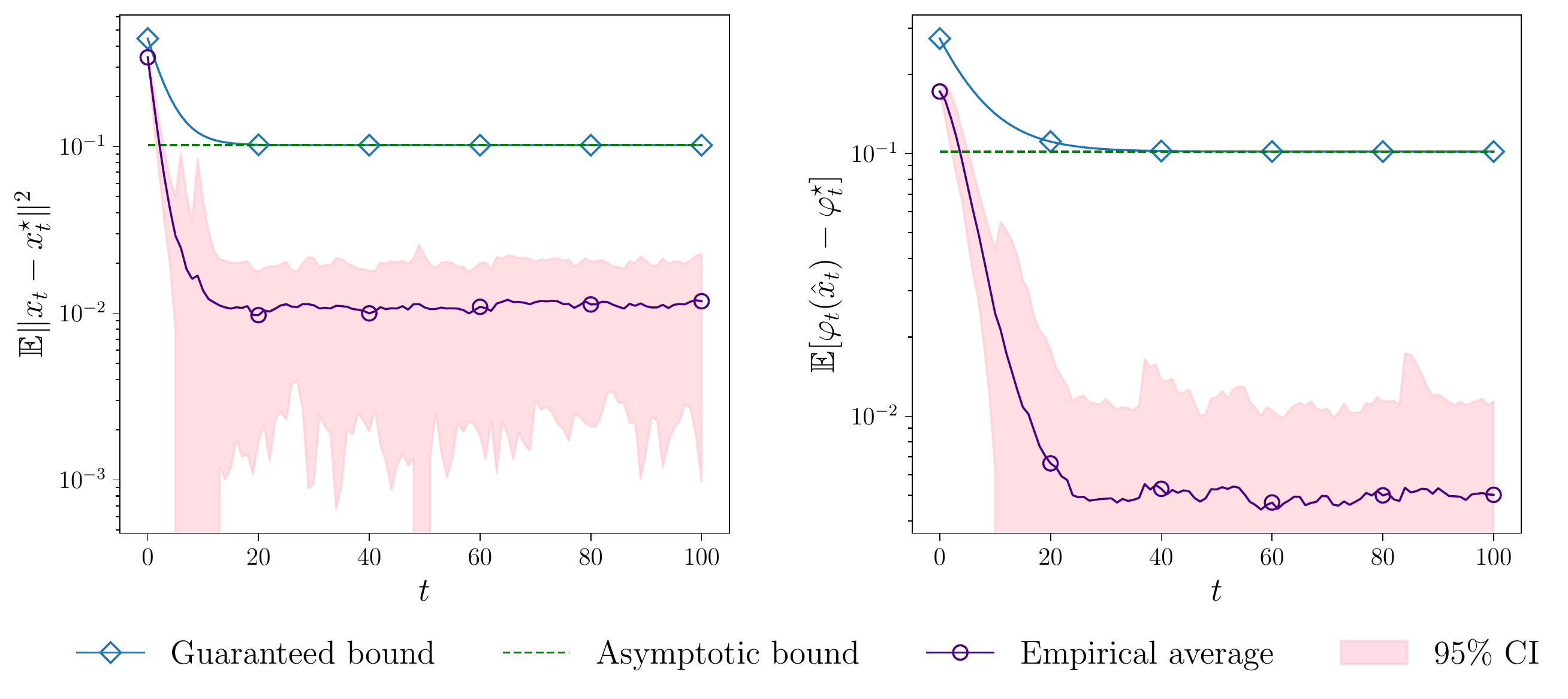}
	\caption{\small Semilog plots of guaranteed bounds and empirical tracking errors with respect to iteration $t$ for sparse least-squares recovery. Shaded regions indicate the $95\%$ confidence intervals for $\|x_t - x_t^\star\|^2$ and $\varphi_t(\hat{x}_t) - \varphi_t^\star$; empirical averages and confidence intervals are computed over $100$ trials. Default parameter values: $\mu=L=1$, $\sigma = 1/2$,  $\Delta = 1/20$, and $\eta = \eta_{\star}$.}
	\label{fig:fig3}
	\vspace{-.2in}
\end{figure}

\begin{figure}[h!]
	\centering
	\includegraphics[width=0.98\textwidth]{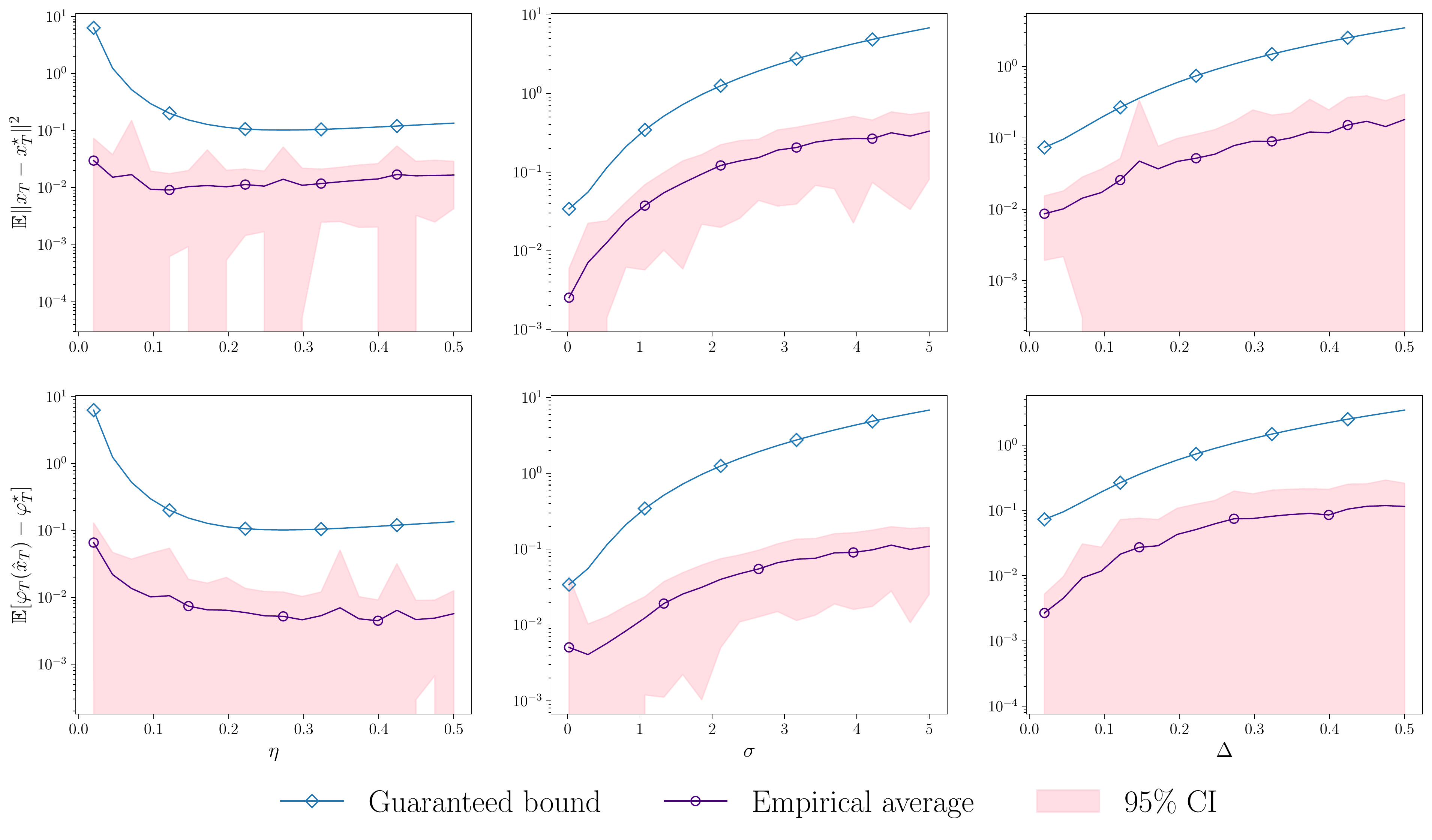}
	\caption{\small Semilog plots of guaranteed bounds and empirical tracking errors at horizon $T= 100$ with respect to $\eta$, $\sigma$, and $\Delta$ for sparse least-squares recovery. Shaded regions indicate the $95\%$ confidence intervals for  $\|x_T - x_T^\star\|^2$ and $\varphi_T(\hat{x}_T) - \varphi_T^\star$; empirical averages and confidence intervals are computed over $100$ trials. Default parameter values: $\mu=L=1$, $\sigma = 1/2$,  $\Delta = 1/20$, and $\eta = \eta_{\star}$.}
	\label{fig:fig4}
\end{figure}

\paragraph{$\ell_2^2$-regularized logistic regression.}
Finally, we consider the time-varying $\ell_2^2$-regularized logistic regression problem
\begin{equation*}
	\min_{x\in\mathbb{R}^d}\,\frac{1}{n}\!\left(\sum_{i=1}^{n}\log\!\big(1 + \exp\langle a_i, x\rangle\big) - \langle Ax,b_t \rangle\!\right)+ \frac{\mu}{2}\|x\|^2,
\end{equation*}
where the matrix $A\in\mathbb{R}^{n\times d}$ has fixed rows $a_1,\ldots,a_n\in\R^d$, $\{b_t\}$ is a random sequence of label vectors in $\{0,1\}^n$ such that $b_t$ and $b_{t+1}$ differ in precisely one coordinate for each $t$, and $\mu>0$. This amounts to the problem \eqref{eqn:prob_online} with $f_t(x) = \frac{1}{n}\!\left(\sum_{i=1}^{n}\log(1 + \exp\langle a_i, x\rangle) - \langle Ax,b_t \rangle \right)+ \frac{\mu}{2}\|x\|^2$ and $r_t = 0$; setting $L = \frac{1}{4n}\|A\|_{\text{op}}^2 + \mu$, it follows that $f_t$ is $\mu$-strongly convex and $L$-smooth. Letting $\{x_t^\star\}$ denote the corresponding sequence of minimizers and setting $\Delta = \frac{1}{\mu n}\max_{i=1,\ldots,n}\|a_i\|$, it follows that the minimizer and gradient drift satisfy
 \begin{equation*}
\mu\|x_t^\star-x_{t+1}^\star\| \leq \sup_{x}\|\nabla f_t(x)-\nabla f_{t+1}(x)\|\leq\mu\Delta.  	
 \end{equation*}
We implement Algorithms~\ref{alg:sgd} and \ref{alg:sgdavg} using the random summand sample gradient 
\begin{equation*}
		g_t = \bigg(\frac{\exp\langle a_k, x_t \rangle}{1+\exp\langle a_k, x_t \rangle} - b_t^k\bigg)a_k + \mu x_t 
\end{equation*}
at step $t$, where $k\sim\mathsf{Unif}\{1,\ldots,n\}$ and $b_t^k$ denotes the $k^\text{th}$ coordinate of $b_t$; the gradient noise satisfies $\mathbb{E}\|\nabla f_t(x_t) - g_t\|^2 \leq\sigma^2$, where
\begin{equation*}
	\sigma^2 = \frac{1}{n^2}\!\left(\!(n-2)\sum_{i=1}^{n}\|a_i\|^2 + \sum_{i,j=1}^{n}\|a_i\|\|a_j\|\!\right) \leq 2\!\left(\max_{i=1,\ldots,n}\|a_i\|^2\right)\!.
\end{equation*}
\bigskip

In our simulations, we set $d=20$ and $n=200$, fix standard Gaussian vectors $x_0\in\R^d$ and $a_1,\ldots,a_n\in\R^d$, fix $b_0$ drawn uniformly from $\{0,1\}^n$, and generate $b_{t+1}$ from $b_t$ by flipping a single coordinate selected uniformly at random. In Figure~\ref{fig:fig5}, we use default parameter values $\mu=1$ and the corresponding asymptotically optimal step size $\eta=\eta_\star$. In Figure~\ref{fig:fig6}, we illustrate the dependence of tracking error on the regularization parameter $\mu$; here, the asymptotically optimal step size $ \eta_\star$ is used (which itself depends on $\mu$). In Figure~\ref{fig:fig7}, we use the default parameter value $\mu =1$. To estimate the expected values and confidence intervals of $\|x_t - x_t^\star\|^2$ and $\varphi_t(\hat{x}_t) - \varphi_t^\star$, we run $100$ trials with horizon $T=600$. The results confirm our bounds and show that they capture the correct dependence on  $\mu$ and $\eta$. In particular, Figure~\ref{fig:fig7} illustrates that $\eta_\star$ is close to empirically optimal.

\bigskip
\bigskip

\begin{figure}[h!]
	\centering
	\includegraphics[width=0.98\textwidth]{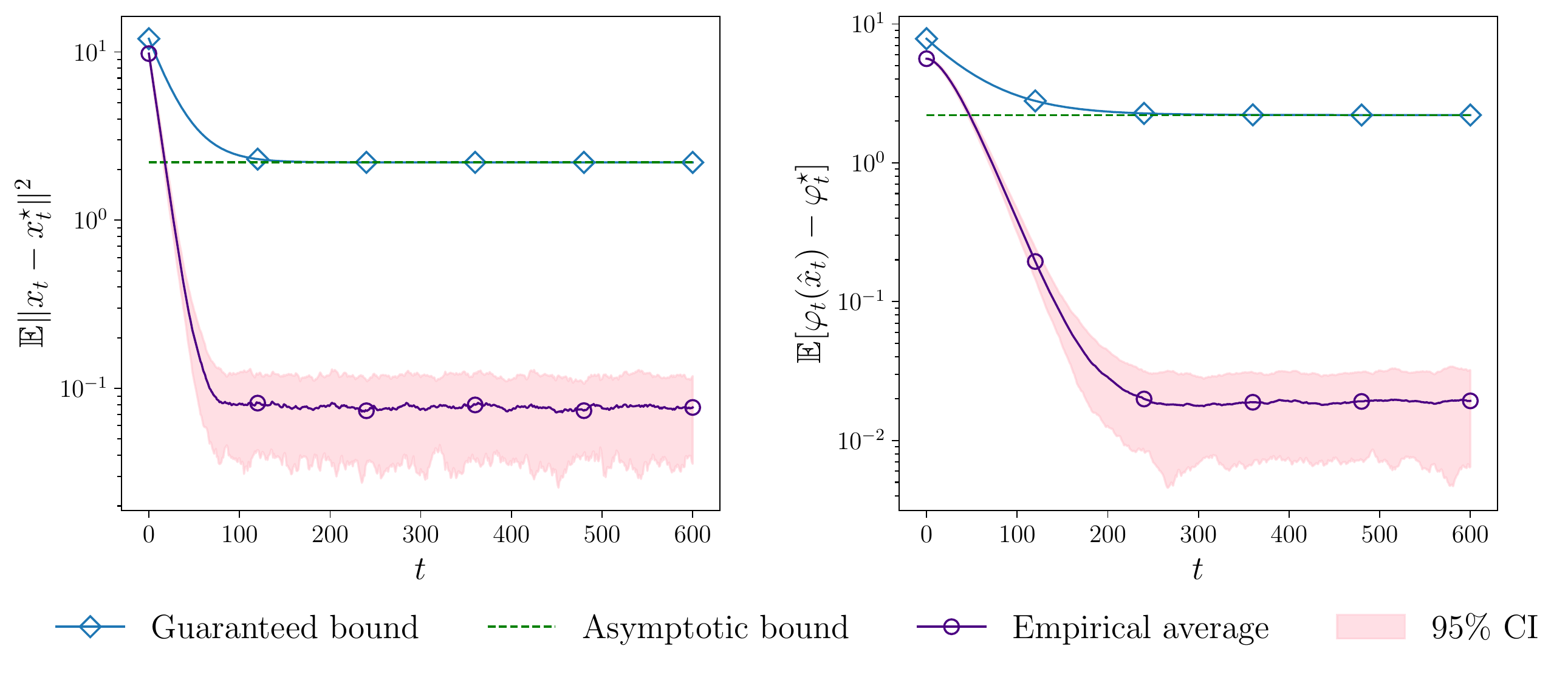}
	\caption{\small Semilog plots of guaranteed bounds and empirical tracking errors with respect to iteration $t$ for $\ell_2^2$-regularized logistic regression. Shaded regions indicate the $95\%$ confidence intervals for  $\|x_t - x_t^\star\|^2$ and $\varphi_t(\hat{x}_t) - \varphi_t^\star$; empirical averages and confidence intervals are computed over $100$ trials. Default parameter values: $\mu = 1$ and $\eta = \eta_{\star}$.}
	\label{fig:fig5}
\end{figure}

\begin{figure}[h!]
	\centering
	\includegraphics[width=0.98\textwidth]{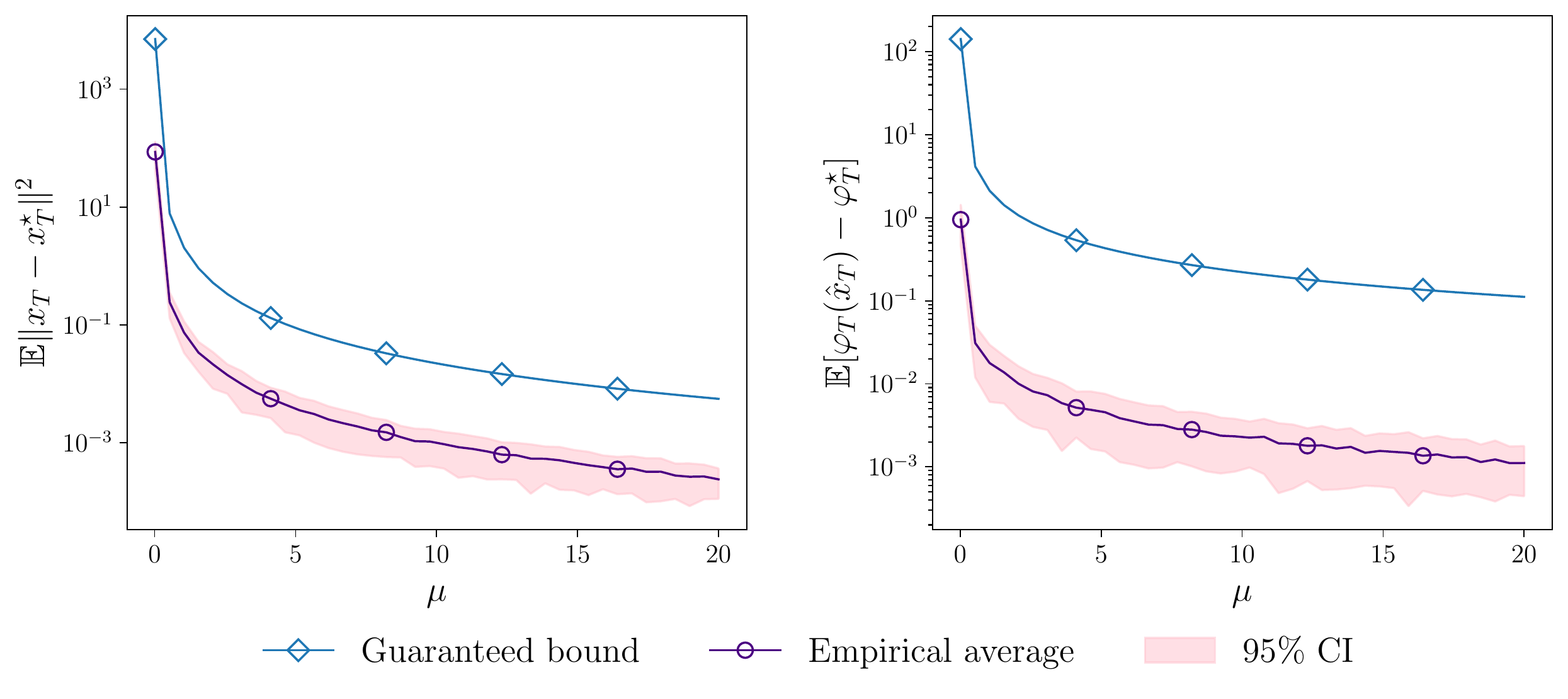}
	\caption{\small Semilog plots of guaranteed bounds and empirical tracking errors at horizon $T=600$ with respect to the strong convexity parameter $\mu$ for $\ell_2^2$-regularized logistic regression. Shaded regions indicate the $95\%$ confidence intervals for  $\|x_T - x_T^\star\|^2$ and $\varphi_T(\hat{x}_T) - \varphi_T^\star$; empirical averages and confidence intervals are computed over $100$ trials, using the asymptotically optimal step size $\eta_\star$ (which itself depends on $\mu$).}
	\label{fig:fig6}
	\vspace{-.2in}
\end{figure}

\begin{figure}[h!]
	\centering
	\includegraphics[width=0.98\textwidth]{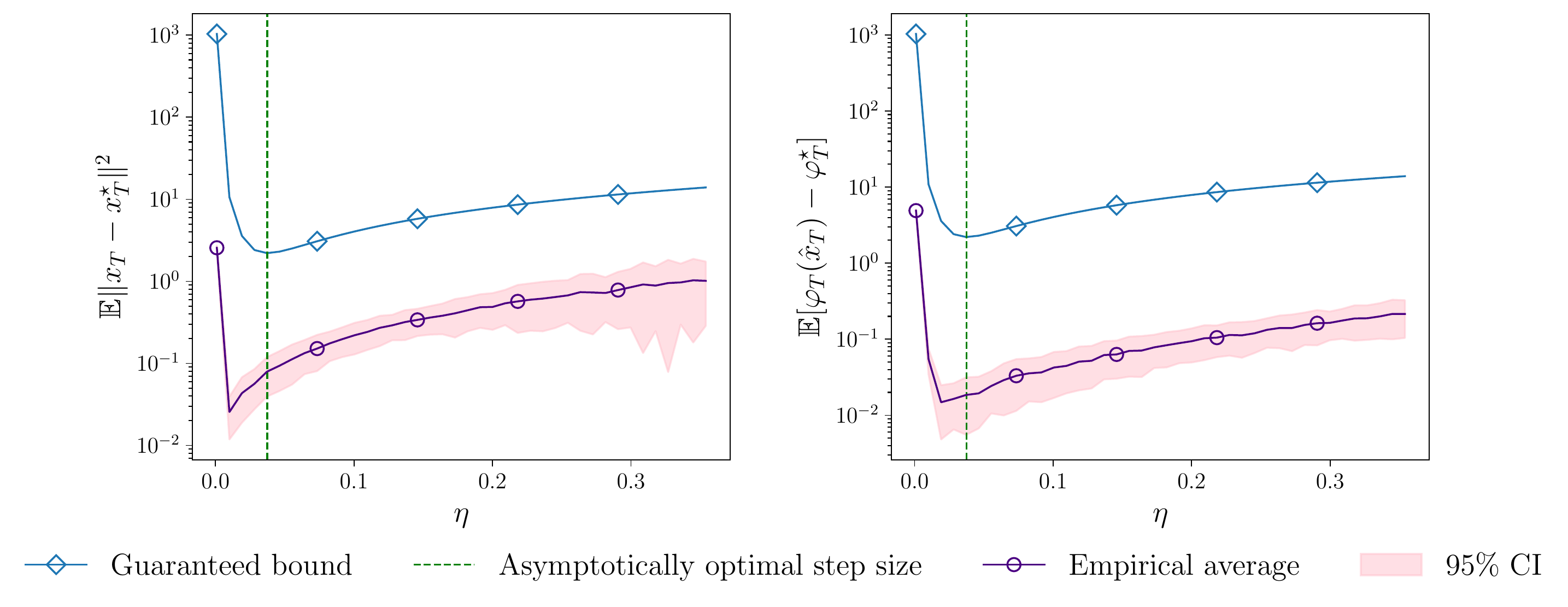}
	\caption{\small Semilog plots of guaranteed bounds and empirical tracking errors at horizon $T=600$ with respect to the step size $\eta$ for $\ell_2^2$-regularized logistic regression. Shaded regions indicate the $95\%$ confidence intervals for $\|x_T - x_T^\star\|^2$ and $\varphi_T(\hat{x}_T) - \varphi_T^\star$; empirical averages and confidence intervals are computed over $100$ trials. Default parameter value: $\mu=1$. Observe that $\eta_\star$ is close to empirically optimal.}
	\label{fig:fig7}
\end{figure}

\clearpage

 \acks{This work was supported by NSF DMS-2023166, DMS-2134012, DMS-1651851, DMS-2133244, CCF-2019844, and CIFAR LMB. Part of this work was done while Z. Harchaoui was visiting the Simons Institute for the Theory of Computing.}
\appendix

\section{Averaging Lemma}\label{apdx:avg}

We will use a variation of the averaging strategy used by \citet{ghadimi2012optimal}; our approach here follows \citet[Section A]{drusvyatskiy2020stochastic} and  \citet[Sections A.2 and A.3]{kulunchakov2020estimate}. To begin, consider a convex function $h\colon\mathbb{R}^d\to\mathbb{R}\cup\{\infty\}$ and let $\{x_t\}_{t\geq 0}$ be a sequence of vectors in $\R^d$. Suppose that there are constants $c_1,c_2\in \mathbb{R}$, a sequence of nonnegative weights $\{\rho_t\}_{t\geq 1}$, and scalar sequences $\{V_t\}_{t\geq 0}$ and $\{\omega_t\}_{t\geq 1}$ satisfying the recursion
\begin{equation}\label{avgrec}
	\rho_th(x_t)\leq (1-c_1\rho_t)V_{t-1}-(1+c_2\rho_t)V_t+\omega_t
\end{equation}
for all $t\geq1$. The goal is to bound the function value $h(\hat x_t)$ evaluated along an ``average iterate'' $\hat x_t$. 

Suppose that the relations $c_1+c_2>0$, $1- c_1\rho_t>0$, and $1+c_2\rho_t> 0$  hold for all $t\geq 1$. Define the augmented weights and products
\begin{equation*}
	\hat\rho_t=\frac{(c_1+c_2)\rho_t}{1+c_2\rho_t} \quad\text{~~and~~} \quad \hat\Gamma_t=\prod_{i=1}^t(1-\hat \rho_i)
\end{equation*}
for each $t\geq1$, and set $\hat{\Gamma}_0 = 1$. A straightforward induction yields the relation
\begin{equation}\label{eq:convexco}
	1+\sum_{i=1}^{t}\frac{\hat{\rho}_i}{\hat{\Gamma}_i} = \frac{1}{\hat{\Gamma}_t}.
\end{equation}
Now set  $\hat x_0 = x_0$ and recursively define the average iterates	\begin{equation*}
	\hat{x}_t=(1-\hat\rho_t)\hat x_{t-1}+\hat\rho_t x_t
\end{equation*}
for all $t\geq1$. Unrolling this recursion, we may equivalently write 
\begin{equation}\label{avgaltdef}
	\hat{x}_t=\hat{\Gamma}_t\!\left(x_0+\sum_{i=1}^{t}\frac{\hat{\rho}_i}{\hat{\Gamma}_i}x_i \right)\!.
\end{equation}
The following lemma provides the key estimate we will need.	

\begin{lemma}[Averaging]\label{lem:avg}
	The estimate holds for all $t\geq0$:
	\begin{equation*}
		\frac{h(\hat x_t)}{c_1+c_2} + V_t \leq  \hat \Gamma_t\!\left( \frac{h(x_0)}{c_1+c_2}+ V_0+\sum_{i=1}^t\frac{ \omega_i}{ \hat \Gamma_i(1+c_2\rho_i)}\right)\!.
	\end{equation*}
\end{lemma}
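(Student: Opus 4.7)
The plan is to massage the one-step recursion (\ref{avgrec}) into a telescoping form whose sum, combined with Jensen's inequality for the convex function $h$, delivers the claim. The whole argument is essentially algebraic bookkeeping once one recognizes the two key identities
\[
\frac{\rho_t}{1 + c_2\rho_t} = \frac{\hat\rho_t}{c_1 + c_2}
\quad\text{and}\quad
\frac{1 - c_1\rho_t}{1 + c_2\rho_t} = 1 - \hat\rho_t,
\]
both of which follow directly from the definition of $\hat\rho_t$.

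First, I would divide the recursion (\ref{avgrec}) through by $1 + c_2\rho_t$ (which is positive by hypothesis) and apply the two identities above to rewrite the one-step estimate in the clean form
\[
\frac{\hat\rho_t}{c_1+c_2}\,h(x_t) + V_t \;\leq\; (1 - \hat\rho_t)\,V_{t-1} + \frac{\omega_t}{1 + c_2\rho_t}.
\]
Dividing this inequality by $\hat\Gamma_t$ and using $\hat\Gamma_{t} = (1 - \hat\rho_t)\hat\Gamma_{t-1}$ yields a telescoping recursion
\[
\frac{V_t}{\hat\Gamma_t} \;\leq\; \frac{V_{t-1}}{\hat\Gamma_{t-1}} - \frac{1}{c_1+c_2}\cdot\frac{\hat\rho_t}{\hat\Gamma_t}\,h(x_t) + \frac{\omega_t}{\hat\Gamma_t(1 + c_2\rho_t)}.
\]

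Next, I would sum this inequality from $i=1$ to $t$ (with $\hat\Gamma_0 = 1$) to obtain
\[
\frac{V_t}{\hat\Gamma_t} + \frac{1}{c_1+c_2}\sum_{i=1}^t \frac{\hat\rho_i}{\hat\Gamma_i}\,h(x_i) \;\leq\; V_0 + \sum_{i=1}^t \frac{\omega_i}{\hat\Gamma_i(1 + c_2\rho_i)}.
\]
To introduce $h(\hat x_t)$ on the left, I would invoke the representation (\ref{avgaltdef}) together with the convex-combination identity (\ref{eq:convexco}): the coefficients $\hat\Gamma_t$ and $\hat\Gamma_t \cdot \hat\rho_i/\hat\Gamma_i$ (for $i = 1, \ldots, t$) are nonnegative and sum to $1$, so Jensen's inequality applied to the convex function $h$ gives
\[
h(\hat x_t) \;\leq\; \hat\Gamma_t\,h(x_0) + \hat\Gamma_t\sum_{i=1}^t \frac{\hat\rho_i}{\hat\Gamma_i}\,h(x_i).
\]
Rearranging, $\sum_{i=1}^t \hat\rho_i\,h(x_i)/\hat\Gamma_i \geq h(\hat x_t)/\hat\Gamma_t - h(x_0)$, so the lower bound on the sum can be substituted into the telescoped inequality.

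Finally, collecting terms and multiplying through by $\hat\Gamma_t$ produces exactly the claimed bound. The only potential obstacle is a sign/domain subtlety: the argument needs $1 - \hat\rho_t \geq 0$ so that $\hat\Gamma_t > 0$ and the division is valid, but this is exactly the hypothesis $1 - c_1\rho_t \geq 0$ combined with $1 + c_2\rho_t > 0$. There is no other genuine difficulty, so the proof reduces to carefully executing the three moves: normalize, telescope, apply Jensen.
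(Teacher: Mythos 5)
Your proof is correct and follows essentially the same route as the paper: normalize the recursion (dividing by $1+c_2\rho_t$ and then $\hat\Gamma_t$, which the paper does in one step by dividing by $\hat\Gamma_i(1+c_2\rho_i)$), telescope, and apply Jensen's inequality via the convex-combination representation \eqref{avgaltdef}--\eqref{eq:convexco}. One small note: for $\hat\Gamma_t>0$ you need the strict inequality $1-\hat\rho_i>0$, which follows from the paper's hypotheses $1-c_1\rho_i>0$ and $1+c_2\rho_i>0$, not merely $1-\hat\rho_i\geq0$ as you wrote.
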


\begin{proof}
	Observe that \eqref{avgaltdef} expresses $\hat{x}_t$ as a convex combination of $x_0,\ldots,x_t$ by virtue of \eqref{eq:convexco}. Therefore, by the convexity of $h$, we may apply Jensen's inequality to obtain
	\begin{equation}\label{eq:jensen}
		h(\hat{x}_t)\leq\hat{\Gamma}_t\!\left(h(x_0)+\sum_{i=1}^{t}\frac{\hat{\rho}_i}{\hat{\Gamma}_i}h(x_i)\right)\!.
	\end{equation} 
	On the other hand, for each $i\geq1$, we may divide the recursion \eqref{avgrec} by $\hat{\Gamma}_i(1+c_2\rho_i)$ to obtain
	\begin{equation*}
		\frac{\hat{\rho}_i}{(c_1+c_2)\hat{\Gamma}_i}h(x_i)\leq\frac{V_{i-1}}{\hat{\Gamma}_{i-1}}-\frac{V_i}{\hat{\Gamma}_i}+\frac{\omega_i}{\hat{\Gamma}_i(1+c_2\rho_i)},
	\end{equation*} 
	which telescopes to yield
	\begin{equation*}
		\frac{1}{c_1+c_2}\sum_{i=1}^{t}\frac{\hat{\rho}_i}{\hat{\Gamma}_i}h(x_i)\leq V_0 - \frac{V_t}{\hat{\Gamma}_t}+\sum_{i=1}^{t}\frac{\omega_i}{\hat{\Gamma}_i(1+c_2\rho_i)}.
	\end{equation*} 
	Multiplying this inequality by $\hat{\Gamma}_t$ and applying \eqref{eq:jensen} yields
	\begin{equation*}
		\frac{h(\hat{x}_t)}{c_1+c_2}\leq\hat{\Gamma}_t\!\left(\frac{h(x_0)}{c_1+c_2}+V_0 - \frac{V_t}{\hat{\Gamma}_t}+\sum_{i=1}^{t}\frac{\omega_i}{\hat{\Gamma}_i(1+c_2\rho_i)}\right)\!,
	\end{equation*} 
	as claimed.
\end{proof}

\section{Additional Proofs}\label{missingproofs}
\subsection{Proof of Theorem~\ref{thm:time_to_track2}}\label{pf:time_to_track2}
For each index $k$, let $t_k:=T_0+\cdots+T_{k-1}$ (with $t_0 := 0$), $\bar{X}_k$ be the minimizer of the corresponding function $\psi_{t_k}$, and $$\bar{E}_k:=c\!\left(\frac{\eta_k\sigma^2}{\bar\mu}+\left(\frac{\bar\Delta}{\bar\mu\bar\eta_\star}\right)^2\right)\!,$$ where $c\geq1$ is an absolute constant satisfying the bound (\ref{ineq:HPtrackdec}) in Theorem~\ref{HPtrackdec}. Taking into account $\eta_k\geq\bar{\eta}_{\star}$ and our selection of $c$, Theorem~\ref{HPtrackdec} implies that for any specified index $k$ and $\delta\in(0,1)$, the following estimate holds with probability at least $1-\delta$:
\begin{align*}
	\|X_{k+1} - \bar{X}_{k+1}\|^2 &\leq \big(1-\tfrac{\bar\mu\eta_k}{2}\big)^{T_k}\|X_k - \bar{X}_k\|^2 + c\!\left(\frac{\eta_k\sigma^2}{\bar\mu}+\left(\frac{\bar\Delta}{\bar\mu\eta_k}\right)^2\right)\!\log\!\left(\frac{e}{\delta}\right)\\
	&\leq e^{-\bar\mu\eta_kT_k/2}\|X_{k} - \bar{X}_{k}\|^2 + \bar{E}_k\log\!\left(\frac{e}{\delta}\right)\!.
\end{align*}

We will verify by induction that for each index $k\geq1$, the estimate $\|X_k - \bar{X}_k\|^2\leq 3\bar{E}_{k-1}\log(e/\delta)$ holds with probability at least $1-\delta$ for all $\delta\in(0,1)$. To see the base case, observe that the estimate $$\|X_{1} - \bar{X}_{1}\|^2\leq e^{-\bar\mu\eta_0 T_0/2}\|X_0 - \bar{X}_0\|^2 + \bar{E}_0\log\!\left(\frac{e}{\delta}\right)\leq 3\bar{E}_0\log\!\left(\frac{e}{\delta}\right)$$ holds with probability at least $1-\delta$ for all $\delta\in(0,1)$. Now assume that the claim holds for some index $k\geq1$, and let $\delta\in(0,1)$; then $\|X_k - \bar{X}_k\|^2\leq 3\bar{E}_{k-1}\log(2e/\delta)$ with probability at least $1 - \delta/2$. Thus, since we also have 
\begin{align*}
	\|X_{k+1} - \bar{X}_{k+1}\|^2&\leq e^{-\bar\mu\eta_k T_k/2}\|X_k - \bar{X}_k\|^2 + \bar{E}_k\log\!\left(\frac{2e}{\delta}\right)\\
	&\leq \frac{1}{12}\|X_k - \bar{X}_k\|^2 + \bar{E}_k\log\!\left(\frac{2e}{\delta}\right)\\
	&\leq \frac{\bar{E}_k}{6\bar{E}_{k-1}}\|X_k - \bar{X}_k\|^2 + \bar{E}_k\log\!\left(\frac{2e}{\delta}\right)
\end{align*}
with probability at least $1 - \delta/2$, a union bound reveals $$\|X_{k+1} - \bar{X}_{k+1}\|^2 \leq \frac{3}{2}\bar{E}_k\log\!\left(\frac{2e}{\delta}\right)\leq 3\bar{E}_k\log\!\left(\frac{e}{\delta}\right) $$ with probability at least $1-\delta$, thereby completing the induction. Hence, upon fixing $\delta\in(0,1)$, we have $\|X_K - \bar{X}_K\|^2 \leq 3\bar{E}_{K-1}\log(e/\delta)$ with probability at least $1-\delta$. 

Next, observe
$$\frac{2}{c}\bar{E}_{K-1}-\sqrt[3]{54}\left(\frac{\bar\Delta\sigma^2}{\bar{\mu}^2}\right)^{2/3} = \frac{2\sigma^2}{\bar\mu}(\eta_{K-1} - \bar{\eta}_\star) = \frac{2\sigma^2}{\bar\mu}\cdot \frac{\eta_0 - \bar{\eta}_\star}{2^{K-1}} \leq \left(\frac{\bar\Delta\sigma^2}{\bar{\mu}^2}\right)^{2/3} = \bar{\mathcal{E}},$$ so $$\|X_K - \bar{X}_K\|^2 \leq \frac{3c}{2}\big(1+\sqrt[3]{54}\big)\bar{\mathcal{E}}\log\!\left(\frac{e}{\delta}\right) \asymp \bar{\mathcal{E}}\log\!\left(\frac{e}{\delta}\right)$$ with probability at least $1-\delta$. Finally, note $$T \lesssim \frac{L}{\bar\mu}\log\!\left(\frac{\bar\mu LD}{\sigma^2}\right)^+ + \frac{1}{\bar\mu}\sum_{k=1}^{K-1}\frac{1}{\eta_k}$$ and $$\sum_{k=1}^{K-1}\frac{1}{\eta_k}\leq 2L\sum_{k=1}^{K-1} 2^k \leq 2L\cdot 2^{K} = 8L\cdot 2^{K-2} \leq 8 \left(\frac{\sigma^2\bar\mu}{\bar{\Delta}^2}\right)^{1/3} = \frac{8\sigma^2}{\bar\mu} \cdot \left(\frac{\bar\Delta\sigma^2}{\bar{\mu}^2}\right)^{-2/3} \asymp \frac{\sigma^2}{\bar{\mu}\bar{\mathcal{E}}}.$$ 
This completes the proof. 

\subsection{Proof of Theorem~\ref{thm:time_to_track_gap_exp}}\label{pf:gap_schedul}
For each index $k$, let $t_k:=T_0+\cdots+T_{k-1}$ (with $t_0 := 0$) and $\widehat{G}_k:=\eta_k\sigma^2 + 8\bar{\Delta}^2/\hat{\mu}\hat{\eta}^2_{\star}$.
Then taking into account $\eta_k\geq\hat{\eta}_{\star}$, Corollary~\ref{cor2} and inequality (\ref{initialgapbound0}) directly imply
\begin{align*}
	\mathbb{E}\big[\psi_{t_{k+1}}(X_{k+1}) - \psi_{t_{k+1}}^\star\big] &\leq \big(1-\tfrac{\hat\mu\eta_k}{2}\big)^{T_k}\mathbb{E}\big[3\big(\psi_{t_k}(X_k) - \psi_{t_k}^\star\big) + 5\hat\mu\bar{\Delta}^2T_k^2\big] + \eta_k\sigma^2 + \frac{8\bar{\Delta}^2}{\hat\mu\eta^2_k} \\
	&\leq 3e^{-\hat\mu\eta_kT_k/2}\mathbb{E}\big[\psi_{t_k}(X_k) - \psi_{t_k}^\star\big] + 5e^{-\hat\mu\eta_kT_k/2}\hat\mu\bar{\Delta}^2T_k^2 + \widehat{G}_k.
\end{align*}

We will verify by induction that the estimate $\mathbb{E}\big[\psi_{t_{k}}(X_{k}) - \psi_{t_{k}}^\star\big] \leq 11\widehat{G}_{k-1}$ holds for all indices $k\geq1$. To see the base case, observe that inequality (\ref{ineq:expquad}) facilitates the estimation
$$\mathbb{E}\big[\psi_{t_1}(X_1) - \psi_{t_1}^\star\big] \leq 3e^{-\hat\mu\eta_0T_0/2}\big(\psi_0(x_0) - \psi_{0}^\star\big) + 5e^{-\hat\mu\eta_0T_0/2}\hat\mu\bar{\Delta}^2T_0^2 + \widehat{G}_0 \leq 11\widehat{G}_0.$$
Now assume that the claim holds for some index $k \geq 1$. We then conclude
\begin{align*}
	\mathbb{E}\big[\psi_{t_{k+1}}(X_{k+1}) - \psi_{t_{k+1}}^\star\big]&\leq 3e^{-\hat\mu\eta_kT_k/2}\mathbb{E}\big[\psi_{t_k}(X_k) - \psi_{t_k}^\star\big] + 5e^{-\hat\mu\eta_kT_k/2}\hat\mu\bar{\Delta}^2T_k^2 + \widehat{G}_k\\
	&\leq \frac{1}{4}\mathbb{E}\big[\psi_{t_k}(X_k) - \psi_{t_k}^\star\big]+\frac{13\bar{\Delta}^2}{\hat\mu\eta_k^2} + \widehat{G}_k\\
	&\leq \frac{\widehat{G}_k}{2 \widehat{G}_{k-1}}\mathbb{E}\big[\psi_{t_k}(X_k) - \psi_{t_k}^\star\big]+\frac{13\bar{\Delta}^2}{\hat\mu\eta_k^2} + \widehat{G}_k \leq 11\widehat{G}_k,
\end{align*}
thereby completing the induction. Hence $\mathbb{E}\big[\psi_T(X_K) - \psi_T^\star\big] \leq 11\widehat{G}_{K-1}$.

Next, observe
$$\widehat{G}_{K-1}-\sqrt[3]{250}\cdot\hat\mu\bigg(\frac{\bar\Delta\sigma^2}{\hat{\mu}^2}\bigg)^{2/3}=\sigma^2(\eta_{K-1}-\hat{\eta}_\star) = \sigma^2\cdot \frac{\eta_0 - \hat{\eta}_\star}{2^{K-1}} \leq \frac{\hat\mu}{2}\bigg(\frac{\bar\Delta\sigma^2}{\hat{\mu}^2}\bigg)^{2/3} = \frac{1}{2}\widehat{\mathcal{G}},$$ 
so 
$$\mathbb{E}\big[\psi_T(X_K) - \psi_T^\star\big] \leq  11\big(\tfrac{1}{2}+\sqrt[3]{250}\big)\cdot\hat\mu\bigg(\frac{\bar\Delta\sigma^2}{\hat{\mu}^2}\bigg)^{2/3}\asymp\widehat{\mathcal{G}}.$$ Finally, note 
$$T \lesssim \frac{L}{\hat\mu}\log\left(\frac{LD}{\sigma^2}\right)^+ + \frac{1}{\hat\mu}\sum_{k=1}^{K-1}\frac{1}{\eta_k}$$ and $$\sum_{k=1}^{K-1}\frac{1}{\eta_k}\leq 2L\sum_{k=1}^{K-1} 2^k \leq 2L\cdot 2^{K} = 8L\cdot 2^{K-2} \leq 8 \bigg(\frac{\sigma^2\hat\mu}{\bar{\Delta}^2}\bigg)^{1/3} = 8\sigma^2 \cdot \hat{\mu}^{-1}\bigg(\frac{\bar\Delta\sigma^2}{\hat{\mu}^2}\bigg)^{-2/3} \asymp \frac{\sigma^2}{\widehat{\mathcal{G}}}.$$  
This completes the proof.

\subsection{Proof of Proposition~\ref{prop:avgrec2}}\label{pf:avgrec2}
Fix $t\geq1$. Given $i\geq1$ and $\alpha>0$, the $\mu$-strong convexity of $\psi_t$ and Lemma \ref{lem:avgrecdec} imply
\begin{equation*}
	\begin{split}
		\mu\eta\|x_i - \bar{x}_t\|^2 \leq 2\eta\big(\psi_t(x_i) - \psi_t^\star\big) \leq (1&-\bar\mu\eta)\|x_{i-1} - \bar{x}_t\|^2 - \big(1-(\gamma+\alpha)\eta\big)\|x_i - \bar{x}_t\|^2 \\ &+ 2\eta\langle z_{i-1}, x_{i-1} - \bar{x}_t \rangle + 2\eta^2\|z_{i-1}\|^2 + \tfrac{\eta}{\alpha}\bar{G}_{i-1,t}^2,
	\end{split}
\end{equation*}
hence
\begin{equation*}
	\begin{split}
	\big(1+(\bar\mu-\alpha)\eta\big)\|x_i - \bar{x}_t\|^2\leq (1-\bar\mu\eta)\|x_{i-1} - \bar{x}_t\|^2 &+ 2\eta\langle z_{i-1}, x_{i-1} - \bar{x}_t \rangle \\ &+ 2\eta^2\|z_{i-1}\|^2 + \tfrac{\eta}{\alpha}\bar{G}_{i-1,t}^2.
	\end{split}
\end{equation*}
Taking $\alpha=\bar\mu$, we obtain
\begin{equation*}
	\begin{split}
		\|x_i - \bar{x}_t\|^2 \leq (1-\bar\mu\eta)\|x_{i-1} - \bar{x}_t\|^2 + 2\eta\langle z_{i-1}, x_{i-1} - \bar{x}_t \rangle + 2\eta^2\|z_{i-1}\|^2 + \tfrac{\eta}{\bar\mu}\bar{G}_{i-1,t}^2.
	\end{split}
\end{equation*}
Thus, given any $\lambda\in(0,\bar\mu\eta]$ and proceeding by induction, we conclude that the following estimate holds for all $i\geq1$:
\begin{equation*}
	\begin{split}
		\|x_i - \bar{x}_t\|^2\leq (1-\lambda)^i\|x_0 - \bar{x}_t\|^2 &+ 2\eta\sum_{j=0}^{i-1}\langle z_j, x_j - \bar{x}_t \rangle(1-\lambda)^{i-1-j} \\
		&+ 2\eta^2\sum_{j=0}^{i-1}\|z_j\|^2(1-\lambda)^{i-1-j} + \tfrac{\eta}{\bar\mu}\sum_{j=0}^{i-1}\bar{G}_{j,t}^2(1-\lambda)^{i-1-j}.
	\end{split} 
\end{equation*}
Therefore
\begin{equation*}
	\begin{split}
		\sum_{i=0}^{t-1}\|x_i - \bar{x}_t\|^2&(1-\lambda)^{2(t-1-i)} \\ &\leq\|x_0 - \bar{x}_t\|^2\sum_{i=0}^{t-1}(1-\lambda)^{2(t-1)-i}  +2\eta\sum_{i=1}^{t-1}\sum_{j=0}^{i-1}\langle z_j, x_j - \bar{x}_t\rangle(1-\lambda)^{2t-3-j-i} \\ 
		&~~~~+ 2\eta^2\sum_{i=1}^{t-1}\sum_{j=0}^{i-1}\|z_j\|^2(1-\lambda)^{2t-3-j-i} +\tfrac{\eta}{\bar\mu}\sum_{i=1}^{t-1}\sum_{j=0}^{i-1}\bar{G}_{j,t}^2(1-\lambda)^{2t-3-j-i}.
	\end{split}
\end{equation*}

Next, we compute
\begin{equation*}
	\sum_{i=0}^{t-1}(1-\lambda)^{2(t-1)-i}=(1-\lambda)^{t-1}\sum_{i=0}^{t-1}(1-\lambda)^{t-1-i} < \tfrac{1}{\lambda}(1-\lambda)^{t-1}
\end{equation*}
and observe that for any scalar sequence $\{a_j\}_{j=0}^{t-2}$, we have
\begin{equation*}
	\sum_{i=1}^{t-1}\sum_{j=0}^{i-1}a_j(1-\lambda)^{2t-3-j-i}=\sum_{j=0}^{t-2}\left(\sum_{i=j+1}^{t-1}(1-\lambda)^{t-2-i}\right)\!a_j(1-\lambda)^{t-1-j}.
\end{equation*}
Further, if $a_j\geq0$ for all $j=0,\ldots,t-2$, then we have
\begin{equation*}
	\begin{split}
	\sum_{i=1}^{t-1}\sum_{j=0}^{i-1}a_j(1-\lambda)^{2t-3-j-i} &=\sum_{j=0}^{t-2}\left(\sum_{i=j+1}^{t-1}(1-\lambda)^{t-1-i}\right)\!a_j(1-\lambda)^{t-2-j} \\ &\leq\tfrac{1}{\lambda}\sum_{j=0}^{t-2}a_j(1-\lambda)^{t-2-j}.
	\end{split}
\end{equation*}
Hence the following estimation holds:
\begin{equation*}
	\begin{split}
		\sum_{i=0}^{t-1}\|x_i - \bar{x}_t\|^2(1-\lambda)^{2(t-1-i)}\leq \sum_{j=0}^{t-2}&\left(2\eta\sum_{i=j+1}^{t-1}(1-\lambda)^{t-2-i}\right)\!\langle z_j,x_j - \bar{x}_t \rangle(1-\lambda)^{t-1-j} \\ 
		&+ \tfrac{1}{\lambda}(1-\lambda)^{t-1}\|x_0 - \bar{x}_t\|^2 + \tfrac{2\eta^2}{\lambda}\sum_{j=0}^{t-2}\|z_j\|^2(1-\lambda)^{t-2-j} \\
		&+ \tfrac{\eta}{\bar\mu\lambda}\sum_{j=0}^{t-2}\bar{G}_{j,t}^2(1-\lambda)^{t-2-j}.
	\end{split}
\end{equation*}
This completes the proof.

\subsection{Proof of Theorem~\ref{thm:time_to_track_gap_hp2}}\label{pf:time_to_track_gap_hp2}
For each index $k$, let $t_k:=T_0+\cdots+T_{k-1}$ (with $t_0 := 0$) and $\widehat{G}_k:=\eta_k\sigma^2 + \bar{\Delta}^2/\hat\mu\hat{\eta}^2_{\star}$.
Then taking into account $\eta_k\geq\hat{\eta}_{\star}$ and our selection of the absolute constant $c>0$ via (\ref{ineq:hpbound2dec}), it follows that for any specified index $k$, the estimate
\begin{align*}
	\psi_{t_{k+1}}(X_{k+1}) - \psi_{t_{k+1}}^\star &\leq c\left(\big(1-\tfrac{\hat\mu\eta_k}{2}\big)^{T_k}\big(\psi_{t_k}(X_k) - \psi_{t_k}^\star\big) + \eta_k\sigma^2 + \frac{\bar{\Delta}^2}{\hat\mu\eta^2_k}\right)\log\!\left(\frac{e}{\delta}\right) \\
	&\leq c\left(e^{-\hat\mu\eta_kT_k/2}\big(\psi_{t_k}(X_k) - \psi_{t_k}^\star\big) + \widehat{G}_k\right)\log\!\left(\frac{e}{\delta}\right)
\end{align*}
holds with probability at least $1-\delta$.

We will verify by induction that for each index $k\geq 1$, the estimate $$\psi_{t_k}(X_k) - \psi_{t_k}^\star \leq 3c\cdot\widehat{G}_{k-1}\log\!\left(\frac{e}{\delta}\right)$$ holds with probability at least $1-k\delta$. To see the base case, observe that the estimate
$$\psi_{t_1}(X_1) - \psi_{t_1}^\star \leq c\left(e^{-\hat\mu\eta_0T_0/2}\big(\psi_0(x_0) - \psi_0^\star\big) + \widehat{G}_0\right)\log\!\left(\frac{e}{\delta}\right)\leq 3c\cdot\widehat{G}_0\log\!\left(\frac{e}{\delta}\right)$$
holds with probability at least $1-\delta$.
Now assume that the claim holds for some index $k\geq1$. Then because we also have
\begin{align*}
	\psi_{t_{k+1}}(X_{k+1}) - \psi_{t_{k+1}}^\star&\leq c\left(e^{-\hat\mu\eta_kT_k/2}\big(\psi_{t_k}(X_k) - \psi_{t_k}^\star\big) + \widehat{G}_k\right)\log\!\left(\frac{e}{\delta}\right)\\
	&\leq c\left(\frac{1}{4c\log(e/\delta)}\big(\psi_{t_k}(X_k) - \psi_{t_k}^\star\big) + \widehat{G}_k\right)\log\!\left(\frac{e}{\delta}\right)\\
	&\leq c\left(\frac{\widehat{G}_k}{2c\cdot\widehat{G}_{k-1}\log(e/\delta)}\big(\psi_{t_k}(X_k) - \psi_{t_k}^\star\big) + \widehat{G}_k\right)\log\!\left(\frac{e}{\delta}\right)
\end{align*}
with probability at least $1-\delta$, a union bound reveals that the estimate $$\psi_{t_{k+1}}(X_{k+1}) - \psi_{t_{k+1}}^\star \leq 3c\cdot\widehat{G}_{k}\log\!\left(\frac{e}{\delta}\right)$$ holds with probability at least $1-(k+1)\delta$, thereby completing the induction. In particular, $\psi_{T}(X_K) - \psi_{T}^\star \leq 3c\cdot\widehat{G}_{K-1}\log(e/\delta)$ with probability at least $1-K\delta$. 

Next, observe
$$\widehat{G}_{K-1}-\sqrt[3]{\tfrac{27}{4}}\cdot\hat\mu\left(\frac{\bar\Delta\sigma^2}{\hat\mu^2}\right)^{2/3}=\sigma^2(\eta_{K-1} - \hat{\eta}_\star) = \sigma^2\cdot \frac{\eta_0 - \hat{\eta}_\star}{2^{K-1}} \leq \frac{\hat\mu}{2}\left(\frac{\bar\Delta\sigma^2}{\hat{\mu}^2}\right)^{2/3} = \frac{1}{2}\widehat{\mathcal{G}},$$ so $$\psi_{T}(X_K) - \psi_{T}^\star \leq  3c\left(\tfrac{1}{2}+\sqrt[3]{\tfrac{27}{4}}\right)\cdot\hat\mu\left(\frac{\bar\Delta\sigma^2}{\hat\mu^2}\right)^{2/3}\log\left(\frac{e}{\delta}\right)\asymp\widehat{\mathcal{G}}\log\!\left(\frac{e}{\delta}\right)$$ with probability at least $1-K\delta$. Finally, note 
$$T \lesssim \frac{L}{\hat\mu}\log\left(\frac{LD}{\sigma^2}\right)^+ + \left(1 \vee \log\log\frac{e}{\delta}\right)\frac{1}{\hat\mu}\sum_{k=1}^{K-1}\frac{1}{\eta_k}$$ and $$\sum_{k=1}^{K-1}\frac{1}{\eta_k}\leq 2L\sum_{k=1}^{K-1} 2^k \leq 2L\cdot 2^{K} = 8L\cdot 2^{K-2} \leq 8 \left(\frac{\sigma^2\hat\mu}{\bar{\Delta}^2}\right)^{1/3} = 8\sigma^2 \cdot \hat{\mu}^{-1}\left(\frac{\bar\Delta\sigma^2}{\hat{\mu}^2}\right)^{-2/3} \asymp \frac{\sigma^2}{\widehat{\mathcal{G}}}.$$  
This completes the proof. 

\bibliography{biblio}

\end{document}